\renewcommand{\@thesubfigure}{\hskip\subfiglabelskip}
\renewcommand{\baselinestretch} {1.3}
\makeatletter \setcounter{page}{1}
\def\singlespace{\def\baselinestretch{1}\@normalsize}
\newlength\savewidth
\newtheorem{theorem}{Theorem}
\newtheorem{assumption}{Assumption}
\newtheorem{lemma}{Lemma}
\newtheorem{remark}{Remark}
\newtheorem{corollary}{Corollary}
\newtheorem{example}{Example}
\newtheorem{condition}{Condition}
\newtheorem{proposition}{Proposition}
\newtheorem{definition}{Definition}
\newcommand{\ba}{\mathbf{a}}
\newcommand{\bA}{\mathbf{A}}
\newcommand{\bP}{\mathbf{P}}
\newcommand{\bD}{\mathbf{D}}
\newcommand{\bI}{\mathbf{I}}
\newcommand{\bv}{\mathbf{v}}
\newcommand{\bw}{\mathbf{w}}
\newcommand{\bX}{\mathbf{X}}
\newcommand{\bx}{\mathbf{x}}
\newcommand{\bs}{\mathbf{s}}
\newcommand{\be}{\mathbf{e}}
\newcommand{\by}{\mathbf{y}}
\newcommand{\bk}{\mathbf{k}}
\newcommand{\bbf}{\mathbf{f}}
\newcommand{\btheta}{\boldsymbol{\theta}}
\newcommand{\bbeta}{\boldsymbol{\beta}}
\newcommand{\bomega}{\boldsymbol{\omega}}
\newcommand{\bphi}{\boldsymbol{\phi}}
\newcommand{\beps}{\boldsymbol{\epsilon}}
\newcommand{\bgamma}{\boldsymbol{\gamma}}
\renewcommand{\hat}{\widehat}
\def\tilde{\widetilde}
\def\tr{\operatorname{tr}}
\DeclareMathOperator*{\argmax}{argmax}
\newcommand{\opnorm}{\@ifstar\@opnorms\@opnorm}
\newcommand{\@opnorms}[1]{%
  \left|\mkern-1.5mu\left|\mkern-1.5mu\left|
   #1
  \right|\mkern-1.5mu\right|\mkern-1.5mu\right|
}
\newcommand{\@opnorm}[2][]{%
  \mathopen{#1|\mkern-1.5mu#1|\mkern-1.5mu#1|}
  #2
  \mathclose{#1|\mkern-1.5mu#1|\mkern-1.5mu#1|}
}
\newcommand{\bg}{\begin{eqnarray}}
\newcommand{\ed}{\end{eqnarray}}
\newcommand{\bgn}{\begin{eqnarray*}}
\newcommand{\edn}{\end{eqnarray*}}
\renewcommand{\@thesubfigure}{\hskip\subfiglabelskip}
\def\singlespace{\def\baselinestretch{1}\@normalsize}
\title{On optimality of Mallows model averaging}
\author[a]{Jingfu Peng}
\author[a]{Yang Li}
\author[b]{Yuhong Yang}
\affil[a]{School of Statistics, Renmin University of China}
\affil[b]{School of Statistics, University of Minnesota}
\date{}
\begin{document}
\begin{sloppypar}
\maketitle

\begin{abstract}

In the past decades, model averaging (MA) has attracted much attention as it has emerged as an alternative tool to the model selection (MS) statistical approach. Hansen [\emph{Econometrica} \textbf{75} (2007) 1175--1189] introduced a Mallows model averaging (MMA) method with model weights selected by minimizing a Mallows' $C_p$ criterion. The main theoretical justification for MMA is an asymptotic optimality (AOP), which states that the risk/loss of the resulting MA estimator is asymptotically equivalent to that of the best but infeasible averaged model. MMA's AOP is proved in the literature by either constraining weights in a special discrete weight set or limiting the number of candidate models. In this work, it is first shown that under these restrictions, however, the optimal risk of MA becomes an unreachable target, and MMA may converge more slowly than MS. In this background, a foundational issue that has not been addressed is: When a suitably large set of candidate models is considered, and the model weights are not harmfully constrained, can the MMA estimator perform asymptotically as well as the optimal convex combination of the candidate models? We answer this question in both nested and non-nested settings. In the nested setting, we provide finite sample inequalities for the risk of MMA and show that without unnatural restrictions on the candidate models, MMA's AOP holds in a general continuous weight set under certain mild conditions. In the non-nested setting, a sufficient condition and a negative result are established for the achievability of the optimal MA risk. Implications on minimax adaptivity are given as well. The results from simulations back up our theoretical findings. 

\end{abstract}
\textbf{Keywords: Model averaging, model selection, asymptotic optimality, minimax adaptivity.}

\bigskip

\tableofcontents

\addtocontents{toc}{\setcounter{tocdepth}{2}}
\section{Introduction}\label{sec:introduction}

In statistical modeling, multiple candidate models are usually considered to explore the data. Model selection (MS) guides us in search for the best model among candidates based on a traditional selection criterion, such as AIC \citep{Akaike1973}, $C_p$ \citep{Mallows1973}, and BIC \citep{Schwarz1978}, the use of cross-validation \citep{Allen1974, Stone1974}, and solving a penalized regression problem, such as Lasso \citep{Tibshirani1996}, adaptive Lasso \citep{zou2006adaptive}, SCAD \citep{Fan2001}, and MCP \citep{Zhang2010Nearly}. The key theoretical properties of these methods, namely consistency in selection, asymptotic efficiency, and minimax-rate optimality, have been well established in the literature (see \cite{Claeskens2008} and \cite{Ding2018} for general reviews). These MS methodologies have found successful applications in density estimation and classical regression settings, and have also been extended to more complex statistical models (see, e.g., \cite{Muller2013} for a review on mixed effects models). Once a final model is selected, all subsequent estimation, prediction, and inference are typically based on the selected model as if it were given in advance.

However, it has been increasingly recognized that choosing just one model inherently ignores possibly high uncertainty in the selection process \citep{Chatfield1995model, Draper1995, Yuan2005}. Model averaging (MA), on the other hand, provides an alternative to reduce the variability in MS while offering a possibility of reducing modeling bias by averaging over the candidate models properly.

MA has a rich heritage in Bayesian statistics, see, e.g., \cite{Draper1995}, \cite{Clyde1996}, \cite{George1997}, and \cite{Hoeting1999} for more details and references therein. From a frequentist perspective, several attractive strategies have been proposed to combine models, including boosting \citep{FREUND1995256}, bagging \citep{Breiman1996b}, random forest \citep{Amit1997}, information criterion weighting \citep{Buckland1997, Hjort2003}, adaptive regression by mixing \citep{Yang2001, Yuan2005}, exponentially weighted aggregation \citep{Leung2006}, to name a few. In particular, by minimizing some specific performance measures, a growing MA literature develops methods to pursue the optimal convex combination of the candidate models based on the same data. To the best of our knowledge, this problem was first considered by \cite{Blaker1999adaptive} in a two-model case, and studied in a multiple-model setting by \citet{Hansen2007}, who proposed a Mallows model averaging (MMA) method to select weights for averaging across nested linear models by minimizing the Mallows' $C_p$ criterion \citep{Mallows1973}. Adopting other performance measures like cross-validation error and Kullback-Leibler divergence, the MMA-type strategies have been developed explicitly for other or more general frameworks, such as heteroskedastic error regression model \citep{Hansen2012, Liu2013}, time-series error model \citep{HANSEN2008342, ZHANG201382, Cheng2015}, high-dimensional regression model \citep{Ando2014, Zhang2020}, generalized linear model \citep{Ando2017, Zhang2016jlm}, quantile regression model \citep{Lu2015}, expectile regression model \citep{TU2020109607}, varying-coefficient model \citep{zhu2019mallows}, semiparametric model \citep{FANG2022219}, among many useful others.

Given the increasing and potential wide applications of the MMA-type methods, an essential question is how good this popular class of methods for constructing an MA estimator is. This paper focuses on MMA introduced by \cite{Hansen2007} and revisits its optimality. Note that the MMA criterion is an unbiased estimate of the squared risk of the MA estimator plus a constant, and the resulting MMA estimator targets the minimization of the squared risk/loss of MA.

The optimality of MMA has certainly been studied from an asymptotic viewpoint in the MA literature. An asymptotic optimality (AOP) theory states that a good MA estimator can be asymptotically equivalent to the optimal convex combination of the given candidates in terms of the statistical risk/loss. There are two major scenarios where the MMA's AOP is proved. In a nested setup where the candidates are strictly nested, \citet{Hansen2007} proved it when the weight vectors are contained in a special discrete set. His results do not impose any additional assumption on the number of nested models. In a non-nested setup, \citet{Wan2010} considered the minimization of the MMA criterion over a continuous weight set. Their paper justifies the MMA's AOP but requires a restriction on the candidate model set. In summarizing the literature in relation to the real goal of AOP, while the aforementioned theoretical advancements are novel and valuable, the consequences of the restrictions imposed on weight/candidate models are still unclear.

Consider a typical nested framework with the $m$-th candidate containing the first $m$ regressors. For \citet{Hansen2007}'s theory, a sensible choice for the candidate model set is to include $M_n \geq m_n^*$ nested models, where $m_n^*$ is the size of the optimal nested model. We show in Section~\ref{subsec:review_hansen} that when $m_n^*$ is not too small relative to the sample size $n$ (e.g., $m_n^*$ grows at order $n^{\alpha}$ for some $0<\alpha<1$), the best possible MA risk in the discrete weight set is suboptimal. For the theory in \cite{Wan2010}, although it allows for the combination of possibly non-nested models, it still relies on a \emph{reorder-then-combine} strategy to implement MMA, which first reorders the regressors in terms of  importance and then constructs a typically nested MA estimators \citep[see, e.g.,][]{Zhang2020}. As shown in Section~\ref{subsec:review_wan}, the required restriction in \cite{Wan2010} is so strong that $m_n^*$ is excluded, and the reorder-then-combine strategy can only combine a set of underperforming models. Note that the MMA-type literature often motivates their approaches to overcome the drawbacks of MS and hence perform better. However, the MA estimator based on such nested model sets actually converges more slowly than MS.

In this background, a critical issue that has not been addressed in the existing literature is: When the weight vector is allowed for the full potential of MA, and the number of candidate models is not harmfully constrained, can the MMA estimator perform asymptotically as well as the infeasible optimal averaged model?

This paper addresses the aforementioned foundational question in both the nested and non-nested setups, making three main contributions. (1) We establish a non-asymptotic risk bound for MMA when the candidate models are nested. When the coefficients do not decay too fast (e.g., in a polynomial rate), this risk bound demonstrates that the MMA estimator based on all nested models asymptotically attains the optimal MA risk without discretizing the weight set. Several specific candidate model sets are also proposed to fully exploit the advantages of MMA in the nested setup. (2) We show that if the coefficients are partially ordered and do not decay too fast, the optimal MA risk over all non-nested models can still be realized by some specific nested MMA strategies. In contrast, if there is no prior knowledge of the coefficients' order at all, the optimal non-nested MA risk cannot be reached by any estimation procedures, and the asymptotic advantage of MA over MS cannot be materialized either. (3) We prove that the MMA estimator exhibits optimal minimax adaptivity over some general coefficient classes, such as ellipsoids and hyperrectangles. The results from our finite sample simulations support these findings.

The rest of the paper is organized as follows. In Section~\ref{sec:setup}, we set up the regression framework and give the MMA estimators. In Section~\ref{sec:review}, we theoretically investigate the consequences of using the discrete weight set or restricting the candidate model set. We then in Section~\ref{sec:main_results} develop the MMA's AOP theory in the nested setup. The non-nested setup is addressed in Section~\ref{sec:non-nested}. Section~\ref{sec:minimax} shows the minimax adaptivity of MMA. Section \ref{sec:simulation} presents the results of simulation experiments. Concluding remarks are given in Section~\ref{sec:conclusion}. The discussions on the other related works, proofs, and additional simulation and theoretical results can be found in the Appendix. 

\section{Problem setup}\label{sec:setup}

\subsection{Setup and notation}\label{sec:setup:1}

Consider the linear regression model

\begin{equation}\label{eq:model}
  y_i=f_i+\epsilon_i=\sum_{j=1}^{p_n}\beta_jx_{ij}+\epsilon_i,\quad i=1,\ldots,n,
\end{equation}
where $\epsilon_1, \ldots, \epsilon_n$ are i.i.d. sub-Gaussian random variables with $\mathbb{E}{\epsilon_i}=0$ and $\mathbb{E}{\epsilon_i^2}=\sigma^2$, and $\bx_j = (x_{1j}, \ldots , x_{nj})^{\top}$, $j=1,\ldots,p_n$, are nonstochastic regressor vectors. Define the response vector $\by=(y_1,\ldots,y_n)^{\top}$, the regression mean vector $\bbf=(f_1,\ldots,f_n)^{\top}$, the regression coefficient vector $\bbeta=\left(\beta_1,\ldots,\beta_{p_n}\right)^{\top}$, the regressor matrix $\bX=\left[\bx_1,\ldots, \bx_{p_n}\right]\in \mathbb{R}^{n \times p_n}$, and the noise vector $\beps=(\epsilon_1,\ldots,\epsilon_n)^{\top}$. We can write (\ref{eq:model}) in a matrix form
\begin{equation}\label{eq:model_matrix}
  \by = \bbf+ \beps = \bX\bbeta + \beps.
\end{equation}
For the sake of simplicity, we assume $p_n \leq n$, and $\bX$ has full column rank.

To estimate the true regression mean vector $\bbf$, $M_n$ linear regression models are considered as candidates. The $m$-th candidate model includes the regressors in the index set $\mathcal{I}_m \subseteq \{1,\ldots,p_n\}$, and the set of the candidate models is $\mathcal{M}=\left\{\mathcal{I}_1,\ldots,\mathcal{I}_{M_n} \right\}$ with $M_n=|\mathcal{M}|$, where $|\mathcal{S}|$ denotes the cardinality of a set $\mathcal{S}$ throughout this paper. Let $\bX_{\mathcal{I}_m}$ be the design matrix of the $m$-th candidate model, which estimates $\bbf$ by the least squares method $\hat{\bbf}_{m|\mathcal{M}}=\bX_{\mathcal{I}_m}(\bX_{\mathcal{I}_m}^{\top}\bX_{\mathcal{I}_m})^{-1}\bX_{\mathcal{I}_m}^{\top}\by= \bP_{m|\mathcal{M}}\by$, where $\bP_{m|\mathcal{M}}\triangleq\bX_{\mathcal{I}_m}(\bX_{\mathcal{I}_m}^{\top}\bX_{\mathcal{I}_m})^{-1}\bX_{\mathcal{I}_m}^{\top}$.

Let $\bw=(w_1,\ldots,w_{M_n})^{\top}$ denote a weight vector in the unit simplex of $\mathbb{R}^{M_n}$:
\begin{equation}\label{eq:weight_general}
  \mathcal{W}_{M_n}=\left\{\bw\in[0,1]^{M_n}:\sum_{m=1}^{M_n}w_m=1\right\}.
\end{equation}
Given the candidate model set $\mathcal{M}$, the MA estimator of $\bbf$ is $\hat{\bbf}_{\bw | \mathcal{M}}=\sum_{m=1}^{M_n}w_m\hat{\bbf}_{m|\mathcal{M}} =\bP_{\bw|\mathcal{M}}\by$, where $\bP_{\bw|\mathcal{M}}\triangleq\sum_{m=1}^{M_n}w_m\bX_{\mathcal{I}_m}(\bX_{\mathcal{I}_m}^{\top}\bX_{\mathcal{I}_m})^{-1}\bX_{\mathcal{I}_m}^{\top}$, and the subscripts $m|\mathcal{M}$ and $\bw | \mathcal{M}$ are to emphasize the dependences of the MS and MA estimators on $\mathcal{M}$, respectively.

We consider the normalized squared $\ell_2$ loss $L_n(\hat{\bbf},\bbf)= n^{-1}\|\hat{\bbf}-\bbf \|^2$ and its corresponding risk $R_n(\hat{\bbf},\bbf)=\mathbb{E}L_n(\hat{\bbf},\bbf)$ as two measures of the performance of an estimator $\hat{\bbf}$, where $\|\cdot\|$ refers to the Euclidean norm. For abbreviation, let $L_n(m|\mathcal{M},\bbf)$, $R_n(m|\mathcal{M},\bbf)$, $L_n(\bw | \mathcal{M},\bbf)$ and $R_n(\bw | \mathcal{M},\bbf)$ stand for $L_n(\hat{\bbf}_{m|\mathcal{M}},\bbf)$, $R_n(\hat{\bbf}_{m|\mathcal{M}},\bbf)$, $L_n(\hat{\bbf}_{\bw | \mathcal{M}},\bbf)$ and $R_n(\hat{\bbf}_{\bw | \mathcal{M}},\bbf)$, respectively. We denote $m^*|\mathcal{M}=\arg\min_{m \in \{1,\ldots,M_n \}}R_n(m|\mathcal{M},\bbf)$ the optimal single model in $\mathcal{M}$, and $\bw^*|\mathcal{M}=\arg\min_{\mathbf{w} \in \mathcal{W}_{M_n}}R_n(\bw | \mathcal{M},\bbf)$ the optimal weight vector based on the candidate model set $\mathcal{M}$ and the general continuous weight set $\mathcal{W}_{M_n}$. The quantities $m^*|\mathcal{M}$ and $\bw^*|\mathcal{M}$ are unknown in practice since they depend on the unknown parameters $\bbf$ and $\sigma^2$.

In this paper, we choose the weights by minimizing the MMA criterion proposed by \cite{Hansen2007}
\begin{equation}\label{eq:criterion}
  C_n(\bw|\mathcal{M},\by)=\frac{1}{n}\left\|\by-\hat{\bbf}_{\bw | \mathcal{M}} \right\|^2+\frac{2\hat{\sigma}^2}{n}\bk^{\top}\bw,
\end{equation}
that is, $\hat{\bw}|\mathcal{M}=\arg\min_{\mathbf{w} \in \mathcal{W}_{M_n}}C_n(\bw|\mathcal{M},\by)$, where $\hat{\sigma}^2$ is an estimator of $\sigma^2$, $\bk=(k_1,\ldots,k_{M_n})^{\top}$, and $k_m = |\mathcal{I}_m|$. Note that when $\mathbb{E}\hat{\sigma}^2=\sigma^2$, $\hat{\bw}|\mathcal{M}$ can be seen as a minimizer of an unbiased estimate for the MA risk
\begin{equation}\label{eq:MA_risk}
  R_n(\bw|\mathcal{M},\bbf)=\frac{1}{n}\left\| \bbf - \bP_{\bw|\mathcal{M}}\bbf \right\|^2 + \frac{\sigma^2}{n}\tr\left(\bP_{\bw|\mathcal{M}}^{\top}\bP_{\bw|\mathcal{M}}\right),
\end{equation}
since $\mathbb{E}C_n(\bw|\mathcal{M},\by)= R_n(\bw|\mathcal{M},\bbf) + \sigma^2$. The resulting MMA estimator of $\bbf$ is
\begin{equation}\label{eq:MMA_estimator}
  \hat{\bbf}_{\hat{\bw} | \mathcal{M}} =\sum_{m=1}^{M_n}\hat{w}_m\hat{\bbf}_{m|\mathcal{M}}.
\end{equation}

Let $\mathbb{E}L_n(\hat{\bw}|\mathcal{M},\bbf)\triangleq n^{-1}\mathbb{E}\|\hat{\bbf}_{\hat{\bw} | \mathcal{M}}-\bbf \|^2$ and $\mathbb{E}R_n(\hat{\bw}|\mathcal{M},\bbf)\triangleq n^{-1}\mathbb{E}\| \bbf - \bP_{\hat{\bw}|\mathcal{M}}\bbf \|^2+n^{-1}\sigma^2\mathbb{E}\tr(\bP_{\hat{\bw}|\mathcal{M}}^{\top}\bP_{\hat{\bw}|\mathcal{M}})$ denote the risk functions of (\ref{eq:MMA_estimator}). The former is a little different from the latter since in the latter function, $\hat{\bw}$ is directly plugged into the right hand side of (\ref{eq:MA_risk}). Let $Q_n(\hat{\bw}|\mathcal{M},\bbf)$ denote any one of the two quantities: $\mathbb{E}L_n(\hat{\bw}|\mathcal{M},\bbf)$ and $\mathbb{E}R_n(\hat{\bw}|\mathcal{M},\bbf)$.

From now on, we will use the symbols $\lesssim$, $\gtrsim$, and $\asymp$ for comparison of positive sequences, where $a_n\lesssim b_n$ means $a_n=O(b_n)$, $a_n\gtrsim b_n$ means $b_n=O(a_n)$, and $a_n\asymp b_n$ means both $a_n\lesssim b_n$ and $a_n\gtrsim b_n$. Also, $a_n\sim b_n $ means that $a_n/b_n\rightarrow 1$ as $n\rightarrow \infty$. Let $\lfloor a \rfloor$ and $\lceil a \rceil$ return the floor and the ceiling of $a$, respectively. For any two real numbers $a$ and $b$, we use notation $a \wedge b = \min(a,b)$ and $a \vee b = \max(a,b)$.

\subsection{Definitions of optimality}\label{sec:setup2}

In the literature, a common approach to defining the optimality of MMA is AOP.

\begin{definition}
   Given a candidate model set $\mathcal{M}$ and a weight set $\mathcal{W}$, an MA estimator $\hat{\bbf}_{\tilde{\bw} | \mathcal{M}}$ with $\tilde{\bw}$ trained on the data is said to be asymptotically optimal (AOP) at $\bbf$ if it satisfies
  \begin{equation}\label{eq:opt1}
    Q_n\left(\tilde{\bw}|\mathcal{M},\bbf\right)\leq [1+o(1)]\min_{\mathbf{w} \in \mathcal{W}}R_n\left(\bw|\mathcal{M},\bbf\right)
  \end{equation}
  as $n\to \infty$.
\end{definition}

The existing literature showed that the AOP property of MMA can be obtained with certain restrictions on the weight set $\mathcal{W}$ or the candidate model set $\mathcal{M}$. Specifically, in a nested model setting with $\mathcal{M}=\{ \{1,2,\ldots,k_m \}: 1\leq k_1<k_2<\cdots<k_{M_n}\leq p_n\}$ and
\begin{equation}\label{eq:condition_discrete}
  \mathcal{W}=\mathcal{W}_{|\mathcal{M}|}(N)\triangleq\left\{\sum_{m=1}^{|\mathcal{M}|}w_m=1, w_m\in \left\{0,\frac{1}{N},\frac{2}{N},\ldots,1\right\}\right\},
\end{equation}
in which $N$ is a fixed positive integer, \citet{Hansen2007} proved the MMA's AOP when $nR_n(\bw^* | \mathcal{M},\bbf) \to \infty$. Under a non-nested setup, \citet{Wan2010} established the MMA's AOP for a general non-nested $\mathcal{M}$ and the continuous weight set $\mathcal{W}_{|\mathcal{M}|}$ but with an additional condition on $\mathcal{M}$, that is
\begin{equation}\label{eq:condition_wan}
  \frac{|\mathcal{M}|\sum_{m=1}^{|\mathcal{M}|}R_n\left(m|\mathcal{M},\bbf\right)}{nR_n^2\left(\bw^*|\mathcal{M},\bbf  \right)}\to 0.
\end{equation}
In this paper, we refer the AOP theories in \cite{Hansen2007} and \cite{Wan2010} as the \emph{restricted AOP}, because the former does not allow all possible convex combinations of the nested models, and the latter excludes useful models in the candidate set, hence may lead to a suboptimal convergence rate (see Section~\ref{sec:review} for the detailed discussion).

Define $\mathcal{M}_A=\{\{1,\ldots,j \}: 1\leq j \leq p_n  \}$ the candidate model set with all the nested models. The optimal MA risk $R_n\left(\bw^*|\mathcal{M}_A,\bbf\right)$ can be seen as the full potential of MA under the nested model setting. Therefore, in the context of nested candidate models, as opposed to the restricted AOP, a more natural definition of the optimality is the \emph{full AOP}.
\begin{definition}\label{def:full_aop}
   An MA estimator $\hat{\bbf}_{\tilde{\bw} | \mathcal{M}}$ with $\tilde{\mathbf{w}}$ trained on the data is said to achieve the full AOP at $\bbf$ if it satisfies
  \begin{equation}\label{eq:fullopt}
    Q_n\left(\tilde{\bw}|\mathcal{M},\bbf\right)\leq [1+o(1)]R_n\left(\bw^*|\mathcal{M}_A,\bbf\right)
  \end{equation}
  as $n\to \infty$.
\end{definition}

Then two important questions arise:
\begin{description}

\item[Q1.] Does the MMA estimator (\ref{eq:MMA_estimator}) achieve the full AOP by combining candidates in $\mathcal{M}_A$ and minimizing the criterion (4) over $\mathcal{W}_{|\mathcal{M}_A|}$ directly?

\item[Q2.] Can we use a reduced set of nested models $\mathcal{M} \subset \mathcal{M}_A$ yet the MMA estimator based on $\mathcal{M}$ still satisfies the full AOP property?

\end{description}

Note the full AOP aims at achieving the optimal MA risk in the nested setting. To further clarify the potential of AOP, let $\mathcal{M}_{AS}=\{ \mathcal{I}: \mathcal{I} \subseteq \{1,\ldots,p_n \} \}$ be the candidate model set comprising all subsets of $\{1,\ldots,p_n \}$, and let $R_n\left(\bw^*|\mathcal{M}_{AS},\bbf\right)$ be the corresponding optimal MA risk, which represents the ideal performance in the non-nested setting.
\begin{definition}\label{def:ideal_aop}
   An MA estimator $\hat{\bbf}_{\tilde{\bw} | \mathcal{M}}$ with $\tilde{\mathbf{w}}$ estimated on the data is said to achieve the ideal AOP at $\bbf$ if it satisfies
  \begin{equation}\label{eq:idealopt}
    Q_n\left(\tilde{\bw}|\mathcal{M},\bbf\right)\leq [1+o(1)]R_n\left(\bw^*|\mathcal{M}_{AS},\bbf\right)
  \end{equation}
  as $n\to \infty$.
\end{definition}
\begin{description}

\item[Q3.] Is there an estimator that can achieve the ideal AOP generally?

\end{description}

The answers to Q1--Q3 may provide a previously unavailable insight on the AOP theory of MMA. Furthermore, this paper also introduces another approach to evaluating the optimality of MMA: minimax adaptivity across a list of classes of $\bbf$. Detailed formulation and discussion on the minimax theory of MMA are presented in Section~\ref{sec:minimax}.

\subsection{Reparameterization of the regression model}\label{sec:reparameterization}

Before comprehensively addressing the aforementioned questions, we first introduce some notations helpful in our theoretical analysis. As pointed out in \cite{Xu2022From}, $\bP_{j|\mathcal{M}_A}-\bP_{(j-1)|\mathcal{M}_A}=\bphi_j\bphi_j^{\top}$, $j=1,\ldots,p_n$, are mutually orthogonal, where $\bP_{0|\mathcal{M}_A}=\boldsymbol{0}_{n \times n}$, and $\bphi_j\in \mathbb{R}^{n}$ is an eigenvector satisfying $\|\bphi_j\|=1$. Obviously, $\{\bphi_1,\ldots,\bphi_{p_n} \}$ forms an orthonormal basis for the column space of $\bX$. Let us denote the \emph{transformed coefficients} $\btheta=(\theta_1,\ldots,\theta_{p_n})^{\top}$ of $\bbf$ by
\begin{equation}\label{eq:trans_para}
  \theta_j=\theta_j(\bbf)=\bphi_j^{\top}\bbf/\sqrt{n},\quad j=1,\ldots,p_n.
\end{equation}

Based on (\ref{eq:trans_para}), the regression model is equivalent to $\by = \bbf+ \beps = \sqrt{n}\sum_{j=1}^{p_n}\theta_j\bphi_j + \beps$, and $\hat{\bbf}_{\bw|\mathcal{M}_A}$ can be expressed as the MA estimator utilizing all the nested models along $\{\bphi_1,\ldots,\bphi_{p_n}\}$. In addition, when the columns of $\bX$ are mutually orthogonal with $\ell_2$ norm $\sqrt{n}$, the transformed coefficient $\theta_j$ coincides with the regression coefficient $\beta_j$, and $\hat{\bbf}_{\bw|\mathcal{M}_{AS}}$ aligns with the non-nested MA estimator based on all subsets of $\{\bphi_1,\ldots,\bphi_{p_n}\}$.

\section{Revisiting the existing AOP theories on MMA}\label{sec:review}

\subsection{Discrete weight set}\label{subsec:review_hansen}

This subsection investigates the consequences of using the discrete weight set in the nested setting \citep{Hansen2007}. For simplicity, we consider a set of successive candidate models $\mathcal{M}_{S}=\{\{1,\ldots,m \}:1\leq m \leq M_n\}$. Let $m_n^*=m^*|\mathcal{M}_A$ be the optimal nested model and $\bw_N^*|\mathcal{M}_S=\arg\min_{\bw \in \mathcal{W}_{|\mathcal{M}_S|}(N)}R_n(\bw | \mathcal{M}_S,\bbf)$ denote the optimal discrete weight vector in $\mathcal{W}_{|\mathcal{M}_S|}(N)$. We focus on the magnitude of the risk increment $R_n\left(\bw_N^*|\mathcal{M}_S,\bbf\right)-R_n\left(\bw^*|\mathcal{M}_S,\bbf\right)$ under certain assumptions on $\bbf$.

\begin{assumption}\label{asmp:square_summable}
   The regression mean vector $\bbf$ satisfies $\lim\sup_n n^{-1}\|\bbf \|^2 < \infty$.
\end{assumption}

\begin{assumption}\label{asmp:regressor_order}
  (Completely ordered coefficients) The transformed coefficients (\ref{eq:trans_para}) are completely ordered, which means $\{|\theta_j|,j\geq 1\}$ is a non-increasing positive sequence.
\end{assumption}

Assumption~\ref{asmp:square_summable} is a standard assumption for regression estimation problems. When the columns of $\bX$ are mutually orthogonal, we see that $\theta_j$ is proportional to $\beta_j$. In this case, Assumption~\ref{asmp:regressor_order} ensures that the regressors are ordered from most important to least important, which is reasonable for the nested setup. The implication of this assumption in the non-nested setup is discussed in Section~\ref{sec:non-nested}.

We further provide two different conditions on the transformed coefficients (\ref{eq:trans_para}).
\begin{condition}\label{condition1}
(Slowly decaying coefficients) There exist constants $\kappa>1$ and $0<\delta\leq\nu<1$ with $\kappa\nu^2<1$ such that $\delta\leq|\theta_{\lfloor \kappa l \rfloor}/\theta_l|\leq\nu$ when $l$ is large enough.
\end{condition}
\begin{condition}\label{condition2}
(Fast decaying coefficients) For every constant $\kappa>1$, $\lim\limits_{l \to \infty}|\theta_{\lfloor \kappa l \rfloor}/\theta_l|= 0$.
\end{condition}

Condition~\ref{condition1} includes the case $\theta_j=j^{-\alpha_1}$ for $\alpha_1 > 1/2$, which serves as the principal example in the MA literature since in this case, the optimal MA risk can significantly reduce the optimal MS risk \citep{Peng2021}. In contrast, the coefficients satisfying Condition~\ref{condition2} decay much faster. An example is the exponentially decaying coefficients $\theta_j=\exp(-j^{\alpha_2})$ for some $\alpha_2>0$. In this scenario, the asymptotic improvement of MA over MS is negligible \citep{Peng2021}.

\begin{proposition}\label{lemma:delta_1}
    Suppose Assumptions~\ref{asmp:square_summable}--\ref{asmp:regressor_order} hold. When both Condition~\ref{condition1} and $M_n\gtrsim m_n^*$ are satisfied, we have
    $
    R_n\left(\bw_N^*|\mathcal{M}_S,\bbf\right)-R_n\left(\bw^*|\mathcal{M}_S,\bbf\right) \asymp R_n(\bw^*|\mathcal{M}_S,\bbf).
    $
When either Condition~\ref{condition2} or $M_n=o(m_n^*)$ holds, we have
    $
    R_n\left(\bw_N^*|\mathcal{M}_S,\bbf\right)-R_n\left(\bw^*|\mathcal{M}_S,\bbf\right)=o\left[ R_n(\bw^*|\mathcal{M}_S,\bbf)\right].
    $
\end{proposition}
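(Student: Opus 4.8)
The plan is to pass to the orthonormal reparameterization of Section~\ref{sec:reparameterization} and reduce both optimizations — over the continuous simplex $\mathcal{W}_{|\mathcal{M}_S|}$ and over the discrete grid $\mathcal{W}_{|\mathcal{M}_S|}(N)$ — to a single scalar shrinkage problem. For the successive nested set one has $\bP_{m|\mathcal{M}_S}=\sum_{j=1}^m\bphi_j\bphi_j^{\top}$, so for $\bw\in\mathcal{W}_{|\mathcal{M}_S|}$ the averaged projection is $\bP_{\bw|\mathcal{M}_S}=\sum_{j=1}^{M_n}a_j\bphi_j\bphi_j^{\top}$ with cumulative weights $a_j=\sum_{m\ge j}w_m$. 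Since $\bbf\in\mathrm{col}(\bX)$, substituting into (\ref{eq:MA_risk}) gives the exact decomposition
\[
R_n(\bw|\mathcal{M}_S,\bbf)=\sum_{j=1}^{p_n}(1-a_j)^2\theta_j^2+\frac{\sigma^2}{n}\sum_{j=1}^{M_n}a_j^2,\qquad a_j=0\ \text{for}\ j>M_n.
\]
The map $\bw\mapsto\mathbf{a}$ is a bijection between $\mathcal{W}_{|\mathcal{M}_S|}$ and monotone vectors $1=a_1\ge a_2\ge\cdots\ge a_{M_n}\ge 0$, carrying $\mathcal{W}_{|\mathcal{M}_S|}(N)$ onto those monotone vectors lying on the grid $\{0,1/N,\ldots,1\}$. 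Thus both problems become the minimization of a separable quadratic over monotone sequences, continuous versus $1/N$-quantized.

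Next I compute the continuous optimum. Minimizing each summand gives the coordinatewise shrinkage $a_j^{\star}=\theta_j^2/(\theta_j^2+\sigma^2/n)$, which is non-increasing by Assumption~\ref{asmp:regressor_order} and hence feasible, the only caveat being that $a_1$ is forced to $1$; this boundary term shifts the risk by $O(\sigma^2/n)$ and is negligible. Using $\theta_{m_n^*}^2\asymp\sigma^2/n$, I will verify $R_n(\bw^*|\mathcal{M}_S,\bbf)\asymp\sigma^2 m_n^*/n$ under Condition~\ref{condition1} with $M_n\gtrsim m_n^*$ (the bias tail being of the same order by slow decay). The key identity for the comparison is that, because the objective is a separable quadratic with curvature $2(\theta_j^2+\sigma^2/n)$, any feasible $\mathbf{a}$ satisfies $R_n-R_n(\bw^*|\mathcal{M}_S,\bbf)=\sum_j(\theta_j^2+\sigma^2/n)(a_j-a_j^{\star})^2$, and the gap of interest is the minimum of the right-hand side over monotone grid sequences.

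For the first assertion I bound this gap both ways. For the lower bound I localize to the band of active coordinates where $\theta_j^2$ lies within a fixed constant factor of $\sigma^2/n$, equivalently $a_j^{\star}\in[c,1-c]$. Condition~\ref{condition1} controls $|\theta_j|$ multiplicatively across blocks $[l,\lfloor\kappa l\rfloor]$, so the block straddling $m_n^*$ has length $\asymp m_n^*$ and the associated $a_j^{\star}$ sweep a subinterval of $(0,1)$ of positive length; a counting argument then shows that $\gtrsim m_n^*$ of these $a_j^{\star}$ sit at distance $\ge 1/(4N)$ from the fixed grid, forcing $(a_j-a_j^{\star})^2\gtrsim N^{-2}$ for \emph{every} grid sequence. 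As each such coordinate has $\theta_j^2+\sigma^2/n\asymp\sigma^2/n$, the gap is $\gtrsim\sigma^2 m_n^*/(nN^2)\asymp R_n(\bw^*|\mathcal{M}_S,\bbf)$ since $N$ is fixed. The matching upper bound follows by exhibiting one grid sequence: round $a_j^{\star}$ down to the grid (which preserves monotonicity) and set the negligible tail to $0$; the induced excess splits into the active band ($\asymp m_n^*$ terms, each $\lesssim\sigma^2/(nN^2)$), the saturated head, and the truncated tail $\sum_{j>M_n}\theta_j^2$, all $\lesssim\sigma^2 m_n^*/n$. Combining gives the claimed $\asymp R_n(\bw^*|\mathcal{M}_S,\bbf)$.

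For the second assertion I show the gap is of lower order. Under Condition~\ref{condition2} the same block analysis forces $|\theta_j|$ below any constant factor across $[l,\lfloor\kappa l\rfloor]$, so the active band contains only $o(m_n^*)$ coordinates; rounding as above makes the active-band contribution $o(\sigma^2 m_n^*/n)$, while the saturated head sums rapidly (only $o(m_n^*)$ near-threshold terms matter) and the tail is likewise of smaller order, yielding $o(R_n(\bw^*|\mathcal{M}_S,\bbf))$. Under $M_n=o(m_n^*)$ all candidates are smaller than the optimal single model, so $\theta_j^2\gg\sigma^2/n$ throughout $j\le M_n$ and $a_j^{\star}\approx 1$; here I simply take the single largest model ($w_{M_n}=1$, i.e. $a_j\equiv 1$ on $j\le M_n$), whose excess is exactly $\sum_{j\le M_n}(\sigma^2/n)^2/(\theta_j^2+\sigma^2/n)$, and a direct estimate shows the ratio of this excess to $R_n(\bw^*|\mathcal{M}_S,\bbf)$ is a power of $\sigma^2/(n\theta_{M_n}^2)=o(1)$. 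The main obstacle is the lower-bound counting step: translating the purely multiplicative decay control of Condition~\ref{condition1} into a genuine linear-in-$m_n^*$ count of active coordinates whose ideal shrinkage is bounded away from the fixed grid, uniformly over all admissible grid sequences and robustly to how the $a_j^{\star}$ happen to align with grid points.
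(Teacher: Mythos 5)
Your reduction to the sequence model, the cumulative-weight bijection, the separable-quadratic excess-risk identity $R_n(\bw|\mathcal{M}_S,\bbf)-R_n(\bw^*|\mathcal{M}_S,\bbf)=\sum_j(\theta_j^2+\sigma^2/n)(a_j-a_j^{\star})^2$, and the formula $a_j^{\star}=\theta_j^2/(\theta_j^2+\sigma^2/n)$ all match the paper's setup exactly. The gap is precisely the step you flag at the end: the claim that ``a counting argument shows that $\gtrsim m_n^*$ of the $a_j^{\star}$ in the active band sit at distance $\geq 1/(4N)$ from the fixed grid'' is not established and is not robust as stated. The $1/(4N)$-neighborhoods of $\{0,1/N,\ldots,1\}$ cover more than half of $[0,1]$, and Condition~\ref{condition1} only controls $|\theta_{\lfloor\kappa l\rfloor}/\theta_l|$ multiplicatively between $\delta$ and $\nu$; when $\delta$ is small the sequence $a_j^{\star}$ may drop steeply across a single block, so nothing prevents an adversarial $\btheta$ from concentrating the band's values inside the grid neighborhoods. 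Without this count the lower bound, and hence the first assertion, does not follow.

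The paper closes exactly this hole by a different localization: rather than counting coordinates far from the \emph{whole} grid, it constructs (by iterating the block map $G$ a fixed number $d_1^*+1$ of times, with $d_1^*$ chosen so that $\nu^{2d_1^*}\leq 1/(N-1)$) two indices $j_n^*<i_n^*$ with $i_n^*-j_n^*\asymp m_n^*$ such that for every $j$ in the band $(j_n^*,i_n^*]$ the continuous optimum satisfies $\gamma_j^{*}\in[(N-1)/N+C_1,\,1-C_2]$, i.e.\ it is trapped strictly inside the single topmost grid gap $((N-1)/N,1)$. Any admissible discrete cumulative weight is a multiple of $1/N$ and must therefore equal $1$ or be at most $(N-1)/N$ at each such $j$, forcing $(\gamma_{N,j}-\gamma_j^{*})^2\geq C_1^2\wedge C_2^2$ uniformly over all grid sequences; summing over the band gives the $\gtrsim m_n^*/n$ lower bound. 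The upper bound and the entire second assertion are then obtained for free from the sandwich $R_n(\bw^*|\mathcal{M}_S,\bbf)\leq R_n(\bw_N^*|\mathcal{M}_S,\bbf)\leq R_n(m^*|\mathcal{M}_S,\bbf)$ together with the MS--MA comparison in Lemma~\ref{lemma:peng}, so your rounding construction and the separate treatment of Condition~\ref{condition2} and $M_n=o(m_n^*)$, while plausible, are more work than needed and also lean on unproved order-of-magnitude facts that the paper imports from that lemma.
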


Proposition~\ref{lemma:delta_1} theoretically clarifies the effects of weight discretization and $M_n$ on the optimal MA risk (see also \cite{Xu2022From}). When $\theta_l$ decays slowly and $M_n$ is large, the difference $R_n\left(\bw_N^*|\mathcal{M}_S,\bbf\right)-R_n\left(\bw^*|\mathcal{M}_S,\bbf\right)$ is of the same order as the risk $R_n(\bw^*|\mathcal{M}_S,\bbf)$. In this case, weight discretization increases the optimal MA risk in the general continuous weight set $\mathcal{W}_{|\mathcal{M}_S|}$ by a significant fraction. However, when $\theta_l$ decays fast or $M_n$ is small relative to the size of the optimal nested model, the discrete weight set restriction asymptotically does not influence the optimal risk of MA. Consider a representative setting where $M_n=p_n=n$ and $\mathcal{M}_S = \mathcal{M}_A$ in the following two examples. The proofs for all examples in this paper can be found in the Appendix.
\begin{example}[Polynomially decaying coefficients]
  When $\theta_j=j^{-\alpha_1}, \alpha_1>1/2$, we have $m_n^* \sim (n/\sigma^2)^{1/(2\alpha_1)}$ and $R_n(\bw^*|\mathcal{M}_A,\bbf)\asymp m_n^*/n \asymp n^{-1+1/(2\alpha_1)}$. There exists a constant $1<C<2$ such that  $R_n\left(\bw_N^*|\mathcal{M}_A,\bbf\right)>CR_n(\bw^*|\mathcal{M}_A,\bbf)$ when $n$ is sufficiently large. Thus, it is impossible to achieve the full AOP by using the discrete weight set with any fixed $N$.
\end{example}
\begin{example}[Exponentially decaying coefficients]
  Now the transformed coefficients decay fast: $\theta_j=\exp(-cj^{\alpha_2})$, $\alpha_2>0$. In this case, we have $m_n^{\ast}\sim [\log(n/\sigma^2)^{1/(2c)}]^{1/\alpha_2}$, and the full AOP is achievable due to $R_n\left(\bw_N^*|\mathcal{M}_A,\bbf\right)\sim R_n(\bw^*|\mathcal{M}_A,\bbf)$.
\end{example}

\begin{remark}
Note that MS can be viewed as MA in the discrete set $\mathcal{W}_{|\mathcal{M}_S|}(1)$. A natural question to ask is whether $R_n\left(\bw_N^*|\mathcal{M}_S,\bbf\right)$ has a substantial improvement over $R_n\left(m^*|\mathcal{M}_S,\bbf\right)$. In Section~\ref{sec:A_5_1} of the Appendix, we show that $R_n\left(\bw_N^*|\mathcal{M}_S,\bbf\right)$ still significantly reduces $R_n\left(m^*|\mathcal{M}_S,\bbf\right)$ when $\theta_l$ decays slowly and $M_n$ is large.
\end{remark}

\subsection{Restriction of the candidate model set}\label{subsec:review_wan}

Directly minimizing the MMA criterion over the continuous weight set $\mathcal{W}_{|\mathcal{M}|}$ and the non-nested $\mathcal{M}$ was considered by \cite{Wan2010}. In their original paper, however, there is no theoretical guidance on how to construct $\mathcal{M}$. Subsequently, \cite{Zhang2020} suggested setting $\mathcal{M}$ as $\mathcal{M}_{S}$ after reordering the regressors. As will be seen next, this reorder-then-combine strategy fails to attain the optimal rate when Condition (\ref{eq:condition_wan}) is required, let alone achieving the full/ideal AOP.

\begin{proposition}\label{lemma:delta_2}
    Suppose Assumptions~\ref{asmp:square_summable}--\ref{asmp:regressor_order} are satisfied. Under Condition~\ref{condition1}, a necessary condition of (\ref{eq:condition_wan}) is $M_n=o\left(m_n^* \right)$. In such a case, we have
    \begin{equation}\label{eq:prop3}
      R_n(m^*|\mathcal{M}_A,\bbf)=o[R_n(\bw^*|\mathcal{M}_S,\bbf)].
    \end{equation}
    Under Condition~\ref{condition2}, for (\ref{eq:condition_wan}) to hold, it is also necessary to require $M_n\leq \lfloor C m_n^* \rfloor $ with a constant $0<C<1$. In this case, (\ref{eq:condition_wan}) still leads to the relation (\ref{eq:prop3}).
\end{proposition}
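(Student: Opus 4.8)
The plan is to pass to the transformed coordinates $\{\theta_j\}$ of (\ref{eq:trans_para}), in which every risk diagonalizes and the comparisons become transparent. Since $\bP_{m|\mathcal{M}_A}=\sum_{j\le m}\bphi_j\bphi_j^{\top}$, one gets $R_n(m|\mathcal{M}_S,\bbf)=\sum_{j>m}\theta_j^2+\sigma^2m/n$ and, writing $a_j=\sum_{m\ge j}w_m$ (non-increasing, $a_1=1$),
$$R_n(\bw|\mathcal{M}_S,\bbf)=\sum_{j=1}^{M_n}(1-a_j)^2\theta_j^2+\sum_{j>M_n}\theta_j^2+\frac{\sigma^2}{n}\sum_{j=1}^{M_n}a_j^2 .$$
Minimizing coordinatewise---the monotonicity of $a_j^*=\theta_j^2/(\theta_j^2+\sigma^2/n)$ is automatic under Assumption~\ref{asmp:regressor_order}, and the simplex constraint $a_1=1$ perturbs only the $j=1$ term by $O(n^{-2})$---yields the identity $R_n(\bw^*|\mathcal{M}_S,\bbf)=R_n(\bw^*|\mathcal{M}_A,\bbf)+\sum_{j>M_n}\theta_j^4/(\theta_j^2+\sigma^2/n)$. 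I would first record the facts (valid under either condition) $\theta_{m_n^*}^2\asymp\sigma^2/n$ and $R_n(m^*|\mathcal{M}_A,\bbf)\asymp R_n(\bw^*|\mathcal{M}_A,\bbf)\asymp m_n^*/n$, and the tail estimates that drive everything: under Condition~\ref{condition1}, $\sum_{j>m}\theta_j^2\asymp m\theta_m^2$, the upper half being exactly where $\kappa\nu^2<1$ enters, via summing the geometric series $\sum_{i\ge 0}(\kappa\nu^2)^i$ over the blocks $[m\kappa^i,m\kappa^{i+1})$; under Condition~\ref{condition2}, $\sum_{j>m}\theta_j^2=o(m\theta_m^2)$.

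For Condition~\ref{condition1} I prove necessity of $M_n=o(m_n^*)$ by contraposition. If $\limsup_n M_n/m_n^*>0$ then $M_n\gtrsim m_n^*$ along a subsequence; bounding the numerator of (\ref{eq:condition_wan}) below by its variance part, $\sum_{m=1}^{M_n}R_n(m|\mathcal{M}_S,\bbf)\ge(\sigma^2/n)\sum_{m\le M_n}m\gtrsim\sigma^2M_n^2/n$, while placing all mass on model $\min(M_n,m_n^*)$ and invoking the tail estimate gives $R_n(\bw^*|\mathcal{M}_S,\bbf)\lesssim m_n^*/n$. Hence the ratio in (\ref{eq:condition_wan}) is $\gtrsim M_n^3/(m_n^*)^2\gtrsim m_n^*\to\infty$, a contradiction. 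Granting $M_n=o(m_n^*)$, I lower-bound $R_n(\bw^*|\mathcal{M}_S,\bbf)\ge\sum_{j>M_n}\theta_j^2\asymp M_n\theta_{M_n}^2$ and compare with $R_n(m^*|\mathcal{M}_A,\bbf)\asymp m_n^*\theta_{m_n^*}^2$: the ratio is $\asymp(m_n^*/M_n)(\theta_{m_n^*}/\theta_{M_n})^2$, and iterating $\theta_{\lfloor\kappa l\rfloor}\le\nu\theta_l$ over the $\log_\kappa(m_n^*/M_n)$ steps from $M_n$ to $m_n^*$ gives $(\theta_{m_n^*}/\theta_{M_n})^2\lesssim(m_n^*/M_n)^{2\log_\kappa\nu}$. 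Thus the ratio is $\lesssim(m_n^*/M_n)^{1+2\log_\kappa\nu}=(m_n^*/M_n)^{\ln(\kappa\nu^2)/\ln\kappa}\to0$, precisely because $\kappa\nu^2<1$; this is (\ref{eq:prop3}).

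Under Condition~\ref{condition2} the variance bound already gives the easy half of the necessity: if $M_n\ge m_n^*$ then $R_n(\bw^*|\mathcal{M}_S,\bbf)\le R_n(m^*|\mathcal{M}_A,\bbf)\asymp m_n^*/n$, yet (\ref{eq:condition_wan}) forces $R_n(\bw^*|\mathcal{M}_S,\bbf)\gg M_n^{3/2}/n\gg m_n^*/n$, so $M_n<m_n^*$; upgrading this to $M_n\le\lfloor Cm_n^*\rfloor$ for some $C<1$ is a short finer computation, the point being that---unlike Condition~\ref{condition1}---a constant fraction of $m_n^*$ is now admissible. The relation (\ref{eq:prop3}) I establish by contradiction, needing only $M_n<m_n^*$. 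Suppose $R_n(\bw^*|\mathcal{M}_S,\bbf)\le Km_n^*/n$ along a subsequence. Using $\sum_{m}R_n(m|\mathcal{M}_S,\bbf)\ge M_nT_{M_n}$ with $T_{M_n}:=\sum_{j>M_n}\theta_j^2$ in (\ref{eq:condition_wan}) gives $M_n^2T_{M_n}=o(nR_n^2(\bw^*|\mathcal{M}_S,\bbf))=o((m_n^*)^2/n)$, i.e.\ $T_{M_n}=o\big((m_n^*)^2/(nM_n^2)\big)$. On the other hand, choosing $\kappa=2$ in Condition~\ref{condition2} gives $\theta_{2l}\le\tfrac12\theta_l$ for large $l$, and iterating from $M_n$ up to $m_n^*$ yields $\theta_{M_n+1}\gtrsim(m_n^*/M_n)\theta_{m_n^*}$, whence $T_{M_n}\ge\theta_{M_n+1}^2\gtrsim(m_n^*/M_n)^2(\sigma^2/n)=\sigma^2(m_n^*)^2/(nM_n^2)$. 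The two displays are incompatible because $\sigma^2>0$ is a fixed constant, so in fact $R_n(\bw^*|\mathcal{M}_S,\bbf)\gg m_n^*/n\asymp R_n(m^*|\mathcal{M}_A,\bbf)$, which is (\ref{eq:prop3}); bounded $M_n$ is trivial since then $R_n(\bw^*|\mathcal{M}_S,\bbf)\ge T_{M_n}\ge\theta_{M_n+1}^2\gtrsim1\gg m_n^*/n$.

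The main obstacle is this last step under Condition~\ref{condition2}: the condition furnishes no decay rate for $\theta_j$---only $\theta_{\lfloor\kappa l\rfloor}/\theta_l\to0$ for each fixed $\kappa$---so no closed-form estimate of the truncation bias $T_{M_n}$ is available and one cannot conclude from the bias in isolation. The device that makes it work is to pit the crude but rate-free lower bound on $T_{M_n}$, obtained by iterating Condition~\ref{condition2} at the fixed scale $\kappa=2$, against the upper bound on $T_{M_n}$ extracted from (\ref{eq:condition_wan}) itself; the two estimates meet precisely at the order $(m_n^*)^2/(nM_n^2)$, and it is the assumed (\ref{eq:condition_wan}) that breaks the tie. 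A secondary point is the boundary regime $M_n/m_n^*\to1$ in the necessity claim, where one must check that the variance term dominates the numerator while $R_n(\bw^*|\mathcal{M}_S,\bbf)$ stays controlled; the floors in $\lfloor\kappa l\rfloor$ and the ``for large $l$'' caveats affect only constants and are absorbed into the $\asymp,\lesssim,\gtrsim$ bookkeeping.
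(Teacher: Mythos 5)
Your argument is correct, and on the first half it runs along the same lines as the paper's: necessity of $M_n=o(m_n^*)$ via the variance part of the numerator of (\ref{eq:condition_wan}) against $R_n(\bw^*|\mathcal{M}_S,\bbf)\lesssim m_n^*/n$ when $M_n\gtrsim m_n^*$, and then, under Condition~\ref{condition1}, a lower bound on the truncation bias $\sum_{j>M_n}\theta_j^2$ that beats $m_n^*/n$ precisely because $\kappa\nu^2<1$. Your packaging of that last step as $(m_n^*/M_n)^{\ln(\kappa\nu^2)/\ln\kappa}\to 0$ is the same geometric mechanism the paper expresses as a divergent sum $\sum_{l}(\kappa\nu^2)^{-l}$ over $\asymp\log_\kappa(m_n^*/M_n)$ blocks. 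Where you genuinely diverge from the paper is the Condition~\ref{condition2} proof of (\ref{eq:prop3}): the paper first establishes $M_n\leq\lfloor Cm_n^*\rfloor$ with $C<1$ and then lower-bounds the bias directly by $(\lfloor (C+1)m_n^*/2\rfloor-M_n)\,\theta_{\lfloor (C+1)m_n^*/2\rfloor}^2$, using Condition~\ref{condition2} once at the fixed scale $2/(C+1)>1$; you instead squeeze $T_{M_n}=\sum_{j>M_n}\theta_j^2$ between the upper bound extracted from (\ref{eq:condition_wan}) itself (via the $M_n^2T_{M_n}$ part of the numerator) and the lower bound $\theta_{M_n+1}^2\gtrsim(m_n^*/M_n)^2\sigma^2/n$ obtained by iterating the halving at scale $\kappa=2$. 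Your route needs only $M_n<m_n^*$ rather than the quantitative gap $M_n\leq Cm_n^*$, so the latter claim becomes a side remark instead of a load-bearing step --- a real advantage, since that gap is precisely the step that both you and the paper leave as ``a short finer computation.''

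Two small cautions. First, your preliminary assertion that $\theta_{m_n^*}^2\asymp\sigma^2/n$ ``under either condition'' is false under Condition~\ref{condition2}: (\ref{eq:mnstar}) gives $\theta_{m_n^*}^2>\sigma^2/n\geq\theta_{m_n^*+1}^2$, and the two-sided comparison requires the lower half of Condition~\ref{condition1}; with very fast decay (e.g.\ $\theta_j=e^{-j^2}$) one has $\theta_{m_n^*}^2\gg\sigma^2/n$. You only ever use the valid direction $\theta_{m_n^*}^2>\sigma^2/n$ in the Condition~\ref{condition2} argument, and $R_n(m^*|\mathcal{M}_A,\bbf)\asymp m_n^*/n$ still holds there via the tail estimate applied at $m_n^*+1$, so nothing breaks --- but the blanket claim should be restricted to Condition~\ref{condition1}. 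Second, make sure the iteration $\theta_{2l}\leq\tfrac12\theta_l$ is only invoked for $l\geq M_n+1$ with $M_n\to\infty$ along the subsequence under consideration (you do dispose of bounded $M_n$ separately, which is all that is needed).
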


Proposition~\ref{lemma:delta_2} confirms that $\mathcal{M}_S$ excludes the optimal single model $m_n^*$ under the widely used condition (\ref{eq:condition_wan}). When $\theta_l$ decays slowly, the restricted MA strategy based on $\mathcal{M}_S$ has a significant disadvantage compared to MS in terms of rate of convergence, which is against the motivation of MA. When $\theta_l$ decays fast, MA generally does not have any real benefit compared to MS \citep{Peng2021}. The restricted MMA with (\ref{eq:condition_wan}), however is actually worse. Recall that $R_n(m^*|\mathcal{M}_A,\bbf)$ converges at the same rate of $R_n(\bw^*|\mathcal{M}_A,\bbf)$ regardless of Condition~\ref{condition1} or Condition~\ref{condition2} \citep{Peng2021}. An undesirable consequence of restricting $\mathcal{M}_S$ with (\ref{eq:condition_wan}) is that the resulting MA estimators cannot even attain the rate of $R_n(\bw^*|\mathcal{M}_A,\bbf)$. Consider the following examples for a more specific illustration.

\addtocounter{example}{-2}
\begin{example}[continued]
  When $\theta_j=j^{-\alpha_1}, \alpha_1>1/2$, and $p_n=n$, Condition (\ref{eq:condition_wan}) is equivalent to the restriction on the rate of increase of the number of candidate models in $\mathcal{M}_S$
\begin{equation}\label{eq:M_n_order1}
M_n=\left\{\begin{array}{ll}
o(n^{\frac{1}{2\alpha_1+1}} ) &\quad 1/2<\alpha_1<1, \\
o(n^{\frac{1}{4\alpha_1-1}} ) &\quad \alpha_1 \geq 1.
\end{array}\right.
\end{equation}
Therefore we need $M_n=c_n(m_n^*)^{2\alpha_1/(2\alpha_1+1)}$ with $c_n \to 0$ as $n \to \infty$, where $m_n^*\sim(n/\sigma^2)^{1/(2\alpha_1)}$. In this case, the optimal rate is $R_n(\bw^*|\mathcal{M}_A,\bbf) \asymp R_n(m^*|\mathcal{M}_A,\bbf)\asymp n^{-1+1/(2\alpha_1)}$ \citep{Peng2021}. But the rate of convergence of MA based $\mathcal{M}_S$ is $M_n^{-2\alpha_1+1}$, which converges no faster than $n^{-(2\alpha_1-1)/(2\alpha_1+1)}$ and thus much slower than the optimal rate. For a specific example, if $\alpha_1=1$, the MMA converges slower than $n^{-1/3}$ in contrast to the optimal rate $n^{-1/2}$.

\end{example}
\begin{example}[continued]
  Consider $\theta_j=\exp(-cj^{\alpha_2})$, $\alpha_2>0$. A sufficient condition for (\ref{eq:condition_wan}) is $M_n<\left(1/2 \right)^{1/\alpha_2}m_n^*$, where $m_n^{\ast}\sim [\log(n/\sigma^2)^{1/(2c)}]^{1/\alpha_2}$. In this case, MA based on $\mathcal{M}_S$ converges at the rate of $M_n^{1-\alpha_2}/n^{1/2}$, which is still slower than the optimal MS rate $m_n^*/n$.
\end{example}

Another interesting case is that there are $s \leq l$ non-zero coefficients, where $l$ is a fixed integer. In such a case, typically we have $m_n^*=s$, and $M_n\leq s-1$ since $nR_n\left(\bw^*|\mathcal{M}_S,\bbf  \right)\to \infty$ is a necessary condition for (\ref{eq:condition_wan}). This implies that MA converges more slowly (or not at all) than $m_n^*$ because $\mathcal{M}_S$ excludes at least one non-zero signal.

Comparing the two restricted-AOP theories given by \cite{Hansen2007} and \cite{Wan2010, Zhang2020}, it seems that MA with the discrete weight set is safer in the nested setting, since it always leads to the optimal MS rate when $M_n\gtrsim m_n^*$, while the reorder-then-combine strategy based on the restrictive candidate set does not. Nevertheless, both theories have the same drawbacks of not achieving the MA's full potential. Therefore, the questions Q1--Q3 raised earlier remain largely unanswered.

\begin{remark}
\cite{Zhang2021} and \cite{Fang_Yuan_Tian_2023} proved the MMA's AOP under some more interpretable assumptions
than (\ref{eq:condition_wan}). Following the proof in Proposition~\ref{lemma:delta_2} in the Appendix, we can see that these assumptions exclude $m_n^*$ and thus suffer the same consequence (\ref{eq:prop3}).
\end{remark}

\section{Nested candidate models}\label{sec:main_results}

\subsection{A risk bound}\label{sec:main_results:non-asymptotic}

Given a general nested candidate model set $\mathcal{M}=\{ \{1,2,\ldots,k_m \}: 1\leq k_1<k_2<\cdots<k_{M_n}\leq p_n\}$, define
\begin{equation}\label{eq:psi_M}
  \psi(\mathcal{M}) = \left[M_n\wedge\left(1+ \sum_{j=1}^{M_n-1}\frac{k_{j+1} - k_{j}}{4k_{j}}+\sum_{j=1}^{M_n'-1} \frac{S_{j-1} - S_{j}}{4S_{j}}\right)\right](1+\log M_n)^2,
\end{equation}
where $k_0=0$, $S_j=\sum_{l=k_j+1}^{k_{M_n}}\theta_l^2$ for $1\leq j \leq M_n-1$, $S_{M_n}=0$, and $M_n'=\min\{1 \leq j \leq M_n: S_j=0 \}$. Then we have the following upper bound on $Q_n(\hat{\bw}|\mathcal{M},\bbf)$.

\begin{theorem}\label{theorem:main}
Suppose that Assumption~\ref{asmp:square_summable} holds, then we have
\begin{equation}\label{eq:risk_bound_general}
\begin{split}
   Q_n(\hat{\bw}|\mathcal{M},\bbf)\leq R_n(\bw^*|\mathcal{M},\bbf)&+\frac{C\sigma^2}{n}\psi(\mathcal{M})+\frac{C\sigma}{\sqrt{n}}[\psi(\mathcal{M})]^{\frac{1}{2}}[R_n(\bw^*|\mathcal{M},\bbf)]^{\frac{1}{2}}\\
   &+C\rho\left(n, \mathcal{M},\bbf,\hat{\sigma}^2,\sigma^2\right),\\
\end{split}
\end{equation}
where $C$ is a positive constant that depends on the sub-Gaussian parameter of $\epsilon_i$, and $\rho\left(n, \mathcal{M},\bbf,\hat{\sigma}^2,\sigma^2\right)$ is the estimation error related to $\hat{\sigma}^2$, which is defined by
\begin{equation*}
\begin{split}
  \rho\left(n, \mathcal{M},\bbf,\hat{\sigma}^2,\sigma^2\right) & = \frac{k_{M_n}}{n\sigma^2}\mathbb{E}\left( \hat{\sigma}^2-\sigma^2 \right)^2+ \left[ \frac{k_{M_n}}{n\sigma^2}\mathbb{E}\left( \hat{\sigma}^2-\sigma^2 \right)^2 \right]^{\frac{1}{2}}[R_n(\bw^*|\mathcal{M},\bbf)]^{\frac{1}{2}}.
   \end{split}
\end{equation*}
\end{theorem}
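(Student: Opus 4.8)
The plan is to pass to the orthogonal sequence model of Section~\ref{sec:reparameterization}, where the MMA estimator becomes a monotone step-function shrinkage, and then to run a basic-inequality argument driven by the optimality of $\hat{\bw}|\mathcal{M}$. First I would write $\bP_{\bw|\mathcal{M}}=\sum_{j=1}^{p_n}g_j(\bw)\bphi_j\bphi_j^{\top}$ with cumulative weights $g_j(\bw)=\sum_{m:\,k_m\geq j}w_m$, a non-increasing sequence in $[0,1]$ that is constant on each of the $M_n$ blocks $B_m=\{k_{m-1}+1,\ldots,k_m\}$. Setting $\xi_j=\bphi_j^{\top}\beps/\sqrt{n}$ and $z_j=\theta_j+\xi_j$, and using $\bk^{\top}\bw=\tr(\bP_{\bw|\mathcal{M}})=\sum_j g_j$, the loss, risk, and criterion collapse to scalar sums: $L_n(\bw|\mathcal{M},\bbf)=\sum_j(g_jz_j-\theta_j)^2$, $R_n(\bw|\mathcal{M},\bbf)=\sum_j(1-g_j)^2\theta_j^2+(\sigma^2/n)\sum_j g_j^2$, and, up to a $\bw$-independent constant, $C_n(\bw|\mathcal{M},\by)=\sum_j(1-g_j)^2z_j^2+(2\hat{\sigma}^2/n)\sum_j g_j$.

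From $C_n(\hat{\bw}|\mathcal{M},\by)\leq C_n(\bw^*|\mathcal{M},\by)$, I would cancel the common terms and substitute $z_j=\theta_j+\xi_j$ to obtain the basic inequality
\[
L_n(\hat{\bw})-L_n(\bw^*)\leq 2\sum_j(\hat{g}_j-g_j^*)\theta_j\xi_j+2\sum_j(\hat{g}_j-g_j^*)\Big(\xi_j^2-\tfrac{\sigma^2}{n}\Big)+\tfrac{2(\sigma^2-\hat{\sigma}^2)}{n}\sum_j(\hat{g}_j-g_j^*),
\]
a signal--noise linear term, a centered noise-quadratic term, and a $\hat{\sigma}^2$-bias term, where $\hat{g}=g(\hat{\bw})$ and $g^*=g(\bw^*)$. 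Since both profiles are block-constant, each sum reduces to $M_n$ terms of the form $\sum_m(\hat{g}^{(m)}-g^{*(m)})A_m$ with block statistics $A_m=\sum_{j\in B_m}\theta_j\xi_j$, $V_m=\sum_{j\in B_m}(\xi_j^2-\sigma^2/n)$, and sizes $|B_m|=k_m-k_{m-1}$.

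The hard part, and the main obstacle, is the uniform control of the linear and quadratic processes over the class of non-increasing step profiles, together with the peeling needed to handle the data dependence of $\hat{\bw}$. For the linear part I would apply Cauchy--Schwarz against $R_n(\bw^*)$ and sub-Gaussian maximal inequalities for the $A_m$, whose variance proxies are $(S_{m-1}-S_m)\sigma^2/n$; for the quadratic part I would use Bernstein/Hanson--Wright concentration for the centered sub-Gaussian squares $V_m$, whose variance proxies are of order $|B_m|\sigma^4/n^2$. A union-bound/chaining argument over the at most $M_n$ blocks produces the factor $(1+\log M_n)^2$, while normalizing the block variances by the accumulated variance $k_j$ and the accumulated tail bias $S_j$ yields precisely the two summations $\sum_j\frac{k_{j+1}-k_j}{4k_j}$ and $\sum_j\frac{S_{j-1}-S_j}{4S_j}$ inside $\psi(\mathcal{M})$; the crude alternative bound that discards this block structure gives the competing $M_n$, hence the minimum $M_n\wedge(\cdots)$.

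Combining these estimates gives a geometric-mean bound of the type $\lesssim \sqrt{(\sigma^2/n)\psi(\mathcal{M})}\,\sqrt{L_n(\hat{\bw})\vee R_n(\bw^*)}$; by AM--GM I would absorb a fraction of $L_n(\hat{\bw})$ into the left-hand side, leaving exactly $\frac{C\sigma^2}{n}\psi(\mathcal{M})$ together with the cross term $\frac{C\sigma}{\sqrt{n}}\psi(\mathcal{M})^{1/2}R_n(\bw^*)^{1/2}$. The $\hat{\sigma}^2$-bias term is treated separately by Cauchy--Schwarz using $|\sum_j(\hat{g}_j-g_j^*)|\leq k_{M_n}$, which produces $\rho(n,\mathcal{M},\bbf,\hat{\sigma}^2,\sigma^2)$. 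Finally, since $Q_n$ may denote either $\mathbb{E}L_n(\hat{\bw})$ or the plug-in $\mathbb{E}R_n(\hat{\bw})$, I would note that their difference equals $2\sum_j(\hat{g}_j-1)\hat{g}_j\theta_j\xi_j+\sum_j\hat{g}_j^2(\xi_j^2-\sigma^2/n)$, which is controlled by the same linear and quadratic estimates, so both quantities obey the claimed inequality after taking expectations.
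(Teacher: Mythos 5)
Your proposal is correct and follows essentially the same route as the paper's proof: reduction to the orthogonal sequence model, the basic inequality from $C_n(\hat{\bw}|\mathcal{M},\by)\leq C_n(\bw^*|\mathcal{M},\by)$, summation-by-parts/block normalization by the cumulative quantities $k_j$ and $S_j$ (giving the two sums in $\psi(\mathcal{M})$, with the crude $M_n$ bound supplying the minimum), sub-Gaussian and Hanson--Wright maximal inequalities for the $(1+\log M_n)$ factors, a self-bounding absorption step, and a separate Cauchy--Schwarz treatment of the $\hat{\sigma}^2$ term yielding $\rho$. The only cosmetic differences are that the paper controls the maxima via an exponential-moment (soft-max) lemma rather than a union bound, and resolves the self-bounding inequality as a quadratic in $[\mathbb{E}R_n(\hat{\bw}|\mathcal{M},\bbf)]^{1/2}$ rather than by AM--GM on the loss.
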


The risk bound (\ref{eq:risk_bound_general}) is valid for any sample size and does not rely on Assumption~\ref{asmp:regressor_order} that the transformed coefficients are ordered. Note that the risk of the MMA estimator $Q_n(\hat{\bw}|\mathcal{M},\bbf)$ is bounded by the infeasible optimal MA risk $R_n(\bw^*|\mathcal{M},\bbf)$ plus three additional terms. The first two terms are related to the candidate model set $\mathcal{M}$. The third term $\rho\left(n, \mathcal{M},\bbf,\hat{\sigma}^2,\sigma^2\right)$ is mainly about the estimation error of $\hat{\sigma}^2$.

\subsection{Estimation of $\sigma^2$}\label{sec:estimator_sigma}

As the risk bound suggests, the variance estimation may also have a significant effect on the performance of MMA. When a poor estimator of $\sigma^2$ with non-converging squared risk is considered, the upper bound in (\ref{eq:risk_bound_general}) becomes non-converging if the largest size $k_{M_n}$ is of order $n$. In contrast, when $\mathbb{E}\left( \hat{\sigma}^2-\sigma^2 \right)^2$ converges at the parametric rate $1/n$, the term $\rho\left(n, \mathcal{M},\bbf,\hat{\sigma}^2,\sigma^2\right)$ does not affect the rate of convergence of the upper bound. Here we give two specific estimators for $\sigma^2$.

Consider a model-based estimator from the least squares theory
\begin{equation}\label{eq:sigma_estimator_1}
  \hat{\sigma}_{m_n}^2=\frac{1}{n-m_n}\| \by - \bP_{m_n|\mathcal{M}_A}\by \|^2.
\end{equation}
Section~\ref{sec:proof:eq:variance_risk} of the Appendix proves
\begin{equation}\label{eq:variance_risk}
  \mathbb{E}(\hat{\sigma}_{m_n}^2-\sigma^2)^2\lesssim \frac{1}{n-m_n}\vee\frac{n\|\btheta_{-m_n} \|^2}{(n-m_n)^2} \vee  \frac{n^2 \|\btheta_{-m_n} \|^4}{(n-m_n)^2},
\end{equation}
where $\btheta_{-m_n}=(\theta_{m_n+1},\ldots, \theta_{p_n})^{\top}$. When $n-p_n \asymp n$, the variance estimator $\hat{\sigma}_{p_n}^2$ based on the largest candidate model converges at the parametric rate $1/n$. When $p_n = n$, the estimation error of $\hat{\sigma}_{m_n}^2$ with $m_n = \lfloor \kappa n \rfloor$ ($0<\kappa<1$) is not slower than $(1/n)\vee \|\btheta_{-m_n} \|^4$. As will be seen in the next subsection, $\hat{\sigma}_{m_n}^2$ may be sufficient for the full AOP of MMA (e.g., in the examples of polynomially and exponentially decaying coefficients), even if it does not converge at the parametric rate in some cases.

Moreover, when $p_n=n$, the first difference variance estimator proposed by \cite{rice1984bandwidth} can also be used. For the one-dimensional nonparametric regression $y_i=f(u_i)+\epsilon_i$, where the model (\ref{eq:model}) is a linear approximation for $f$, consider
$
  \hat{\sigma}_{D}^2=\frac{1}{2(n-1)}\sum_{i=2}^{n}\left[y_{(i+1)}-y_{(i)}\right]^2,
$
where $y_{(i)}$ denotes the observed response at the $i$-th smallest $u$ value. Under a mild smoothness assumption on $f$, $\hat{\sigma}_{D}^2$ has the property $\mathbb{E}(\hat{\sigma}_D^2 -\sigma^2)^2 \sim cn^{-1}\mbox{\mbox{Var}}(\epsilon^2)$. This estimator can be extended to design points in a multidimensional case \citep{Munk2005}.

\subsection{The full AOP}\label{subsec:aop}

With a suitable estimator $\hat{\sigma}^2$, the AOP of MMA is readily available as shown in the following theorem.

\begin{theorem}\label{theorem:aop}
  Suppose Assumption~\ref{asmp:square_summable} holds, if
  \begin{equation}\label{eq:variance_rate}
    k_{M_n}\mathbb{E}\left( \hat{\sigma}^2-\sigma^2 \right)^2=o\left[nR_n(\bw^*|\mathcal{M},\bbf)\right]
  \end{equation}
  and
  \begin{equation}\label{eq:minimum_marisk_rate}
    \psi(\mathcal{M})=o\left[nR_n(\bw^*|\mathcal{M},\bbf)\right],
  \end{equation}
  then $\hat{\bbf}_{\hat{\bw} | \mathcal{M}}$ achieves AOP in the sense that (\ref{eq:opt1}) holds for the continuous weight set $\mathcal{W}_{|\mathcal{M}|}$. In particular, using the variance estimator (\ref{eq:sigma_estimator_1}) with $m_n = \lfloor \kappa n \rfloor \wedge p_n$ ($0<\kappa<1$), if
  \begin{equation}\label{eq:variance_rate_2}
    (1/n)\vee \|\btheta_{-m_n} \|^4=o\left[R_n(\bw^*|\mathcal{M}_A,\bbf)\right]
  \end{equation}
  and
  \begin{equation}\label{eq:minimum_marisk_rate_2}
    \left[ \log p_n +  \varphi(\btheta) \right](\log p_n)^2=o\left[nR_n(\bw^*|\mathcal{M}_A,\bbf)\right],
  \end{equation}
  where $\varphi(\btheta)\triangleq\sum_{j=1}^{p_n'-1} (\theta_j^2/\sum_{l=j+1}^{p_n}\theta_l^2)$ and $p_n'=\min\{1 \leq j \leq p_n: \sum_{l=j+1}^{p_n}\theta_l^2=0 \}$, then $\hat{\bbf}_{\hat{\bw} | \mathcal{M}_A}$ achieves the full AOP in terms of (\ref{eq:fullopt}).
\end{theorem}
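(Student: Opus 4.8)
The plan is to derive both parts directly from the finite-sample bound (\ref{eq:risk_bound_general}) of Theorem~\ref{theorem:main}. Since $\bw^*|\mathcal{M}$ minimizes $R_n(\bw|\mathcal{M},\bbf)$ over $\mathcal{W}_{|\mathcal{M}|}$, the right-hand side of (\ref{eq:opt1}) equals $[1+o(1)]R_n(\bw^*|\mathcal{M},\bbf)$, so it suffices to show that the three error terms in (\ref{eq:risk_bound_general}) are each $o\!\left[R_n(\bw^*|\mathcal{M},\bbf)\right]$. First I would dispose of the two terms driven by $\psi(\mathcal{M})$: after dividing by $R_n(\bw^*|\mathcal{M},\bbf)$, the leading term contributes $C\sigma^2\psi(\mathcal{M})/[nR_n(\bw^*|\mathcal{M},\bbf)]$ and the cross term contributes $C\sigma\{\psi(\mathcal{M})/[nR_n(\bw^*|\mathcal{M},\bbf)]\}^{1/2}$, both of which vanish under (\ref{eq:minimum_marisk_rate}).

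Next I would treat $\rho(n,\mathcal{M},\bbf,\hat{\sigma}^2,\sigma^2)$. Writing $A_n=k_{M_n}\mathbb{E}(\hat{\sigma}^2-\sigma^2)^2/(n\sigma^2)$, assumption (\ref{eq:variance_rate}) gives $A_n/R_n(\bw^*|\mathcal{M},\bbf)=o(1)$; since $\rho/R_n(\bw^*|\mathcal{M},\bbf)=A_n/R_n(\bw^*|\mathcal{M},\bbf)+\{A_n/R_n(\bw^*|\mathcal{M},\bbf)\}^{1/2}$, this term is $o(1)$ as well. Collecting the three estimates and recalling that $C$ and $\sigma$ are fixed constants yields $Q_n(\hat{\bw}|\mathcal{M},\bbf)\le[1+o(1)]R_n(\bw^*|\mathcal{M},\bbf)$, which is the first (general) claim.

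For the full-AOP part I would specialize the general claim to $\mathcal{M}=\mathcal{M}_A$ and $\hat{\sigma}^2=\hat{\sigma}_{m_n}^2$, so that it remains only to verify that the two explicit hypotheses (\ref{eq:variance_rate_2}) and (\ref{eq:minimum_marisk_rate_2}) imply (\ref{eq:variance_rate}) and (\ref{eq:minimum_marisk_rate}). For the variance condition, note that here $k_{M_n}=p_n\le n$. Because $m_n=\lfloor\kappa n\rfloor\wedge p_n\le\kappa n$ with $0<\kappa<1$ forces $n-m_n\ge(1-\kappa)n\asymp n$, and because Assumption~\ref{asmp:square_summable} gives $\|\btheta_{-m_n}\|^2\le\|\btheta\|^2=n^{-1}\|\bbf\|^2=O(1)$, the middle term of (\ref{eq:variance_risk}) is $O(1/n)$ and the bound collapses to $\mathbb{E}(\hat{\sigma}_{m_n}^2-\sigma^2)^2\lesssim(1/n)\vee\|\btheta_{-m_n}\|^4$. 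Hence $k_{M_n}\mathbb{E}(\hat{\sigma}^2-\sigma^2)^2\lesssim n[(1/n)\vee\|\btheta_{-m_n}\|^4]$, which is $o[nR_n(\bw^*|\mathcal{M}_A,\bbf)]$ by (\ref{eq:variance_rate_2}), giving (\ref{eq:variance_rate}). For the complexity condition I would evaluate $\psi(\mathcal{M}_A)$ explicitly: with $k_j=j$ the first inner sum is $\tfrac14\sum_{j=1}^{p_n-1}j^{-1}\asymp\log p_n$, while with $S_j=\sum_{l=j+1}^{p_n}\theta_l^2$ the increments satisfy $S_{j-1}-S_j=\theta_j^2$ and $M_n'=p_n'$, so the second inner sum equals exactly $\tfrac14\varphi(\btheta)$. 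Thus $\psi(\mathcal{M}_A)\lesssim[\log p_n+\varphi(\btheta)](\log p_n)^2$, and (\ref{eq:minimum_marisk_rate_2}) yields (\ref{eq:minimum_marisk_rate}); the general claim then delivers exactly (\ref{eq:fullopt}), since AOP relative to $\mathcal{M}_A$ is precisely full AOP.

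The conceptual heavy lifting already resides in Theorem~\ref{theorem:main}, so the remaining work is essentially bookkeeping; nonetheless the two places demanding care are (i) the simplification of (\ref{eq:variance_risk}), where one must invoke both $n-m_n\asymp n$ and the uniform boundedness of $\|\btheta\|^2$ from Assumption~\ref{asmp:square_summable} to discard the middle term, and (ii) the exact identification of the $S_j$-ratio sum in $\psi(\mathcal{M}_A)$ with $\tfrac14\varphi(\btheta)$, together with the $p_n\le n$ slack used to absorb $k_{M_n}$. I expect step (ii) to be the most error-prone, since one must track the index conventions ($k_0=0$, $S_{M_n}=0$, $M_n'$ versus $p_n'$) that tie $\psi(\mathcal{M}_A)$ to the reparameterized quantity $\varphi(\btheta)$.
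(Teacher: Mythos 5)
Your proposal is correct and follows essentially the same route as the paper's proof: divide the finite-sample bound of Theorem~\ref{theorem:main} by $R_n(\bw^*|\mathcal{M},\bbf)$ and kill each remainder term via (\ref{eq:variance_rate})--(\ref{eq:minimum_marisk_rate}), then verify that (\ref{eq:variance_rate_2}) implies (\ref{eq:variance_rate}) by collapsing (\ref{eq:variance_risk}) using $n-m_n\asymp n$ and $\|\btheta_{-m_n}\|^2=O(1)$, and that (\ref{eq:minimum_marisk_rate_2}) implies (\ref{eq:minimum_marisk_rate}) by bounding $\psi(\mathcal{M}_A)\lesssim[\log p_n+\varphi(\btheta)](\log p_n)^2$ with the harmonic sum for the $k_j$ part and the identification $S_{j-1}-S_j=\theta_j^2$ for the $S_j$ part. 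The only cosmetic difference is that you identify the $S_j$-ratio sum as exactly $\tfrac14\varphi(\btheta)$ where the paper settles for a constant multiple, which changes nothing.
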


Theorem~\ref{theorem:aop} establishes the MMA's AOP for the general nested model set $\mathcal{M}$ and weight set $\mathcal{W}_{|\mathcal{M}|}$ with variance estimation. Compared with the restricted-AOP theory in \cite{Hansen2007}, our result does not restrict the model weights to the discrete set $\mathcal{W}_{|\mathcal{M}|}(N)$. As demonstrated in Proposition~\ref{lemma:delta_1}, relaxing the model weights from $\mathcal{W}_{|\mathcal{M}|}(N)$ to $\mathcal{W}_{|\mathcal{M}|}$ improves the optimal MA risk substantially in various situations. Second, the condition~(\ref{eq:minimum_marisk_rate}) in Theorem~\ref{theorem:aop} substantially improves the condition (\ref{eq:condition_wan}) in \cite{Wan2010} by allowing most helpful models in the nested setting. In fact, Theorem~\ref{theorem:aop} permits the use of the largest candidate model set $\mathcal{M}_A$. The conditions (\ref{eq:variance_rate_2})--(\ref{eq:minimum_marisk_rate_2}) are two sufficient conditions of (\ref{eq:variance_rate})--(\ref{eq:minimum_marisk_rate}) when $\hat{\sigma}^2=\hat{\sigma}_{m_n}^2$ and $\mathcal{M}=\mathcal{M}_A$. When $p_n=n$ and $m_n = \lfloor \kappa n \rfloor$, $0<\kappa<1$, the condition (\ref{eq:variance_rate_2}) holds in Examples~1--2, and (\ref{eq:minimum_marisk_rate_2}) is satisfied with $p_n'=n$ when $|\theta_j|$ decays polynomially, as seen below.

\addtocounter{example}{-2}
\begin{example}[continued]\label{example:11}
When the coefficients decay as $\theta_j=j^{-\alpha_1}$, $\alpha_1>1/2$, we have $nR_n(\bw^*|\mathcal{M}_A,\bbf) \asymp n^{1/(2\alpha_1)}$. The condition (\ref{eq:variance_rate_2}) is satisfied since $\|\btheta_{-m_n} \|^4 =o\left[R_n(\bw^*|\mathcal{M}_A,\bbf)\right]$. And the left side of (\ref{eq:minimum_marisk_rate_2}) is upper bounded by $(\log n)^3$. Thus, the condition (\ref{eq:minimum_marisk_rate_2}) also holds.

\end{example}
\begin{example}[continued]

When $\theta_j=\exp(-j^{\alpha_2})$, $\alpha_2>0$, we have $nR_n(\bw^*|\mathcal{M}_A,\bbf) \asymp  (\log n)^{1/\alpha_2}$. The condition (\ref{eq:variance_rate_2}) is still satisfied by noting $\|\btheta_{-m_n} \|^4 =O[n^{2-2\alpha_2}\exp( - 4(\kappa n)^{\alpha_2} )]=o(1/n)$. However, (\ref{eq:minimum_marisk_rate_2}) does not hold due to $\psi(\mathcal{M}_A)/[nR_n(\bw^*|\mathcal{M}_A,\bbf)] \to \infty$.

\end{example}

Based on the above analysis, we observe that with the optimal single model $m_n^*$ being included in $\mathcal{M}_A$ in both examples, the full AOP is achieved for the MMA estimator based on $\mathcal{M}_A$ when the coefficients do not decay too fast. Recalling that Example~1 serves as an important scenario for preferring MA to MS \citep{Peng2021}, our result here shows that the substantial risk reduction of MA over MS can be realized by utilizing $\mathcal{M}_A$. In contrast, the theory of \cite{Hansen2007} fails to justify the full AOP in Example 1, and the result given by \cite{Wan2010} does not even attain the optimal rate in either of the two examples.

\subsection{Construction of candidate model set}\label{sec:reduced}

This subsection proposes two types of nested candidate model sets, on which the MMA estimators achieve the full AOP on broader parameter regions than that based on $\mathcal{M}_A$.

\subsubsection{Candidate model set with grouped regressors}\label{subsec:grouped}

Instead of combining the candidate models with successively increasing sizes, we consider a smaller set $\mathcal{M}_G=\{ \{1,2,\ldots,k_m \}: 1\leq k_1<k_2<\cdots<k_{M_n}\leq p_n\}$, where the size of each candidate model is group-wise added. Let $k_0=0$, $k_{M_n}=p_n$, and $\max_{1\leq j \leq M_n-1}[(k_{j+1}-k_j)/(k_j-k_{j-1})]\leq 1+\zeta_n$, where $\zeta_n \geq 0$.

\begin{theorem}\label{cor:grouped}
  Suppose Assumption~\ref{asmp:square_summable} holds. If $\zeta_n =o(1)$, $k_1\vee \psi(\mathcal{M}_G)=o[nR_n(\bw^*|\mathcal{M}_A,\bbf)]$, and $p_n\mathbb{E}\left( \hat{\sigma}^2-\sigma^2 \right)^2=o[nR_n(\bw^*|\mathcal{M}_A,\bbf)]$, then $\hat{\bbf}_{\hat{\bw}|\mathcal{M}_G}$ achieves the full AOP in terms of (\ref{eq:fullopt}).
\end{theorem}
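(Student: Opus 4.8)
The plan is to deduce the full AOP on $\mathcal{M}_G$ by combining the general risk bound of Theorem~\ref{theorem:main} with two comparisons: one between the optimal MA risk on the grouped set $\mathcal{M}_G$ and the optimal MA risk on the full nested set $\mathcal{M}_A$, and one controlling the complexity quantity $\psi(\mathcal{M}_G)$. Concretely, applying Theorem~\ref{theorem:main} with $\mathcal{M}=\mathcal{M}_G$ gives
\begin{equation*}
   Q_n(\hat{\bw}|\mathcal{M}_G,\bbf)\leq R_n(\bw^*|\mathcal{M}_G,\bbf)+\frac{C\sigma^2}{n}\psi(\mathcal{M}_G)+\frac{C\sigma}{\sqrt{n}}[\psi(\mathcal{M}_G)]^{\frac{1}{2}}[R_n(\bw^*|\mathcal{M}_G,\bbf)]^{\frac{1}{2}}+C\rho,
\end{equation*}
with $\rho=\rho(n,\mathcal{M}_G,\bbf,\hat{\sigma}^2,\sigma^2)$. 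Since $k_{M_n}=p_n$, the hypothesis $p_n\,\mathbb{E}(\hat{\sigma}^2-\sigma^2)^2=o[nR_n(\bw^*|\mathcal{M}_A,\bbf)]$ handles the $\rho$ term directly (each of its two pieces is $o[R_n(\bw^*|\mathcal{M}_A,\bbf)]$, using the elementary fact that if $a_n=o(b_n)$ then $\sqrt{a_nb_n}=o(b_n)$). Thus the whole argument reduces to showing the two relations $R_n(\bw^*|\mathcal{M}_G,\bbf)\leq[1+o(1)]R_n(\bw^*|\mathcal{M}_A,\bbf)$ and $\psi(\mathcal{M}_G)=o[nR_n(\bw^*|\mathcal{M}_A,\bbf)]$; the second is assumed as a hypothesis, so the heart of the proof is the first.

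The first relation — that restricting from all nested models $\mathcal{M}_A$ to the grouped subset $\mathcal{M}_G$ inflates the optimal MA risk only by a $1+o(1)$ factor — is where the grouping condition $\max_j (k_{j+1}-k_j)/(k_j-k_{j-1})\leq 1+\zeta_n$ with $\zeta_n=o(1)$ does its work. I would argue as follows. In the reparameterized coordinates from Section~\ref{sec:reparameterization}, the MA risk has the explicit form $R_n(\bw|\mathcal{M},\bbf)=\sum_j(1-\Lambda_j(\bw))^2\theta_j^2 + (\sigma^2/n)\sum_j\Lambda_j(\bw)^2$, where $\Lambda_j(\bw)$ is the cumulative weight attached to coordinate $j$ and is a non-increasing step function that, for a nested set, can jump only at the model boundaries $k_1<\cdots<k_{M_n}$. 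The optimizer over $\mathcal{W}_{|\mathcal{M}_A|}$ produces some optimal profile $\Lambda_j^{A}$; the plan is to construct a feasible profile on $\mathcal{M}_G$ by \emph{flattening} $\Lambda^{A}$ to be constant on each group $(k_{j},k_{j+1}]$ — for instance taking the value $\Lambda^{A}_{k_{j+1}}$ — and to bound the resulting increase in both the bias term $\sum_j(1-\Lambda_j)^2\theta_j^2$ and the variance term $(\sigma^2/n)\sum_j\Lambda_j^2$. Because consecutive group sizes are comparable up to the factor $1+\zeta_n$, the number of coordinates over which $\Lambda$ is held constant in one group is at most $(1+\zeta_n)$ times that of the neighboring group, so the accumulated variance and bias within a group are dominated by those of an adjacent group in $\mathcal{M}_A$ up to the factor $1+o(1)$; summing telescopically over groups yields $R_n(\bw^*|\mathcal{M}_G,\bbf)\leq(1+o(1))R_n(\bw^*|\mathcal{M}_A,\bbf)$.

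With both relations in hand, the remaining cross term $\frac{C\sigma}{\sqrt n}[\psi(\mathcal{M}_G)]^{1/2}[R_n(\bw^*|\mathcal{M}_G,\bbf)]^{1/2}$ is controlled by the same $\sqrt{a_nb_n}=o(b_n)$ trick, taking $a_n=(\sigma^2/n)\psi(\mathcal{M}_G)=o[R_n(\bw^*|\mathcal{M}_A,\bbf)]$ and $b_n=R_n(\bw^*|\mathcal{M}_G,\bbf)\asymp R_n(\bw^*|\mathcal{M}_A,\bbf)$; the assumption $k_1=o[nR_n(\bw^*|\mathcal{M}_A,\bbf)]$ enters to ensure the smallest model is not too large, guaranteeing the leading $\psi(\mathcal{M}_G)$ term is genuinely lower order and that the flattening construction above does not lose a constant on the first group. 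Collecting terms, every summand beyond $R_n(\bw^*|\mathcal{M}_G,\bbf)$ is $o[R_n(\bw^*|\mathcal{M}_A,\bbf)]$, and using $R_n(\bw^*|\mathcal{M}_G,\bbf)\leq[1+o(1)]R_n(\bw^*|\mathcal{M}_A,\bbf)$ one more time gives $Q_n(\hat{\bw}|\mathcal{M}_G,\bbf)\leq[1+o(1)]R_n(\bw^*|\mathcal{M}_A,\bbf)$, which is exactly (\ref{eq:fullopt}). The main obstacle I anticipate is making the flattening comparison fully rigorous: one must verify that replacing the finely-resolved optimal weights of $\mathcal{M}_A$ by a group-constant profile is both feasible in $\mathcal{W}_{|\mathcal{M}_G|}$ and cheap, and the careful bookkeeping of how the bias and variance redistribute within each group — especially near the boundary coordinates where $\Lambda^A$ may change sharply — is the delicate step that the grouping ratio condition $\zeta_n=o(1)$ is precisely designed to tame.
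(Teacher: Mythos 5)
Your overall architecture is exactly the paper's: apply Theorem~\ref{theorem:main} with $\mathcal{M}=\mathcal{M}_G$, absorb the $\rho$ term via $k_{M_n}=p_n$ and the variance hypothesis, and reduce everything to the oracle comparison $R_n(\bw^*|\mathcal{M}_G,\bbf)\leq[1+o(1)]R_n(\bw^*|\mathcal{M}_A,\bbf)$, proved by replacing the optimal $\mathcal{M}_A$ cumulative-weight profile by one that is constant on each group. That much is right and is what the paper does.

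The gap is in the direction of your flattening and the ensuing claim that \emph{both} the bias and the variance within a group are dominated by those of an adjacent group up to a factor $1+\zeta_n$. If you set the group value to $\Lambda^A_{k_{j+1}}$ (the right endpoint, i.e.\ the \emph{smallest} cumulative weight in the group, since $\gamma^*_j$ is non-increasing), the bias term $\sum_{l}(1-\Lambda_l)^2\theta_l^2$ \emph{increases} on that group, and this increase cannot in general be telescoped against a neighboring group: the group-ratio condition controls the group \emph{sizes} $k_{j+1}-k_j$, not the distribution of the $\theta_l^2$ mass across groups, so an adjacent group may carry arbitrarily little $\theta^2$ mass and the comparison fails. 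The paper's construction flattens in the opposite direction: it aggregates the optimal weights, $\bar w_m=\sum_{j=k_{m-1}+1}^{k_m}w_j^*$, so the group's cumulative weight equals $\gamma^*_{k_{m-1}+1}$, the \emph{largest} value in the group. Then $(1-\bar\gamma_m)^2\leq(1-\gamma^*_j)^2$ throughout the group and the bias never increases; only the variance term grows, and since it carries the \emph{uniform} per-coordinate weight $\sigma^2/n$, the group-size ratio condition applies directly via the index shift $(k_m-k_{m-1})\bar\gamma_m^2\leq(1+\zeta_n)\sum_{j=k_{m-2}+1}^{k_{m-1}}(\gamma^*_j)^2$, with the first group contributing the extra $k_1\sigma^2/n$ that your hypothesis $k_1=o[nR_n(\bw^*|\mathcal{M}_A,\bbf)]$ kills. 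Reversing your flattening choice and restricting the $1+\zeta_n$ loss to the variance term turns your sketch into the paper's proof.
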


Theorem~\ref{cor:grouped} indicates that, rather than combining all nested models, the MMA estimator based on $\mathcal{M}_G$ still achieves the optimal risk $R_n(\bw^*|\mathcal{M}_A,\bbf)$ asymptotically when the sizes of the candidate models are appropriately selected. Here we provide a specific construction of $\mathcal{M}_G$ inspired by the weakly geometrically increasing groups in \cite{cavalier2001penalized}. For two constants $t_1>0$ and $t_2>0$, define $\zeta_n = t_1/(\log n)^{t_2}$. Consider $\mathcal{M}_{G1}$ with $k_1 = \lceil \zeta_{n}^{-1} \rceil$, $k_m  =k_{m-1} +\lfloor k_1(1+\zeta_{n})^{m-1}\rfloor$ for $m=2,\ldots,M_n-1$, and $k_{M_n}=p_n$, where $M_n = \arg\min_{m \in \mathbb{N}}\{ (k_1 + \sum_{j=2}^{m}\lfloor k_1(1+\zeta_{n})^{j-1}\rfloor) \geq p_n \}$. Then the conditions in Theorem~\ref{cor:grouped} are satisfied with milder restrictions on the rate of decay of $|\theta_j|$, as seen in the following two examples. To keep in line with the analysis in Section~\ref{subsec:aop}, we here focus on the case $p_n = n$ and set $\zeta_n = t_1/(\log n)^{t_2}$ with $t_2=1$.
\addtocounter{example}{-2}
\begin{example}[continued]
\label{example1}

When $\theta_j=j^{-\alpha_1}$, $\alpha_1>1/2$, we have $nR_n(\bw^*|\mathcal{M}_A,\bbf) \asymp n^{1/(2\alpha_1)}$ and $\psi(\mathcal{M}_{G1}) \lesssim (\log n)^2(\log\log n)^2$. We see that $k_1/n^{1/(2\alpha_1)} \to 0$, $\psi(\mathcal{M}_{G1})/n^{1/(2\alpha_1)} \to 0$, and the condition $n\mathbb{E}\left( \hat{\sigma}^2-\sigma^2 \right)^2=o[n^{1/(2\alpha_1)}]$ is satisfied for the variance estimators given in Section~\ref{sec:estimator_sigma}. Thus, the MMA estimator based on $\mathcal{M}_{G1}$ attains the same full AOP property as that based on $\mathcal{M}_{A}$.

\end{example}

\begin{example}[continued]
\label{example2}

When $\theta_j=\exp(-j^{\alpha_2})$, $0<\alpha_2< 1/2$, we see $nR_n(\bw^*|\mathcal{M}_A,\bbf) \asymp (\log n)^{1/\alpha_2}$, $k_1/(\log n)^{1/\alpha_2} \to 0$ and $\psi(\mathcal{M}_{G1}) \lesssim (\log n)^2(\log\log n)^2 = o[(\log n)^{1/\alpha_2}]$. Thus the MMA estimator with $\mathcal{M}_{G1}$ achieves the full AOP on a broader parameter region compared to that based on $\mathcal{M}_{A}$.

\end{example}

\begin{remark}
  Another construction of $\mathcal{M}_{G}$ involves equal-sized groups. Consider $\mathcal{M}_{G2}$ with $\zeta_n = 0$, $k_1 = \lceil(\log n)^{t}\rceil $, $k_m = mk_1$ for $m=2,\ldots,M_n-1$, and $k_{M_n}=p_n$, where $0<t<\infty$. We have $\psi(\mathcal{M}_{G2}) \lesssim (\log M_n)^3 \lesssim (\log n - t \log\log n)^3$. Thus the MMA estimator based on $\mathcal{M}_{G2}$ still attains the full AOP in the case of polynomially decaying coefficients. However, it cannot be proved to be full AOP by the current theory when $\theta_j=\exp(-j^{\alpha_2})$, $0<\alpha_2< \infty$.
\end{remark}

\subsubsection{Candidate model set based on MS}\label{subsec:minimum}

This subsection considers a smaller number of nested models with the size centering on $m_n^*$. Since $m_n^*$ is unknown in practice, we estimate it by some MS method and then consider the candidate model set $\hat{\mathcal{M}}_{MS}=\hat{\mathcal{M}}_{MS}(\kappa_l, \kappa_u)=\{ \{1,\ldots,m \} : \hat{l}_n \leq m \leq \hat{u}_n \}$, where $\hat{l}_n=1\vee\left\lfloor \kappa_l^{-1}\hat{m}_n\right\rfloor$, $\hat{u}_n= p_n\wedge \left\lfloor \kappa_u\hat{m}_n\right\rfloor $, $\kappa_l>1$, and $\kappa_u>1$.

To get asymptotic properties of $\hat{\mathcal{M}}_{MS}$, we need another assumption on the transformed coefficients, which is satisfied for both polynomially and exponentially decaying coefficients.
\begin{assumption}\label{asmp:regressor_order2}
  The transformed coefficients satisfy $\lim_{\kappa \to \infty}\left|\theta_{\lfloor \kappa l\rfloor}/\theta_l\right| \to 0$ for any $l \in \mathbb{N}$.
\end{assumption}
Define $c_1$ and $c_2$ two constants with $0<c_1<1<c_2$. Let $F_n$ denote the event $\left\lfloor c_1m_n^*\right\rfloor\leq\hat{m}_n\leq \left\lfloor c_2m_n^*\right\rfloor$ and $\bar{F}_n$ be its complement. Consider a candidate model set $\hat{\mathcal{M}}_{MS1}=\hat{\mathcal{M}}_{MS}(\kappa_l, \kappa_u)$ with $\kappa_l \to \infty$ and $\kappa_u \to \infty$.

\begin{theorem}\label{cor:minimum_1}
Suppose that Assumptions~\ref{asmp:square_summable}--\ref{asmp:regressor_order2} hold. If the condition (\ref{eq:variance_rate}) is satisfied for $\mathcal{M}_A$, $\mathbb{E}\psi(\hat{\mathcal{M}}_{MS1}) = o(m_n^*)$, and the event $F_n$ satisfies $\mathbb{P}(\bar{F}_n)= o\left(m_n^*/n\right)$, then we have $\mathbb{E}Q_n(\hat{\bw}|\hat{\mathcal{M}}_{MS1},\bbf)\leq[1+o(1)] R_n\left(\bw^*|\mathcal{M}_A,\bbf\right)$.
\end{theorem}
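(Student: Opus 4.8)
The plan is to control the expected risk of the full data-driven procedure by splitting on the ``good'' selection event $F_n$ and its complement, then on $F_n$ reducing to the fixed-model inequality of Theorem~\ref{theorem:main} while showing that the random set $\hat{\mathcal{M}}_{MS1}$ is simultaneously \emph{rich enough} (its best achievable MA risk nearly matches $R_n(\bw^*|\mathcal{M}_A,\bbf)$) and \emph{cheap enough} (its penalty $\psi(\hat{\mathcal{M}}_{MS1})$ is small in expectation). Concretely I would write $\mathbb{E}Q_n(\hat{\bw}|\hat{\mathcal{M}}_{MS1},\bbf)=\mathbb{E}[Q_n\,\mathbf{1}_{F_n}]+\mathbb{E}[Q_n\,\mathbf{1}_{\bar F_n}]$ and treat the two pieces separately, the first being the main term. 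Throughout I use $R_n(\bw^*|\mathcal{M}_A,\bbf)\gtrsim \sigma^2 m_n^*/n$, which holds since each of the $m_n^*$ leading directions contributes at least order $\sigma^2/n$ to the optimal risk.

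For the richness step I pass to the reparameterization of Section~\ref{sec:reparameterization}: a nested MA estimator along $\{\bphi_1,\dots,\bphi_{p_n}\}$ corresponds to a non-increasing profile $a_1\ge a_2\ge\cdots\ge 0$ with $a_j\in[0,1]$, of MA risk $\sum_j(1-a_j)^2\theta_j^2+(\sigma^2/n)\sum_j a_j^2$, where $a_j=1$ for every index below the smallest retained model size and $a_j=0$ for every index above the largest. Let $a_j^\star=n\theta_j^2/(n\theta_j^2+\sigma^2)$ be the unconstrained minimizer, which is non-increasing by Assumption~\ref{asmp:regressor_order}, so that $\sum_j\theta_j^2\sigma^2/(n\theta_j^2+\sigma^2)\le R_n(\bw^*|\mathcal{M}_A,\bbf)$. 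On $F_n$, the hypotheses $\kappa_l\to\infty$, $\kappa_u\to\infty$ together with $\lfloor c_1 m_n^*\rfloor\le\hat m_n\le\lfloor c_2 m_n^*\rfloor$ force $\hat l_n=o(m_n^*)$ and $\hat u_n\gtrsim\kappa_u m_n^*$ (or $\hat u_n=p_n$). Hence the profile equal to $1$ on $j\le\hat l_n$, to $a_j^\star$ on $\hat l_n<j\le\hat u_n$, and to $0$ on $j>\hat u_n$ is feasible for $\hat{\mathcal{M}}_{MS1}$, and its risk exceeds $R_n(\bw^*|\mathcal{M}_A,\bbf)$ by at most $\sigma^2\hat l_n/n+(n/\sigma^2)\sum_{j>\hat u_n}\theta_j^4$. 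The first term is $o(\sigma^2 m_n^*/n)=o(R_n(\bw^*|\mathcal{M}_A,\bbf))$ because $\hat l_n=o(m_n^*)$; the second is $o(R_n(\bw^*|\mathcal{M}_A,\bbf))$ by Assumption~\ref{asmp:regressor_order2} together with $\kappa_u\to\infty$, the decay of $\theta_j$ beyond $\kappa_u m_n^*$ supplying the extra factor that kills the tail, exactly as computed in the running polynomial and exponential examples. Thus $R_n(\bw^*|\hat{\mathcal{M}}_{MS1},\bbf)\le[1+o(1)]R_n(\bw^*|\mathcal{M}_A,\bbf)$ on $F_n$.

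For the main bound I would apply Theorem~\ref{theorem:main} to $\hat{\mathcal{M}}_{MS1}$ and dispatch the three error terms. The cross term $\tfrac{C\sigma}{\sqrt n}\psi^{1/2}(R_n^*)^{1/2}$ is not handled by the naive $\sqrt{ab}\le(a+b)/2$ (which would leave a constant fraction of $R_n^*$); instead I would use Young's inequality with a slowly diverging weight, reducing it to the other two terms and to $\mathbb{E}\psi(\hat{\mathcal{M}}_{MS1})=o(m_n^*)$ via Cauchy--Schwarz. The term $\rho$ is $o(R_n(\bw^*|\mathcal{M}_A,\bbf))$ because the largest size in $\hat{\mathcal{M}}_{MS1}$ is at most $p_n$, so condition~(\ref{eq:variance_rate}) imposed for $\mathcal{M}_A$ dominates it. The penalty term obeys $\mathbb{E}[\sigma^2\psi(\hat{\mathcal{M}}_{MS1})/n]=o(\sigma^2 m_n^*/n)=o(R_n(\bw^*|\mathcal{M}_A,\bbf))$ by $\mathbb{E}\psi(\hat{\mathcal{M}}_{MS1})=o(m_n^*)$. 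On $\bar F_n$, a crude bound $L_n\lesssim 1+n^{-1}\|\beps\|^2$ (using $\|\bP_{\hat\bw|\mathcal{M}}\|\le 1$ and Assumption~\ref{asmp:square_summable}), whose expectation is $O(1)$ by sub-Gaussianity of $\epsilon_i$, gives $\mathbb{E}[Q_n\,\mathbf{1}_{\bar F_n}]\lesssim\mathbb{P}(\bar F_n)=o(m_n^*/n)=o(R_n(\bw^*|\mathcal{M}_A,\bbf))$. Summing the pieces yields the claim.

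The main obstacle is reconciling the data-dependence of $\hat{\mathcal{M}}_{MS1}$ with Theorem~\ref{theorem:main}, which is stated for a \emph{fixed} nested set: since $\hat{\mathcal{M}}_{MS1}$ is a function of the same noise that drives the MMA fit, one cannot naively insert the random set into the fixed-model inequality. I would resolve this by conditioning on the realized value of $\hat{\mathcal{M}}_{MS1}$ (equivalently on $\hat m_n$, which takes finitely many values), applying the non-asymptotic bound to each realization, and only then averaging---using $F_n$ so that the realized $R_n(\bw^*|\hat{\mathcal{M}}_{MS1},\bbf)$ meets the deterministic target $R_n(\bw^*|\mathcal{M}_A,\bbf)$, using $\mathbb{E}\psi(\hat{\mathcal{M}}_{MS1})=o(m_n^*)$ to absorb the random penalty in expectation, and using $\mathbb{P}(\bar F_n)=o(m_n^*/n)$ to discard the selection-failure event. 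Checking that this conditioning is legitimate (or, alternatively, that $\hat m_n$ may be taken from an independent split without affecting the conclusion) is the step that requires the most care; the tail estimate $(n/\sigma^2)\sum_{j>\hat u_n}\theta_j^4=o(R_n(\bw^*|\mathcal{M}_A,\bbf))$ under Assumption~\ref{asmp:regressor_order2} alone is the other delicate computation.
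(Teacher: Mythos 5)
Your proposal follows essentially the same route as the paper: decompose on $F_n$ versus $\bar F_n$, show on $F_n$ that the reduced set $\hat{\mathcal{M}}_{MS1}$ is ``rich enough'' by exhibiting the feasible cumulative-weight profile ($1$ below $\hat l_n$, oracle values in between, $0$ above $\hat u_n$) and bounding the excess by $\sigma^2\hat l_n/n$ plus a tail term killed by Assumption~\ref{asmp:regressor_order2} with $\kappa_l,\kappa_u\to\infty$, then absorb the penalty via $\mathbb{E}\psi(\hat{\mathcal{M}}_{MS1})=o(m_n^*)$ and the variance term via (\ref{eq:variance_rate}). Your richness computation is exactly the paper's bound on $\Delta_{n1}=R_n(\bw^*|\hat{\mathcal{M}}_{MS1},\bbf)-R_n(\bw^*|\mathcal{M}_A,\bbf)$, and the Young's-inequality treatment of the cross term is unnecessary but harmless: since $\psi=o(nR_n^*)$ the cross term is already $o(R_n^*)$ after a single Cauchy--Schwarz in expectation.

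Two points deserve correction. First, your bad-event step as written does not follow: from $\mathbb{E}[1+n^{-1}\|\beps\|^2]=O(1)$ you cannot conclude $\mathbb{E}[Q_n\mathbf{1}_{\bar F_n}]\lesssim\mathbb{P}(\bar F_n)$; Cauchy--Schwarz would only give $\mathbb{P}(\bar F_n)^{1/2}$, which under the stated hypothesis $\mathbb{P}(\bar F_n)=o(m_n^*/n)$ is \emph{not} $o(m_n^*/n)$. The paper avoids this by truncating not the random loss but the deterministic plug-in quantity $\Delta_{n1}$, which is a function of $\hat m_n$ alone and is bounded by $\max_m R_n(m|\mathcal{M}_A,\bbf)<C$ under Assumption~\ref{asmp:square_summable}, so that $\mathbb{E}(\Delta_{n1}\mathbf{1}_{\bar F_n})\le C\,\mathbb{P}(\bar F_n)$ immediately. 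Second, the data-dependence issue you flag as the main obstacle is resolved in the paper not by conditioning on $\hat m_n$ (which, as you suspect, would distort the noise distribution) but by \emph{defining} $Q_n(\hat{\bw}|\hat{\mathcal{M}}_{MS1},\bbf)$ and $R_n(\bw^*|\hat{\mathcal{M}}_{MS1},\bbf)$ as the deterministic risk functionals evaluated at the random argument, with the theorem's conclusion stated for the outer expectation $\mathbb{E}Q_n$ of that plug-in; under this convention the fixed-set bound (\ref{eq:risk_bound_general}) applies realization by realization and one simply averages. With these two repairs your argument is the paper's proof.
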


Theorem~\ref{cor:minimum_1} states that MMA achieves the full AOP with the reduced candidate model set $\hat{\mathcal{M}}_{MS1}$. To verify the regularity conditions in this theorem, we set $\kappa_l=\kappa_u=\log n$ and select $\hat{m}_n$ by Mallows’ $C_p$ MS criterion \citep{Mallows1973}. Consider the following two examples when $\sigma^2$ is known.
\addtocounter{example}{-2}
\begin{example}[continued]

When $\theta_j=j^{-\alpha_1}, \alpha_1>1/2$, we have $m_n^* \asymp n^{1/(2\alpha_1)}$, $\mathbb{E}\psi(\hat{\mathcal{M}}_{MS1})\lesssim (\log\log n + \log m_n^*)^3 = o\left(m_n^*\right)$, and $\mathbb{P}\left(\bar{F}_n\right) \lesssim \exp{[-C(m_n^*)^{1/2}]}=o\left(m_n^*/n\right)$ for any fixed $C$, which meet the conditions in Theorem~\ref{cor:minimum_1}. Thus, the MMA estimator with $\hat{\mathcal{M}}_{MS1}$ retains the full AOP as that based on $\mathcal{M}_A$ when the transformed coefficients decay slowly.

\end{example}

\begin{example}[continued]

When $\theta_j=\exp(-j^{\alpha_2}),0<\alpha_2< 1/2$, we have $m_n^* \asymp (\log n)^{1/\alpha_2}$, $\mathbb{E}\psi(\hat{\mathcal{M}}_{MS1}) \precsim (\log n)^{\alpha_2+1} (\log\log n)^2 = o\left(m_n^*\right)$, and $\mathbb{P}\left(\bar{F}_n\right) = o\left( 1/n \right) = o(m_n^*/n)
$
for any constant $C$. Thus, $\hat{\mathcal{M}}_{MS1}$ also expands the region for the full AOP of $\mathcal{M}_A$ when the coefficients decay fast.

\end{example}

\begin{remark}
  Let $\hat{\mathcal{M}}_{MS2}=\hat{\mathcal{M}}_{MS}$ with $\kappa_l\vee \kappa_u$ being upper bounded by some positive constant $C$. In Section~\ref{sec:MS_bounded} of the Appendix, we prove that the MMA estimator based on $\hat{\mathcal{M}}_{MS2}$ cannot achieve the full potential of MA under Condition~\ref{condition1}. But it can reach the full AOP under Condition~\ref{condition2}.
\end{remark}

Table~\ref{tab:method} summarizes the MMA strategies discussed in Sections~\ref{sec:review}--\ref{sec:main_results}. We emphasize that the parameter regions given in the last two columns are the known sufficient conditions for the full AOP of MMA. Whether these methods achieve the full AOP in
larger regions remains open. Overall, the MMA estimators based on $\mathcal{M}_{G1}$ exhibit widest parameter regions for achieving the full AOP when the variance $\sigma^2$ is unknown. The roles of $\mathcal{M}_{G1}$ in the non-nested setting and minimax theory will be further explored in Sections~\ref{sec:non-nested}--\ref{sec:minimax}, respectively.

\begin{table}[!htbp]
  \centering
  \caption{MA methods with different weight set or candidate model set restrictions. The last two columns summarize the ranges of $\alpha_1$ and $\alpha_2$ on which MMA is shown to achieve the full AOP in two representative examples respectively. }
  \resizebox{\columnwidth}{!}{
    \begin{tabular}{clllll}
    \hline
          & Method & Candidate model set  & Weight set & $\theta_j=j^{-\alpha_1}$ & $\theta_j=\exp\left(-j^{\alpha_2}\right)$ \\
          \hline
    \cite{Hansen2007} & WR & $\mathcal{M}_A$     & $\mathcal{W}_{|\mathcal{M}_A|}(N)$ with fixed $N\geq 1$     & $\emptyset$     & $(0, + \infty)$ \\
    \cite{Wan2010}      & MR & $\mathcal{M}_S$ with (\ref{eq:condition_wan})     & $\mathcal{W}_{|\mathcal{M}_S|}$     & $\emptyset$      & $\emptyset$  \\
    \hline
     & M-ALL & $\mathcal{M}_A$     & $\mathcal{W}_{|\mathcal{M}_A|}$     & $(1/2,+\infty)$     & $\emptyset$ \\
          & M-G1 & $\mathcal{M}_{G1}$     & $\mathcal{W}_{|\mathcal{M}_{G1}|}$     & $(1/2,+\infty)$     & $(0,1/2)$ \\
      This paper    & M-G2 & $\mathcal{M}_{G2}$     & $\mathcal{W}_{|\mathcal{M}_{G2}|}$     & $(1/2,+\infty)$    & $\emptyset$ \\
          & M-MS1 & $\hat{\mathcal{M}}_{MS1}$     & $\mathcal{W}_{|\hat{\mathcal{M}}_{MS1}|}$     & $(1/2,+\infty)$     & $(0,1/2)$ \\
          & M-MS2 & $\hat{\mathcal{M}}_{MS2}$     & $\mathcal{W}_{|\hat{\mathcal{M}}_{MS2}|}$     & $\emptyset$      & $(0, + \infty)$ \\
          \hline
    \end{tabular}%
    }
  \label{tab:method}%
\end{table}%

\section{Non-nested candidate models}\label{sec:non-nested}

We now turn to the non-nested setup. Suppose $\{\bphi_1,\ldots,\bphi_{p_n} \}$ is a given orthonormal basis of the column space of $\bX$, which can be the eigenvectors of $\bP_{j|\mathcal{M}_A}-\bP_{(j-1)|\mathcal{M}_A},j=1,\ldots,p_n$ as introduced in Section~\ref{sec:reparameterization} based on the given arbitrary order of the regressors, or generated by other orthogonalization algorithms. For consistency in notation, define $\theta_j\triangleq\bphi_j^{\top}\bbf/\sqrt{n},j=1,\ldots,p_n$. Let $\hat{\bbf}_{\bw | \mathcal{M}_{AS}}$ and $\hat{\bbf}_{m | \mathcal{M}_{AS}}$ denote the MA and MS estimators based on all subsets of $\{\bphi_1,\ldots,\bphi_{p_n}\}$. Define $R_n(\bw^*|\mathcal{M}_{AS},\bbf)=\min_{\bw \in \mathcal{W}_{2^{p_n}}}R_n(\bw|\mathcal{M}_{AS},\bbf)$ and $R_n\left(m^*|\mathcal{M}_{AS},\bbf\right)=\min_{m \in \{1,\ldots,2^{p_n} \}}R_n\left(m|\mathcal{M}_{AS},\bbf\right)$ as the ideal MA and MS risks, respectively.

\subsection{Improvability of the ideal MA risk over the ideal MS risk}\label{sec:improvability}

For any $\btheta = (\theta_1,\ldots,\theta_{p_n})^{\top}$, let $|\theta|_{(1)},\ldots,|\theta|_{(p_n)}$ denote a decreasing rearrangement of $|\theta_1|,\ldots,|\theta_{p_n}|$. The following conditions extend Conditions~\ref{condition1}--\ref{condition2} to distinguish two different rates of decay of the ordered coefficients.
\begin{condition}\label{condition3}
(Slowly decaying ordered coefficients) There exist constants $\kappa>1$ and $0<\delta\leq\nu<1$ with $\kappa\nu^2<1$ such that $\delta\leq|\theta|_{(\lfloor \kappa l \rfloor)}/|\theta|_{(l)}\leq\nu$ when $l$ is large enough.
\end{condition}
\begin{condition}\label{condition4}
(Fast decaying ordered coefficients) For every constant $\kappa>1$, the ordered coefficients satisfy $\lim\limits_{l \to \infty}|\theta|_{(\lfloor \kappa l \rfloor)}/|\theta|_{(l)}= 0$.
\end{condition}

While we already know $R_n\left(\bw^*|\mathcal{M}_{AS},\bbf\right) \leq R_n\left(m^*|\mathcal{M}_{AS},\bbf\right)$, the following theorem examines the scenarios under which $R_n\left(\bw^*|\mathcal{M}_{AS},\bbf\right)$ can significantly reduce $R_n\left(m^*|\mathcal{M}_{AS},\bbf\right)$.

\begin{theorem}\label{theo:improvability}
Suppose Assumption~\ref{asmp:square_summable} holds. Under Condition~\ref{condition3}, the risk reduction satisfies $R_n\left(m^*|\mathcal{M}_{AS},\bbf\right) - R_n\left(\bw^*|\mathcal{M}_{AS},\bbf\right) \asymp R_n\left(m^*|\mathcal{M}_{AS},\bbf\right)$. Under Condition~\ref{condition4}, the risk reduction satisfies $R_n(m^*|\mathcal{M}_{AS},\bbf) - R_n(\bw^*|\mathcal{M}_{AS},\bbf) = o[R_n(m^*|\mathcal{M}_{AS},\bbf)]$.
\end{theorem}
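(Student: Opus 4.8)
The plan is to exploit the orthonormal basis $\{\bphi_1,\dots,\bphi_{p_n}\}$ to diagonalize both risks, turning the all-subsets problem into a separable, coordinatewise one. Writing $\tau=\sigma^2/n$ and $g_j=\theta_j^2$, note that every candidate projection is $\bP_{\mathcal{I}}=\sum_{j\in\mathcal{I}}\bphi_j\bphi_j^{\top}$, so a single model has risk $R_n(\mathcal{I}\,|\,\mathcal{M}_{AS},\bbf)=\sum_{j\notin\mathcal{I}}g_j+\tau|\mathcal{I}|$; minimizing over all subsets retains coordinate $j$ iff $g_j\ge\tau$, giving $R_n(m^*|\mathcal{M}_{AS},\bbf)=\sum_j\min(g_j,\tau)$. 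For averaging, $\bP_{\bw|\mathcal{M}_{AS}}=\sum_j a_j\bphi_j\bphi_j^{\top}$ with $a_j=\sum_{\mathcal{I}\ni j}w_{\mathcal{I}}\in[0,1]$, and I would first verify that the map $\bw\mapsto(a_1,\dots,a_{p_n})$ is onto $[0,1]^{p_n}$, which follows from the product construction $w_{\mathcal{I}}=\prod_{j\in\mathcal{I}}a_j\prod_{j\notin\mathcal{I}}(1-a_j)$. Since the MA risk depends on $\bw$ only through $(a_j)$ and equals $\sum_j[(1-a_j)^2 g_j+\tau a_j^2]$, it separates, and the coordinatewise minimizer $a_j^*=g_j/(g_j+\tau)$ yields $R_n(\bw^*|\mathcal{M}_{AS},\bbf)=\sum_j g_j\tau/(g_j+\tau)$.

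Next I would reduce to the ordered coefficients. The per-coordinate reduction is $\min(g_j,\tau)-g_j\tau/(g_j+\tau)=\min(g_j,\tau)^2/(g_j+\tau)$, so the total reduction is $\Delta_n:=R_n(m^*|\mathcal{M}_{AS},\bbf)-R_n(\bw^*|\mathcal{M}_{AS},\bbf)=\sum_j\min(g_j,\tau)\rho_j$ with $\rho_j=\min(g_j,\tau)/(g_j+\tau)\in[0,1/2]$; in particular $\Delta_n\le\tfrac12 R_n(m^*|\mathcal{M}_{AS},\bbf)$ unconditionally. Because both quantities are symmetric in the $g_j$, I would rewrite them through the ordered values $g_{(l)}=|\theta|_{(l)}^2$ and the threshold count $k=k_n:=|\{l:g_{(l)}\ge\tau\}|$, so that $R_n(m^*|\mathcal{M}_{AS},\bbf)=k\tau+\sum_{l>k}g_{(l)}$ and $\Delta_n=\sum_{l\le k}\tau^2/(g_{(l)}+\tau)+\sum_{l>k}g_{(l)}^2/(g_{(l)}+\tau)$. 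Since $\tau\to0$ and Assumption~\ref{asmp:square_summable} keeps $g_{(1)}$ bounded, $k\to\infty$, which I would record so that the large-$l$ clauses of Conditions~\ref{condition3}--\ref{condition4} apply near the threshold.

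For the slow case (Condition~\ref{condition3}) I would show that every $l\in(\lfloor k/\kappa\rfloor,k]$ satisfies $\tau\le g_{(l)}<\tau/\delta^2$: the lower bound is the definition of $k$, and the upper bound follows by applying Condition~\ref{condition3} at index $l$, since $\lfloor\kappa l\rfloor>k$ forces $g_{(\lfloor\kappa l\rfloor)}<\tau$ and hence $\delta^2 g_{(l)}\le g_{(\lfloor\kappa l\rfloor)}<\tau$. There are $\asymp k$ such indices, each contributing $\tau^2/(g_{(l)}+\tau)\gtrsim\tau$, so $\Delta_n\gtrsim k\tau$. A dyadic tail decomposition along the geometric subsequence, using the iterated bound $g_{(\lfloor\kappa l\rfloor)}\le\nu^2 g_{(l)}$ together with the summability $\sum_i(\kappa\nu^2)^i<\infty$ guaranteed by $\kappa\nu^2<1$, gives $\sum_{l>k}g_{(l)}\lesssim k\tau$ and hence $R_n(m^*|\mathcal{M}_{AS},\bbf)\asymp k\tau$. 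Combined with $\Delta_n\le\tfrac12 R_n(m^*|\mathcal{M}_{AS},\bbf)$, this yields $\Delta_n\asymp R_n(m^*|\mathcal{M}_{AS},\bbf)$, the first claim.

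For the fast case (Condition~\ref{condition4}) the same mechanism runs in reverse through a band-counting estimate: for every fixed $c>1$, the number of $l$ with $g_{(l)}\in[\tau/c,c\tau)$ is $o(k)$. Indeed such indices form a contiguous block $[a+1,b]$ with $g_{(b)}/g_{(a+1)}\ge 1/c$, whereas Condition~\ref{condition4} forces $g_{(\lfloor\kappa l\rfloor)}/g_{(l)}\to0$ for every $\kappa>1$; taking $\kappa=1+\varepsilon$ shows $b-a=O(\varepsilon k)$ for large $n$, hence $o(k)$. Using this I would split $\Delta_n$ at the levels $M\tau$ and $\tau/M$ for a large constant $M$: the near-threshold coordinates ($g_{(l)}\in[\tau/M,M\tau)$) number $o(k)$ and contribute $O(\tau)$ each, totalling $o(k\tau)$; the coordinates with $g_{(l)}\ge M\tau$ contribute at most $\tau/M$ each, totalling $\le k\tau/M$; and those with $g_{(l)}<\tau/M$ contribute at most $g_{(l)}/M$ each, totalling $\le R_n(m^*|\mathcal{M}_{AS},\bbf)/M$. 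Since $k\tau\le R_n(m^*|\mathcal{M}_{AS},\bbf)$, letting $M\to\infty$ after $n\to\infty$ gives $\Delta_n=o(R_n(m^*|\mathcal{M}_{AS},\bbf))$. The main obstacle is precisely this band-counting step: converting the ratio conditions along geometric subsequences into a sharp count of ordered coefficients within a constant factor of the shrinking threshold $\tau$, and making the eventually/large-$l$ quantifiers uniform as $n$, and thus $k$, grows.
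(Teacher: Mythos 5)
Your proposal is correct and reaches the same coordinatewise formulas the paper relies on, but by a genuinely different route. The paper proves the theorem by introducing the nested set $\mathcal{M}_N$ ordered by $|\theta|_{(j)}$, showing $R_n(m^*|\mathcal{M}_{AS},\bbf)=R_n(m^*|\mathcal{M}_N,\bbf)$ exactly and $R_n(\bw^*|\mathcal{M}_{AS},\bbf)\sim R_n(\bw^*|\mathcal{M}_N,\bbf)$ via a sandwich (the lower bound $\sum_j\theta_j^2\sigma^2/(n\theta_j^2+\sigma^2)$ and an upper bound from $\mathcal{M}_N$ differing by $O(n^{-2})$), and then delegating the slow/fast dichotomy entirely to Theorem~2 of Peng (2021) (Lemma~1 in the appendix). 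You instead obtain the ideal MA risk as an \emph{exact} identity via the surjectivity of $\bw\mapsto(a_j)$ onto $[0,1]^{p_n}$ through the product weights $w_{\mathcal{I}}=\prod_{j\in\mathcal{I}}a_j\prod_{j\notin\mathcal{I}}(1-a_j)$ --- a cleaner statement than the paper's asymptotic equivalence, and valid because $\mathcal{M}_{AS}$ contains all $2^{p_n}$ subsets including the empty one --- and you then prove the dichotomy from scratch with the threshold count $k$ and a band-counting argument, rather than citing the external result. What your route buys is self-containedness and the exact per-coordinate reduction $\min(g_j,\tau)^2/(g_j+\tau)$ with the unconditional bound $\Delta_n\le\tfrac12 R_n(m^*|\mathcal{M}_{AS},\bbf)$; what the paper's route buys is brevity by reusing the already-established nested-model comparison.

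Two places in your argument need tightening, both minor. First, in the slow case the indices $l$ for which $\lfloor\kappa l\rfloor>k$ is guaranteed are those with $l\ge\lceil(k+1)/\kappa\rceil$ (for $l$ slightly below this you may get $\lfloor\kappa l\rfloor=k$, where $g_{(k)}\ge\tau$ gives no contradiction); this only trims $O(1)$ indices and does not affect the order $k(1-1/\kappa)$. Likewise the ratio across the band $[\tau/c,c\tau)$ is bounded below by $1/c^2$, not $1/c$. Second, the band-counting step you flag as the main obstacle does require a case split: if the left endpoint $a$ of the band stays bounded you cannot invoke the large-$l$ clause of Condition~4 at $l=a+1$, but then applying it at $l\approx b/2$ (with $\kappa=2$) forces $b$ to be bounded as well, so the band has $O(1)$ length; if $a\to\infty$ your argument gives $b-a\le\varepsilon(a+1)+1\le\varepsilon(k+1)+1$ eventually for every $\varepsilon>0$, hence $o(k)$. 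One should also dispose of the degenerate fast-decay case where $k$ stays bounded (finitely many nonzero $\theta_j$), where $\Delta_n\le k\tau\cdot\max_{l\le k}\tau/(g_{(l)}+\tau)=o(k\tau)$ directly. With these patches your proof is complete.
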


Theorem~\ref{theo:improvability} states that the potential advantage of MA over MS is substantial when the ordered coefficients decay slowly. However, in the case of fast decaying $|\theta|_{(j)}$, there is no asymptotic improvement of MA over MS. Therefore, to justify the superiority of MA, it is of great significance to investigate whether $R_n\left(\bw^*|\mathcal{M}_{AS},\bbf\right)$ can be realized under Condition~\ref{condition3}.

\subsection{Achievability of the ideal AOP}\label{sec:achievability}

For a vector $\be$, define $\mathfrak{S}_{\be}$ as the set of all vectors whose elements are permutations of the elements in $\be$.
\begin{assumption}\label{asmp:partial_order}
  (Weakly ordered coefficients) Suppose there exists a list of indices $0=d_0<d_1<\cdots < d_{D_n}=p_n$ such that $\max_{1\leq l \leq D_n-1}[(d_{l+1}-d_l)/(d_l-d_{l-1})]\leq 1+z_n$ with $z_n \to 0$, and $(|\theta_{d_{l-1}+1}|,\ldots,|\theta_{d_{l}}|)^{\top} \in \mathfrak{S}_{(|\theta|_{(d_{l-1}+1)},\ldots,|\theta|_{(d_{l})})^{\top}}$ for $1\leq l \leq D_n$.
\end{assumption}

Obviously, the completely ordered $\btheta$ in Assumption~\ref{asmp:regressor_order} satisfies Assumption~\ref{asmp:partial_order} since $(|\theta_{d_{j-1}+1}|,\ldots,|\theta_{d_{l}}|)^{\top} = (|\theta|_{(d_{l-1}+1)},\ldots,|\theta|_{(d_{l})})^{\top}\in \mathfrak{S}_{(|\theta|_{(d_{l-1}+1)},\ldots,|\theta|_{(d_{l})})^{\top}}$  for $1\leq l \leq D_n $. Differently, Assumption~\ref{asmp:partial_order} allows the coefficients with index between $d_{l-1}+1$ and $d_{l}$ to be totally unordered.
\begin{theorem}\label{theo:suff}

Suppose Assumption~\ref{asmp:square_summable} and Assumption~\ref{asmp:partial_order} hold. If $d_1 = o\left[nR_n(\bw^*|\mathcal{M}_{AS},\bbf)\right]$, then we have $R_n(\bw^*|\mathcal{M}_{A},\bbf) =\left[1+o(1) \right] R_n\left(\bw^*|\mathcal{M}_{AS},\bbf\right)$. Moreover, the nested MMA estimator $\hat{\bbf}_{\hat{\bw} | \mathcal{M}_{A}}$ achieves the ideal AOP (\ref{eq:idealopt}) if $d_1 \vee \psi(\mathcal{M}_A) = o\left[nR_n(\bw^*|\mathcal{M}_{AS},\bbf)\right]$, $\hat{\bbf}_{\hat{\bw} | \mathcal{M}_{G1}}$ achieves  (\ref{eq:idealopt}) provided $d_1 \vee k_1 \vee \psi(\mathcal{M}_{G1})=o\left[nR_n(\bw^*|\mathcal{M}_{AS},\bbf)\right]$.

\end{theorem}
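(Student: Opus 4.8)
The plan is to pass to the reparameterized model of Section~\ref{sec:reparameterization}, where the candidate models are built from the orthonormal system $\{\bphi_1,\ldots,\bphi_{p_n}\}$, and to reduce every risk in the statement to an explicit coordinatewise shrinkage functional. Writing a projection as $\bP_{\bw|\mathcal{M}}=\sum_j c_j\bphi_j\bphi_j^{\top}$, the identity (\ref{eq:MA_risk}) gives $R_n(\bw|\mathcal{M},\bbf)=\sum_{j=1}^{p_n}\big[(1-c_j)^2\theta_j^2+(\sigma^2/n)c_j^2\big]$. For $\mathcal{M}_{AS}$ the inclusion vector $c=(c_1,\ldots,c_{p_n})$ ranges over all of $[0,1]^{p_n}$ (the convex hull of the $0/1$ indicator vectors of subsets is the whole cube, realized e.g. by independent inclusion of coordinates), so coordinatewise minimization yields the ideal shrinkage risk $R_n(\bw^*|\mathcal{M}_{AS},\bbf)=\sum_{j=1}^{p_n}h(\theta_j^2)$ with $h(x)=\frac{x\,\sigma^2/n}{x+\sigma^2/n}$, whereas for $\mathcal{M}_A$ the feasible $c$ are exactly the nonincreasing sequences with $c_1=1$.

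First I would prove $R_n(\bw^*|\mathcal{M}_A,\bbf)=[1+o(1)]R_n(\bw^*|\mathcal{M}_{AS},\bbf)$. The lower bound is immediate from $\mathcal{M}_A\subseteq\mathcal{M}_{AS}$. For the upper bound I would exhibit a feasible nested $c$ that is block-constant on the blocks $B_l=\{d_{l-1}+1,\ldots,d_l\}$ of Assumption~\ref{asmp:partial_order}: set $c_j=1$ on $B_1$ and $c_j=\bar a_l/(\bar a_l+\sigma^2/n)$ on $B_l$ for $l\geq2$, with $\bar a_l=n_l^{-1}\sum_{j\in B_l}\theta_j^2$ and $n_l=d_l-d_{l-1}$. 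Since $\sum_{j\in B_l}\theta_j^2$ is permutation invariant, Assumption~\ref{asmp:partial_order} lets me compute the block energies from the sorted squares $a_i=|\theta|_{(i)}^2$; hence $\bar a_l$ is nonincreasing in $l$ and the constructed $c$ is a valid nested profile. The cost on $B_1$ is at most $(\sigma^2/n)d_1=o[R_n(\bw^*|\mathcal{M}_{AS},\bbf)]$ by hypothesis, and for $l\geq2$ the block-constant risk equals $n_lh(\bar a_l)$.

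The heart of the argument is then a one-block shift: because $a_i$ is nonincreasing, $\bar a_l\leq a_{d_{l-1}+1}$ and $h(a_{d_{l-1}+1})\leq h(a_i)$ for every $i\in B_{l-1}$, so $n_lh(\bar a_l)\leq (n_l/n_{l-1})\sum_{i\in B_{l-1}}h(a_i)\leq(1+z_n)\sum_{i\in B_{l-1}}h(a_i)$, using $n_l/n_{l-1}\leq 1+z_n$ from Assumption~\ref{asmp:partial_order}. Summing over $l\geq2$ telescopes to $(1+z_n)R_n(\bw^*|\mathcal{M}_{AS},\bbf)$, and adding the negligible $B_1$ term yields the first assertion. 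The two AOP statements follow by chaining this with the nested-model results: for $\hat\bbf_{\hat\bw|\mathcal{M}_A}$, the hypothesis $d_1\vee\psi(\mathcal{M}_A)=o[nR_n(\bw^*|\mathcal{M}_{AS},\bbf)]$ together with $R_n(\bw^*|\mathcal{M}_A,\bbf)\asymp R_n(\bw^*|\mathcal{M}_{AS},\bbf)$ verifies condition (\ref{eq:minimum_marisk_rate}) of Theorem~\ref{theorem:aop}, giving $Q_n(\hat\bw|\mathcal{M}_A,\bbf)\leq[1+o(1)]R_n(\bw^*|\mathcal{M}_A,\bbf)=[1+o(1)]R_n(\bw^*|\mathcal{M}_{AS},\bbf)$, i.e. (\ref{eq:idealopt}); for $\hat\bbf_{\hat\bw|\mathcal{M}_{G1}}$ I would instead invoke Theorem~\ref{cor:grouped}, where $k_1\vee\psi(\mathcal{M}_{G1})=o[nR_n(\bw^*|\mathcal{M}_{AS},\bbf)]$ becomes the full-AOP condition after the same substitution, the variance condition (\ref{eq:variance_rate}) being inherited as in Section~\ref{subsec:aop}.

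The main obstacle is obtaining the sharp constant $1+o(1)$ rather than a fixed multiplicative factor in the first assertion: a purely blockwise comparison of $n_lh(\bar a_l)$ with the ideal risk on the same $B_l$ only controls things up to a factor of two, since a block straddling a sharp drop of $|\theta|_{(i)}$ produces an order-one Jensen gap. Circumventing this through the one-block-shift telescoping—legitimate precisely because $z_n\to0$ forces each block to be asymptotically thin relative to its predecessor—and justifying the reduction to sorted coefficients via permutation invariance of the block energies is the crux; everything else is bookkeeping with the already-established Theorems~\ref{theorem:aop} and~\ref{cor:grouped}.
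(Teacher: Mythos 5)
Your proposal is correct and follows essentially the same route as the paper's proof: both arguments build a block-constant nested weight profile on the blocks $\{d_{l-1}+1,\ldots,d_l\}$, sacrifice $d_1\sigma^2/n$ on the first block, and use the one-block-shift telescoping with $n_l/n_{l-1}\leq 1+z_n$ to land within a $(1+z_n)$ factor of the coordinatewise-ideal risk $\sum_j \theta_j^2(\sigma^2/n)/(\theta_j^2+\sigma^2/n)$, before invoking Theorems~\ref{theorem:aop} and~\ref{cor:grouped}. The only (cosmetic) difference is that you define the block profile via block-averaged signal energies and identify $R_n(\bw^*|\mathcal{M}_{AS},\bbf)$ directly through the convex-hull-of-indicators observation, whereas the paper aggregates the optimal weights of the sorted nested set $\mathcal{M}_N$ and bounds the bias and variance terms separately.
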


Theorem~\ref{theo:suff} first establishes an asymptotic equivalence between $R_n\left(\bw^*|\mathcal{M}_A,\bbf\right)$ and $R_n(\bw^*|\mathcal{M}_{AS},\bbf)$. When $|\theta_j|,j\geq 1$ are weakly ordered, the nested MMA estimators achieve the ideal MA risk when the ordered coefficients do not decay too fast. Consequently, the asymptotic advantage of MA over MS, as characterized in Theorem~\ref{theo:improvability}, is also materialized.

\begin{example}
  Consider a representative example under Condition~\ref{condition3}: $|\theta|_{(j)}=j^{-\alpha_1}$, $\alpha_1>1/2$. Suppose $(|\theta_{d_{l-1}+1}|,\ldots,|\theta_{d_{l}}|)^{\top} \in \mathfrak{S}_{(|\theta|_{(d_{l-1}+1)},\ldots,|\theta|_{(d_{l})})^{\top}}$ for $1\leq l \leq D_n$, where $d_1\asymp\log n$, $d_{l+1} -d_{l} \leq (1+1/\log n)(d_{l} -d_{l-1})$ for $l=1,\ldots,D_n-1$. In this case, Assumption~\ref{asmp:partial_order} is satisfied with $z_n = 1/\log n$, and the conditions in Theorem~\ref{theo:suff} hold due to $nR_n(\bw^*|\mathcal{M}_{AS},\bbf) \asymp n^{1/(2\alpha_1)}$, $\psi(\mathcal{M}_A) \lesssim (\log n)^{3+2\alpha_1}$, and $k_1\vee\psi(\mathcal{M}_{G1}) \lesssim (\log n)^2(\log\log n)^2$. Combining with the results in Theorem~\ref{theo:improvability}, the nested MMA estimators $\hat{\bbf}_{\hat{\bw} | \mathcal{M}_A}$ and $\hat{\bbf}_{\hat{\bw} | \mathcal{M}_{G1}}$ have a significant advantage compared to any MS estimators in the non-nested setup.
\end{example}

Next, we present a negative result on the achievability of the ideal MA risk. Since our statement is on the negative side, we assume $\sigma^2$ is known, $p_n=n$, and $\epsilon_1, \ldots, \epsilon_n$ are i.i.d. $N(0,\sigma^2)$. Unlike Assumption~\ref{asmp:partial_order}, we now suppose the statistician has zero knowledge of the order of the coefficients in $\btheta$ in the sense that any permutation of the coefficient values is possible, which is perhaps the case in many applications.

\begin{theorem}\label{prop:nece}
Suppose Assumption~\ref{asmp:square_summable} holds, and $|\theta|_{(j)}= j^{-\alpha_1}$ with a fixed $\alpha_1>1/2$. For any estimator $\hat{\bbf}$, there must exist an $\bbf$ with $\btheta \in \mathfrak{S}_{(|\theta|_{(1)},\ldots,|\theta|_{(n)})^{\top}}$ such that
  \begin{equation*}
    \lim_{n\to \infty}\frac{R_n(\hat{\bbf},\bbf)}{R_n(\bw^*|\mathcal{M}_{AS},\bbf)} \to \infty.
  \end{equation*}
\end{theorem}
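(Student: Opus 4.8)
The plan is to prove a minimax lower bound over the permutation class and combine it with the permutation-invariance of the ideal MA risk. Since $\{\bphi_1,\ldots,\bphi_n\}$ is orthonormal and $\beps\sim N(\mathbf{0},\sigma^2\bI)$, I would first pass to the sequence coordinates $z_j=\bphi_j^{\top}\by/\sqrt{n}$, which are independent with $z_j\sim N(\theta_j,\tau^2)$, $\tau^2=\sigma^2/n$, and note that $L_n(\hat\bbf,\bbf)=\sum_{j=1}^n(\hat\theta_j-\theta_j)^2$ for $\hat\theta_j=\bphi_j^{\top}\hat\bbf/\sqrt n$. In this model the all-subsets MA estimator realizes an arbitrary coordinatewise shrinkage $\hat\theta_j=\gamma_j z_j$ with $\gamma_j\in[0,1]$, so $R_n(\bw^*|\mathcal{M}_{AS},\bbf)=\sum_{j=1}^n \theta_j^2\tau^2/(\theta_j^2+\tau^2)\asymp\sum_{j=1}^n\min(\theta_j^2,\tau^2)$, a quantity depending only on the multiset $\{|\theta_j|\}$. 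Hence it takes the \emph{same} value $R_n^\ast\asymp n^{-1+1/(2\alpha_1)}$ (as computed for the polynomial Example~1) for every $\bbf$ in the class, and it suffices to show that for every estimator, $\sup_{\bbf}R_n(\hat\bbf,\bbf)\gtrsim(\log n)^{1-1/(2\alpha_1)}R_n^\ast$, the exponent being positive because $\alpha_1>1/2$.

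The key is a needle-in-haystack construction using only permutations. Let $a\asymp\tau\sqrt{\log n}$ and let $B$ collect the magnitudes $|\theta|_{(j)}=j^{-\alpha_1}$ falling in a band $[\tfrac12 a, a]$; there are $K:=|B|\asymp (n/\log n)^{1/(2\alpha_1)}=j_n^\ast(\log n)^{-1/(2\alpha_1)}$ of them, where $j_n^\ast\asymp n^{1/(2\alpha_1)}$ is the oracle cutoff. Let $T$ collect the $\asymp n$ smallest (tiny) magnitudes, those far below $\tau$. Restricting to the sub-class of permutations that fix every coordinate outside $B\cup T$ and only rearrange the $B$- and $T$-magnitudes among a common set $P$ of $|P|=K+|T|\asymp n$ positions, I obtain a sparse Gaussian sequence problem: $K$ needles of common height $\asymp a$ at unknown locations, hidden among $|P|\asymp n$ positions whose remaining entries are essentially noise-level. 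Because all these configurations lie in the original class and share the ideal risk $R_n^\ast$, a lower bound on the minimax risk of this sub-problem lower-bounds $\sup_{\bbf}R_n(\hat\bbf,\bbf)$.

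To lower bound the sub-problem I would further restrict to the sub-class with exactly one needle per block, after partitioning $P$ into $K$ blocks of size $m=|P|/K\asymp n/K$; the blocks are then statistically independent and the risk is additive, so it suffices to lower bound a single block. Within a block the needle location is one of $m$ equally likely hypotheses $\btheta=a\,\mathbf{e}_\ell$ (the vector with a single entry $a$ at position $\ell$), pairwise separated by $\|a\mathbf{e}_\ell-a\mathbf{e}_{\ell'}\|^2=2a^2$ and with pairwise Gaussian KL divergence $a^2/\tau^2$. Choosing the band so that $a\le\tau\sqrt{2\log m}$ (possible since $\log m\asymp\log(n/K)\asymp\log n$ and $a\asymp\tau\sqrt{\log n}$) keeps the average KL below $\log m$, whence a Fano-type (Tsybakov) reduction from testing to estimation yields a per-block minimax risk $\gtrsim a^2\asymp\tau^2\log n$. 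Summing over the $K$ independent blocks gives minimax risk $\gtrsim K\tau^2\log n\asymp j_n^\ast\tau^2(\log n)^{1-1/(2\alpha_1)}\asymp(\log n)^{1-1/(2\alpha_1)}R_n^\ast$. As this holds for every $\hat\bbf$, for each $n$ one may select $\bbf$ in the class attaining (up to a constant) the supremum, so that $R_n(\hat\bbf,\bbf)/R_n(\bw^*|\mathcal{M}_{AS},\bbf)\gtrsim(\log n)^{1-1/(2\alpha_1)}\to\infty$.

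The main obstacle is the per-block estimation lower bound at the detection boundary: one must verify that the many-hypothesis Fano bound transfers from a testing statement to an $\Omega(a^2)$ squared-error bound, i.e.\ that location uncertainty forces the estimator to forfeit the full signal energy $a^2$ rather than hedge cheaply across candidate positions. Tuning the constant in $a\le\tau\sqrt{2\log m}$ and checking the self-consistency of $K$ (which re-enters through $\log(n/K)\asymp\log n$) is the delicate bookkeeping, whereas the orthonormal/Gaussian reduction and the permutation-invariance of $R_n^\ast$ are routine.
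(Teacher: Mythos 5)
Your proposal is correct and reaches the same quantitative conclusion as the paper --- a minimax lower bound of order $(n/\log n)^{-1+1/(2\alpha_1)}$ over the permutation class, against the permutation-invariant ideal MA risk $\asymp n^{-1+1/(2\alpha_1)}$, so the ratio diverges like $(\log n)^{1-1/(2\alpha_1)}$ --- but the lower-bound construction is genuinely different. The paper builds a single global packing set: it fixes the top $m_n^{**}=\lfloor (n/\log n)^{1/(2\alpha_1)}\rfloor$ coefficients, places the next $m_n^{**}$ (which have magnitude $\asymp \sigma\sqrt{\log n/n}$, i.e.\ exactly your band height $a$) on supports given by a sparse Varshamov--Gilbert code, verifies the pairwise separation and local-ball containment directly from the polynomial decay, and then invokes the Yang--Barron local metric entropy bound (Proposition~16 of \cite{Wang2014}) as a black box. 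You instead partition into $K\asymp(n/\log n)^{1/(2\alpha_1)}$ independent blocks with one needle each and run a per-block Fano argument at the detection threshold $a\asymp\tau\sqrt{\log n}$. Your route makes the source of the $\log n$ gap transparent (it is literally the sparse-detection penalty) and is self-contained, but it shifts the burden onto three pieces of bookkeeping that the global packing avoids: (i) each block hypothesis must remain a bona fide permutation of the full multiset, so the needle magnitudes (which vary over $[a/2,a]$, not a common value) and the displaced near-zero magnitudes must be tracked, most cleanly by realizing each hypothesis change as a single transposition so that two hypotheses differ in exactly two coordinates; (ii) the testing-to-estimation transfer and the constant in $a\le c\,\tau\sqrt{\log m}$ must respect that $\log m=\log(n/K)\sim(1-\tfrac{1}{2\alpha_1})\log n$ carries a constant strictly less than one, which forces you to take the band a constant factor below the threshold; and (iii) the additivity of minimax risk over independent blocks (standard via a product prior, but it should be stated). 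None of these is a gap --- all are routine --- so your argument would go through; the paper's single VG packing simply absorbs the varying magnitudes and the separation computation into one entropy calculation.
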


This theorem shows that even with the precise knowledge of magnitude of $|\theta|_{(j)}$, the ideal MA risk $R_n(\bw^*|\mathcal{M}_{AS},\bbf)$ remains unattainable by any estimation procedures without any prior information on the order of $|\theta_j|$. Theorem~\ref{prop:nece} also offers the first theoretical insight, albeit negative, into the reorder-then-combine strategy in \cite{Zhang2016jlm, Zhang2020}. It reveals that there is no data-based regressor ordering method that can achieve the real goal of non-nested MA in the absence of order assumption on the regression coefficients. Furthermore, based on the findings in Theorem~\ref{theo:improvability}, making a certain order assumption on the regression coefficients is essential to substantiate the asymptotic improvement of MA over MS.

\begin{remark}
Combining the results in Theorems~\ref{theo:suff}--\ref{prop:nece}, there may be possibilities that the ideal AOP can be achieved when Assumption~\ref{asmp:partial_order} is relaxed, although Theorem~\ref{prop:nece} rules out the possibility of achieving the ideal AOP without any perhaps subjective and hard to justify assumption on the order of the coefficients.
\end{remark}

\section{Minimax adaptivity}\label{sec:minimax}

Suppose the transformed coefficients $\btheta$ defined in (\ref{eq:trans_para}) belongs to the parameter space $\Theta \subseteq \mathbb{R}^{p_n}$, and the corresponding space of $\bbf$ is defined by $\mathcal{F}_{\Theta}=\{\bbf=\sqrt{n}\sum_{j=1}^{p_n}\theta_j\bphi_j:\btheta\in\Theta\}$. Define the minimax risk
$
  R_{M}(\mathcal{F}_{\Theta})=\inf_{\hat{\bbf}}\sup_{\bbf \in \mathcal{F}_{\Theta}}R_n(\hat{\bbf},\bbf),
$
where the infimum is over all estimator $\hat{\bbf}$. In addition, define the minimax risk of the linear-combined estimators
$
  R_L(\mathcal{F}_{\Theta})=\inf_{\bw}\sup_{\bbf \in \mathcal{F}_{\Theta}}R_n(\bw|\mathcal{M}_A,\bbf),
$
where $\inf_{\mathbf{w}}$ denote the infimum over all $\mathbf{w}\in\mathbb{R}^{p_n}$, and the subscript $L$ here is to emphasize that $\hat{\bbf}$ is restricted to the class of all the linear combinations of the models in $\mathcal{M}_A$. In this section, we assume $p_n=n$, and $\epsilon_1, \ldots, \epsilon_n$ are i.i.d. $N(0,\sigma^2)$ for simplicity.

We investigate the exact minimax adaptivity of the MMA estimator based on $\mathcal{M}_{G1}$ when the transformed coefficient $\btheta$ belongs to two types of classes, respectively. The first class is the ellipsoid
$
  \Theta(\alpha, R)=\{ \btheta \in \mathbb{R}^{n}:\sum_{j=1}^{n}j^{2\alpha}\theta_j^2\leq R \},
$
where $\alpha>0$ and $R>0$. Let $\mathcal{F}_{\Theta(\alpha, R)}=\{\bbf=\sqrt{n}\sum_{j=1}^{n}\theta_j\bphi_j:\btheta\in\Theta(\alpha, R)\}$ be the class of regression mean vector associated with $\Theta(\alpha, R)$. Let
$
  \Theta^{H}(c,q)=\{\btheta \in \mathbb{R}^{n}: |\theta_j|\leq cj^{-q},j=1,\ldots,n \}
$
be the hyperrectangle set of $\btheta$, where $c>0$ and $q>1/2$. And let $\mathcal{F}_{\Theta^{H}(c,q)}$ be the corresponding mean vector class of $\Theta^{H}(c,q)$.

\begin{theorem}\label{tho:minimax}
   Suppose $\hat{\sigma}_D^2$ or $\hat{\sigma}_{m_n}^2$ with $m_n = \lfloor \kappa n \rfloor$ $(0<\kappa<1)$ is adopted. Then the MMA estimator $\hat{\bbf}_{\hat{\bw}|\mathcal{M}_{G1}}$ is adaptive in the exact minimax sense on the family of the ellipsoids $\boldsymbol{\mathcal{F}}=\left\{\mathcal{F}_{\Theta(\alpha, R)}:\alpha>0,R>0\right\}$, i.e.,
   \begin{equation}\label{eq:minimax_all}
    \sup_{\bbf \in \mathcal{F}}\mathbb{E}L_n(\hat{\bw}|\mathcal{M}_{G1},\bbf)=[1+o(1)]R_M(\mathcal{F})
   \end{equation}
   holds for every $\mathcal{F}\in \boldsymbol{\mathcal{F}}$. And $\hat{\bbf}_{\hat{\bw}|\mathcal{M}_{G1}}$ is adaptive in the exact linear-combined minimax sense on the family of the hyperrectangles $\boldsymbol{\mathcal{F}}^{H}=\left\{\mathcal{F}_{\Theta^{H}(c,q)}:c>0,q>1/2\right\}$, i.e.,
   \begin{equation}\label{eq:minimax_ma}
    \sup_{\bbf \in \mathcal{F}}\mathbb{E}L_n(\hat{\bw}|\mathcal{M}_{G1},\bbf)=[1+o(1)]R_L(\mathcal{F})
   \end{equation}
   holds for every $\mathcal{F}\in \boldsymbol{\mathcal{F}}^{H}$.

\end{theorem}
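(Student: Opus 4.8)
The plan is to sandwich the worst-case risk $\sup_{\bbf\in\mathcal{F}}\mathbb{E}L_n(\hat\bw|\mathcal{M}_{G1},\bbf)$ between the relevant minimax benchmark and $[1+o(1)]$ times it, working throughout in the orthonormal coordinates $\{\bphi_1,\ldots,\bphi_n\}$. In these coordinates a nested weight vector acts as the monotone shrinkage sequence $\lambda_j=\sum_{m:\,k_m\geq j}w_m$, so that combining over $\mathcal{M}_A$ realizes (as a limit over weights) every monotone shrinkage filter, while $\mathcal{M}_{G1}$ forces $\lambda_j$ to be constant on each group and monotone across groups, i.e.\ a data-driven monotone block-shrinkage estimator. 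First I would record the two elementary benchmark facts to aim at: for any estimator $\sup_\bbf\mathbb{E}L_n\geq R_M(\mathcal{F})$, and by the maximin/minimax inequality $\sup_\bbf R_n(\bw^*|\mathcal{M}_A,\bbf)=\sup_\bbf\inf_\bw R_n(\bw|\mathcal{M}_A,\bbf)\leq\inf_\bw\sup_\bbf R_n(\bw|\mathcal{M}_A,\bbf)=R_L(\mathcal{F})$.

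The common upper bound comes from the finite-sample inequality of Theorem~\ref{theorem:main} applied with $\mathcal{M}=\mathcal{M}_{G1}$, exactly as in the full-AOP argument of Theorem~\ref{cor:grouped}. The conditions $\zeta_n=o(1)$, $k_1\vee\psi(\mathcal{M}_{G1})=o[nR_n(\bw^*|\mathcal{M}_A,\bbf)]$ and $p_n\mathbb{E}(\hat\sigma^2-\sigma^2)^2=o[nR_n(\bw^*|\mathcal{M}_A,\bbf)]$ must be verified to hold uniformly over each class; here $\psi(\mathcal{M}_{G1})\lesssim(\log n)^2(\log\log n)^2$ is free of $\bbf$, the stated variance estimators satisfy $n\mathbb{E}(\hat\sigma^2-\sigma^2)^2=O(1)$, and $\sup_\bbf R_n(\bw^*|\mathcal{M}_A,\bbf)$ is of the minimax order, which dominates $1/n$. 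The key observation delivering uniformity is that the two $\mathcal{M}$-dependent corrections in (\ref{eq:risk_bound_general}), namely $C\sigma^2\psi/n$ and the cross term $C\sigma n^{-1/2}\psi^{1/2}(R_n^*)^{1/2}=C(\sigma^2\psi R_n^*/n)^{1/2}$ with $R_n^*:=R_n(\bw^*|\mathcal{M}_A,\bbf)$, are both nondecreasing in $R_n^*$; taking the supremum it therefore suffices to substitute $\bar R:=\sup_\bbf R_n^*$, giving $\sup_\bbf\mathbb{E}L_n(\hat\bw|\mathcal{M}_{G1},\bbf)\leq\bar R[1+C(\psi/(n\bar R))^{1/2}]+o(\bar R)=[1+o(1)]\sup_\bbf R_n(\bw^*|\mathcal{M}_A,\bbf)$, and the $\rho$-term is absorbed by the variance condition.

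It then remains to identify $\sup_\bbf R_n(\bw^*|\mathcal{M}_A,\bbf)$ with the target benchmark on each family. For the ellipsoids I would invoke Pinsker's theorem: the exact minimax linear risk over $\mathcal{F}_{\Theta(\alpha,R)}$ is asymptotically attained by the (monotone) Pinsker filter and equals $[1+o(1)]R_M(\mathcal{F})$; since for an orthosymmetric ellipsoid the minimax linear estimator is diagonal, this minimax linear risk is exactly $R_L(\mathcal{F})$. Combined with the upper bound and the trivial lower bound this yields $R_M(\mathcal{F})\leq\sup_\bbf\mathbb{E}L_n(\hat\bw|\mathcal{M}_{G1},\bbf)\leq[1+o(1)]R_L(\mathcal{F})=[1+o(1)]R_M(\mathcal{F})$, which is (\ref{eq:minimax_all}). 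For the hyperrectangles I would instead establish a saddle point: writing $R_n(\bw|\mathcal{M}_A,\bbf)=\sum_j[(1-\lambda_j)^2\theta_j^2+n^{-1}\sigma^2\lambda_j^2]$, this is convex in $(\lambda_j)$ and linear, hence concave, in $(\theta_j^2)$, and $\Theta^H(c,q)$ is a box in $(\theta_j^2)$; Sion's minimax theorem then gives $\sup_\bbf R_n(\bw^*|\mathcal{M}_A,\bbf)=R_L(\mathcal{F})$ exactly, with a least-favorable vertex $\theta_j=\pm cj^{-q}$.

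The remaining and hardest step is the matching lower bound $\sup_\bbf\mathbb{E}L_n(\hat\bw|\mathcal{M}_{G1},\bbf)\geq[1-o(1)]R_L(\mathcal{F})$ for the hyperrectangles, since here $R_L$ exceeds the all-estimator benchmark $R_M$ by a constant factor, so the trivial bound is insufficient. I would evaluate the MMA risk at the least-favorable vertex and exploit the block-constant structure of $\mathcal{M}_{G1}$-MMA: on each group the estimator is a data-driven constant shrinkage whose risk cannot fall below the per-group linear oracle by more than the overhead of estimating a single shrinkage level, an additive $O(\sigma^2/n)$ that is lower order once the group dimension grows. Because the grouping is weakly geometric, the coordinate bounds $cj^{-q}$ are comparable within each group, so the per-group linear oracle risks sum to $[1+o(1)]R_L(\mathcal{F})$, forcing the MMA risk at the vertex to be $[1-o(1)]R_L(\mathcal{F})$. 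Making this ``no free lunch within a group'' bound precise---showing the adaptive block shrinkage does not beat the block oracle by more than a negligible amount at the dense vertex, uniformly over the relevant blocks---is the main obstacle, and is where the bulk of the technical work (a block-wise risk expansion analogous to the analysis of penalized blockwise Stein estimators) will be needed.
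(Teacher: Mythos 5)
Your upper-bound architecture and your treatment of the ellipsoid family coincide with the paper's proof almost step for step: the paper also works in the sequence-model coordinates, applies the $\mathcal{M}_{G1}$ specialization of the finite-sample bound (the display (\ref{eq:oracle_MG}) with $\psi(\mathcal{M}_{G1})\lesssim(\log n\log\log n)^2$), passes the supremum through the bound exactly by the monotonicity-in-$R_n(\bw^*|\mathcal{M}_A,\bbf)$ observation you make, and then invokes Pinsker's theorem to identify $\sup_{\bbf}R_n(\bw^*|\mathcal{M}_A,\bbf)$ with $R_L(\mathcal{F})\sim R_M(\mathcal{F})$. Two small points you gloss over that the paper handles explicitly: (i) $R_L$ is defined with $\inf$ over all $\bw\in\mathbb{R}^{p_n}$, not over the simplex, so one must check that restricting to $\mathcal{W}_n$ costs only $o(R_L)$ (the paper's conditions (\ref{eq:remain0}) and (\ref{eq:remain10}), which follow because the Pinsker/oracle cumulative weights are monotone in $[0,1)$ with $\tilde{\gamma}_1^*\to1$); and (ii) your shortcut ``$n\mathbb{E}(\hat{\sigma}^2-\sigma^2)^2=O(1)$'' is literally false for $\hat{\sigma}_{m_n}^2$ on parts of these families (e.g.\ it is of order $n^{1-4\alpha}$ on $\Theta(\alpha,R)$ with small $\alpha$), but the weaker condition you actually need, $\sup_{\bbf}\mathbb{E}(\hat{\sigma}^2-\sigma^2)^2=o(R_L)$, does hold for all $\alpha>0$ and all $q>1/2$, which is what the paper verifies.

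The genuine divergence is your final paragraph. The paper's proof of the hyperrectangle claim consists \emph{only} of the upper bound $\sup_{\bbf}\mathbb{E}L_n(\hat{\bw}|\mathcal{M}_{G1},\bbf)\leq[1+o(1)]R_L(\mathcal{F})$ together with the closed-form evaluation of $R_L[\mathcal{F}_{\Theta^H(c,q)}]$ at the vertex; it does not supply a matching lower bound, whereas for ellipsoids the lower bound is free from $R_M\sim R_L$. You are right that for hyperrectangles the trivial bound via $R_M$ falls short by the Ibragimov--Khasminskii constant $\rho>1$, so the stated \emph{equality} (\ref{eq:minimax_ma}) (and the subsequent remark that the estimator is \emph{not} exactly minimax on $\boldsymbol{\mathcal{F}}^H$) really does require showing that the data-driven blockwise shrinkage cannot beat the linear oracle by a constant factor at the dense vertex $\theta_j=cj^{-q}$. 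Your proposed route --- lower-bounding each block's risk by $\mathbb{E}\inf_{\lambda}\|\lambda\hat{\btheta}_{(b)}-\btheta_{(b)}\|^2$, using concentration of $\|\hat{\btheta}_{(b)}\|^2$ and $\hat{\btheta}_{(b)}^{\top}\btheta_{(b)}$ as block sizes grow, and the comparability of $j^{-2q}$ within each weakly geometric block --- is a sensible and essentially the standard way to close this, but it is not carried out in the paper and is left as a sketch in your proposal, so as a self-contained proof of the two-sided statement your argument is not yet complete at precisely the point you identify as hardest. If your goal is to reproduce the paper's own argument, that step can be dropped; if your goal is a complete proof of (\ref{eq:minimax_ma}) as an equality, it is the one substantive piece of work remaining.
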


This theorem demonstrates that the nested MMA estimator also has appealing exact minimax properties with the estimated $\sigma^2$. Indeed, $\hat{\bbf}_{\hat{\bw}|\mathcal{M}_{G1}}$ is adaptive in the exact minimax sense on the family of the ellipsoids. However, based on Theorem~5 of \cite{Donoho1990}, we deduce that $\hat{\bbf}_{\hat{\bw}|\mathcal{M}_{G1}}$ cannot be adaptive in the exact minimax sense on the family of the hyperrectangles due to $R_L[\mathcal{F}_{\Theta^{H}(c,q)}]/R_M[\mathcal{F}_{\Theta^{H}(c,q)}]\to \rho, 1<\rho<\infty$. But it is still seen that $\hat{\bbf}_{\hat{\bw}|\mathcal{M}_{G1}}$ adaptively achieves the minimax-rate optimality over all the estimators on the family of the hyperrectangles.

\begin{remark}
  The coefficient vectors within the ellipsoid $\Theta(\alpha, R)$ and the hyperrectangle $\Theta^{H}(c,q)$ inherently exhibit order constraints. For minimax theories under general $\ell_q$-balls without such order structures, see, e.g., \cite{Donoho1994}, \cite{Ye2010}, \cite{Raskutti2011}, and \cite{Wang2014}. It remains open whether MMA also attains the minimax adaptivity over these broader classes.
\end{remark}

\section{Simulation studies}\label{sec:simulation}

The data is simulated from the linear regression model (\ref{eq:model}), where $p_n=\lfloor 2n/3 \rfloor$, $x_{1i} = 1$, the remaining $x_{ji}$ are independently generated from $N(0,1)$, and the random error terms $\epsilon_i$ are i.i.d. from $N(0,\sigma^2)$ and are independent of ${x_{ji}}'$s. We consider two cases of the monotone regression coefficients, wherein the ideal AOP and the full AOP pursue the same goal.

\begin{itemize}
  \item \emph{Case 1} (Polynomially decaying coefficients). Here, $\beta_j=j^{-\alpha_1}$ and $\alpha_1$ is varied from $0.5$ to $1.5$.
  \item \emph{Case 2} (Exponentially decaying coefficients). Here, $\beta_j=\exp(-j^{\alpha_2})$ and $\alpha_2$ is varied from $0.25$ to $1.25$.
\end{itemize}

The signal-to-noise ratio, which is defined by $\sum_{j=2}^{p_n}\beta_j^2/\sigma^2$, is set to be one via the parameter $\sigma^2$. And the sample size $n$ increases from $30$ to $1000$. The candidate models used to implement MA are nested and estimated by least squares. To highlight the issue of the weight/candidate model restriction, we assume that $\sigma^2$ is known for all the methods.

Let $\bbf=(f_1,\ldots,f_n)^{\top}$ denote the mean vector of the true regression function. The accuracy of an estimation procedure is evaluated in terms of the squared $\ell_2$ loss $n^{-1}\|\bbf - \hat{\bbf} \|^2$, where $\hat{\bbf}=(\hat{f}_1,\ldots,\hat{f}_n)^{\top}$ is the estimated mean vector. We replicate the data generation process $R=1000$ times to approximate the risks of the competing methods.

The restricted-AOP MMA estimators considered are WR with $N=2$ (WR1), WR with $N=5$ (WR2), MR with $M_n = 2\vee\lfloor(m_n^*)^{1/2}\rfloor$ (MR1), and MR with $M_n = 2\vee\lfloor m_n^*/2\rfloor$ (MR2). Detailed definitions of these methods are given in Section~\ref{sec:review} and Table~\ref{tab:method}. In each replication, we normalize the squared $\ell_2$ loss of these four methods by dividing the $\ell_2$ loss of the MMA estimator based on $\mathcal{M}_A$ and $\mathcal{W}_{p_n}$ (representing a full-AOP MMA method in Case 1). The relative risks are given in Table~\ref{tab:simu2} based on 1000 simulation runs (the standard errors are in the parentheses).

\begin{table}[!b]
  \centering
  \caption{Comparisons of the restricted-AOP MMA methods. The squared $\ell_2$ loss of each method is divided by the $\ell_2$ loss of the MMA estimator based on $\mathcal{M}_A$ and $\mathcal{W}_{p_n}$ in each simulation.}
  \resizebox{\columnwidth}{!}{
    \begin{tabular}{cccccccc}
    \hline
     \multirow{2}[0]{*}{$n$}     &   \multirow{2}[0]{*}{method}    & \multicolumn{3}{c}{Case 1} & \multicolumn{3}{c}{Case 2} \\
          \cline{3-5} \cline{6-8}
     &  & \multicolumn{1}{c}{$\alpha_1=0.51$} & \multicolumn{1}{c}{$\alpha_1=1$} & \multicolumn{1}{c}{$\alpha_1=1.5$} & \multicolumn{1}{c}{$\alpha_2=0.25$} & \multicolumn{1}{c}{$\alpha_2=0.75$} & \multicolumn{1}{c}{$\alpha_2=1.25$} \\
    \hline\specialrule{0em}{1.5pt}{1.5pt}
    \multicolumn{1}{c}{30} & WR1   & 1.091 (0.016) & 1.135 (0.017) & 1.137 (0.020) & 1.103 (0.011) & 1.111 (0.017) & 1.131 (0.030) \\
          & WR2   & 1.020 (0.007) & 1.044 (0.008) & 1.033 (0.010) & 1.020 (0.005) & 1.042 (0.009) & 1.011 (0.012) \\
          & MR1   & 1.972 (0.071) & 1.624 (0.050) & 1.923 (0.080) & 1.747 (0.040) & 2.243 (0.096) & 0.954 (0.043) \\
          & MR2   & 1.441 (0.043) & 1.388 (0.035) & 1.254 (0.042) & 1.280 (0.021) & 1.167 (0.036) & 0.954 (0.043) \\[6pt]
    \multicolumn{1}{c}{100} & WR1   & 1.113 (0.011) & 1.126 (0.017) & 1.124 (0.022) & 1.126 (0.007) & 1.125 (0.021) & 1.093 (0.028) \\
          & WR2   & 1.025 (0.004) & 1.037 (0.007) & 1.031 (0.009) & 1.022 (0.003) & 1.028 (0.009) & 1.051 (0.013) \\
          & MR1   & 2.081 (0.041) & 1.926 (0.041) & 2.072 (0.072) & 2.179 (0.031) & 1.821 (0.058) & 1.397 (0.079) \\
          & MR2   & 1.420 (0.022) & 1.306 (0.024) & 1.491 (0.043) & 1.440 (0.015) & 1.018 (0.026) & 1.397 (0.079) \\[6pt]
    \multicolumn{1}{c}{300} & WR1   & 1.129 (0.006) & 1.116 (0.015) & 1.065 (0.019) & 1.133 (0.005) & 1.025 (0.021) & 1.025 (0.037) \\
          & WR2   & 1.031 (0.003) & 1.041 (0.006) & 1.047 (0.011) & 1.029 (0.002) & 1.036 (0.009) & 1.081 (0.020) \\
          & MR1   & 2.286 (0.027) & 2.601 (0.050) & 3.735 (0.107) & 2.586 (0.024) & 3.703 (0.116) & 3.415 (0.358) \\
          & MR2   & 1.496 (0.013) & 1.356 (0.020) & 1.392 (0.032) & 1.514 (0.010) & 1.647 (0.044) & 3.415 (0.358) \\[6pt]
    \multicolumn{1}{c}{1000} & WR1   & 1.123 (0.004) & 1.090 (0.009) & 1.052 (0.017) & 1.124 (0.003) & 0.957 (0.021) & 0.898 (0.030) \\
          & WR2   & 1.026 (0.002) & 1.055 (0.006) & 1.062 (0.014) & 1.034 (0.002) & 1.016 (0.013) & 1.011 (0.023) \\
          & MR1   & 2.541 (0.018) & 3.740 (0.056) & 4.945 (0.128) & 3.558 (0.022) & 10.469 (0.443) & 8.506 (0.757) \\
          & MR2   & 1.525 (0.008) & 1.432 (0.015) & 1.447 (0.030) & 1.524 (0.007) & 4.076 (0.166) & 8.506 (0.757) \\
          \hline
    \end{tabular}%
    }
  \label{tab:simu2}%
\end{table}%

From Table~\ref{tab:simu2}, the relative risks of the methods WR1 and WR2 are significantly larger than 1 in Case 1, which implies that using the discrete weight sets increases the risk of the full-AOP MMA by a sizable edge. This result is consistent with Proposition~\ref{lemma:delta_1}. In Case 2, however, when $\alpha_2=1.25$ and $n=1000$, the relative risks of WR1 and WR2 are 0.898 (0.030) and 1.011 (0.023), respectively, which shows that WR methods perform better than and comparably to the MMA based on $\mathcal{W}_{p_n}$. This phenomenon is not surprising. Although Proposition~\ref{lemma:delta_1} states that MA with the discrete weight restriction has an asymptotically equivalent oracle risk to that under the continuous weight set in Case 2, the latter actually pays a higher price to pursue the oracle MA risk when $n$ is finite, and the trade-off favors simplicity in this special case.

We find that the MR1 and MR2 methods mostly have much larger relative risks than the WR methods in both cases. Moreover, their relative risks become increasingly greater as the sample size increases from 30 to 1000. These findings support our theoretical understandings in Section~\ref{subsec:review_wan}.

Another interesting observation is about the result when $\alpha_2= 0.25$ in Case 2. Although the data is generated from a true regression model with exponentially decaying coefficients, this setting is more like a polynomially decaying case in the finite sample situation. Indeed, when $\alpha_1= 0.75$ and $n=1000$, we have $m_n^* \approx 75$. While in Case 2 with $\alpha_2= 0.25$ and $n=1000$, $m_n^*$ is around $77$, which does not exhibit the significant difference as in Case 1. Thus it is not surprising that the numerical performance of the competing methods in Case 2 ($\alpha_2= 0.25$) is similar to that in Case 1. More discussions related to this phenomenon can be found in \cite{Liu2011PI}, \cite{ZHANG201595}, and \cite{Peng2021}.

In the Appendix, we provide more simulation results to assess the full-AOP theory in Section~\ref{subsec:aop} and to compare the different candidate model sets given in Section~\ref{sec:reduced}. Overall, these results support our AOP theory on MMA and present evidence favoring the use of the candidate model sets with reduced sizes.

\section{Conclusion}\label{sec:conclusion}

MMA is one of the most studied MA methods in the literature, with its hallmark feature of AOP as a central focus. As shown in this work, however, the previous AOP theories, which rely on either discretizing the weight parameters or restricting the set of candidate models, actually do not establish the intended AOP. This raises doubt on validity of the MMA approach.

In this paper, we have addressed three key questions in this regard. (1) Can MMA achieve the performance of the optimal convex combination of all the nested models (i.e., the full AOP)? (2) Under what conditions can the optimal risk over all the convex combinations of all subset models (i.e., the ideal AOP) be realized? (3) Is MMA adaptive in an exact minimax sense for nonparametric classes?

The first and third questions are answered positively in this work, which rigorously justify the MMA methodology in those contexts. Therefore, without discretizing model weight or restricting the candidate model set harmfully, MMA achieves the full AOP when the optimal MA risk does not converge too fast, and consequently outperforms any MS method when for instance the true regression coefficients decay polynomially. The exact minimax optimalities proved in our work further support MMA as a powerful tool for regression estimation when the variables are properly ordered.

The answer to the second question unfortunately is generally negative. That is, if there is no prior information on the order of the coefficients, MMA (or any other method) cannot generally achieve the best performance of a convex combination of all the subset models even when the regressors are orthogonal to each other. A silver lining is that if the coefficients are ordered to some large extent, the ideal AOP may be reachable, as shown in Theorem~\ref{theo:suff}.

The above findings may have important implications for applications. In practice, it is perhaps rare that the data analyst knows the correct order of the true coefficients. From a practical viewpoint, one may pursue an importance learning of the regressors based on parametric or nonparametric approaches such as Random Forest and SOIL (see \cite{Ye2018Sparsity} for references). If one feels reasonably confident about the ranking of the regressors, then conducting MMA based on the corresponding nested models may work very well. In contrast, if the order of many relevant variables is highly uncertain, as is often the case in high-dimensional regression especially when the regressors are highly correlated, the ideal AOP is simply an unrealizable objective. Then it is better to turn to the less aggressive goal of minimax-rate optimality, which can be obtained based on MA methods such as ARM \citep{Yang2001, Wang2014}.

In closing, we provide several directions for future research. The focus of this paper has been on a linear regression setup with nonstochastic regressors. It is of great interest to extend the theoretical framework to the random design regression, such as the high-dimensional regression with sparse constraints \citep[see, e.g.,][]{Wang2014}, the linear mixed effects models \citep[see][]{jiang2007linear, Chen2003, Bondell2010, Fan2012}, and other practically important models. Another extension, motivated by an observation from Table~\ref{tab:method}, is to develop an MA method that can achieve the full AOP on the whole parameter region, if possible. Based on the works of \cite{ZHANG201595} and \cite{QIAN2022193}, we conjecture that a universally full AOP may be established by properly using cross-validation or hypothesis testing techniques. We leave these for future work.

\appendix

\section*{Appendix}\label{appendix}
\addcontentsline{toc}{section}{Appendix}
\addtocontents{toc}{\setcounter{tocdepth}{1}}
\renewcommand{\thesection}{\Alph{section}}
\numberwithin{equation}{section}

Section~\ref{sec:a:proof:s3} contains the proofs of all the propositions and examples in Section~\ref{sec:review}. The proofs of the main results under the nested and the non-nested setups are given in Section~\ref{sec:a:proof:s4} and Section~\ref{sec:a:proof_non_nested}, respectively. Section~\ref{sec:proof_minimax} provides the proof of the minimax adaptivity of Mallows model averaging (MMA). Section~\ref{sec:a:add_theory} and Section~\ref{sec:a:simu} present additional theoretical results and simulation results, respectively. And other related works are discussed in Section~\ref{sec:a:related}.

\section{Proofs of the results in Section~\ref{sec:review}}\label{sec:a:proof:s3}

\subsection{An equivalent sequence model}\label{sec:a:proof:s3:preliminaries}

Recall that $\bP_{j|\mathcal{M}_A}$ represents the projection matrix based on the first $j$ columns of $\bX$. Define $\bD_j\triangleq\bP_{j|\mathcal{M}_A} - \bP_{(j-1)|\mathcal{M}_A}$ for $j=1,\ldots,p_n$, where $\bP_{0|\mathcal{M}_A}=\boldsymbol{0}_{n\times n}$. Note that $\bD_j$ is a projection matrix, and $\bD_j,j=1,\ldots,p_n$ are mutually orthogonal, i.e., $\bD_j\bD_{j'}=\bD_{j'}\bD_j=\bD_j\delta_{jj'}$, where $\delta_{jj'}$ denotes the Kronecker delta \citep[see, e.g.,][]{Xu2022From}. Using eigendecomposition, we express $\bD_j$ as $\bphi_j\bphi_j^{\top}$, where $\bphi_j\in \mathbb{R}^{n}$ satisfies $\|\bphi_j\|=1$. The orthogonality of $\bD_j,j=1,\ldots,p_n$ implies that $\{\bphi_1,\ldots,\bphi_{p_n} \}$ constitutes an orthonormal basis for the column space of $\bX$. Consequently, we can rewrite the model (\ref{eq:model_matrix}) as an equivalent sequence model
\begin{equation}\label{eq:sequence}
  \hat{\theta}_{j} = \theta_j + e_j, \quad j=1,\ldots,p_n,
\end{equation}
where $\hat{\theta}_{j} =\bphi_j^{\top}\by/\sqrt{n}$, $\theta_j=\bphi_j^{\top}\bbf/\sqrt{n}$, and $e_j=\bphi_j^{\top}\beps/\sqrt{n}$. Suppose $\epsilon_1, \ldots, \epsilon_n$ are i.i.d. $\eta$-sub-Gaussian random variables. Then, $e_j,j=1,\ldots,p_n$ are $(\eta/\sqrt{n})$-sub-Gaussian variables, which satisfy $\mathbb{E}e_j=0$, $\mathbb{E}e_j^2=\sigma^2/n$, and $\mathbb{E}e_je_{j'}=0$ when $j \neq j'$.

Based on the sequence model (\ref{eq:sequence}), the least squares estimator $\hat{\bbf}_{m|\mathcal{M}_A}$ has the following equivalent form
\begin{equation}\label{eq:bbf_m_eq}
  \hat{\bbf}_{m|\mathcal{M}_A}=\bP_{m|\mathcal{M}_A}\by = \sum_{j=1}^{m}\bD_j\by=\sum_{j=1}^{m}\bphi_j\bphi_j^{\top}\by=\sqrt{n}\sum_{j=1}^{m}\bphi_j\hat{\theta}_{j}.
\end{equation}
The $\ell_2$ risk of $\hat{\bbf}_{m|\mathcal{M}_A}$ is
\begin{equation}\label{eq:risk_m}
\begin{split}
   R_n(m|\mathcal{M}_A,\bbf) & =\frac{1}{n}\mathbb{E}\left\|\hat{\bbf}_{m|\mathcal{M}_A}-\bbf \right\|^2=\frac{1}{n}\mathbb{E}\left\|\sum_{j=1}^{m}\bD_j\by-\sum_{j=1}^{p_n}\bD_j\bbf \right\|^2\\
    &=\mathbb{E} \left\|  \sum_{j=1}^{m}\bphi_j\hat{\theta}_{j} - \sum_{j=1}^{p_n}\bphi_j\theta_{j} \right\|^2 =\mathbb{E}\left\|\sum_{j=1}^{m}\bphi_je_j-\sum_{j=m+1}^{p_n}\bphi_j\theta_j \right\|^2\\
    &=\frac{m\sigma^2}{n}+\sum_{j=m+1}^{p_n}\theta_j^2,
\end{split}
\end{equation}
where the last equality arising from the orthogonality of $\{\bphi_1,\ldots,\bphi_{p_n} \}$ and $\mathbb{E}e_j^2=\sigma^2/n$.

Define the cumulative weight $\gamma_j\triangleq\sum_{m=j}^{M_n}w_m$ for $1\leq j \leq M_n$. The nested model averaging (MA) estimator based on $\mathcal{M}=\{ \{1,2,\ldots,k_m \}: 1\leq k_1<k_2<\cdots<k_{M_n}\leq p_n\}$ can be expressed as
\begin{equation}\label{eq:fw}
\begin{split}
   \hat{\bbf}_{\mathbf{w}|\mathcal{M}} &     =\sum_{m=1}^{M_n}w_m\hat{\bbf}_{m|\mathcal{M}}=\sum_{m=1}^{M_n}w_m\hat{\bbf}_{k_m|\mathcal{M}_A}\\
   &=\sum_{m=1}^{M_n}w_m\left(\sqrt{n}\sum_{j=1}^{k_m}\bphi_j\hat{\theta}_{j}\right)\\
   &=\sqrt{n}\sum_{j=1}^{M_n}\sum_{l=k_{j-1}+1}^{k_j}\gamma_j\bphi_l\hat{\theta}_l,
\end{split}
\end{equation}
where $k_0=0$, and the third equality follows from (\ref{eq:bbf_m_eq}). A similar calculation to (\ref{eq:risk_m}) yields the $\ell_2$ loss of $\hat{\bbf}_{\mathbf{w}|\mathcal{M}}$
\begin{equation}\label{eq:lossw}
\begin{split}
    L_n(\bw|\mathcal{M},\bbf)&=\frac{1}{n}\left\|\hat{\bbf}_{\mathbf{w}|\mathcal{M}}-\bbf \right\|^2 \\
     & =\sum_{j=1}^{M_n}\sum_{l=k_{j-1}+1}^{k_j}\left( \gamma_j\hat{\theta}_l-\theta_l  \right)^2+\sum_{j=k_{M_n}+1}^{p_n}\theta_j^2
\end{split}
\end{equation}
and its corresponding MA risk
\begin{equation}\label{eq:riskw}
\begin{split}
    R_n(\bw|\mathcal{M},\bbf)&=\mathbb{E}L_n(\bw|\mathcal{M},\bbf) \\
     & =\sum_{j=1}^{M_n}\sum_{l=k_{j-1}+1}^{k_j}\left[ \left(1-\gamma_j \right)^2\theta_l^2+\frac{\sigma^2}{n}\gamma_j^2 \right]+\sum_{j=k_{M_n}+1}^{p_n}\theta_j^2.
\end{split}
\end{equation}
Furthermore, the MMA criterion (\ref{eq:criterion}) can be equivalently rewritten as
\begin{equation}\label{eq:cr}
\begin{split}
   &C_{n}(\bw|\mathcal{M},\by) =\frac{1}{n}\left\|\by-\hat{\bbf}_{\bw | \mathcal{M}} \right\|^2+\frac{2\hat{\sigma}^2}{n}\bk^{\top}\bw\\
     & = \frac{1}{n}\left\|\by-\sqrt{n}\sum_{j=1}^{M_n}\sum_{l=k_{j-1}+1}^{k_j}\gamma_j\bphi_l\hat{\theta}_l \right\|^2+\sum_{j=1}^{M_n}\sum_{l=k_{j-1}+1}^{k_j}2\gamma_j\frac{\hat{\sigma}^2}{n}\\
     & =\frac{1}{n}\left\|\by\right\|^2 - \frac{2}{\sqrt{n}}\sum_{j=1}^{M_n}\sum_{l=k_{j-1}+1}^{k_j}\gamma_j\by^{\top}\bphi_l\hat{\theta}_l + \sum_{j=1}^{M_n}\sum_{l=k_{j-1}+1}^{k_j}\gamma_j^2\hat{\theta}_l^2 +\sum_{j=1}^{M_n}\sum_{l=k_{j-1}+1}^{k_j}2\gamma_j\frac{\hat{\sigma}^2}{n}\\
     & =\sum_{j=1}^{M_n}\sum_{l=k_{j-1}+1}^{k_j}\left[ \gamma_j^2\hat{\theta}_l^2+2\gamma_j\left(\frac{\hat{\sigma}^2}{n}-\hat{\theta}_l^2\right) \right]+\frac{1}{n}\sum_{i=1}^{n}y_i^2,
\end{split}
\end{equation}
where the last equality follows from $\by^{\top}\bphi_l/\sqrt{n}=\hat{\theta}_l$ for $l=1,\ldots,p_n$.


\subsection{Technical lemma}

Lemma~\ref{lemma:peng} compares the optimal risks of model selection (MS) and MA based on the successive candidate model set $\mathcal{M}_{S}=\{\{1,\ldots,m \}:1\leq m \leq M_n\}$. Let us recall some notations. We define $m_n^*=\arg\min_{m \in \{1,\ldots,p_n \}}R_n(m|\mathcal{M}_A,\bbf)$ as the size of the optimal nested model, $m^*|\mathcal{M}_S=\arg\min_{m \in \{1,\ldots,M_n \}}R_n(m|\mathcal{M}_{S},\bbf)$ as the size of the optimal candidate model in $\mathcal{M}_S$, and $\bw^*|\mathcal{M}_S=\arg\min_{\mathbf{w} \in \mathcal{W}_{M_n}}R_n(\bw | \mathcal{M}_S,\bbf)$ as the optimal weight vector based on the candidate model set $\mathcal{M}_S$.

\begin{lemma}\label{lemma:peng}
Suppose that Assumptions~\ref{asmp:square_summable}--\ref{asmp:regressor_order} hold. For any set $\mathcal{M}_{S}$ with a size $1 \leq M_n \leq p_n$, the following relationship holds:
\begin{equation*}\label{eq:lemma:peng:1}
R_n(m^*|\mathcal{M}_{S},\bbf) \asymp R_n(\bw^*|\mathcal{M}_S,\bbf).
\end{equation*}

For a large set $\mathcal{M}_{S}$ with $M_n\gtrsim m_n^*$, we have
\begin{equation*}\label{eq:lemma:peng:2}
R_n(m^*|\mathcal{M}_{S},\bbf) \asymp R_n(\bw^*|\mathcal{M}_{S},\bbf) \asymp R_n(m^*|\mathcal{M}_A,\bbf) \asymp \frac{m_n^*}{n}.
\end{equation*}
Furthermore, under Condition~\ref{condition1}, we have
\begin{equation*}\label{eq:lemma:peng:3}
  R_n(m^*|\mathcal{M}_{S},\bbf) - R_n(\bw^*|\mathcal{M}_{S},\bbf) \asymp R_n(m^*|\mathcal{M}_{S},\bbf).
\end{equation*}
Under Condition~\ref{condition2}, we obtain
\begin{equation*}\label{eq:lemma:peng:4}
  R_n(m^*|\mathcal{M}_{S},\bbf) - R_n(\bw^*|\mathcal{M}_{S},\bbf) = o\left[R_n(m^*|\mathcal{M}_{S},\bbf)\right].
\end{equation*}

For a small set $\mathcal{M}_{S}$ with $M_n=o(m_n^*)$, we have \begin{equation*}\label{eq:lemma:peng:5}
  R_n(m^*|\mathcal{M}_{S},\bbf) - R_n(\bw^*|\mathcal{M}_{S},\bbf) = o\left[R_n(m^*|\mathcal{M}_{S},\bbf)\right].
\end{equation*}

\end{lemma}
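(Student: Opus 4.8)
The plan is to argue everything inside the sequence model of Section~\ref{sec:a:proof:s3:preliminaries}, writing $\lambda\triangleq\sigma^2/n$ and reducing each quantity to the transformed coefficients via the explicit formulas (\ref{eq:risk_m}) and (\ref{eq:riskw}). For $\mathcal{M}_S$ we have $k_m=m$, so (\ref{eq:riskw}) becomes $\sum_{j=1}^{M_n}[(1-\gamma_j)^2\theta_j^2+\lambda\gamma_j^2]+\sum_{j>M_n}\theta_j^2$, which is \emph{separable} in the cumulative weights subject only to $1=\gamma_1\ge\gamma_2\ge\cdots\ge\gamma_{M_n}\ge0$. Each summand is minimized at the unconstrained value $\gamma_j^\circ=\theta_j^2/(\theta_j^2+\lambda)$, and since Assumption~\ref{asmp:regressor_order} makes $|\theta_j|$ non-increasing, the sequence $\gamma_j^\circ$ is automatically non-increasing; as $\gamma_1$ is pinned to $1\ge\gamma_2^\circ$, the ordering constraints are inactive for $j\ge2$. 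This yields the closed form
\[R_n(\bw^*|\mathcal{M}_S,\bbf)=\lambda+\sum_{j=2}^{M_n}\frac{\lambda\theta_j^2}{\theta_j^2+\lambda}+\sum_{j=M_n+1}^{p_n}\theta_j^2,\]
and the elementary bounds $\tfrac12(\lambda\wedge\theta_j^2)\le \lambda\theta_j^2/(\theta_j^2+\lambda)\le \lambda\wedge\theta_j^2$ give the clean comparison $R_n(\bw^*|\mathcal{M}_S,\bbf)\asymp\sum_{j=1}^{M_n}(\lambda\wedge\theta_j^2)+\sum_{j>M_n}\theta_j^2$.

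For the first equivalence I would bound the selection risk from above by a single well-chosen cutoff. Let $m_0$ be the largest index $\le M_n$ with $\theta_{m_0}^2\ge\lambda$. Monotonicity gives $m_0\lambda=\sum_{j\le m_0}(\lambda\wedge\theta_j^2)$ and $\sum_{m_0<j\le M_n}\theta_j^2=\sum_{m_0<j\le M_n}(\lambda\wedge\theta_j^2)$, so $R_n(m_0|\mathcal{M}_S,\bbf)=\sum_{j\le M_n}(\lambda\wedge\theta_j^2)+\sum_{j>M_n}\theta_j^2\asymp R_n(\bw^*|\mathcal{M}_S,\bbf)$. Since selection is a feasible weight, $R_n(\bw^*|\mathcal{M}_S,\bbf)\le R_n(m^*|\mathcal{M}_S,\bbf)\le R_n(m_0|\mathcal{M}_S,\bbf)$, which sandwiches $R_n(m^*|\mathcal{M}_S,\bbf)\asymp R_n(\bw^*|\mathcal{M}_S,\bbf)$ with no decay assumption. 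In the large-set case $M_n\gtrsim m_n^*$ the global minimizer $m_n^*$ lies in $\mathcal{M}_S$, so $R_n(m^*|\mathcal{M}_S,\bbf)=R_n(m^*|\mathcal{M}_A,\bbf)$ and the three risks coincide up to constants.

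The remaining identity $R_n(m^*|\mathcal{M}_A,\bbf)\asymp m_n^*/n$ is the crux: since $R_n(m^*|\mathcal{M}_A,\bbf)=m_n^*\lambda+\sum_{j>m_n^*}\theta_j^2$ and $m_n^*\lambda\asymp m_n^*/n$, it is equivalent to the tail-bias bound $\sum_{j>m_n^*}\theta_j^2\lesssim m_n^*\lambda$. This is exactly where regularity of the coefficients beyond square-summability enters, and I expect it to be the main obstacle: I would derive it from the doubling-type control in Condition~\ref{condition1}/\ref{condition2}, under which $\theta_j^2$ changes by a bounded factor across each window $[l,\kappa l]$ (Condition~\ref{condition1}) or collapses across such a window (Condition~\ref{condition2}), in both cases forcing $\sum_{j>m_n^*}\theta_j^2=O(m_n^*\lambda)$. (For borderline slowly-varying sequences the tail can dominate the variance, so this step is delicate and genuinely uses the decay regularity rather than Assumption~\ref{asmp:square_summable} alone.)

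Finally, for the improvement of MA over MS I would subtract the two closed forms. On the large-set regime $M_n\gtrsim m_n^*$ the optimal cutoff is interior, so $R_n(m^*|\mathcal{M}_S,\bbf)=\sum_{j}(\lambda\wedge\theta_j^2)$, and a direct computation shows the per-coordinate gap is $(\lambda\wedge\theta_j^2)-\lambda\theta_j^2/(\theta_j^2+\lambda)=(\lambda\wedge\theta_j^2)^2/(\theta_j^2+\lambda)$, whence
\[R_n(m^*|\mathcal{M}_S,\bbf)-R_n(\bw^*|\mathcal{M}_S,\bbf)\asymp\sum_{j=2}^{M_n}\frac{(\lambda\wedge\theta_j^2)^2}{\theta_j^2+\lambda}.\]
Each summand is $\asymp(\lambda\wedge\theta_j^2)\cdot(\lambda\wedge\theta_j^2)/(\lambda\vee\theta_j^2)$, i.e.\ the risk re-weighted by a factor that is $\asymp1$ on the transition band $\theta_j^2\asymp\lambda$ and $\to0$ away from it. Under Condition~\ref{condition1} the window argument shows $\{j:\theta_j^2\asymp\lambda\}$ has width $\asymp m_n^*$, so the gap is $\gtrsim m_n^*\lambda\asymp R_n(m^*|\mathcal{M}_S,\bbf)$, giving the $\asymp$ claim; under Condition~\ref{condition2} the band has width $o(m_n^*)$ and the off-band terms sum to $o(m_n^*\lambda)$, giving the $o(\cdot)$ claim. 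For a small set $M_n=o(m_n^*)$ every $j\le M_n$ has $\theta_j^2\ge\lambda$, so the selection optimum is $m^\dagger=M_n$ and the gap equals $\sum_{j=2}^{M_n}\lambda^2/(\theta_j^2+\lambda)\le M_n\lambda/2$, while $R_n(m^*|\mathcal{M}_S,\bbf)\ge\sum_{M_n<j\le m_n^*}\theta_j^2\gtrsim m_n^*\lambda$; since $M_n=o(m_n^*)$ the ratio is $o(1)$, which is the final assertion.
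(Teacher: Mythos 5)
The paper does not actually prove this lemma from scratch: its ``proof'' is a two-line reduction to Assumption~1 of \cite{Peng2021} followed by citations to Theorems~1--2 of \cite{Peng2021} (for $M_n\geq m_n^*$) and to \cite{Xu2022From} (for $M_n<m_n^*$). Your proposal is therefore a genuinely different, self-contained route. Its backbone is sound and matches the machinery the paper develops elsewhere: your closed form for $R_n(\bw^*|\mathcal{M}_S,\bbf)$ is exactly (\ref{eq:riskw_large})--(\ref{eq:gamma_star}), the two-sided comparison $\lambda\theta_j^2/(\theta_j^2+\lambda)\asymp\lambda\wedge\theta_j^2$ is correct, the sandwich via the cutoff $m_0$ cleanly gives the first equivalence with no decay assumption, the per-coordinate gap identity $(\lambda\wedge\theta_j^2)-\lambda\theta_j^2/(\theta_j^2+\lambda)=(\lambda\wedge\theta_j^2)^2/(\theta_j^2+\lambda)$ checks out in both branches, and the small-set argument ($M_n=o(m_n^*)$) is complete and correct as written. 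What your route buys is transparency: it makes visible exactly which claims need only Assumptions~\ref{asmp:square_summable}--\ref{asmp:regressor_order} and which need the decay regularity of Conditions~\ref{condition1}--\ref{condition2}, something the citation-based proof hides.

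Two gaps remain. First, ``in the large-set case $M_n\gtrsim m_n^*$ the global minimizer $m_n^*$ lies in $\mathcal{M}_S$'' is false for the subcase $cm_n^*\leq M_n<m_n^*$ with $c<1$, which the notation $\gtrsim$ permits; there the best model in $\mathcal{M}_S$ is $M_n$ itself and one must bound $\sum_{j=M_n+1}^{m_n^*}\theta_j^2$ by $Cm_n^*/n$, which requires Condition~\ref{condition1} or \ref{condition2} (this is precisely the computation the paper carries out inside the proof of Proposition~\ref{lemma:delta_2}, using $\theta_{G_{d}(m_n^*+1)}^2\leq\delta^{-2d}\theta_{m_n^*+1}^2\leq\delta^{-2d}\sigma^2/n$). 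Second, the step you yourself call the crux, $\sum_{j>m_n^*}\theta_j^2\lesssim m_n^*\sigma^2/n$, is only announced, not proved; you are right that it fails under Assumptions~\ref{asmp:square_summable}--\ref{asmp:regressor_order} alone (take $\theta_j^2$ equal to $2\sigma^2/n$ for $j\leq\sqrt{n}$ and to $\sigma^2/n$ for $\sqrt{n}<j\leq n$: then $m_n^*=\sqrt{n}$ but $R_n(m^*|\mathcal{M}_A,\bbf)\asymp 1\gg m_n^*/n$), so the chain $\asymp m_n^*/n$ genuinely needs the doubling control. Under Condition~\ref{condition1} the missing argument is short --- $\theta_{m_n^*}^2\leq\delta^{-2}\theta_{\lfloor\kappa m_n^*\rfloor}^2\leq\delta^{-2}\theta_{m_n^*+1}^2\leq\delta^{-2}\sigma^2/n$ by (\ref{eq:mnstar}), and then blocking the tail over $(\kappa^dm_n^*,\kappa^{d+1}m_n^*]$ gives $\sum_{j>m_n^*}\theta_j^2\leq(\kappa-1)m_n^*\theta_{m_n^*}^2\sum_{d\geq0}(\kappa\nu^2)^d\lesssim m_n^*/n$ using $\kappa\nu^2<1$ --- but it must be written, and the Condition~\ref{condition2} version of both the tail bound and the ``off-band terms are $o(m_n^*\lambda)$'' claim needs the diagonal argument over $\kappa\downarrow1$ spelled out rather than asserted.
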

\begin{proof}
  Note that Assumption~\ref{asmp:square_summable} is equivalent to
  \begin{equation}\label{eq:sum_theta}
    \frac{1}{n}\left\|\bbf \right\|^2 = \frac{1}{n}\left\|\sum_{j=1}^{p_n}\bD_j\bbf \right\|^2= \frac{1}{n}\left\|\sqrt{n}\sum_{j=1}^{p_n}\bphi_j\theta_j \right\|^2 = \sum_{j=1}^{p_n}\theta_j^2<\infty.
  \end{equation}
  This coincides with Assumption 1 in \cite{Peng2021}. Thus, Theorems~1--2 of \cite{Peng2021} imply the results when $M_n \geq m_n^*$. For cases where $M_n < m_n^*$, see \cite{Xu2022From} for the complete proofs.
\end{proof}

\subsection{Proof of Proposition~\ref{lemma:delta_1}}

We begin by recalling some established results about the optimal MS and MA risks in \cite{Peng2021}. From (\ref{eq:risk_m}), we see that when Assumption~\ref{asmp:regressor_order} holds, the optimal nested model $m_n^{\ast}$ satisfies
\begin{equation}\label{eq:mnstar}
    \theta_{m_n^{\ast}}^2>\frac{\sigma^2}{n}\geq \theta_{m_n^{\ast}+1}^2.
\end{equation}
Thus, the optimal MS risk can be expressed as
\begin{equation}\label{eq:optimal_ms_risk}
  R_n(m^{\ast}|\mathcal{M}_A,\bbf)=\frac{m_n^{\ast}\sigma^2}{n}+\sum_{j=m_n^{\ast}+1}^{p_n}\theta_j^2.
\end{equation}
Using (\ref{eq:riskw}), we obtain the MA risk based on $\mathcal{M}_S$ as
\begin{equation}\label{eq:riskw_large}
  \begin{split}
     R_n\left(\bw|\mathcal{M}_S,\bbf\right)&=\sum_{j=1}^{M_n}\left( \frac{\sigma^2}{n} + \theta_j^2 \right) \left( \gamma_j - \frac{\theta_j^2}{\theta_j^2+\frac{\sigma^2}{n}} \right)^2+\sum_{j=1}^{M_n}\frac{\theta_j^2\sigma^2}{n\theta_j^2+\sigma^2}+\sum_{j=M_n+1}^{p_n}\theta_j^2.
  \end{split}
\end{equation}
The infeasible optimal weight vector $\bw^*|\mathcal{M}_S=(w_1^*,\ldots,w_{M_n}^*)^{\top}$ can be obtained by setting
\begin{equation}\label{eq:gamma_star}
  \gamma_1^*=1,\,\gamma_j^*=\frac{\theta_j^2}{\theta_j^2+\frac{\sigma^2}{n}},\, j=2,\ldots,M_n,
\end{equation}
where $\gamma_j^*=\sum_{m=j}^{M_n}w_m^*$. Thus, the optimal MA risk based on $\mathcal{M}_S$ is
$$
R_n(\bw^*|\mathcal{M}_S,\bbf)=\frac{\sigma^2}{n}+\sum_{j=2}^{M_n}\frac{\theta_j^2\sigma^2}{n\theta_j^2+\sigma^2}+\sum_{j=M_n+1}^{p_n}\theta_j^2.
$$

We first prove the results when Condition~\ref{condition1} and $M_n\gtrsim m_n^*$ hold. Let $G:\mathbb{N}\rightarrow \mathbb{N}$ by
$$
G(x)=\arg\min_{t \in \mathbb{N}}\left( \lfloor \kappa t \rfloor \geq x  \right),
$$
where $\kappa$ is the constant given in Condition~\ref{condition1} and $\mathbb{N}$ is the set of natural numbers. Define a sequence of functions $G_d(x)$ indexed by integer $d$
\begin{equation}\label{eq:Gfunction}
G_d(x) =\left\{\begin{array}{ll}
x &\quad d = 0, \\
\left(G \circ G_{d-1} \right)(x) &\quad d \geq 1,
\end{array}\right.
\end{equation}
where the notation $(f\circ g)(x)$ means the composition of functions $f(g(x))$.

Given a fixed $N$, define $d_1^*=\arg\min_{d \in \mathbb{N}}\nu^{2d} \leq 1/(N-1)$ and $i_n^*=M_n \wedge G_{d_1^*+1}(m_n^*)$, where $0<\nu<1$ is the constant defined in Condition~\ref{condition1}. Since $M_n\gtrsim m_n^*$ and $d_1^*$ is a fixed integer, we see $i_n^* \asymp m_n^*$.
We have
\begin{equation}\label{eq:a135}
  \begin{split}
     \frac{\theta_{m_n^*}^2}{\theta_{i_n^*}^2} & \leq\frac{\theta_{m_n^*}^2}{\theta_{G_1(m_n^*)}^2}\times\frac{\theta_{G_1(m_n^*)}^2}{\theta_{G_2(m_n^*)}^2}\times\cdots\times\frac{\theta_{G_{d_1^*}(m_n^*)}^2}{\theta_{G_{d_1^*+1}(m_n^*)}^2}\times\frac{\theta_{G_{d_1^*+1}(m_n^*)}^2}{\theta_{i_n^*}^2} \\
       &\leq  \nu^{2d_1^*+2}\leq\frac{\nu^2}{N-1},
  \end{split}
\end{equation}
where the second inequality follows from Condition~\ref{condition1} and $\theta_{i_n^*}^2\geq \theta_{G_{d_1^*+1}(m_n^*)}^2$, and the last inequality is due to the definition of $d_1^*$. Therefore
\begin{equation}\label{eq:dis_weight_1}
\gamma_{i_n^*}^{*}- \frac{N-1}{N}\geq \frac{\theta_{i_n^*}^2}{\theta_{i_n^*}^2+\theta_{m_n^*}^2}- \frac{N-1}{N} \geq \frac{N-1}{N-1+\nu^2}- \frac{N-1}{N}\triangleq C_1>0,
\end{equation}
where the first inequality is due to (\ref{eq:mnstar}) and (\ref{eq:gamma_star}), and the second inequality is due to (\ref{eq:a135}).

Define another model index $j_n^*=G_{1}(i_n^*)$. Note that
\begin{equation*}
  \begin{split}
     \frac{\theta_{m_n^*+1}^2}{\theta_{j_n^*}^2} & =\frac{\theta_{m_n^*+1}^2}{\theta_{G_1(m_n^*+1)}^2}\times\frac{\theta_{G_1(m_n^*+1)}^2}{\theta_{G_2(m_n^*+1)}^2}\times\cdots\times\frac{\theta_{G_{d_1^*+1}(m_n^*+1)}^2}{\theta_{i_n^*}^2}\times\frac{\theta_{i_n^*}^2}{\theta_{G_{1}(i_n^*)}^2} \\
       & \geq \delta^{2d_1^*+4}\frac{\theta_{G_{d_1^*+1}(m_n^*+1)}^2}{\theta_{i_n^*}^2},
  \end{split}
\end{equation*}
where $0<\delta<1$ is the constant defined in Condition~\ref{condition1}. Since $i_n^*=M_n \wedge G_{d_1^*+1}(m_n^*)$ and $M_n\gtrsim m_n^*$, there must exist a constant $0<c\leq 1$ such that
\begin{equation*}
  \frac{\theta_{G_{d_1^*+1}(m_n^*+1)}^2}{\theta_{i_n^*}^2}>c
\end{equation*}
under Condition~\ref{condition1}. We thus have
\begin{equation}\label{eq:dis_weight_2}
  1-\gamma_{j_n^*}^{*}\geq 1- \frac{\theta_{j_n^*}^2}{\theta_{j_n^*}^2+\theta_{m_n^*+1}^2}\geq 1- \frac{1}{1+ c\delta^{2d_1^*+4}}\triangleq C_2>0.
\end{equation}

Let $\bw_N^*|\mathcal{M}_S=\arg\min_{\bw \in \mathcal{W}_{|\mathcal{M}_S|}(N)}R_n(\bw | \mathcal{M}_S,\bbf)$ denote the optimal discrete weight vector in $\mathcal{W}_{|\mathcal{M}_S|}(N)$. Note that restricting $\bw_N|\mathcal{M}_{S}=(w_{N,1},\ldots,w_{N,M_n})^{\top} \in \mathcal{W}_{|\mathcal{M}_S|}(N)$ is equivalent to restricting $\bgamma_N|\mathcal{M}_S=(\gamma_{N,1},\ldots,\gamma_{N,M_n})^{\top} \in \Gamma_{|\mathcal{M}_S|}(N)=\{\gamma_{N,j}=t_j/N:N=t_1\geq t_2\geq\cdots\geq t_{M_n}\geq0, t_j \in \mathbb{N}\cup \{0\} \}$, where $\gamma_{N,j}\triangleq\sum_{m=j}^{M_n}w_{N,m}$. Based on (\ref{eq:dis_weight_1}) and (\ref{eq:dis_weight_2}), when $j_n^*<j\leq i_n^*$, we see that the optimal cumulative weights satisfy
$$
\frac{N-1}{N}+C_1\leq \gamma_{j}^{*}\leq 1-C_2.
$$
However, the optimal discrete cumulative weight $\gamma_{N,j}^*=\sum_{m=j}^{M_n}w_{N,m}^*$ is either 1 or $(N-1)/N$ when $j_n^*<j\leq i_n^*$. Combining (\ref{eq:riskw_large}) with (\ref{eq:dis_weight_1}) and (\ref{eq:dis_weight_2}), we see at once that
\begin{equation*}\label{eq:risk_difference_bound}
\begin{split}
&R_n\left(\bw_N^*|\mathcal{M}_{S},\bbf\right)-R_n\left(\bw^*|\mathcal{M}_{S},\bbf\right)\\
&=\sum_{j=1}^{M_n}\left( \frac{\sigma^2}{n} + \theta_j^2 \right) \left( \gamma_{N,j}^* - \frac{\theta_j^2}{\theta_j^2+\frac{\sigma^2}{n}} \right)^2-\sum_{j=1}^{M_n}\left( \frac{\sigma^2}{n} + \theta_j^2 \right) \left( \gamma_{j}^* - \frac{\theta_j^2}{\theta_j^2+\frac{\sigma^2}{n}} \right)^2\\
& \geq \sum_{j=2}^{M_n}\left( \frac{\sigma^2}{n} + \theta_j^2 \right) \left( \gamma_{N,j}^*- \gamma_{j}^* \right)^2\geq \sum_{j=j_n^*+1}^{i_n^*}\frac{\sigma^2}{n}(C_1^2\wedge C_2^2) \\
&= \frac{(C_1^2\wedge C_2^2)(i_n^*-j_n^*)\sigma^2}{n}\asymp \frac{m_n^*}{n}
   \asymp R_n(\bw^*|\mathcal{M}_{S},\bbf),
\end{split}
\end{equation*}
where the constants $C_1$ and $C_2$ are defined in (\ref{eq:dis_weight_1}) and (\ref{eq:dis_weight_2}) respectively, and the last approximation follows from Lemma~\ref{lemma:peng}.

Due to
\begin{equation}\label{eq:relation3}
  R_n\left(\bw^*|\mathcal{M}_S,\bbf\right)\leq R_n\left(\bw_N^*|\mathcal{M}_S,\bbf\right)\leq R_n\left(m^*|\mathcal{M}_S,\bbf\right),
\end{equation}
the proof of the results under Condition~\ref{condition2} or $M_n=o(m_n^*)$ is a direct application of Lemma~\ref{lemma:peng}. This completes the proof.

\subsection{Proof of Examples~1--2 in Section~\ref{subsec:review_hansen}}

When $\theta_j=j^{-\alpha_1}, \alpha_1>1/2$, according to Theorem~1 and Example~1 of \cite{Peng2021}, we have $m_n^* \sim (n/\sigma^2)^{1/(2\alpha_1)}$ and $$
R_n(\bw^*|\mathcal{M}_A,\bbf)\asymp R_n(m^*|\mathcal{M}_A,\bbf) \asymp m_n^*/n \asymp n^{-1+1/(2\alpha_1)}.$$
In addition, we have
$$
R_n(\bw^*|\mathcal{M}_A,\bbf) \leq R_n\left(\bw_N^*|\mathcal{M}_A,\bbf\right)\leq R_n(m^*|\mathcal{M}_A,\bbf)\leq 2R_n(\bw^*|\mathcal{M}_A,\bbf),
$$
where the last inequality follows from (A.6) of \cite{Peng2021}. Thus, combining with Proposition~\ref{lemma:delta_1}, there exists a constant $1<C<2$ such that  $R_n\left(\bw_N^*|\mathcal{M}_A,\bbf\right)>CR_n(\bw^*|\mathcal{M}_A,\bbf)$ when $n$ is sufficiently large.

For the case when $\theta_j=\exp(-cj^{\alpha_2})$, $\alpha_2>0$, the results follow directly from Example~2 of \cite{Peng2021} and Proposition~\ref{lemma:delta_1} in this paper.

\subsection{Proof of Proposition~\ref{lemma:delta_2}}

Under Condition~\ref{condition1}, in a manner of proof by contradiction, we first check that a necessary condition for (\ref{eq:condition_wan}) is $M_n=o(m_n^*)$. Suppose $M_n\geq m_n^*$, it is already seen from \cite{Peng2021} that $$R_n\left(\bw^*|\mathcal{M}_S,\bbf \right) \asymp R_n(m^*|\mathcal{M}_A,\bbf) \asymp \frac{m_n^*}{n}$$
under Condition~\ref{condition1}. We thus obtain
\begin{equation}\label{eq:condition_wan2}
\begin{split}
   & \frac{|\mathcal{M}_S|\sum_{m=1}^{|\mathcal{M}_S|}R_n\left(m|\mathcal{M}_S,\bbf\right)}{nR_n^2\left(\bw^*|\mathcal{M}_S,\bbf  \right)} \gtrsim \frac{M_n^2m_n^*/n}{(m_n^*)^2/n}\\
   &=\frac{M_n^2}{m_n^*}\geq M_n \geq m_n^* \to \infty,
\end{split}
\end{equation}
which contradicts the assumption (\ref{eq:condition_wan}). Suppose $M_n < m_n^*$ but $M_n \asymp m_n^*$, there must exist a constant $C>1$ and a positive integer $K$ such that for any $n>K$, we have $m_n^*<CM_n$. In this case, the main task is to show the risk of the optimal single model in $\mathcal{M}_S$ and the risk of the optimal averaged model based on $\mathcal{M}_S$ both have the order $m_n^*/n$. Note first that the optimal single model in $\mathcal{M}_S$ needs to include the first $M_n$ regressors, which has the risk
$$
R_n\left( m^*|\mathcal{M}_S,\bbf \right) = \frac{M_n\sigma^2}{n} + \sum_{j=M_n+1}^{m_n^*}\theta_j^2+\sum_{j=m_n^*+1}^{p_n}\theta_j^2.
$$ As there must exist an index $d_2^*$ such that $G_{d_2^*}(m_n^*+1)\leq m_n^*/C<M_n$, it follows that the second term in $R_n\left( m^*|\mathcal{M}_S,\bbf \right)$ is bounded by
\begin{equation}
\begin{split}
   \sum_{j=M_n+1}^{m_n^*}\theta_j^2 & \leq \left(m_n^*-M_n\right)\theta_{G_{d_2^*}(m_n^*+1)}^2\leq \frac{\left(m_n^*-M_n\right)\theta_{m_n^*+1}^2}{\delta^{2d_2^*}} \\
     & \leq \frac{\left(m_n^*-M_n\right)\sigma^2}{n\delta^{2d_2^*}}\lesssim \frac{m_n^*}{n},
\end{split}
\end{equation}
where the second inequality follows from Condition~\ref{condition1}, and the third inequality follows from (\ref{eq:mnstar}). Since the order of the last term in $R_n\left( m^*|\mathcal{M}_S,\bbf \right)$ is also no bigger than $m_n^*/n$ \citep{Peng2021}, we thus get $R_n\left( m^*|\mathcal{M}_S,\bbf \right)\asymp m_n^*/n$. Furthermore, it is easy to check that
\begin{equation*}
\begin{split}
    & R_n\left( m^*|\mathcal{M}_S,\bbf \right) \geq R_n(\bw^*|\mathcal{M}_S,\bbf) \\
     & \geq R_n(\bw^*|\mathcal{M}_L,\bbf) \asymp R_n(m^*|\mathcal{M}_A,\bbf) \asymp \frac{m_n^*}{n},
\end{split}
\end{equation*}
where $\mathcal{M}_L=\{\{1,\ldots,m \}:1\leq m \leq M_n\}$ with $M_n \geq m_n^*$, and the last two approximations are due to \cite{Peng2021}. It follows immediately that $R_n(\bw^*|\mathcal{M}_S,\bbf) \asymp m_n^*/n$. In the same manner of (\ref{eq:condition_wan2}), we also obtain a contradiction of assumption (\ref{eq:condition_wan}) when $M_n < m_n^*$ and $M_n \asymp m_n^*$. Thus, under Condition~\ref{condition1}, a necessary condition for (\ref{eq:condition_wan}) is $M_n=o(m_n^*)$.

Define $d_3^*=\arg\max_{d\in \mathbb{N}}\{G_{d}(m_n^*)\geq M_n\}$. Since $M_n=o(m_n^*)$, we have $d_3^*\to \infty$ as $n\to \infty$. Then the MA risk is lower bounded by
\begin{equation}\label{eq:dndkna}
\begin{split}
&R_n(\bw^*|\mathcal{M}_S,\bbf) \geq \sum_{j=M_n+1}^{m_n^*}\theta_j^2\\
&= \sum_{j=G_1(m_n^*)+1}^{m_n^*}\theta_j^2 + \sum_{j=G_2(m_n^*)+1}^{G_1(m_n^*)}\theta_j^2 + \cdots + \sum_{j=G_{d_3^*}(m_n^*)+1}^{G_{d_3^*-1}(m_n^*)}\theta_j^2 \\
   &\geq \theta_{m_n^*}^2[m_n^*-G_1(m_n^*)]+ \theta_{G_1(m_n^*)}^2[G_1(m_n^*)-G_2(m_n^*)]+\cdots+\theta_{G_{d_3^*-1}(m_n^*)}^2[G_{d_3^*-1}(m_n^*)-G_{d_3^*}(m_n^*)] \\
   &\geq \frac{\sigma^2}{n}\left(m_n^*-\frac{m_n^*}{\kappa}\right)+\frac{\sigma^2}{n\nu^2}\left(\frac{m_n^*}{\kappa}-\frac{m_n^*}{\kappa^2}\right)+\cdots+\frac{\sigma^2}{n\nu^{2(d_3^*-1)}}\left(\frac{m_n^*}{\kappa^{d_3^*-1}}-\frac{m_n^*}{\kappa^{d_3^*}}\right)\\
     & \geq \frac{m_n^*\sigma^2}{n}\left(1-\frac{1}{\kappa}\right)\sum_{l=0}^{d_3^*-1}\frac{1}{(\kappa\nu^{2})^l},
\end{split}
\end{equation}
where the first inequality follows from (\ref{eq:riskw}), and the third inequality is due to (\ref{eq:mnstar}) and Condition~\ref{condition1}. Since $d_3^*\to \infty$ and $\kappa\nu^{2}<1$, we thus get $$\sum_{l=0}^{d_3^*-1}\frac{1}{(\kappa\nu^{2})^l}\to \infty.$$
Due to $R_n(m^*|\mathcal{M}_A,\bbf)\asymp m_n^*/n$, from (\ref{eq:dndkna}) we conclude $R_n(m^*|\mathcal{M}_A,\bbf)=o[R_n(\bw^*|\mathcal{M}_S,\bbf)]$.

When Condition~\ref{condition2} holds, using the proof by contradiction again, we see that a necessary condition for (\ref{eq:condition_wan}) is $M_n \leq \lfloor C m_n^* \rfloor$ with a constant $0<C<1$. Note that $\lfloor C m_n^* \rfloor \leq \lfloor (C+1) m_n^*/2 \rfloor\leq m_n^*$. Then the MA risk is lower bounded by
\begin{equation*}
\begin{split}
    & R_n(\bw^*|\mathcal{M}_S,\bbf)\geq \sum_{j=M_n+1}^{\lfloor (C+1) m_n^*/2 \rfloor}\theta_j^2 \\
     & \geq ( \lfloor (C+1) m_n^*/2 \rfloor - M_n)\theta_{\lfloor (C+1) m_n^*/2 \rfloor}^2.
\end{split}
\end{equation*}
Under Condition~\ref{condition2}, we have $\theta_{\lfloor (C+1) m_n^*/2 \rfloor}^2/\theta_{m_n^*}^2\to \infty$ and $\theta_{m_n^*}^2 \asymp 1/n$. Thus we get $R_n(m^*|\mathcal{M}_A,\bbf)=o[R_n(\bw^*|\mathcal{M}_S,\bbf)]$, which proves the proposition.

\subsection{Proof of Examples~1--2 in Section~\ref{subsec:review_wan}}

Based on the risk of MS (\ref{eq:risk_m}), we have
\begin{equation}\label{eq:add1}
\begin{split}
\sum_{m=1}^{|\mathcal{M}_S|}R_n\left(m|\mathcal{M}_S,\bbf\right)&= \sum_{m=1}^{M_n}R_n\left(m|\mathcal{M}_S,\bbf \right)\\
    &= \sum_{j=1}^{M_n}\frac{j}{n}\sigma^2+\sum_{j=2}^{p_n}\theta_j^2+\cdots+\sum_{j=M_n+1}^{p_n}\theta_j^2 \\
     & = \sum_{j=1}^{M_n}\frac{j}{n}\sigma^2 + \sum_{j=2}^{M_n}(j-1)\theta_j^2+M_n\sum_{j=M_n+1}^{p_n}\theta_j^2.
\end{split}
\end{equation}
When $\theta_j=j^{-\alpha_1}, \alpha_1>1/2$, approximating the sums in (\ref{eq:add1}) by integrals, we obtain that the numerator of (\ref{eq:condition_wan}) has the order
\begin{equation*}
M_n\sum_{m=1}^{M_n}R_n\left(m|\mathcal{M}_S,\bbf \right) \asymp\left\{\begin{array}{ll}
M_n^{-2\alpha_1+3} &\quad 1/2<\alpha_1<1, \\
M_n\log{M_n} &\quad \alpha_1=1,\\
M_n &\quad \alpha_1>1.
\end{array}\right.
\end{equation*}

We now turn to evaluate the order of the denominator of (\ref{eq:condition_wan}). Define $g(x)=\int_{0}^{\frac{1}{1+x^{2\alpha_1}}}t^{1-\frac{1}{2\alpha_1}}(1-t)^{\frac{1}{2\alpha_1}}dt$ and $g'(x)=-\frac{2\alpha_1}{1+x^{2\alpha_1}}$. Based on the proof of Example~1 in \cite{Peng2021}, we have
\begin{equation}\label{eq:add2}
\begin{split}
R_n(\bw^*|\mathcal{M}_S, \bbf)& \asymp n^{-1+1/(2\alpha_1)}\left[g(0) - g\left(\frac{M_n}{m_n^*}\right) \right]+M_n^{-2\alpha_1+1} \\
     &\asymp n^{-1+1/(2\alpha_1)} \left[-g'(0) \left(\frac{M_n}{m_n^*}\right) \right]+M_n^{-2\alpha_1+1}\\
     &\asymp \frac{M_n}{n} + M_n^{-2\alpha_1+1} \asymp M_n^{-2\alpha_1+1},
\end{split}
\end{equation}
where the second approximation follows from Taylor's expansion, the third approximation follows from $m_n^*\asymp n^{1/(2\alpha_1)}$, and the last approximation follows from the fact $M_n=o(m_n^*)$ and $m_n^*\asymp n^{1/(2\alpha_1)}$. Combining (\ref{eq:add1}) with (\ref{eq:add2}) gives (\ref{eq:M_n_order1}).

When $\theta_j=\exp(-j^{\alpha_2})$, $\alpha_2>0$, in the same manner, we can see that the numerator of (\ref{eq:condition_wan}) has the order $M_n$. Define $\mbox{Ga}(x;a)=\int_{t=x}^{\infty}t^{a-1}\exp(-t)dt$ for $x>0$. Based on the proof of Example~2 in \cite{Peng2021}, we have
\begin{equation*}
  \begin{split}
      R_n(\bw^*|\mathcal{M}_S,\bbf)& \asymp R_n(m^*|\mathcal{M}_S,\bbf)\\
      &\asymp \frac{M_n}{n}+\mbox{Ga}\left(2M_n^{\alpha_2};\frac{1}{\alpha_2}\right) \\
       & \asymp \frac{M_n}{n} +(2M_n^{\alpha_2})^{\frac{1}{\alpha_2}-1}\exp(-2M_n^{\alpha_2}),
  \end{split}
\end{equation*}
where the first approximation follows from Lemma~\ref{lemma:peng}, and the third approximation is based on the asymptotic expansion of the incomplete gamma-function. Thus (\ref{eq:condition_wan}) is reduced to  $M_n<\left(1/2 \right)^{1/\alpha_2}m_n^*$, where $m_n^*=[(1/2)\log(n/\sigma^2)]^{1/\alpha_2}$. This completes the proof.

\section{Proofs of the results under the nested setup}\label{sec:a:proof:s4}

\subsection{Notations}\label{sec:a:proof:s4:notation}

In this section, we employ the symbols defined in Section~\ref{sec:a:proof:s3:preliminaries}. In addition, for any $n\times n$ real matrix $\bA$, we denote the operator norm and the Frobenius norm of $\bA$ as $\|\bA \|_2 = \max_{\|\bx \| \leq 1}\| \bA \bx\|$ and $\|\bA \|_{\mathrm{F}} = (\sum_{i,j}a_{i,j}^2)^{1/2}$, respectively. We define $\frac{0}{0} = 0$ in the proof for simplicity of notation.

\subsection{Technical lemma}\label{sec:a:proof:s4:preliminaries}

\begin{lemma}\label{lemma}
Let $\{\xi(t), t \in \mathcal{T}\}$ be a stochastic process with $\mathbb{E}\xi(t) = 0$ and finite
variance $\mathbb{E}[\xi(t)]^2 = \sigma^2(t)$ for all $t \in \mathcal{T}$, where $\mathcal{T}$ is a finite index set. Suppose that there exist $\lambda>0$ and $\varphi(\lambda)<\infty$ such that
\begin{equation}\label{eq:lemma_con}
  \max_{t \in \mathcal{T}} \mathbb{E}\exp(\lambda| \xi(t) |)\leq \varphi(\lambda).
\end{equation}
Then for all $r\geq 1$, there exists a constant $C$ depending on $\lambda$ and $r$ such that
\begin{equation*}
  \left(\mathbb{E} \max_{t \in \mathcal{T}}| \xi(t) |^r\right)^{\frac{1}{r}} \leq C (\log |\mathcal{T}|+1).
\end{equation*}
\end{lemma}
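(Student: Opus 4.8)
The plan is to reduce the maximal inequality to a single moment-generating-function (MGF) bound on $Z \triangleq \max_{t\in\mathcal{T}}|\xi(t)|$, and then convert that MGF control into a moment bound by integrating the tail. First I would observe that the maximum of exponentials is dominated by their sum, so that
\[
  \exp(\lambda Z)=\max_{t\in\mathcal{T}}\exp\!\big(\lambda|\xi(t)|\big)\leq \sum_{t\in\mathcal{T}}\exp\!\big(\lambda|\xi(t)|\big).
\]
Taking expectations and invoking the hypothesis (\ref{eq:lemma_con}) yields $\mathbb{E}\exp(\lambda Z)\leq |\mathcal{T}|\varphi(\lambda)$. Since $\exp(\lambda|\xi(t)|)\geq 1$ pointwise we also have $\varphi(\lambda)\geq 1$, hence $|\mathcal{T}|\varphi(\lambda)\geq 1$, a fact I will use to keep the relevant threshold nonnegative.

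Next I would pass from the MGF bound to a tail bound by a Chernoff argument: for every $u\geq 0$,
\[
  \mathbb{P}(Z\geq u)\leq e^{-\lambda u}\,\mathbb{E}\exp(\lambda Z)\leq |\mathcal{T}|\varphi(\lambda)\,e^{-\lambda u}.
\]
The key device for producing a $\log|\mathcal{T}|$ factor rather than a power of $|\mathcal{T}|$ is to locate the threshold $u_0 \triangleq \lambda^{-1}\log\!\big(|\mathcal{T}|\varphi(\lambda)\big)\geq 0$ at which this exponential tail first becomes nontrivial, and to split the moment integral there. Writing $\mathbb{E}Z^r=\int_0^\infty r u^{r-1}\,\mathbb{P}(Z\geq u)\,du$, I would bound the contribution over $[0,u_0]$ using the trivial estimate $\mathbb{P}(Z\geq u)\leq 1$, giving $\int_0^{u_0}r u^{r-1}\,du=u_0^r$, and bound the contribution over $[u_0,\infty)$ by the tail estimate. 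After the substitution $v=u-u_0$ and the elementary inequality $(v+u_0)^{r-1}\leq C_r\big(v^{r-1}+u_0^{r-1}\big)$, valid for $r\geq 1$, the latter reduces to a gamma integral plus a term of order $u_0^{r-1}$.

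Combining the two pieces gives $\mathbb{E}Z^r\leq u_0^r+C(r,\lambda)\big(1+u_0^{r-1}\big)$. Since $u_0=\lambda^{-1}\big(\log|\mathcal{T}|+\log\varphi(\lambda)\big)\lesssim \log|\mathcal{T}|+1$ and $\log|\mathcal{T}|+1\geq 1$, each term is at most a constant multiple of $(\log|\mathcal{T}|+1)^r$, and taking $r$-th roots yields the claim. The step I expect to require the most care is precisely the manipulation that turns exponential MGF control into a logarithmic factor: a direct application of the pointwise bound $z^r\lesssim e^{\lambda z}$ to $\mathbb{E}Z^r\leq C_r\,\mathbb{E}e^{\lambda Z}$ would leave a factor $\big(|\mathcal{T}|\varphi(\lambda)\big)^{1/r}$, i.e.\ a \emph{power} of $|\mathcal{T}|$, which is far weaker than the desired $\log|\mathcal{T}|$. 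Splitting the tail integral at $u_0$ is exactly what avoids this loss, so the delicate part is to verify that the contribution from $u\geq u_0$ is genuinely of lower order in $u_0$ and never reintroduces a power of $|\mathcal{T}|$.
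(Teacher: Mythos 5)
Your proposal is correct, and it reaches the stated bound by a genuinely different route from the paper. The paper's proof is a one-shot Jensen argument: it writes $|\lambda\xi(t)|^r=\log^r\bigl[\exp(\lambda|\xi(t)|)\bigr]$, shifts the argument by $\exp(r-1)$ so that $F(x)=\log^r[x+\exp(r-1)]$ is concave on $(0,\infty)$ (checked via $F''\leq 0$), applies Jensen to pull the expectation inside $F$, and then bounds the maximum of exponentials by their sum to invoke the hypothesis, landing directly on $\lambda^{-1}\log[\varphi(\lambda)|\mathcal{T}|+\exp(r-1)]\leq C(\log|\mathcal{T}|+1)$. You instead first establish $\mathbb{E}\exp(\lambda Z)\leq|\mathcal{T}|\varphi(\lambda)$ for $Z=\max_t|\xi(t)|$ by the same max-of-exponentials-versus-sum device, then convert this into the tail bound $\mathbb{P}(Z\geq u)\leq e^{-\lambda(u-u_0)}$ for $u\geq u_0=\lambda^{-1}\log(|\mathcal{T}|\varphi(\lambda))$, and integrate the layer-cake formula with a split at $u_0$; your observations that $\varphi(\lambda)\geq 1$ (so $u_0\geq 0$), that $(v+u_0)^{r-1}\leq C_r(v^{r-1}+u_0^{r-1})$ for $r\geq1$, and that the naive pointwise bound $z^r\lesssim e^{\lambda z}$ would only give a $|\mathcal{T}|^{1/r}$ factor are all accurate, and the resulting estimate $\mathbb{E}Z^r\leq u_0^r+C(r,\lambda)(1+u_0^{r-1})\leq C(\log|\mathcal{T}|+1)^r$ is exactly what is needed. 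The trade-off is that the paper's argument is shorter and avoids any tail integration, with the entire logarithmic gain packaged into the concavity of $F$, whereas your argument is longer but more modular: it isolates the MGF bound, the Chernoff step, and the critical-level split as separate reusable pieces, makes transparent where the $\log|\mathcal{T}|$ comes from, and yields a high-probability tail statement for $Z$ as a by-product. In both proofs the constant $C$ depends on $\varphi(\lambda)$ as well as on $\lambda$ and $r$, which is consistent with how the lemma is used in the paper.
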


\begin{proof}
The proof of this lemma is motivated by Lemma~2.1 in \cite{Golubev2008}. Notice that for $r \geq 1$, the function $F(x)=\log^r[x+\exp(r-1)]$ is concave on $(0,\infty)$ since
\begin{equation*}
  F''(x) = \frac{r\log^{r-2}[x+\exp(r-1)]}{[x+\exp(r-1)]^2}\left\{r-1-\log\left[ x+\exp(r-1)\right]  \right\}\leq 0.
\end{equation*}
Using Jensen's inequality, we have
  \begin{equation*}
  \begin{split}
\left[\mathbb{E} \max_{t \in \mathcal{T}}| \xi(t) |^r\right]^{\frac{1}{r}}
&=\frac{1}{\lambda}\left\{ \mathbb{E} \left[\max_{t \in \mathcal{T}}|\lambda \xi(t) |\right]^r \right\}^{\frac{1}{r}}= \frac{1}{\lambda}\left\{\mathbb{E}\log^r \left[\exp\left( \max_{t \in \mathcal{T}}|\lambda \xi(t) | \right)\right]\right\}^{\frac{1}{r}}  \\
& \leq \frac{1}{\lambda}\left\{\mathbb{E}\log^r \left[\exp\left( \max_{t \in \mathcal{T}}|\lambda \xi(t) | \right)+\exp(r-1)\right]\right\}^{\frac{1}{r}} \\
&\leq \frac{1}{\lambda}\log \left[\mathbb{E}\exp\left(\max_{t \in \mathcal{T}}\lambda| \xi(t) |\right)+ \exp(r-1) \right]\\
&\leq \frac{1}{\lambda}\log \left[\sum_{t\in\mathcal{T}}\mathbb{E}\exp(\lambda| \xi(t) |)+ \exp(r-1) \right]\\
&\leq \frac{\log \left[ \varphi(\lambda)|\mathcal{T}| + \exp(r-1) \right]}{\lambda} \leq C(\log |\mathcal{T}|+1),
  \end{split}
  \end{equation*}
  which proves the lemma.

\end{proof}

\subsection{Proof of Theorem~\ref{theorem:main}}\label{sec:Proof_main}

Recall that $\hat{\theta}_{l} =\bphi_l^{\top}\by/\sqrt{n}$, $\theta_l=\bphi_l^{\top}\bbf/\sqrt{n}$, and $e_l=\bphi_l^{\top}\beps/\sqrt{n}$, $l=1,\ldots,p_n$. Define $z_l=\sqrt{n}e_l/\sigma,l=1,\ldots,k_{M_n}$, $\hat{\gamma}_j=\sum_{m=j}^{M_n}\hat{w}_m$, $\gamma_j^*=\sum_{m=j}^{M_n}w_m^*$, $j=1,\ldots,M_n$, where $\hat{w}_m$ and $w_m^*$ are $m$-th elements of $\hat{\bw}|\mathcal{M}$ and $\bw^*|\mathcal{M}$, respectively.
Based on (\ref{eq:lossw}) and (\ref{eq:cr}), we have
\begin{equation}\label{eq:addd1}
\begin{split}
    & L_n(\bw|\mathcal{M},\bbf)- C_{n}(\bw|\mathcal{M},\by) \\
     & = 2\sum_{j=1}^{M_n}\sum_{l=k_{j-1}+1}^{k_j}\left[\gamma_j\hat{\theta}_l(\hat{\theta}_l-\theta_l) \right]-2\sum_{j=1}^{M_n}\sum_{l=k_{j-1}+1}^{k_j}\gamma_j\frac{\hat{\sigma}^2}{n}+\sum_{j=1}^{p_n}\theta_j^2-\frac{1}{n}\sum_{i=1}^{n}y_i^2\\
     & = 2\sum_{j=1}^{M_n}\sum_{l=k_{j-1}+1}^{k_j}\gamma_j\left( e_l^2-\frac{\sigma^2}{n}+\theta_le_l \right)+2\sum_{j=1}^{M_n}\sum_{l=k_{j-1}+1}^{k_j}\gamma_j\left( \frac{\sigma^2}{n} - \frac{\hat{\sigma}^2}{n} \right)\\
     & \quad+ \frac{1}{n}\sum_{j=1}^{p_n} \left(\bphi_j^{\top}\bbf\right)^2 - \frac{1}{n}\|\bbf \|^2 - \frac{1}{n}\bbf^{\top}\beps - \frac{1}{n}\|\beps \|^2\\
     & = 2\sum_{j=1}^{M_n}\sum_{l=k_{j-1}+1}^{k_j}\gamma_j\left( e_l^2-\frac{\sigma^2}{n}+\theta_le_l \right)+2\sum_{j=1}^{M_n}\sum_{l=k_{j-1}+1}^{k_j}\gamma_j\left( \frac{\sigma^2}{n} - \frac{\hat{\sigma}^2}{n} \right)\\
     & \quad- \frac{1}{n}\bbf^{\top}\beps - \frac{1}{n}\|\beps \|^2,
\end{split}
\end{equation}
where the second equality follows from $\hat{\theta}_{l} = \theta_l + e_l$ and $\theta_j=\bphi_j^{\top}\bbf/\sqrt{n}$, and the last step follows from $\|\bbf \|^2=\sum_{j=1}^{p_n} \left(\bphi_j^{\top}\bbf\right)^2$. In addition, for any non-random $\bw|\mathcal{M}$, we have
\begin{equation}\label{eq:addd2}
  \begin{split}
      & \mathbb{E}C_{n}(\bw|\mathcal{M},\by)-R_n(\bw|\mathcal{M},\bbf) \\
       & =\mathbb{E}\sum_{j=1}^{M_n}\sum_{l=k_{j-1}+1}^{k_j}\left[(\gamma_j^2-2\gamma_j)(\hat{\theta}_l^2 - \theta_l^2) + 2\gamma_j \frac{\hat{\sigma}^2}{n} - \gamma_j^2\frac{\sigma^2}{n}  \right]\\
       &\quad + \frac{1}{n}\mathbb{E}\sum_{i=1}^{n}y_i^2 - \sum_{j=1}^{p_n}\theta_j^2\\
       & = 2\mathbb{E}\sum_{j=1}^{M_n}\sum_{l=k_{j-1}+1}^{k_j}\gamma_j\left( \frac{\hat{\sigma}^2}{n} - \frac{\sigma^2}{n}\right)+ \frac{1}{n}\mathbb{E}\left(\|\bbf\|^2+2\bbf^{\top}\beps + \|\beps \|^2 \right) - \sum_{j=1}^{p_n}\theta_j^2\\
       & = 2\mathbb{E}\sum_{j=1}^{M_n}\sum_{l=k_{j-1}+1}^{k_j}\gamma_j\left( \frac{\hat{\sigma}^2}{n} - \frac{\sigma^2}{n}\right) + \sigma^2,
  \end{split}
\end{equation}
where the first equality follows from (\ref{eq:riskw}) and (\ref{eq:cr}), the second equality follows from $\mathbb{E}{\hat{\theta}_l^2}= \theta_l^2+\sigma^2/n$, and the last equality is due to $\|\bbf \|^2=\sum_{j=1}^{p_n} \left(\bphi_j^{\top}\bbf\right)^2= n\sum_{j=1}^{p_n}\theta_j^2$. Combining (\ref{eq:addd1}) with (\ref{eq:addd2}), we have
\begin{equation}\label{eq:difference1}
  \begin{split}
     \mathbb{E}L_n(\hat{\bw}|\mathcal{M},\bbf)&= \mathbb{E}C_n(\hat{\bw}|\mathcal{M},\by)-\sigma^2
       +2\mathbb{E}\sum_{j=1}^{M_n}\sum_{l=k_{j-1}+1}^{k_j}\hat{\gamma}_j\left( e_l^2-\frac{\sigma^2}{n}+\theta_le_l \right)\\
       &+2\mathbb{E}\sum_{j=1}^{M_n}\sum_{l=k_{j-1}+1}^{k_j}\hat{\gamma}_j\left( \frac{\sigma^2}{n} - \frac{\hat{\sigma}^2}{n} \right)\\
\leq R_n(\bw^*|\mathcal{M},\bbf)&+\frac{2\sigma^2}{n}\mathbb{E}\left|\sum_{j=1}^{M_n}\sum_{l=k_{j-1}+1}^{k_j}\hat{\gamma}_j\left( z_l^2-1\right) \right| +\frac{2\sigma}{\sqrt{n}}\mathbb{E}\left|\sum_{j=1}^{M_n}\sum_{l=k_{j-1}+1}^{k_j}(1-\hat{\gamma}_j)\theta_lz_l\right|\\
&+2\mathbb{E}\left|\sum_{j=1}^{M_n}\sum_{l=k_{j-1}+1}^{k_j}\hat{\gamma}_j\left(\frac{\sigma^2}{n}-\frac{\hat{\sigma}^2}{n}\right)\right|+2\mathbb{E}\left|\sum_{j=1}^{M_n}\sum_{l=k_{j-1}+1}^{k_j}\gamma_j^*\left(\frac{\sigma^2}{n}-\frac{\hat{\sigma}^2}{n}\right)\right|,
  \end{split}
\end{equation}
where the inequality in (\ref{eq:difference1}) follows from $C_n(\hat{\bw}|\mathcal{M},\by) \leq C_n(\bw^*|\mathcal{M},\by)$ and the absolute value inequalities, and $z_l=\sqrt{n}e_l/\sigma$, $l=1,\ldots,k_{M_n}$. From (\ref{eq:riskw}) with (\ref{eq:cr}), in the same manner we can see that
\begin{equation}\label{eq:addd3}
  \begin{split}
      & R_n(\bw|\mathcal{M},\bbf)- C_{n}(\bw|\mathcal{M},\by) \\
       & = \sum_{j=1}^{M_n}\sum_{l=k_{j-1}+1}^{k_j} \left[ (\gamma_j^2-2\gamma_j)(\theta_l^2 - \hat{\theta}_l^2) +\gamma_j^2\frac{\sigma^2}{n} -2\gamma_j\frac{\hat{\sigma}^2}{n} \right]+\sum_{j=1}^{p_n}\theta_j^2-\frac{1}{n}\sum_{i=1}^{n}y_i^2\\
       & = \sum_{j=1}^{M_n}\sum_{l=k_{j-1}+1}^{k_j}(\gamma_j^2-2\gamma_j)\left(\frac{\sigma^2}{n}-e_l^2-2\theta_le_l\right)+2    \sum_{j=1}^{M_n}\sum_{l=k_{j-1}+1}^{k_j}\gamma_j\left(\frac{\sigma^2}{n}-\frac{\hat{\sigma}^2}{n}\right)\\
       & \quad- \frac{1}{n}\bbf^{\top}\beps - \frac{1}{n}\|\beps \|^2,
  \end{split}
\end{equation}
where the second equality follows from $\hat{\theta}_l^2=\theta_l^2 + 2\theta_le_l +e_l^2$. Combining (\ref{eq:addd3}) with (\ref{eq:addd2}), we have
\begin{equation}\label{eq:difference2}
  \begin{split}
     \mathbb{E}R_n(\hat{\bw}|\mathcal{M},\bbf) &=\mathbb{E}C_n(\hat{\bw}|\mathcal{M},\by)-\sigma^2+\mathbb{E}\sum_{j=1}^{M_n}\sum_{l=k_{j-1}+1}^{k_j}(\hat{\gamma}_j^2-2\hat{\gamma}_j)\left(\frac{\sigma^2}{n}-e_l^2-2\theta_le_l\right)\\
     &+2    \mathbb{E}\sum_{j=1}^{M_n}\sum_{l=k_{j-1}+1}^{k_j}\hat{\gamma}_j\left(\frac{\sigma^2}{n}-\frac{\hat{\sigma}^2}{n}\right)  \\
\leq R_n(\bw^*|\mathcal{M},\bbf)&+\frac{\sigma^2}{n}\mathbb{E}\left|\sum_{j=1}^{M_n}\sum_{l=k_{j-1}+1}^{k_j}\hat{\gamma}_j^2\left( z_l^2-1\right) \right| +\frac{2\sigma^2}{n}\mathbb{E}\left|\sum_{j=1}^{M_n}\sum_{l=k_{j-1}+1}^{k_j}\hat{\gamma}_j\left( z_l^2-1\right) \right| \\
&+\frac{2\sigma}{\sqrt{n}}\mathbb{E}\left|\sum_{j=1}^{M_n}\sum_{l=k_{j-1}+1}^{k_j}(1-\hat{\gamma}_j)^2\theta_lz_l\right|+2\mathbb{E}\left|\sum_{j=1}^{M_n}\sum_{l=k_{j-1}+1}^{k_j}\hat{\gamma}_j\left(\frac{\sigma^2}{n}-\frac{\hat{\sigma}^2}{n}\right)\right|\\
&+2\mathbb{E}\left|\sum_{j=1}^{M_n}\sum_{l=k_{j-1}+1}^{k_j}\gamma_j^*\left(\frac{\sigma^2}{n}-\frac{\hat{\sigma}^2}{n}\right)\right|.
  \end{split}
\end{equation}
The main idea of the proof is to take the upper bounds of the terms in (\ref{eq:difference1}) and (\ref{eq:difference2}).

\subsubsection{Upper bounding $\mathbb{E}\left|\sum_{j=1}^{M_n}\sum_{l=k_{j-1}+1}^{k_j}\hat{\gamma}_j( z_l^2-1) \right|$}\label{sec:upper_bounding_term_1}

We first bound $\mathbb{E}\left|\sum_{j=1}^{M_n}\sum_{l=k_{j-1}+1}^{k_j}\hat{\gamma}_j( z_l^2-1) \right|$. Define $k_0=0$, $\hat{\gamma}_{M_n+1}=0$, and a random variable $\kappa_1=\max_{1\leq j \leq M_n}\{| \sum_{l=1}^{k_j}(z_l^2-1) |k_j^{-1/2}\}$. Note that
\begin{equation}\label{eq:F1}
\begin{split}
   &\sum_{j=1}^{M_n}\frac{\left(  k_j^{\frac{1}{2}} -k_{j-1}^{\frac{1}{2}}\right)^2}{k_j-k_{j-1}} = 1+ \sum_{j=2}^{M_n}\left(\frac{  k_j^{\frac{1}{2}} -k_{j-1}^{\frac{1}{2}}}{k_j-k_{j-1}}\right)^2(k_j-k_{j-1})\\
   &
   \leq 1+ \sum_{j=2}^{M_n}\frac{k_{j} - k_{j-1}}{4k_{j-1}}
   =1+ \sum_{j=1}^{M_n-1}\frac{k_{j+1} - k_{j}}{4k_{j}},
\end{split}
\end{equation}
where the inequality is due to the concavity of the function $h(x)=x^{1/2}$. Using summation by parts, we can rewrite $\mathbb{E}\left|\sum_{j=1}^{M_n}\sum_{l=k_{j-1}+1}^{k_j}\hat{\gamma}_j( z_l^2-1) \right|$ as
\begin{equation}\label{eq:step21}
  \begin{split}
     \mathbb{E}\left|\sum_{j=1}^{M_n}\sum_{l=k_{j-1}+1}^{k_j}\hat{\gamma}_j( z_l^2-1) \right|
      &= \mathbb{E}\left|\sum_{j=1}^{M_n}(\hat{\gamma}_j-\hat{\gamma}_{j+1})\sum_{l=1}^{k_j}(z_l^2-1)\right|\\
     &\leq \mathbb{E}\left\{\kappa_1\sum_{j=1}^{M_n}(\hat{\gamma}_j-\hat{\gamma}_{j+1})k_j^{\frac{1}{2}}\right\}\\
     &=\mathbb{E}\left\{ \kappa_1\sum_{j=1}^{M_n}\hat{\gamma}_j\left(  k_j^{\frac{1}{2}}-k_{j-1}^{\frac{1}{2}}\right)\right\}\\
     &\leq\mathbb{E}\left\{ \kappa_1 \left[\sum_{j=1}^{M_n}\hat{\gamma}_j^2\left(k_j-k_{j-1} \right)\right]^{\frac{1}{2}}  \left[\sum_{j=1}^{M_n}\frac{\left(  k_j^{\frac{1}{2}} -k_{j-1}^{\frac{1}{2}}\right)^2}{k_j-k_{j-1}}\right]^{\frac{1}{2}}\right\}\\
      &\leq \frac{\sqrt{n}}{\sigma}\left(\mathbb{E}\kappa_1^2\right)^{\frac{1}{2}}\left[\mathbb{E}R_n(\hat{\bw}|\mathcal{M},\bbf)\right]^{\frac{1}{2}}\left(1+ \sum_{j=1}^{M_n-1}\frac{k_{j+1} - k_{j}}{4k_{j}}\right)^{\frac{1}{2}},\\
  \end{split}
\end{equation}
where the first inequality follows from the definition of $\kappa_1$, the second inequality follows from the Cauchy-Schwarz inequality, and the third inequality follows from the Cauchy-Schwarz inequality, (\ref{eq:riskw}), and (\ref{eq:F1}).

The task is now to construct an upper bound for $\left(\mathbb{E}\kappa_1^2\right)^{1/2}$ by Lemma~\ref{lemma}. It remains to check (\ref{eq:lemma_con}) for the stochastic process $\{\xi_1(t)=\sum_{l=1}^{k_t}(z_l^2-1) k_t^{-1/2},t=1,\ldots,M_n\}$. Recall that $z_l=\sqrt{n}e_l/\sigma=\bphi_l^{\top}\beps/\sigma$. Define an $n \times n$ matrix
$$
\bA \triangleq \frac{\sum_{l=1}^{k_t}\bphi_l \bphi_l^{\top}}{\sigma^2\sqrt{k_t}}.
$$
Then we can write $\xi_1(t)$ as
\begin{equation*}
\begin{split}
   \xi_1(t) & = \beps^{\top}\left(\frac{\sum_{l=1}^{k_t}\bphi_l \bphi_l^{\top}}{\sigma^2\sqrt{k_t}} \right)\beps - \sqrt{k_t} = \beps^{\top} \bA \beps - \mathbb{E}\beps^{\top} \bA \beps,
\end{split}
\end{equation*}
where the second inequality follows from
\begin{equation*}
\begin{split}
   \mathbb{E}\beps^{\top} \bA \beps = \sigma^2 \tr \bA& = \frac{\tr\left(\sum_{l=1}^{k_t}\bphi_l \bphi_l^{\top} \right)}{\sqrt{k_t}} = \frac{\sum_{l=1}^{k_t}\tr\left(\bphi_l \bphi_l^{\top} \right)}{\sqrt{k_t}} \\
     & = \frac{\sum_{l=1}^{k_t}\tr\left(\bphi_l^{\top}\bphi_l \right)}{\sqrt{k_t}} = \sqrt{k_t}.
\end{split}
\end{equation*}
Using Hansen-Wright inequality for sub-Gaussian random variables (Theorem 1.1 of \cite{Rudelson2013}), we know that there exists a positive absolute constant $c$ such that for any $x\geq 0$,
\begin{equation}\label{eq:HW_ineq}
\begin{split}
\mathbb{P}\left( \left|\xi_1(t)\right| >x \right)& = \mathbb{P}\left( \left|\beps^{\top} \bA \beps - \mathbb{E}\beps^{\top} \bA \beps\right|>x \right)\\
    &\leq 2\exp\left[ -c\min\left(\frac{x}{\eta^2\|\bA \|_2} , \frac{x^2}{\eta^4\|\bA \|_{\mathrm{F}}^2} \right) \right]
\\
 &\leq 2\exp\left[ -c\min\left(x , x^2 \right) \right],
\end{split}
\end{equation}
where the second inequality follows from $\|\bA \|_2 = 1/(\sigma^2\sqrt{k_t})\leq 1/\sigma^2$ and $\|\bA \|_{\mathrm{F}}^2=\tr(\bA^{\top}\bA)=1/\sigma^4$. The inequality (\ref{eq:HW_ineq}) also implies that
\begin{equation*}
\mathbb{P}\left( \left|\xi_1(t)\right|>\frac{\log x}{\lambda} \right)\leq \left\{\begin{array}{ll}
2x^{-\frac{c}{\lambda^2}\log x} &\quad 0\leq x<\exp(\lambda), \\
2x^{-\frac{c}{\lambda}} &\quad x\geq \exp(\lambda),\\
\end{array}\right.
\end{equation*}
where $\lambda>0$. Thus we have
\begin{equation}\label{eq:expect_xi}
\begin{split}
   \mathbb{E}\exp(\lambda| \xi_1(t) |) & = \int_{0}^{\infty}\mathbb{P}(\exp(\lambda| \xi_1(t) |)>x)dx = \int_{0}^{\infty}\mathbb{P}\left(| \xi_1(t) |>\frac{\log x}{\lambda}\right)dx \\
     & \leq 2\int_{0}^{\exp(\lambda)}x^{-\frac{c}{\lambda^2}\log x}dx+2\int_{\exp(\lambda)}^{\infty}x^{-\frac{c}{\lambda}}dx.
\end{split}
\end{equation}
When $0<\lambda<c$, the first term of (\ref{eq:expect_xi}) is upper bounded by
\begin{equation}\label{eq:expect_xi_1}
\begin{split}
   2\int_{0}^{\exp(\lambda)}x^{-\frac{c}{\lambda^2}\log x}dx & = \frac{2\lambda^2}{c}\int_{-\frac{c}{\lambda}}^{\infty}\exp\left[ -\frac{\lambda^2(u^2+u)}{c} \right]du\\
     & \leq \frac{2\lambda^2}{c}\exp\left(\frac{\lambda^2}{4c} \right)\sqrt{\frac{\pi c}{\lambda^2}}\\
     &\leq 2\exp\left(\frac{c}{4}\right)\sqrt{\pi c}<\infty.
\end{split}
\end{equation}
And the second term of (\ref{eq:expect_xi}) is
\begin{equation}\label{eq:expect_xi_2}
  2\int_{\exp(\lambda)}^{\infty}x^{-\frac{c}{\lambda}}dx = \frac{2}{\frac{c}{\lambda}-1}\exp(-c+\lambda)< \infty.
\end{equation}
Combining (\ref{eq:expect_xi_1})--(\ref{eq:expect_xi_2}) with (\ref{eq:expect_xi}), we see that when $0<\lambda<c$, $\mathbb{E}\exp(\lambda| \xi_1(t) |)$ is uniformly upper bounded for any $t=1,\ldots,M_n$, which meets the condition (\ref{eq:lemma_con}) of Lemma~\ref{lemma}. Thus we have
$$
\left(\mathbb{E}\kappa_1^2\right)^{\frac{1}{2}} \leq C (1+\log M_n),
$$
and the term $\mathbb{E}\left|\sum_{j=1}^{M_n}\sum_{l=k_{j-1}+1}^{k_j}\hat{\gamma}_j( z_l^2-1) \right|$ is upper bounded by
\begin{equation}\label{eq:step211}
  \mathbb{E}\left|\sum_{j=1}^{M_n}\sum_{l=k_{j-1}+1}^{k_j}\hat{\gamma}_j( z_l^2-1) \right| \leq \frac{C\sqrt{n}}{\sigma}[\mathbb{E}R_n(\hat{\bw}|\mathcal{M},\bbf)]^{\frac{1}{2}}\left(1+ \sum_{j=1}^{M_n-1}\frac{k_{j+1} - k_{j}}{4k_{j}}\right)^{\frac{1}{2}}(1+\log M_n).
\end{equation}

Now we construct another upper bound for $\mathbb{E}\left|\sum_{j=1}^{M_n}\sum_{l=k_{j-1}+1}^{k_j}\hat{\gamma}_j( z_l^2-1) \right|$. Define a random variable $\kappa_2=\max_{1\leq j \leq M_n}\{| \sum_{l=k_{j-1}+1}^{k_j}(z_l^2-1) |(k_j-k_{j-1})^{-1/2}\}$. Note that
\begin{equation*}\label{eq:step22}
  \begin{split}
     \mathbb{E}\left|\sum_{j=1}^{M_n}\sum_{l=k_{j-1}+1}^{k_j}\hat{\gamma}_j( z_l^2-1) \right|
      &= \mathbb{E}\left|\sum_{j=1}^{M_n}\hat{\gamma}_j\sum_{l=k_{j-1}+1}^{k_j}(z_l^2-1)\right|\\
     &\leq \mathbb{E}\left\{\kappa_2\sum_{j=1}^{M_n}\hat{\gamma}_j\left(k_j-k_{j-1}\right)^{\frac{1}{2}}\right\}\\
     &\leq\mathbb{E}\left\{ \kappa_2 \left[\sum_{j=1}^{M_n}\hat{\gamma}_j^2\left(k_j-k_{j-1} \right)\right]^{\frac{1}{2}}  M_n^{\frac{1}{2}}\right\}\\
      &\leq \frac{\sqrt{n}}{\sigma}\left(\mathbb{E}\kappa_2^2\right)^{\frac{1}{2}}\left[\mathbb{E}R_n(\hat{\bw}|\mathcal{M},\bbf)\right]^{\frac{1}{2}}M_n^{\frac{1}{2}},\\
  \end{split}
\end{equation*}
where the first inequality follows from the definition of $\kappa_2$, the second inequality follows from the Cauchy-Schwarz inequality, and the third inequality follows from the Cauchy-Schwarz inequality and (\ref{eq:riskw}). Based on the same technique used to bound $\left(\mathbb{E}\kappa_1^2\right)^{\frac{1}{2}}$, we can derive
$$
\left(\mathbb{E}\kappa_2^2\right)^{\frac{1}{2}} \leq C (1+\log M_n).
$$
Consequently,
\begin{equation}\label{eq:step2111}
  \mathbb{E}\left|\sum_{j=1}^{M_n}\sum_{l=k_{j-1}+1}^{k_j}\hat{\gamma}_j( z_l^2-1) \right| \leq \frac{C\sqrt{n}}{\sigma}[\mathbb{E}R_n(\hat{\bw}|\mathcal{M},\bbf)]^{\frac{1}{2}}M_n^{\frac{1}{2}}(1+\log M_n).
\end{equation}

Then, combining (\ref{eq:step211}) with (\ref{eq:step2111}), we obtain
\begin{equation}\label{eq:upper_bounding_term_1}
  \mathbb{E}\left|\sum_{j=1}^{M_n}\sum_{l=k_{j-1}+1}^{k_j}\hat{\gamma}_j( z_l^2-1) \right| \leq \frac{C\sqrt{n}}{\sigma}[\mathbb{E}R_n(\hat{\bw}|\mathcal{M},\bbf)]^{\frac{1}{2}}\left[M_n\wedge\left(1+ \sum_{j=1}^{M_n-1}\frac{k_{j+1} - k_{j}}{4k_{j}}\right)\right]^{\frac{1}{2}}(1+\log M_n).
\end{equation}

\subsubsection{Upper bounding $\mathbb{E}\left|\sum_{j=1}^{M_n}\sum_{l=k_{j-1}+1}^{k_j}(1-\hat{\gamma}_j)\theta_lz_l\right|$}\label{sec:upper_bounding_term_2}

Define $S_t=\sum_{l=k_t+1}^{k_{M_n}}\theta_l^2$ for $0\leq t \leq M_n-1$ and $S_{M_n}=0$, and a random variable $\kappa_3=\max_{0\leq t \leq M_n-1}\left\{\left| \sum_{l=k_t+1}^{k_{M_n}}\theta_lz_l \right|S_t^{-1/2}\right\}$. In addition, define $M_n'=\min\{1 \leq m \leq M_n: S_m=0 \}$. Note that
\begin{equation}\label{eq:F2}
  \begin{split}
\sum_{j=1}^{M_n}\frac{\left(S_{j-1}^{\frac{1}{2}}-S_{j}^{\frac{1}{2}}\right)^2}{S_{j-1} - S_{j}}&= \sum_{j=1}^{M_n'}\frac{\left(S_{j-1}^{\frac{1}{2}}-S_{j}^{\frac{1}{2}}\right)^2}{S_{j-1} - S_{j}} + \sum_{j=M_n'+1}^{M_n}\frac{\left(S_{j-1}^{\frac{1}{2}}-S_{j}^{\frac{1}{2}}\right)^2}{S_{j-1} - S_{j}} \\
       & = 1 + \sum_{j=1}^{M_n'-1}\left(\frac{S_{j-1}^{\frac{1}{2}}-S_{j}^{\frac{1}{2}}}{S_{j-1} - S_{j}}\right)^2\left( S_{j-1} - S_{j} \right)\\
       & \leq 1 + \sum_{j=1}^{M_n'-1} \frac{S_{j-1} - S_{j}}{4S_{j}},
  \end{split}
\end{equation}
where the second equality follows from the definition $0/0 = 0$, and the last inequality is due to the concavity of $h(x)=x^{1/2}$. Using summation by parts again, we see that
\begin{equation}\label{eq:important2}
\begin{split}
   \mathbb{E}\left|\sum_{j=1}^{M_n}\sum_{l=k_{j-1}+1}^{k_j}(1-\hat{\gamma}_j)\theta_lz_l\right|
     &= \mathbb{E}\left|\sum_{j=2}^{M_n}(\hat{\gamma}_{j-1}-\hat{\gamma}_j)\sum_{l=k_{j-1}+1}^{k_{M_n}}\theta_lz_l \right|\\
   &\leq \mathbb{E}\left\{\kappa_3 \sum_{j=2}^{M_n}(\hat{\gamma}_{j-1}-\hat{\gamma}_j)S_{j-1}^{\frac{1}{2}}\right\}\\
   &= \mathbb{E}\left\{\kappa_3 \sum_{j=1}^{M_n}(1-\hat{\gamma}_j)\left(S_{j-1}^{\frac{1}{2}}-S_{j}^{\frac{1}{2}}\right)\right\}\\
   & \leq \mathbb{E}\left\{\kappa_3 \left[\sum_{j=1}^{M_n}(1-\hat{\gamma}_j)^2(S_{j-1} - S_{j} )\right]^{\frac{1}{2}}\left[\sum_{j=1}^{M_n}\frac{\left(S_{j-1}^{\frac{1}{2}}-S_{j}^{\frac{1}{2}}\right)^2}{S_{j-1} - S_{j}}\right]^{\frac{1}{2}}\right\}\\
   & \leq (\mathbb{E} \kappa_3^2)^{\frac{1}{2}}[\mathbb{E}R_n(\hat{\bw}|\mathcal{M},\bbf)]^{\frac{1}{2}}\left(1 + \sum_{j=1}^{M_n'-1} \frac{S_{j-1} - S_{j}}{4S_{j}}\right)^{\frac{1}{2}},\\
\end{split}
\end{equation}
where the first inequality is due to the definition of $\kappa_3$, the second inequality follows from the Cauchy-Schwarz inequality and the definition $0/0=0$, and the third inequality follows from the Cauchy-Schwarz inequality, (\ref{eq:riskw}), and (\ref{eq:F2}).

Now we construct upper bound for $(\mathbb{E} \kappa_3^2)^{1/2}$ by Lemma~\ref{lemma}. Consider the stochastic process $\{\xi_3(t) = ( \sum_{l=k_t+1}^{k_{M_n}}\theta_lz_l )S_t^{-1/2},t=0,\ldots,M_n-1\}$. Recall that $z_l=\bphi_l^{\top}\beps/\sigma$. Define an $n$-dimensional vector
$$
\ba\triangleq \frac{1}{\sigma S_t^{\frac{1}{2}}}\left(\bphi_{k_t+1},\ldots, \bphi_{k_{M_n}} \right)\begin{pmatrix}
\theta_{k_t+1} \\
\vdots \\
\theta_{k_{M_n}}
\end{pmatrix}.
$$
We write $\xi_3(t)$ as
\begin{equation*}
  \xi_3(t)= \frac{1}{\sigma S_t^{\frac{1}{2}}}\left(\theta_{k_t+1},\ldots,\theta_{k_{M_n}}  \right)\begin{pmatrix}
\bphi_{k_t+1}^{\top} \\
\vdots \\
\bphi_{k_{M_n}}^{\top}
\end{pmatrix}\beps= \ba^{\top}\beps.
\end{equation*}
Since the elements of $\beps$ are i.i.d. $\eta$-sub-Gaussian variables, from Theorem 2.6 in \cite{Wainwright2019}, we have for any $\lambda\in \mathbb{R}$,
\begin{equation*}
  \mathbb{E}\exp[\lambda\xi_3(t)]=\mathbb{E}\exp(\lambda\ba^{\top}\beps)\leq \exp\left(\frac{\lambda^2\eta^2\|\ba\|^2}{2}\right) = \exp\left(\frac{\lambda^2\eta^2}{2\sigma^2}\right),
\end{equation*}
where the last equality is due to $\|\ba\|^2=1/\sigma^2$. This leads to
\begin{equation*}
\begin{split}
    & \mathbb{E}\exp(\lambda| \xi_3(t) |) \leq \mathbb{E}\exp[\lambda \xi_3(t) ] + \mathbb{E}\exp[-\lambda \xi_3(t) ]  = 2\exp\left(\frac{\lambda^2\eta^2}{2\sigma^2}\right)< \infty,\\
\end{split}
\end{equation*}
which verifies the condition (\ref{eq:lemma_con}) of Lemma~\ref{lemma}. Thus, there exists a constant $C>0$ such that
\begin{equation}\label{eq:kappa_upper_2}
  (\mathbb{E} \kappa_3^2)^{\frac{1}{2}} \leq C\left(1+ \log M_n \right).
\end{equation}
Thus combining (\ref{eq:kappa_upper_2}) with (\ref{eq:important2}), we have the second term
\begin{equation}\label{eq:important211}
  \mathbb{E}\left|\sum_{j=1}^{M_n}\sum_{l=k_{j-1}+1}^{k_j}(1-\hat{\gamma}_j)\theta_lz_l\right| \leq C [\mathbb{E}R_n(\hat{\bw}|\mathcal{M},\bbf)]^{\frac{1}{2}} \left(1 + \sum_{j=1}^{M_n'-1} \frac{S_{j-1} - S_{j}}{4S_{j}}\right)^{\frac{1}{2}}\left(1+ \log M_n \right).
\end{equation}

We now construct another upper bound for $\mathbb{E}\left|\sum_{j=1}^{M_n}\sum_{l=k_{j-1}+1}^{k_j}(1-\hat{\gamma}_j)\theta_lz_l\right|$. Define $\Delta_t=\sum_{l=k_t+1}^{k_{t+1}}\theta_l^2$ for $0\leq t \leq M_n-1$ and a random variable $\kappa_4=\max_{0\leq t \leq M_n-1}\left\{\left| \sum_{l=k_t+1}^{k_{t+1}}\theta_lz_l \right|\Delta_t^{-1/2}\right\}$. Then we have that
\begin{equation}\label{eq:important22}
\begin{split}
   \mathbb{E}\left|\sum_{j=1}^{M_n}\sum_{l=k_{j-1}+1}^{k_j}(1-\hat{\gamma}_j)\theta_lz_l\right|
     &= \mathbb{E}\left|\sum_{j=1}^{M_n}(1-\hat{\gamma}_j)\sum_{l=k_{j-1}+1}^{k_{j}}\theta_lz_l \right|\\
   &\leq \mathbb{E}\left\{\kappa_4 \sum_{j=1}^{M_n}(1-\hat{\gamma}_j)\Delta_{j-1}^{\frac{1}{2}}\right\}\\
   & \leq \mathbb{E}\left\{\kappa_4 \left[\sum_{j=1}^{M_n}(1-\hat{\gamma}_j)^2\Delta_{j-1}\right]^{\frac{1}{2}}M_n^{\frac{1}{2}}\right\}\\
   & \leq (\mathbb{E} \kappa_4^2)^{\frac{1}{2}}[\mathbb{E}R_n(\hat{\bw}|\mathcal{M},\bbf)]^{\frac{1}{2}}M_n^{\frac{1}{2}},\\
\end{split}
\end{equation}
where the first inequality is due to the definition of $\kappa_4$, the second inequality follows from Jensen's inequality, and the last inequality follows from the Cauchy-Schwarz inequality and (\ref{eq:riskw}). Based on the same technique used to upper bound $(\mathbb{E} \kappa_3^2)^{\frac{1}{2}}$, we know there also exists a constant $C>0$ such that
\begin{equation}\label{eq:kappa_upper_3}
  (\mathbb{E} \kappa_4^2)^{\frac{1}{2}} \leq C\left(1+ \log M_n \right).
\end{equation}
Thus combining (\ref{eq:kappa_upper_3}) with (\ref{eq:important22}), we have the second term is also upper bounded by
\begin{equation}\label{eq:important212}
  \mathbb{E}\left|\sum_{j=1}^{M_n}\sum_{l=k_{j-1}+1}^{k_j}(1-\hat{\gamma}_j)\theta_lz_l\right| \leq C [\mathbb{E}R_n(\hat{\bw}|\mathcal{M},\bbf)]^{\frac{1}{2}} M_n^{\frac{1}{2}}\left(1+ \log M_n \right).
\end{equation}
Combining (\ref{eq:important211}) with (\ref{eq:important212}), we have
\begin{equation}\label{eq:important213}
  \mathbb{E}\left|\sum_{j=1}^{M_n}\sum_{l=k_{j-1}+1}^{k_j}(1-\hat{\gamma}_j)\theta_lz_l\right| \leq C [\mathbb{E}R_n(\hat{\bw}|\mathcal{M},\bbf)]^{\frac{1}{2}} \left[M_n \wedge \left(1 + \sum_{j=1}^{M_n'-1} \frac{S_{j-1} - S_{j}}{4S_{j}}\right)\right]^{\frac{1}{2}}\left(1+ \log M_n \right).
\end{equation}

\subsubsection{Upper bounding $\mathbb{E}\left|\sum_{j=1}^{M_n}\sum_{l=k_{j-1}+1}^{k_j}\hat{\gamma}_j^2( z_l^2-1) \right|$}

Based on the same reasoning adopted in (\ref{eq:step211}) and (\ref{eq:important211}), and the fact that $0\leq \hat{\gamma}_j\leq 1,j=1,\ldots,M_n$, we can also prove that
\begin{equation}\label{eq:important3}
  \begin{split}
     \mathbb{E}\left|\sum_{j=1}^{M_n}\sum_{l=k_{j-1}+1}^{k_j}\hat{\gamma}_j^2( z_l^2-1) \right|
      &= \mathbb{E}\left|\sum_{j=1}^{M_n}(\hat{\gamma}_j^2-\hat{\gamma}_{j+1}^2)\sum_{l=1}^{k_j}(z_l^2-1)\right|\\
     &\leq \mathbb{E}\left\{\kappa_1\sum_{j=1}^{M_n}(\hat{\gamma}_j^2-\hat{\gamma}_{j+1}^2)k_j^{\frac{1}{2}}\right\}\\
     &=\mathbb{E}\left\{ \kappa_1\sum_{j=1}^{M_n}\hat{\gamma}_j^2\left(  k_j^{\frac{1}{2}}-k_{j-1}^{\frac{1}{2}}\right)\right\}\\
     &\leq\mathbb{E}\left\{ \kappa_1 \left[\sum_{j=1}^{M_n}\hat{\gamma}_j^4\left(k_j-k_{j-1} \right)\right]^{\frac{1}{2}}  \left[\sum_{j=1}^{M_n}\frac{\left(  k_j^{\frac{1}{2}} -k_{j-1}^{\frac{1}{2}}\right)^2}{k_j-k_{j-1}}\right]^{\frac{1}{2}}\right\}\\
     & \leq \mathbb{E}\left\{ \kappa_1 \left[\sum_{j=1}^{M_n}\hat{\gamma}_j^2\left(k_j-k_{j-1} \right)\right]^{\frac{1}{2}}  \left[\sum_{j=1}^{M_n}\frac{\left(  k_j^{\frac{1}{2}} -k_{j-1}^{\frac{1}{2}}\right)^2}{k_j-k_{j-1}}\right]^{\frac{1}{2}}\right\}\\
      &\leq \frac{\sqrt{n}}{\sigma}\left(\mathbb{E}\kappa_1^2\right)^{\frac{1}{2}}\left[\mathbb{E}R_n(\hat{\bw}|\mathcal{M},\bbf)\right]^{\frac{1}{2}}\left(1+ \sum_{j=1}^{M_n-1}\frac{k_{j+1} - k_{j}}{4k_{j}}\right)^{\frac{1}{2}},\\
  \end{split}
\end{equation}
and
\begin{equation}
  \begin{split}
     \mathbb{E}\left|\sum_{j=1}^{M_n}\sum_{l=k_{j-1}+1}^{k_j}\hat{\gamma}_j^2( z_l^2-1) \right|
      &= \mathbb{E}\left|\sum_{j=1}^{M_n}\hat{\gamma}_j^2\sum_{l=k_{j-1}+1}^{k_j}(z_l^2-1)\right|\\
     &\leq \mathbb{E}\left\{\kappa_2\sum_{j=1}^{M_n}\hat{\gamma}_j^2\left(k_j-k_{j-1}\right)^{\frac{1}{2}}\right\}\\
     &\leq\mathbb{E}\left\{ \kappa_2 \left[\sum_{j=1}^{M_n}\hat{\gamma}_j^4\left(k_j-k_{j-1} \right)\right]^{\frac{1}{2}}  M_n^{\frac{1}{2}}\right\}\\
     & \leq\mathbb{E}\left\{ \kappa_2 \left[\sum_{j=1}^{M_n}\hat{\gamma}_j^2\left(k_j-k_{j-1} \right)\right]^{\frac{1}{2}}  M_n^{\frac{1}{2}}\right\}\\
      &\leq \frac{\sqrt{n}}{\sigma}\left(\mathbb{E}\kappa_2^2\right)^{\frac{1}{2}}\left[\mathbb{E}R_n(\hat{\bw}|\mathcal{M},\bbf)\right]^{\frac{1}{2}}M_n^{\frac{1}{2}}.\\
  \end{split}
\end{equation}
Combining with the upper bounds on $\left(\mathbb{E}\kappa_1^2\right)^{\frac{1}{2}}$ and $\left(\mathbb{E}\kappa_2^2\right)^{\frac{1}{2}}$ in Section~\ref{sec:upper_bounding_term_1}, we see
\begin{equation}\label{eq:upper_bounding_term_3}
  \mathbb{E}\left|\sum_{j=1}^{M_n}\sum_{l=k_{j-1}+1}^{k_j}\hat{\gamma}_j^2( z_l^2-1) \right| \leq \frac{C\sqrt{n}}{\sigma}[\mathbb{E}R_n(\hat{\bw}|\mathcal{M},\bbf)]^{\frac{1}{2}}\left[M_n\wedge\left(1+ \sum_{j=1}^{M_n-1}\frac{k_{j+1} - k_{j}}{4k_{j}}\right)\right]^{\frac{1}{2}}(1+\log M_n).
\end{equation}

\subsubsection{Upper bounding $\mathbb{E}\left|\sum_{j=1}^{M_n}\sum_{l=k_{j-1}+1}^{k_j}(1-\hat{\gamma}_j)^2\theta_lz_l\right|$}

Based on the similar technique in Section~\ref{sec:upper_bounding_term_2}, we obtain
\begin{equation}\label{eq:important4}
\begin{split}
   \mathbb{E}\left|\sum_{j=1}^{M_n}\sum_{l=k_{j-1}+1}^{k_j}(1-\hat{\gamma}_j)^2\theta_lz_l\right|
     &= \mathbb{E}\left|\sum_{j=2}^{M_n}[(1-\hat{\gamma}_j)^2 - (1-\hat{\gamma}_{j-1})^2]\sum_{l=k_{j-1}+1}^{k_{M_n}}\theta_lz_l \right|\\
   &\leq \mathbb{E}\left\{\kappa_3 \sum_{j=2}^{M_n}[(1-\hat{\gamma}_j)^2 - (1-\hat{\gamma}_{j-1})^2]S_{j-1}^{\frac{1}{2}}\right\}\\
   &= \mathbb{E}\left\{\kappa_3 \sum_{j=1}^{M_n}(1-\hat{\gamma}_j)^2\left(S_{j-1}^{\frac{1}{2}}-S_{j}^{\frac{1}{2}}\right)\right\}\\
   & \leq \mathbb{E}\left\{\kappa_3 \left[\sum_{j=1}^{M_n}(1-\hat{\gamma}_j)^4(S_{j-1} - S_{j} )\right]^{\frac{1}{2}}\left[\sum_{j=1}^{M_n}\frac{\left(S_{j-1}^{\frac{1}{2}}-S_{j}^{\frac{1}{2}}\right)^2}{S_{j-1} - S_{j}}\right]^{\frac{1}{2}}\right\}\\
   & \leq \mathbb{E}\left\{\kappa_3 \left[\sum_{j=1}^{M_n}(1-\hat{\gamma}_j)^2(S_{j-1} - S_{j} )\right]^{\frac{1}{2}}\left[\sum_{j=1}^{M_n}\frac{\left(S_{j-1}^{\frac{1}{2}}-S_{j}^{\frac{1}{2}}\right)^2}{S_{j-1} - S_{j}}\right]^{\frac{1}{2}}\right\}\\
   & \leq (\mathbb{E} \kappa_3^2)^{\frac{1}{2}}[\mathbb{E}R_n(\hat{\bw}|\mathcal{M},\bbf)]^{\frac{1}{2}}\left(1 + \sum_{j=1}^{M_n'-1} \frac{S_{j-1} - S_{j}}{4S_{j}}\right)^{\frac{1}{2}},\\
\end{split}
\end{equation}
where the third inequality follows from the fact that $0\leq \hat{\gamma}_j\leq 1,j=1,\ldots,M_n$. In addition, we have
\begin{equation}
\begin{split}
   \mathbb{E}\left|\sum_{j=1}^{M_n}\sum_{l=k_{j-1}+1}^{k_j}(1-\hat{\gamma}_j)^2\theta_lz_l\right|
     &= \mathbb{E}\left|\sum_{j=1}^{M_n}(1-\hat{\gamma}_j)^2\sum_{l=k_{j-1}+1}^{k_{j}}\theta_lz_l \right|\\
   &\leq \mathbb{E}\left\{\kappa_4 \sum_{j=1}^{M_n}(1-\hat{\gamma}_j)^2\Delta_{j-1}^{\frac{1}{2}}\right\}\\
   & \leq \mathbb{E}\left\{\kappa_4 \left[\sum_{j=1}^{M_n}(1-\hat{\gamma}_j)^4\Delta_{j-1}\right]^{\frac{1}{2}}M_n^{\frac{1}{2}}\right\}\\
   & \leq \mathbb{E}\left\{\kappa_4 \left[\sum_{j=1}^{M_n}(1-\hat{\gamma}_j)^2\Delta_{j-1}\right]^{\frac{1}{2}}M_n^{\frac{1}{2}}\right\}\\
   & \leq (\mathbb{E} \kappa_4^2)^{\frac{1}{2}}[\mathbb{E}R_n(\hat{\bw}|\mathcal{M},\bbf)]^{\frac{1}{2}}M_n^{\frac{1}{2}},\\
\end{split}
\end{equation}
where the third inequality follows from $0\leq \hat{\gamma}_j\leq 1,j=1,\ldots,M_n$. Then utilizing the same upper bounds on $\left(\mathbb{E}\kappa_3^2\right)^{\frac{1}{2}}$ and $\left(\mathbb{E}\kappa_4^2\right)^{\frac{1}{2}}$ given in Section~\ref{sec:upper_bounding_term_2}, we obtain
\begin{equation}\label{eq:important214}
  \mathbb{E}\left|\sum_{j=1}^{M_n}\sum_{l=k_{j-1}+1}^{k_j}(1-\hat{\gamma}_j)^2\theta_lz_l\right| \leq C [\mathbb{E}R_n(\hat{\bw}|\mathcal{M},\bbf)]^{\frac{1}{2}} \left[M_n \wedge \left(1 + \sum_{j=1}^{M_n'-1} \frac{S_{j-1} - S_{j}}{4S_{j}}\right)\right]^{\frac{1}{2}}\left(1+ \log M_n \right).
\end{equation}

\subsubsection{Upper bounding $\mathbb{E}\left|\sum_{j=1}^{M_n}\sum_{l=k_{j-1}+1}^{k_j}\hat{\gamma}_j\left(\frac{\sigma^2}{n}-\frac{\hat{\sigma}^2}{n}\right)\right|$}

We observe that
\begin{equation}\label{eq:important5}
\begin{split}
\mathbb{E}\left|\sum_{j=1}^{M_n}\sum_{l=k_{j-1}+1}^{k_j}\hat{\gamma}_j\left(\frac{\sigma^2}{n}-\frac{\hat{\sigma}^2}{n}\right)\right| &= \mathbb{E}\left|\sum_{j=1}^{M_n}\sum_{l=k_{j-1}+1}^{k_j}\hat{\gamma}_j\left(\frac{\sigma^2}{n} \right)^{\frac{1}{2}}\left(\frac{ n}{\sigma^2} \right)^{\frac{1}{2}}\left(\frac{\sigma^2}{n}-\frac{\hat{\sigma}^2}{n}\right)\right| \\
&\leq \mathbb{E}\left\{\left(\sum_{j=1}^{M_n}\sum_{l=k_{j-1}+1}^{k_j}\hat{\gamma}_j^2\frac{\sigma^2}{n}\right)^{\frac{1}{2}}\left[ \sum_{j=1}^{M_n}\sum_{l=k_{j-1}+1}^{k_j} \frac{ n}{\sigma^2}\left(\frac{\sigma^2}{n}-\frac{\hat{\sigma}^2}{n}\right)^2 \right]^{\frac{1}{2}} \right\} \\
& = \mathbb{E}\left\{\left(\sum_{j=1}^{M_n}\sum_{l=k_{j-1}+1}^{k_j}\hat{\gamma}_j^2\frac{\sigma^2}{n}\right)^{\frac{1}{2}}\left[\frac{k_{M_n}}{n\sigma^2}\left( \sigma^2-\hat{\sigma}^2 \right)^2\right]^{\frac{1}{2}} \right\}\\
     &\leq [\mathbb{E}R_n(\hat{\bw}|\mathcal{M},\bbf)]^{\frac{1}{2}}\left[\frac{k_{M_n}}{n\sigma^2}\mathbb{E}\left( \sigma^2-\hat{\sigma}^2 \right)^2\right]^{\frac{1}{2}},
\end{split}
\end{equation}
where the first inequality follows from the Cauchy-Schwarz inequality, and the second inequality follows from (\ref{eq:riskw}) and the Cauchy-Schwarz inequality.

\subsubsection{Upper bounding $\mathbb{E}\left|\sum_{j=1}^{M_n}\sum_{l=k_{j-1}+1}^{k_j}\gamma_j^*\left(\frac{\sigma^2}{n}-\frac{\hat{\sigma}^2}{n}\right)\right|$}

Similar to (\ref{eq:important5}), we have
\begin{equation}\label{eq:important6}
\begin{split}
   \mathbb{E}\left|\sum_{j=1}^{M_n}\sum_{l=k_{j-1}+1}^{k_j}\gamma_j^*\left(\frac{\sigma^2}{n}-\frac{\hat{\sigma}^2}{n}\right)\right| & = \mathbb{E}\left|\sum_{j=1}^{M_n}\sum_{l=k_{j-1}+1}^{k_j}\gamma_j^*\left(\frac{\sigma^2}{n} \right)^{\frac{1}{2}}\left(\frac{ n}{\sigma^2} \right)^{\frac{1}{2}}\left(\frac{\sigma^2}{n}-\frac{\hat{\sigma}^2}{n}\right)\right| \\
   & \leq \mathbb{E}\left\{ \left[\sum_{j=1}^{M_n}\sum_{l=k_{j-1}+1}^{k_j}\left(\gamma_j^*\right)^2\frac{\sigma^2}{n}\right]^{\frac{1}{2}} \left[ \sum_{j=1}^{M_n}\sum_{l=k_{j-1}+1}^{k_j} \frac{ n}{\sigma^2}\left(\frac{\sigma^2}{n}-\frac{\hat{\sigma}^2}{n}\right)^2 \right]^{\frac{1}{2}}\right\} \\
   & = \left[\sum_{j=1}^{M_n}\sum_{l=k_{j-1}+1}^{k_j}\left(\gamma_j^*\right)^2\frac{\sigma^2}{n}\right]^{\frac{1}{2}} \mathbb{E}\left[\frac{k_{M_n}}{n\sigma^2}\left( \sigma^2-\hat{\sigma}^2 \right)^2\right]^{\frac{1}{2}} \\
   &\leq [R_n(\bw^*|\mathcal{M},\bbf)]^{\frac{1}{2}}\left[\frac{k_{M_n}}{n\sigma^2}\mathbb{E}\left( \sigma^2-\hat{\sigma}^2 \right)^2\right]^{\frac{1}{2}},
\end{split}
\end{equation}
where the first inequality follows from the Cauchy-Schwarz inequality, and the second inequality follows from (\ref{eq:riskw}) and Jensen's inequality.

\subsubsection{Completing the proof of Theorem~\ref{theorem:main}}

Define
\begin{equation*}
  \psi(\mathcal{M}) = \left[M_n\wedge\left(1+ \sum_{j=1}^{M_n-1}\frac{k_{j+1} - k_{j}}{4k_{j}}+\sum_{j=1}^{M_n'-1} \frac{S_{j-1} - S_{j}}{4S_{j}}\right)\right](1+\log M_n)^2.
\end{equation*}
Substituting (\ref{eq:upper_bounding_term_1}), (\ref{eq:important213}), (\ref{eq:upper_bounding_term_3}), (\ref{eq:important214}), (\ref{eq:important5}), and (\ref{eq:important6}) into (\ref{eq:difference1}) and (\ref{eq:difference2}) yields
\begin{equation}\label{eq:inequality1}
\begin{split}
Q_n(\hat{\bw}|\mathcal{M},\bbf)&\leq R_n(\bw^*|\mathcal{M},\bbf) +\frac{C\sigma}{\sqrt{n}}[\mathbb{E}R_n(\hat{\bw}|\mathcal{M},\bbf)]^{\frac{1}{2}}\left[\psi(\mathcal{M})\right]^{\frac{1}{2}}\\
     &+\left[\frac{k_{M_n}}{n\sigma^2}\mathbb{E}\left( \sigma^2-\hat{\sigma}^2 \right)^2\right]^{\frac{1}{2}}\left[[\mathbb{E}R_n(\hat{\bw}|\mathcal{M},\bbf)]^{\frac{1}{2}}+[R_n(\bw^*|\mathcal{M},\bbf)]^{\frac{1}{2}}\right].
\end{split}
\end{equation}
In particular, when $Q_n(\hat{\bw}|\mathcal{M},\bbf)$ represents $\mathbb{E}R_n(\hat{\bw}|\mathcal{M},\bbf)$, (\ref{eq:inequality1}) also implies that
\begin{equation}\label{eq:inequality2}
\begin{split}
\mathbb{E}R_n(\hat{\bw}|\mathcal{M},\bbf)&\leq 2\left\{R_n(\bw^*|\mathcal{M},\bbf)+\left[\frac{k_{M_n}}{n\sigma^2}\mathbb{E}\left( \sigma^2-\hat{\sigma}^2 \right)^2\right]^{\frac{1}{2}}[R_n(\bw^*|\mathcal{M},\bbf)]^{\frac{1}{2}}\right\}\\
     &+\left\{\frac{C\sigma}{\sqrt{n}}\left[\psi(\mathcal{M})\right]^{\frac{1}{2}}+ \left[\frac{2k_{M_n}}{n\sigma^2}\mathbb{E}\left( \sigma^2-\hat{\sigma}^2 \right)^2\right]^{\frac{1}{2}}\right\}^2.
\end{split}
\end{equation}
Therefore, after inserting (\ref{eq:inequality2}) into the right side of (\ref{eq:inequality1}) and some additional algebra, we see that (\ref{eq:risk_bound_general}) holds.

\subsection{Proof of (\ref{eq:variance_risk})}\label{sec:proof:eq:variance_risk}

For completeness, we provide a brief proof for (\ref{eq:variance_risk}). We first decompose $\mathbb{E}(\hat{\sigma}_{m_n}^2-\sigma^2)^2$ as the variance term and the bias term
\begin{equation}\label{eq:variance_1}
  \mathbb{E}(\hat{\sigma}_{m_n}^2-\sigma^2)^2=\mathbb{E}(\hat{\sigma}_{m_n}^2-\mathbb{E}\hat{\sigma}_{m_n}^2)^2 + (\mathbb{E}\hat{\sigma}_{m_n}^2-\sigma^2)^2.
\end{equation}
Note that
\begin{equation}\label{eq:variance_2}
\begin{split}
   \hat{\sigma}_{m_n}^2 & =\frac{1}{n-m_n}\left\| \by - \hat{\bbf}_{m_n|\mathcal{M}_A} \right\|^2 \\
     & =\frac{n\| \btheta_{-m_n} \|^2}{n-m_n}+\frac{\beps^{\top}(\bI-\bP_{m_n|\mathcal{M}_A})\beps}{n-m_n}+\frac{2\bbf^{\top}(\bP_{p_n|\mathcal{M}_A}-\bP_{m_n|\mathcal{M}_A})\beps}{n-m_n},
\end{split}
\end{equation}
where $\btheta_{-m_n}=(\theta_{m_n+1},\ldots, \theta_{p_n})^{\top}$. Thus, the bias term of (\ref{eq:variance_1}) equals to
\begin{equation}\label{eq:variance_3}
  (\mathbb{E}\hat{\sigma}_{m_n}^2-\sigma^2)^2 = \left( \frac{n\| \btheta_{-m_n} \|^2}{n-m_n} + \sigma^2- \sigma^2  \right)^2 = \frac{n^2 \|\btheta_{-m_n} \|^4}{(n-m_n)^2}.
\end{equation}
We proceed to construct an upper bound for the variance term $\mathbb{E}(\hat{\sigma}_{m_n}^2-\mathbb{E}\hat{\sigma}_{m_n}^2)^2$. According to Theorem 1.1 of \cite{Rudelson2013}, we have
\begin{equation}\label{eq:HW_ineq_2}
\begin{split}
   &\mathbb{P}\left( \left|\frac{\beps^{\top}(\bI-\bP_{m_n|\mathcal{M}_A})\beps}{n-m_n} - \mathbb{E}\frac{\beps^{\top}(\bI-\bP_{m_n|\mathcal{M}_A})\beps}{n-m_n}\right|>x \right)\\
    &\leq 2\exp\left[ -c(n-m_n)(x \wedge x^2 ) \right].
\end{split}
\end{equation}
And due to the sub-Gaussian property of $\beps$, we have
\begin{equation}\label{eq:v_o_1}
  \mathbb{P}\left( \left|  \frac{2\bbf^{\top}(\bP_{p_n|\mathcal{M}_A}-\bP_{m_n|\mathcal{M}_A})\beps}{n-m_n} \right|>x  \right)\leq 2\exp\left[ -\frac{c(n-m_n)^2x^2}{n\| \btheta_{-m_n} \|^2} \right].
\end{equation}
Combining (\ref{eq:HW_ineq_2})--(\ref{eq:v_o_1}) with (\ref{eq:variance_2}) yields
\begin{equation*}\label{eq:v_o_2}
\begin{split}
    & \mathbb{P}\left( |\hat{\sigma}_{m_n}^2-\mathbb{E}\hat{\sigma}_{m_n}^2|>x \right)  \\
     & \leq 4\exp\left\{ -c\min\left[(n-m_n)x , \frac{(n-m_n)^2x^2}{(n-m_n)\vee(n\| \btheta_{-m_n} \|^2)} \right] \right\}.
\end{split}
\end{equation*}
By integrating the tail probability, we have
\begin{equation}\label{eq:variance_term}
\begin{split}
    \mathbb{E}(\hat{\sigma}_{m_n}^2-\mathbb{E}\hat{\sigma}_{m_n}^2)^2 & = \int_{0}^{\infty}\mathbb{P}\left( |\hat{\sigma}_{m_n}^2-\mathbb{E}\hat{\sigma}_{m_n}^2|>\sqrt{x}  \right)dx \\
    &\lesssim  \frac{1}{n-m_n} \vee \frac{n\| \btheta_{-m_n} \|^2}{(n-m_n)^2}.\\
\end{split}
\end{equation}
Combining (\ref{eq:variance_3}) with (\ref{eq:variance_term}) gives (\ref{eq:variance_risk}).

\subsection{Proof of Theorem~\ref{theorem:aop}}\label{sec:proof:theorem:aop}

In view of Theorem~\ref{theorem:main}, we see
\begin{equation}\label{eq:proof:aop:1}
\begin{split}
   \frac{Q_n(\hat{\bw}|\mathcal{M},\bbf)}{R_n(\bw^*|\mathcal{M},\bbf)}\leq 1&+\frac{C\sigma^2\psi(\mathcal{M})}{nR_n(\bw^*|\mathcal{M},\bbf)}+C\sigma\left[\frac{\psi(\mathcal{M})}{nR_n(\bw^*|\mathcal{M},\bbf)}\right]^{\frac{1}{2}}\\
   &+\frac{k_{M_n}\mathbb{E}\left( \hat{\sigma}^2-\sigma^2 \right)^2}{\sigma^2 n R_n(\bw^*|\mathcal{M},\bbf)}+ \left[ \frac{k_{M_n}\mathbb{E}\left( \hat{\sigma}^2-\sigma^2 \right)^2}{\sigma^2 n R_n(\bw^*|\mathcal{M},\bbf)} \right]^{\frac{1}{2}}.\\
\end{split}
\end{equation}
When Conditions~(\ref{eq:variance_rate})--(\ref{eq:minimum_marisk_rate}) hold, the risk bound (\ref{eq:proof:aop:1}) leads to
\begin{equation*}
  Q_n(\hat{\bw}|\mathcal{M},\bbf) \leq [1+o(1)]R_n(\bw^*|\mathcal{M},\bbf),
\end{equation*}
which proves the first part of Theorem~\ref{theorem:aop}.

Now, we prove that (\ref{eq:variance_rate_2}) is a sufficient condition for (\ref{eq:variance_rate}) when the variance estimator (\ref{eq:sigma_estimator_1}) with $m_n = \lfloor \kappa n \rfloor \wedge p_n$ ($0<\kappa<1$) is adopted. Based on (\ref{eq:variance_risk}), we have
\begin{equation}\label{eq:proof:aop:2}
\begin{split}
   \mathbb{E}(\hat{\sigma}_{m_n}^2-\sigma^2)^2 & \lesssim \frac{1}{n-m_n}\vee\frac{n\|\btheta_{-m_n} \|^2}{(n-m_n)^2} \vee  \frac{n^2 \|\btheta_{-m_n} \|^4}{(n-m_n)^2} \\
     & \asymp \frac{1}{n}\vee\frac{\|\btheta_{-m_n} \|^2}{n} \vee \|\btheta_{-m_n} \|^4\\
     & \asymp \frac{1}{n} \vee \|\btheta_{-m_n} \|^4,
\end{split}
\end{equation}
where the last approximation is due to $\|\btheta_{-m_n} \|^2 \leq \|\btheta \|^2 < C$. Then we have
\begin{equation*}
\begin{split}
   k_{M_n}\mathbb{E}\left( \hat{\sigma}^2_{m_n}-\sigma^2 \right)^2  & \leq n \mathbb{E}\left( \hat{\sigma}^2_{m_n}-\sigma^2 \right)^2  \lesssim n \left( \frac{1}{n} \vee \|\btheta_{-m_n} \|^4\right)\\
     & = o\left[nR_n(\bw^*|\mathcal{M}_A,\bbf)\right],
\end{split}
\end{equation*}
where the first inequality follows from $k_{M_n} \leq n$, the second step follows from (\ref{eq:proof:aop:2}), and the last step is due to the condition (\ref{eq:variance_rate_2}). Thus, the condition (\ref{eq:variance_rate}) is established.

To show (\ref{eq:minimum_marisk_rate_2}) is a sufficient condition for (\ref{eq:minimum_marisk_rate}) with $\mathcal{M}=\mathcal{M}_A$, we need to upper bound $\psi(\mathcal{M}_A)$. When $\mathcal{M}_A$ is used, we obtain $k_j = j$, and
\begin{equation*}
  \begin{split}
       1+ \sum_{j=1}^{M_n-1}\frac{k_{j+1} - k_{j}}{4k_{j}}& = \frac{5}{4} + \sum_{j=2}^{p_n-1}\frac{1}{4j}\leq \frac{5}{4}+ \int_{x=1}^{p_n-1}\frac{1}{4x}dx \leq C\log p_n.
  \end{split}
\end{equation*}
Define $p_n'=\min\{1 \leq j \leq p_n: \sum_{l=j+1}^{p_n}\theta_l^2=0 \}$. Then another term in (\ref{eq:psi_M}) is upper bounded by
\begin{equation*}
  1 + \sum_{j=1}^{M_n'-1} \frac{S_{j-1} - S_{j}}{4S_{j}} \leq C\sum_{j=1}^{p_n'-1} \frac{\theta_j^2}{\sum_{l=j+1}^{p_n}\theta_l^2}.
\end{equation*}
Thus, we see
\begin{equation*}
  \psi(\mathcal{M}_A) \precsim \left( \log p_n +  \sum_{j=1}^{p_n'-1} \frac{\theta_j^2}{\sum_{l=j+1}^{p_n}\theta_l^2} \right)(\log p_n)^2.
\end{equation*}

\subsection{Proof of Examples~1--2 in Section~\ref{subsec:aop}}\label{sec:proof:example:aop}

We first investigate the rate of $\|\btheta_{-m_n} \|^4$ in two examples, respectively. When $\theta_j=j^{-\alpha_1}$, $\alpha_1>1/2$, as per Theorem~1 and Example~1 in \cite{Peng2021}, we have
$$
R_n(\bw^*|\mathcal{M}_A,\bbf)\asymp R_n(m^*|\mathcal{M}_A,\bbf) \asymp \frac{m_n^*}{n}\asymp n^{-1+\frac{1}{2\alpha_1}}.
$$
Recalling the definition $\btheta_{-m_n}=(\theta_{m_n+1},\ldots, \theta_{n})^{\top}$, we see
\begin{equation*}
  \begin{split}
     \|\btheta_{-m_n} \|^4 = & \left(\sum_{j=m_n+1}^{n}\theta_j^2\right)^2 \asymp \left(\int_{\kappa n}^{n}x^{-2\alpha_1}dx\right)^2 \asymp n^{-4\alpha_1+2},
  \end{split}
\end{equation*}
and
\begin{equation*}
  \frac{n^{-4\alpha_1+2}}{m_n^*/n}\asymp \frac{n^{-4\alpha_1+2}}{n^{-1+1/(2\alpha_1)}} = n^{-\left(4\alpha_1+\frac{1}{2\alpha_1}-3\right)} \to 0,
\end{equation*}
where the last approximation is due to $4\alpha_1+\frac{1}{2\alpha_1}>3$ when $\alpha_1>1/2$. For the scenario $\theta_j=\exp(-j^{\alpha_2})$, $\alpha_2>0$, according to Theorem~1 and Example~2 in \cite{Peng2021}, we have
$$
R_n(\bw^*|\mathcal{M}_A,\bbf)\asymp R_n(m^*|\mathcal{M}_A,\bbf) \asymp \frac{m_n^*}{n} \asymp \frac{(\log n)^{\frac{1}{\alpha_2}}}{n}.
$$
Recall the definition $\mbox{Ga}(x;a)=\int_{t=x}^{\infty}t^{a-1}\exp(-t)dt$ for $x>0$. Similarly, we observe
\begin{equation*}
  \begin{split}
     \|\btheta_{-m_n} \|^4 & \asymp \left(\int_{\kappa n}^{n}\exp(-2x^{\alpha_2})dx\right)^2 \asymp \left(\int_{2(\kappa n)^{\alpha_2}}^{2n^{\alpha_2} } t^{\frac{1}{\alpha_2}-1}\exp(-t)dt\right)^2 \\
       & = \left[ \mbox{Ga}\left(2(\kappa n)^{\alpha_2};\frac{1}{\alpha_2}\right) - \mbox{Ga}\left(2n^{\alpha_2};\frac{1}{\alpha_2}\right) \right]^2 \leq \left[ \mbox{Ga}\left(2(\kappa n)^{\alpha_2};\frac{1}{\alpha_2}\right)  \right]^2\\
       & \sim \left[2(\kappa n)^{\alpha_2}\right]^{\frac{2}{\alpha_2}-2}\exp\left[ - 4(\kappa n)^{\alpha_2} \right] \asymp n^{2-2\alpha_2}\exp\left[ - 4(\kappa n)^{\alpha_2} \right]=o(1/n),
  \end{split}
\end{equation*}
where the fifth step follows from $\mbox{Ga}(x;a)/[x^{a-1}\exp(-x)] \to 1$ as $x \to \infty$. Hence, we have $\|\btheta_{-m_n} \|^4= o(m_n^*/n)$.

Next, we upper bound $\varphi(\btheta)$. In both examples, we have $p_n'=p_n$. Thus, we first construct a general upper bound for $1+\sum_{j=1}^{M_n'-1} (S_{j-1} - S_{j})/(4S_{j})$ when $M_n' = M_n$, which is also useful in the proofs of other examples in this paper. We have
\begin{equation}\label{eq:upper_S}
\begin{split}
& 1 + \sum_{j=1}^{M_n-1} \frac{S_{j-1} - S_{j}}{4S_{j}} \\
     & = 1 + \sum_{j=2}^{M_n-1}\left(\frac{S_{j-1} - S_{j}}{4S_{j-1}}\cdot \frac{S_{j-2} - S_{j-1}}{S_{j-1} - S_{j}}\right) + \frac{S_{M_n-2} - S_{M_n-1}}{4S_{M_n-1}}\\
     & \leq  1 + \max_{2 \leq j \leq M_n-1}\left\{ \frac{S_{j-2} - S_{j-1}}{S_{j-1} - S_{j}}\right\}\sum_{j=2}^{M_n-1}\left(\frac{S_{j-1} - S_{j}}{4S_{j-1}}\right) + \frac{S_{M_n-2} - S_{M_n-1}}{4S_{M_n-1}}\\
     & \leq 1 + C\max_{1 \leq j \leq M_n-2}\left\{ \frac{S_{j-1} - S_{j}}{S_{j} - S_{j+1}}\right\}\log \frac{1}{S_{M_n-1}}+ \frac{S_{M_n-2} - S_{M_n-1}}{4(S_{M_n-1} - S_{M_n})}\\
     & \leq C\max_{1 \leq j \leq M_n-1}\left\{ \frac{S_{j-1} - S_{j}}{S_{j} - S_{j+1}}\right\}\log \frac{1}{S_{M_n-1}},
\end{split}
\end{equation}
where the second inequality follows from
\begin{equation*}
\begin{split}
   \sum_{j=2}^{M_n-1}\frac{S_{j-1} - S_{j}}{4S_{j-1}} & \leq \int_{x= S_{M_n-1}}^{S_1} \frac{1}{4x}dx \leq \frac{1}{4} \left(\log S_{1} - \log S_{M_n-1} \right) \leq C \log \frac{1}{S_{M_n-1}}. \\
\end{split}
\end{equation*}
When the candidate model set $\mathcal{M}_A$ is adopted, (\ref{eq:upper_S}) is reduced to
\begin{equation*}
  \begin{split}
     \varphi(\btheta) \lesssim \max_{1\leq j\leq p_n-1}\left\{ \frac{\theta_j^2}{\theta_{j+1}^2} \right\} \log \frac{1}{\theta_{p_n}^2}.\\
  \end{split}
\end{equation*}
When $\theta_j=j^{-\alpha_1}$, $\alpha_1>1/2$, we see
\begin{equation*}
  \begin{split}
       \max_{1\leq j\leq n-1}\left\{ \frac{\theta_j^2}{\theta_{j+1}^2} \right\} \log \frac{1}{\theta_{n}^2} & =  2\alpha_1\max_{1\leq j\leq n-1}\left\{ \left( 1+\frac{1}{j} \right)^{2\alpha_1} \right\} \log n\leq C  \log n.
  \end{split}
\end{equation*}
Hence the condition (\ref{eq:minimum_marisk_rate_2}) is proved. Thus, the full AOP can be achieved by directly using $\mathcal{M}_A$ in Example~1.

For the scenario $p_n=n$ and $\theta_j=\exp(-j^{\alpha_2})$, $\alpha_2>0$, we show that the theory in this paper is not sufficient to establish the full AOP of $\mathcal{M}_A$. We first observe that
\begin{equation*}
  \begin{split}
       & \sum_{j=1}^{M_n}\frac{\left(  k_j^{\frac{1}{2}} -k_{j-1}^{\frac{1}{2}}\right)^2}{k_j-k_{j-1}}= 1+ \sum_{j=2}^{M_n}\left(\frac{  k_j^{\frac{1}{2}} -k_{j-1}^{\frac{1}{2}}}{k_j-k_{j-1}}\right)^2(k_j-k_{j-1}) \\
       & \geq  1+ \sum_{j=2}^{M_n}\frac{k_j-k_{j-1}}{4k_j} = 1 + \sum_{j=2}^{n}\frac{1}{4j}  \geq C \log n.
  \end{split}
\end{equation*}
Thus, we have $\psi(\mathcal{M}_A) \gtrsim (\log n)^3$, which implies that the full AOP is possible only for the case $0 < \alpha_2 < 1/3$. The following analysis shows that this is also impossible. Note that
\begin{equation*}
  \begin{split}
       & \sum_{j=1}^{M_n}\frac{\left(S_{j-1}^{\frac{1}{2}}-S_{j}^{\frac{1}{2}}\right)^2}{S_{j-1} - S_{j}} \geq 1 + \sum_{j=1}^{M_n-1}\frac{S_{j-1} - S_{j}}{4S_{j-1}} \\
       & = 1 + \frac{S_{0} - S_{1}}{4S_{0}}  + \sum_{j=2}^{M_n-1}\frac{S_{j-2} - S_{j-1}}{4S_{j-1}}\cdot \frac{S_{j-1} - S_{j}}{S_{j-2} - S_{j-1}}\\
       & \geq 1 + \frac{S_{0} - S_{1}}{4S_{0}} + \min_{1 \leq j \leq M_n-2}\left\{ \frac{S_{j} - S_{j+1}}{S_{j-1} - S_{j}} \right\}\sum_{j=1}^{M_n-2}\frac{S_{j-1} - S_{j}}{4S_{j}}\\
       & \geq 1 + \frac{S_{0} - S_{1}}{4S_{0}} + C\min_{1 \leq j \leq M_n-2}\left\{ \frac{S_{j} - S_{j+1}}{S_{j-1} - S_{j}} \right\}\log \frac{1}{S_{M_n-2}}.
  \end{split}
\end{equation*}
When $\mathcal{M}_A$ is adopted, we have
\begin{equation*}
  \begin{split}
       & \min_{1 \leq j \leq M_n-2}\left\{ \frac{S_{j} - S_{j+1}}{S_{j-1} - S_{j}} \right\}\log \frac{1}{S_{M_n-2}} = \min_{1 \leq j \leq n-2}\left\{ \frac{\theta_{j+1}^2}{\theta_j^2} \right\}\log \frac{1}{\theta_{n-1}^2+\theta_{n}^2}   \\
       & =\min_{1 \leq j \leq n-2}\left\{ \exp\left[ - 2(j+1)^{\alpha_2}+ 2j^{\alpha_2} \right] \right\} \log \frac{1}{\exp(-2(n-1)^{\alpha_2})+\exp(-2n^{\alpha_2})}\\
       & \geq C \log \frac{1}{\exp(-2(n-1)^{\alpha_2})+\exp(-2n^{\alpha_2})} \geq C \log \frac{1}{2\exp(-2(n-1)^{\alpha_2})}\\
       & =- C \log [2\exp(-2(n-1)^{\alpha_2})] \geq Cn^{\alpha_2}.
  \end{split}
\end{equation*}
Thus, the full AOP cannot be established due to $\psi(\mathcal{M}_A) \gtrsim n^{\alpha_2}$.

\subsection{Proof of Theorem~\ref{cor:grouped}}\label{sec:proof_grouped}

The proof of this theorem follows from the techniques in Lemma~3.11 of \cite{tsybakov2008introduction}. We first show that
\begin{equation}\label{eq:ppc}
  R_n(\bw^*|\mathcal{M}_G,\bbf)\leq (1+\zeta_n)R_n(\bw^*|\mathcal{M}_A,\bbf)+\frac{k_1\sigma^2}{n}.
\end{equation}
Define an $M_n$-dimensional weight vector $\bar{\bw}=(\bar{w}_1,\ldots,\bar{w}_{M_n})^{\top}$, where $\bar{w}_m=\sum_{j=k_{m-1}+1}^{k_m}w_j^*$, $\bar{\gamma}_m=\sum_{j=m}^{M_n}\bar{w}_m$, and $w_j^*$ is the $j$-th element of $\bw^*|\mathcal{M}_A$. According to (\ref{eq:riskw}), we have
\begin{equation}\label{eq:a38}
\begin{split}
   R_n(\bar{\bw}|\mathcal{M}_G,\bbf) & =\sum_{j=1}^{M_n}\left[ \left(1-\bar{\gamma}_j \right)^2\sum_{l=k_{j-1}+1}^{k_j}\theta_l^2+\frac{(k_j - k_{j-1})\sigma^2}{n}\bar{\gamma}_j^2 \right] \\
     &    \leq \sum_{j=1}^{p_n}(1-\gamma_j^*)^2\theta_j^2+\frac{\sigma^2}{n}\sum_{j=1}^{M_n}(k_j-k_{j-1})\bar{\gamma}_j^2,
\end{split}
\end{equation}
where the inequality follows the fact that $\bar{\gamma}_m\geq \gamma_j^*$ for any $k_{m-1}+1\leq j \leq k_m$. Note that
\begin{equation}\label{eq:a39}
\begin{split}
    \sum_{j=1}^{M_n}(k_j-k_{j-1})\bar{\gamma}_j^2& \leq k_1+(1+\zeta_n)\sum_{j=2}^{M_n}(k_{j-1}-k_{j-2})\bar{\gamma}_j^2\\
     & \leq k_1+(1+\zeta_n)\sum_{j=1}^{p_n}(\gamma_j^*)^2,
\end{split}
\end{equation}
where the second inequality is due to $\bar{\gamma}_m\leq \gamma_j^*$ when $k_{m-2}+1\leq j \leq k_{m-1}$.
Substituting (\ref{eq:a39}) into (\ref{eq:a38}), we obtain
\begin{equation*}
  \begin{split}
     R_n(\bw^*|\mathcal{M}_G,\bbf) & \leq  R_n(\bar{\bw}|\mathcal{M}_G,\bbf) \leq \sum_{j=1}^{p_n}(1-\gamma_j^*)^2\theta_j^2 + (1+\zeta_n)\sum_{j=1}^{p_n}\frac{\sigma^2}{n}(\gamma_j^*)^2 + \frac{k_1\sigma^2}{n} \\
       & \leq (1+\zeta_n)R_n(\bw^*|\mathcal{M}_A,\bbf)+\frac{k_1\sigma^2}{n}.
  \end{split}
\end{equation*}
Combining the risk bound in Theorem~\ref{theorem:main}, we see
\begin{equation}\label{eq:oracle_MG}
\begin{split}
   Q_n(\hat{\bw}|\mathcal{M}_G,\bbf)&\leq (1+\zeta_n)R_n(\bw^*|\mathcal{M}_A,\bbf)+\frac{k_1\sigma^2}{n}+\frac{C\sigma^2}{n}\psi(\mathcal{M}_G)\\
   &+\frac{C\sigma}{\sqrt{n}}[\psi(\mathcal{M}_G)]^{\frac{1}{2}}\left[(1+\zeta_n)R_n(\bw^*|\mathcal{M}_A,\bbf)+\frac{k_1\sigma^2}{n}\right]^{\frac{1}{2}}\\
   &+\frac{C p_n}{n\sigma^2}\mathbb{E}\left( \hat{\sigma}^2-\sigma^2 \right)^2+ \left[ \frac{C p_n}{n\sigma^2}\mathbb{E}\left( \hat{\sigma}^2-\sigma^2 \right)^2 \right]^{\frac{1}{2}}\left[(1+\zeta_n)R_n(\bw^*|\mathcal{M}_A,\bbf)+\frac{k_1\sigma^2}{n}\right]^{\frac{1}{2}}.\\
\end{split}
\end{equation}
Then provided $\zeta_n =o(1)$, $k_1=o[nR_n(\bw^*|\mathcal{M}_A,\bbf)]$, $\psi(\mathcal{M}_G)=o[nR_n(\bw^*|\mathcal{M}_A,\bbf)]$, and $p_n\mathbb{E}\left( \hat{\sigma}^2-\sigma^2 \right)^2=o[nR_n(\bw^*|\mathcal{M}_A,\bbf)]$, we have $Q_n(\hat{\bw}|\mathcal{M}_G,\bbf)=[1+o(1)] R_n(\bw^*|\mathcal{M}_A,\bbf)$.

\subsection{Proof of Examples~1--2 in Section~\ref{subsec:grouped}}

We first construct an upper bound on $M_n$ based on Lemma~3.12 of \cite{tsybakov2008introduction}. Suppose that $n$ is sufficiently large such that $\zeta_n<1$. Note the relation $\lfloor x \rfloor \geq x - 1 \geq x (1 - \zeta_n)$
when $x \geq \zeta_n^{-1}$. Thus, we have
$$
\lfloor k_1(1+\zeta_{n})^{j-1}\rfloor \geq k_1(1+\zeta_{n})^{j-1}(1 - \zeta_n)
$$
due to $k_1(1+\zeta_{n})^{j-1} \geq k_1 = \lceil \zeta_{n}^{-1} \rceil \geq \zeta_n^{-1}$. The definition of $M_n$ gives
\begin{equation*}
  \begin{split}
     n &\geq k_1 + \sum_{j=2}^{M_n-1}\lfloor k_1(1+\zeta_{n})^{j-1}\rfloor \geq k_1 + \sum_{j=2}^{M_n-1}k_1(1+\zeta_{n})^{j-1}(1 - \zeta_n) \\
       & = k_1 \left[ 1+ (1 - \zeta_n)\sum_{j=2}^{M_n-1}(1+\zeta_{n})^{j-1} \right]\\
       & = k_1 \left[ 1+ (1 - \zeta_n)\frac{(1+\zeta_n)[(1+\zeta_n)^{M_n-2}-1]}{\zeta_n} \right]\\
       & \geq \zeta_n^{-1} \left\{ 1 + \zeta_n^{-1}[(1+\zeta_n)^{M_n-2}-1] (1-\zeta_n^2) \right\}\\
       & \gtrsim \zeta_n^{-2}(1+\zeta_n)^{M_n-2}.
  \end{split}
\end{equation*}
Thus, we have
\begin{equation*}
  \begin{split}
     M_n & \lesssim \frac{\log (n\zeta_n^2)}{\log(1+\zeta_n)} \sim \frac{\log (n\zeta_n^2)}{\zeta_n} \lesssim \frac{\log n }{\zeta_n}  = (\log n)^{2}.
  \end{split}
\end{equation*}

It remains to evaluate $\psi(\mathcal{M}_{G1})$. Recalling the definition (\ref{eq:psi_M}), we obtain
\begin{equation}\label{eq:upper_psi_G1}
  \begin{split}
     \psi(\mathcal{M}_{G1})  & \lesssim M_n(1+\log M_n)^2\lesssim (\log n)^{2}(\log\log n)^2.
  \end{split}
\end{equation}
We conclude that the condition $\psi(\mathcal{M}_{G1})=o[nR_n(\bw^*|\mathcal{M}_A,\bbf)]$ is satisfied in Example~1 since $nR_n(\bw^*|\mathcal{M}_A,\bbf) \asymp m_n^* \asymp n^{1/(2\alpha_1)}$. In Example~2, when the coefficient decays as $\theta_j=\exp(-j^{\alpha_2})$, $0<\alpha_2< 1/2$, $\psi(\mathcal{M}_{G1})=o[nR_n(\bw^*|\mathcal{M}_A,\bbf)]$ also holds due to $nR_n(\bw^*|\mathcal{M}_A,\bbf) \asymp m_n^* \asymp (\log n)^{1/\alpha_2}$.

\subsection{Proof of Theorem~\ref{cor:minimum_1}}

Note that $R_n(\bw^*|\hat{\mathcal{M}}_{MS1},\bbf)$ is defined by directly plugging $\hat{\mathcal{M}}_{MS1}$ into the expression of $R_n(\bw^*|\mathcal{M},\bbf)$, i.e.,
\begin{equation*}
  R_n(\bw^*|\hat{\mathcal{M}}_{MS1},\bbf)=\frac{\hat{l}_n\sigma^2}{n}+\sum_{j=\hat{l}_n+1}^{\hat{u}_n}\frac{\theta_j^2\sigma^2}{n\theta_j^2+\sigma^2}+\sum_{j=\hat{u}_n+1}^{p_n}\theta_j^2.
\end{equation*}
Similarly, $Q_n(\hat{\bw}|\hat{\mathcal{M}}_{MS1},\bbf)$ is defined by plugging $\hat{\mathcal{M}}_{MS1}$ into $Q_n(\hat{\bw}|\mathcal{M},\bbf)$. Define the random variable $\Delta_{n1}=R_n(\bw^*|\hat{\mathcal{M}}_{MS1},\bbf)-R_n(\bw^*|\mathcal{M}_A,\bbf)$, which measures the risk increment of using the reduced candidate model set $\hat{\mathcal{M}}_{MS1}$. In view of the risk bound (\ref{eq:risk_bound_general}), it suffices to prove
\begin{equation}\label{eq:411}
  \frac{\mathbb{E}\Delta_{n1}}{R_n(\bw^*|\mathcal{M}_A,\bbf)}=\frac{\mathbb{E}(\Delta_{n1}1_{\bar{F}_n})+\mathbb{E}(\Delta_{n1}1_{F_n})}{R_n(\bw^*|\mathcal{M}_A,\bbf)} \to 0
\end{equation}
and
\begin{equation}\label{eq:condition_psi}
  \frac{\mathbb{E}\psi(\hat{\mathcal{M}}_{MS1})}{nR_n(\bw^*|\mathcal{M}_A,\bbf)}\to 0.
\end{equation}

The condition (\ref{eq:condition_psi}) is satisfied due to $nR_n(\bw^*|\mathcal{M}_A,\bbf)\asymp m_n^*$ and the assumption in Theorem~\ref{cor:minimum_1}. Then our main task is to prove (\ref{eq:411}). When $\mathbb{P}(\bar{F}_n)= o\left(m_n^*/n\right)$ holds, the first part on the right side of (\ref{eq:411}) is upper bounded by
\begin{equation}\label{eq:first_part_411}
  \frac{\mathbb{E}(\Delta_{n1}1_{\bar{F}_n})}{R_n(\bw^*|\mathcal{M}_A,\bbf)} \asymp \frac{\mathbb{E}(\Delta_{n1}1_{\bar{F}_n})}{m_n^*/n} \lesssim \frac{\mathbb{P}(\bar{F}_n)}{m_n^*/n}\to 0,
\end{equation}
where the first approximation is due to $R_n(\bw^*|\mathcal{M}_A,\bbf)\asymp m_n^*/n$, and the second step follows from
\begin{equation*}
\begin{split}
   \Delta_{n1} & \leq R_n(\bw^*|\hat{\mathcal{M}}_{MS1},\bbf)\leq \max_{\mathcal{M}\subseteq\{1,\ldots,p_n\}}R_n(\bw^*|\mathcal{M},\bbf) \\
     & \leq \max_{m\in\{1,\ldots,p_n\}}R_n(m|\mathcal{M}_A,\bbf)< C.
\end{split}
\end{equation*}

Now we turn to bound the second part of (\ref{eq:411}). From (\ref{eq:riskw}), we have
\begin{equation*}
  R_n(\bw^*|\mathcal{M}_A,\bbf)=\frac{\sigma^2}{n}+\sum_{j=2}^{p_n}\frac{\theta_j^2\sigma^2}{n\theta_j^2+\sigma^2}.
\end{equation*}

When $F_n$ holds, $\Delta_{n1}$ is upper bounded by
\begin{equation*}
\begin{split}
   \Delta_{n1}&=R_n(\bw^*|\hat{\mathcal{M}}_{MS1},\bbf)-R_n(\bw^*|\mathcal{M}_A,\bbf)=\sum_{j=2}^{\hat{l}_n}\left( \frac{\sigma^2}{n} - \frac{\sigma^2}{n+\frac{\sigma^2}{\theta_j^2}} \right) + \sum_{j=\hat{u}_n+1}^{p_n}\frac{\theta_j^2}{1+\frac{\sigma^2}{n\theta_j^2}}\\
    &\leq \frac{\hat{l}_n}{n}\sigma^2+\sum_{j=\hat{u}_n+1}^{p_n}\frac{\theta_j^2}{1+\frac{\theta_{m_n^*+1}^2}{\theta_j^2}}\leq \frac{\hat{l}_n}{n}\sigma^2+\sum_{j=m_n^*+1}^{p_n}\frac{\theta_j^2}{1+\frac{\theta_{m_n^*+1}^2}{\theta_{\hat{u}_n}^2}}\\
    &\leq \frac{c_2m_n^*}{n\kappa_l }+ \sum_{j=m_n^*+1}^{p_n}\frac{\theta_j^2}{1+\frac{\theta_{m_n^*+1}^2}{\theta_{\lfloor c_1m_n^*\kappa_u\rfloor}^2}},
\end{split}
\end{equation*}
where the first inequality follows from (\ref{eq:mnstar}), and the last step is due to the definitions of $\hat{l}_n$, $\hat{u}_n$, and the event $F_n$.
From this, we see that when $\kappa_l\to \infty$ and $\kappa_u \to \infty$,
\begin{equation*}
\begin{split}
   \mathbb{E}(\Delta_{n1}1_{F_n}) & \leq \frac{c_2m_n^*}{n\kappa_l }+ \sum_{j=m_n^*+1}^{p_n}\frac{\theta_j^2}{1+\frac{\theta_{m_n^*+1}^2}{\theta_{\lfloor c_1m_n^*\kappa_u\rfloor}^2}} =o\left(\frac{m_n^*}{n} \right)+o\left(\sum_{j=m_n^*+1}^{p_n}\theta_j^2  \right)\\
     &=o\left[R_n(m^*|\mathcal{M}_A,\bbf) \right]=o\left[R_n(\bw^*|\mathcal{M}_A,\bbf) \right],
\end{split}
\end{equation*}
where the first equality is due to $\kappa_l \to \infty$, $\kappa_u \to \infty$, and Assumption~\ref{asmp:regressor_order2}, the second equality follows from (\ref{eq:optimal_ms_risk}), and the last equality is due to \cite{Peng2021}. Thus, we have proved the theorem.

\subsection{Proof of Examples~1--2 in Section~\ref{subsec:minimum}}

We first verify the condition $\mathbb{P}(\bar{F}_n)= o\left(m_n^*/n\right)$ in Examples~1--2 when Mallows’ $C_p$ MS criterion \citep{Mallows1973} is adopted. Suppose $\sigma^2$ is known. From \cite{Kneip1994}, we see
\begin{equation}\label{eq:kneip}
\begin{split}
     & \mathbb{P}\left(\left| R_n(\hat{m}_n|\mathcal{M}_A,\bbf)-R_n(m^*|\mathcal{M}_A,\bbf) \right|> n^{-1}[x^2\vee x(m_n^*)^{1/2}] \right) \\
     & \leq C_1\exp(-C_2x)\quad \mbox{for}\,x\geq 0,
\end{split}
\end{equation}
where $C_1$ and $C_2$ are two constants that depend only on $\sigma^2$. Based on the technique in \cite{Peng2021}, it is easily to check that
\begin{equation}\label{eq:easy_check}
  \begin{split}
     \varpi_n & \triangleq[R_n(c_1m^*|\mathcal{M}_A,\bbf)-R_n(m^*|\mathcal{M}_A,\bbf)]\wedge[R_n(c_2m^*|\mathcal{M}_A,\bbf)-R_n(m^*|\mathcal{M}_A,\bbf)] \\
       & \gtrsim \frac{m_n^*}{n},
  \end{split}
\end{equation}
where $c_1$ and $c_2$ are two constants appearing in $F_n$. When $\bar{F}_n$ holds, we have $\hat{m}_n<\lfloor c_1m_n^*\rfloor$ or $\hat{m}_n>\lfloor c_2m_n^*\rfloor$. Thus, $\mathbb{P}\left(\bar{F}_n\right)$ is upper bounded by
\begin{equation}\label{eq:relate}
\begin{split}
   \mathbb{P}\left(\bar{F}_n\right)& \leq \mathbb{P}\left(\left|R_n(\hat{m}_n|\mathcal{M}_A,\bbf)-R_n(m^*|\mathcal{M}_A,\bbf)\right|>\varpi_n\right)\lesssim \exp{\left[-C(m_n^*)^{\frac{1}{2}}\right]},
\end{split}
\end{equation}
where $C$ is a fixed constant, the first inequality follows from the definition of $\varpi_n$, and the last inequality follows from (\ref{eq:kneip}) and (\ref{eq:easy_check}). It suffices to show
\begin{equation}\label{eq:suff_psi}
  \exp{\left[-C(m_n^*)^{\frac{1}{2}}\right]}=o\left(\frac{m_n^*}{n}\right)
\end{equation}
in these two examples. In Example~1, the coefficient $\theta_j=j^{-\alpha_1}$, $1/2<\alpha_1<\infty$, and $m_n^* \asymp n^{1/(2\alpha_1)}$. In this case, (\ref{eq:suff_psi}) is satisfied for any fixed $\alpha_1$. In Example~2, the coefficient decays as $\theta_j=\exp(-j^{\alpha_2})$, and $m_n^* \asymp (\log n)^{1/\alpha_2}$. When $0<\alpha_2 <1/2$, we have
\begin{equation*}
  \begin{split}
     \exp{\left[-C(m_n^*)^{\frac{1}{2}}\right]} & = \exp{\left[-C(\log n)^{\frac{1}{2\alpha_2}}\right]} = o \left[ \exp(-\log n) \right]\\
       & =o\left( \frac{1}{n} \right) = o\left( \frac{m_n^*}{n} \right),
  \end{split}
\end{equation*}
where the second equality holds for any constant $C$.

Next, we investigate the condition $\mathbb{E}\psi(\hat{\mathcal{M}}_{MS1}) = o(m_n^*)$ in Examples~1--2. Based on (\ref{eq:psi_M}) and (\ref{eq:upper_S}), we have
\begin{equation}\label{eq:E_psi}
\begin{split}
    \mathbb{E}\psi(\hat{\mathcal{M}}_{MS1})& \lesssim \mathbb{E}\left\{\left[\log(\kappa_l \kappa_u) + \max_{\hat{l}_n\leq j \leq \hat{u}_n -1}\left\{\frac{\theta_j^2}{\theta_{j+1}^2} \right\}\log \frac{1}{\theta_{\hat{u}_n}^2}\right] \left[\log\left((\kappa_u - \kappa_l^{-1})\hat{m}_n\right)\right]^2\right\}\\
    & = \log(\kappa_l \kappa_u)\mathbb{E}\left[\log\left((\kappa_u - \kappa_l^{-1})\hat{m}_n\right)\right]^2 \\
    & + \mathbb{E}\left\{\left[\max_{\hat{l}_n\leq j \leq \hat{u}_n -1}\left\{\frac{\theta_j^2}{\theta_{j+1}^2} \right\}\log \frac{1}{\theta_{\hat{u}_n}^2}\right] \left[\log\left((\kappa_u - \kappa_l^{-1})\hat{m}_n\right)\right]^2\right\}.\\
    \end{split}
\end{equation}
The first term on the right side of (\ref{eq:E_psi}) is upper bounded by
\begin{equation}\label{eq:ms_proof_1}
  \begin{split}
     \log(\kappa_l \kappa_u)\mathbb{E}\left[\log\left((\kappa_u - \kappa_l^{-1})\hat{m}_n\right)\right]^2 & \leq \log(\kappa_l \kappa_u)\left[\log\left((\kappa_u - \kappa_l^{-1})\mathbb{E}\hat{m}_n\right)\right]^2\\
     & \lesssim (\log \kappa_l + \log \kappa_u) \left[ \log(\kappa_u - \kappa_l^{-1}) + \log m_n^*  \right]^2\\
     & \asymp (\log\log n)(\log\log n + \log m_n^*)^2,
  \end{split}
\end{equation}
where the first inequality follows from Jensen's inequality, the second inequality is due to
$\mathbb{E}\hat{m}_n=\mathbb{E}(\hat{m}_n1_{F_n})+\mathbb{E}(\hat{m}_n1_{\bar{F}_n})\lesssim c_2m_n^*+n\cdot\frac{m_n^*}{n}\lesssim m_n^*$, and the last approximation in (\ref{eq:ms_proof_1}) follows from $\kappa_l=\kappa_u=\log n$. The second term in (\ref{eq:E_psi}) is upper bounded by
\begin{equation}\label{eq:ms_proof_2}
  \begin{split}
       & \mathbb{E}\left\{\left[\max_{\hat{l}_n\leq j \leq \hat{u}_n -1}\left\{\frac{\theta_j^2}{\theta_{j+1}^2} \right\}\log \frac{1}{\theta_{\hat{u}_n}^2}\right] \left[\log\left((\kappa_u - \kappa_l^{-1})\hat{m}_n\right)\right]^2\right\} \\
       & \leq \left\{\mathbb{E}\left[\max_{\hat{l}_n\leq j \leq \hat{u}_n -1}\left\{\frac{\theta_j^4}{\theta_{j+1}^4} \right\}\left(\log \frac{1}{\theta_{\hat{u}_n}^2}\right)^2\right]\right\}^{\frac{1}{2}}\left\{\mathbb{E}\left[\log\left((\kappa_u - \kappa_l^{-1})\hat{m}_n\right)\right]^4 \right\}^{\frac{1}{2}}\\
       & \leq \left\{\mathbb{E}\left[\max_{\hat{l}_n\leq j \leq \hat{u}_n -1}\left\{\frac{\theta_j^4}{\theta_{j+1}^4} \right\}\left(\log \frac{1}{\theta_{\hat{u}_n}}\right)^2\right]\right\}^{\frac{1}{2}}(\log\log n + \log m_n^*)^2,
  \end{split}
\end{equation}
where the first inequality follows from the Cauchy-Schwarz inequality, and the second inequality is due to Jensen's inequality and the same technique in (\ref{eq:ms_proof_1}).

When $\theta_j = j^{-\alpha_1}$, $1/2 < \alpha_1 < \infty$, we have
\begin{equation*}
  \begin{split}
       & \max_{\hat{l}_n\leq j \leq \hat{u}_n -1}\left\{\frac{\theta_j^4}{\theta_{j+1}^4} \right\} = \max_{\hat{l}_n\leq j \leq \hat{u}_n -1} \left\{ \left(\frac{j+1}{j}\right)^{4\alpha_1} \right\} < C.
  \end{split}
\end{equation*}
Thus, the term in right side of (\ref{eq:ms_proof_2}) is upper bounded by
\begin{equation*}
\begin{split}
   \left\{\mathbb{E}\left[\max_{\hat{l}_n\leq j \leq \hat{u}_n -1}\left\{\frac{\theta_j^4}{\theta_{j+1}^4} \right\}\left(\log \frac{1}{\theta_{\hat{u}_n}}\right)^2\right]\right\}^{\frac{1}{2}} & \leq C\left[\mathbb{E}\left(\log\frac{1}{\theta_{\hat{u}_n}}\right)^2\right]^{\frac{1}{2}} \\
     & \lesssim \left[\mathbb{E} \left(\log \hat{u}_n\right)^2\right]^{\frac{1}{2}}\leq \log \left(\mathbb{E}\hat{u}_n\right)\\
     & \lesssim \log\log n + \log m_n^*,
\end{split}
\end{equation*}
where the third step follows from Jensen's inequality, and the last step is due to $\kappa_u=\log n$ and $\mathbb{E}\hat{m}_n\lesssim m_n^*$. Combining all the above bounds, we have
\begin{equation*}
  \mathbb{E}\psi(\hat{\mathcal{M}}_{MS1}) \lesssim (\log\log n + \log m_n^*)^3 = o(m_n^*),
\end{equation*}
where the last equality follows from $m_n^*\asymp n^{1/(2\alpha_1)}$ in Example~1.

Consider Example~2: $\theta_j = \exp(-j^{\alpha_2})$ with $0 < \alpha_2 <1/2$. We have
\begin{equation*}
  \begin{split}
       & \left\{\mathbb{E}\left[\max_{\hat{l}_n\leq j \leq \hat{u}_n -1}\left\{\frac{\theta_j^4}{\theta_{j+1}^4} \right\}\left(\log\frac{1}{\theta_{\hat{u}_n}}\right)^2\right]\right\}^{\frac{1}{2}} \\
       & \lesssim \left[\mathbb{E}\left(\log\frac{1}{\theta_{\hat{u}_n}}\right)^2\right]^{\frac{1}{2}}=\left(\mathbb{E}\hat{u}_n^{2\alpha_2}\right)^{\frac{1}{2}}\\
       & \leq \left(\mathbb{E} \hat{u}_n\right)^{\alpha_2} \leq (\log n)^{\alpha_2} (m_n^*)^{\alpha_2} \asymp (\log n)^{\alpha_2+1},\\
  \end{split}
\end{equation*}
where the first inequality follows form $\max_{\hat{l}_n\leq j \leq \hat{u}_n -1}\left\{\exp\left[ 4(j+1)^{\alpha_2} - 4j^{\alpha_2} \right] \right\}<C$, the second inequality follows from Jensen's inequality, and the last equality is due to $m_n^* \asymp (\log n)^{1/\alpha_2}$.
Combining all the above results, we have
\begin{equation*}
\begin{split}
   \mathbb{E}\psi(\hat{\mathcal{M}}_{MS1}) & \lesssim \left[\log\log n+ (\log n)^{\alpha_2+1}\right](\log\log n + \log m_n^*)^2 \\
     & \lesssim (\log n)^{\alpha_2+1} (\log\log n)^2\\
     & = o(m_n^*),
\end{split}
\end{equation*}
where the last equality is due to $\alpha_2+1 < 1/\alpha_2$ when $0<\alpha_2<1/2$.

\section{Proofs of the results under the non-nested setup}\label{sec:a:proof_non_nested}

\subsection{An equivalent sequence model}\label{sec:preliminaries_non}

Recall that $\{\bphi_1,\ldots,\bphi_{p_n} \}$ is an orthonormal basis of the column space of $\bX$. Similar to the results in Section~\ref{sec:a:proof:s3:preliminaries}, the regression model (\ref{eq:model_matrix}) can be rewritten as an equivalent sequence model
\begin{equation}\label{eq:sequence2}
  \hat{\theta}_{j} = \theta_j + e_j, \quad j=1,\ldots,p_n,
\end{equation}
where $\hat{\theta}_{j} =\bphi_j^{\top}\by/\sqrt{n}$, $\theta_j=\bphi_j^{\top}\bbf/\sqrt{n}$, and $e_j=\bphi_j^{\top}\beps/\sqrt{n}$.

Based on the sequence model (\ref{eq:sequence2}), we present the equivalent forms for the MS and MA estimators based on $\mathcal{M}_{AS}=\{ \mathcal{I}: \mathcal{I} \subseteq \{1,\ldots,p_n \} \}$. Define $\mathcal{I}_m$ the index set of the regressors in the $m$-th non-nested candidate model. Then, the MS estimator $\hat{\bbf}_{m | \mathcal{M}_{AS}}$ equals to
\begin{equation*}
  \begin{split}
     \hat{\bbf}_{m | \mathcal{M}_{AS}} & = \sum_{j\in \mathcal{I}_m}\bphi_j\bphi_j^{\top}\by=\sqrt{n}\sum_{j\in \mathcal{I}_m}\bphi_j\hat{\theta}_{j}.\\
  \end{split}
\end{equation*}
The $\ell_2$ risk of $\hat{\bbf}_{m | \mathcal{M}_{AS}}$ is
\begin{equation}\label{eq:MS_risk_non}
\begin{split}
   R_n(m|\mathcal{M}_{AS},\bbf) & =\frac{1}{n}\mathbb{E}\left\|\hat{\bbf}_{m | \mathcal{M}_{AS}}-\bbf \right\|^2=\mathbb{E} \left\|  \sum_{j\in \mathcal{I}_m}\bphi_j\hat{\theta}_{j} - \sum_{j=1}^{p_n}\bphi_j\theta_{j} \right\|^2\\
    &=\mathbb{E}\left\|\sum_{j\in \mathcal{I}_m}\bphi_je_j-\sum_{j\in \{1,\ldots,p_n \}\setminus \mathcal{I}_m}\bphi_j\theta_j \right\|^2\\
    &=\frac{|\mathcal{I}_m|\sigma^2}{n}+\sum_{j\in \{1,\ldots,p_n \}\setminus \mathcal{I}_m}\theta_j^2.
\end{split}
\end{equation}

Define $\lambda_j \triangleq \sum_{m: j\in \mathcal{I}_m}w_m$ for $1\leq j \leq p_n$. Then, the non-nested MA estimator can be represented by
\begin{equation*}
  \hat{\bbf}_{\bw|\mathcal{M}_{AS}} = \sqrt{n}\sum_{m=1}^{2^{p_n}}w_m \sum_{j \in \mathcal{I}_m}\hat{\theta}_j \bphi_j = \sqrt{n} \sum_{j=1}^{p_n} \left(\sum_{m: j\in \mathcal{I}_m}w_m\right) \hat{\theta}_j \bphi_j = \sqrt{n} \sum_{j=1}^{p_n} \lambda_j \hat{\theta}_j \bphi_j.
\end{equation*}
The risk function of the non-nested MA estimator is
\begin{equation}\label{eq:MA_risk_non}
  \begin{split}
       R_n(\bw|\mathcal{M}_{AS},\bbf)& = n^{-1}\mathbb{E}\left\|\hat{\bbf}_{\bw|\mathcal{M}_{AS}}-\bbf \right\|^2 = n^{-1}\mathbb{E} \left\| \sqrt{n} \sum_{j=1}^{p_n} \lambda_j \hat{\theta}_j \bphi_j - \sqrt{n}\sum_{j=1}^{p_n}\theta_j\bphi_j \right\|^2 \\
       & = \mathbb{E} \left\|  \sum_{j=1}^{p_n} \left(\lambda_j \hat{\theta}_j - \theta_j\right) \bphi_j \right\|^2 = \sum_{j=1}^{p_n} \mathbb{E}\left(\lambda_j \hat{\theta}_j - \theta_j\right)^2\\
        &= \sum_{j=1}^{p_n}\left[(1-\lambda_j)^2\theta_j^2+\frac{\sigma^2\lambda_j^2}{n}\right].
  \end{split}
\end{equation}

%
%
%
%
%
%
%
%

\subsection{Proof of Theorem~\ref{theo:improvability}}\label{sec:proof_impro}

We begin by evaluating the ideal MS and MA risks. Consider a nested candidate model set $\mathcal{M}_{N}=\{\mathcal{S}_m: m =1,\ldots,p_n \}$ based the order of $|\theta_j|, j=1,\ldots,p_n$, where
$$
\mathcal{S}_m = \operatorname{argmax}_{\mathcal{I}\subseteq \{1,\ldots,p_n \}}\sum_{\substack{|\mathcal{I}|=m\\j \in \mathcal{I}}}|\theta|_{(j)}^2.
$$
Thus, $\mathcal{S}_m$ contains the indices of $m$ regressors with the strongest magnitude $|\theta_j|$.

From (\ref{eq:MS_risk_non}), the ideal MS risk is lower bounded by
\begin{equation*}
  \begin{split}
     R_n(m^*|\mathcal{M}_{AS},\bbf) & = \min_{m \in \{1,\ldots,2^{p_n} \}}\left\{\frac{|\mathcal{I}_m|\sigma^2}{n}+\sum_{j\in \{1,\ldots,p_n \}\setminus \mathcal{I}_m}\theta_j^2\right\}\\
       & = \min_{1\leq k \leq p_n}\left\{\frac{k\sigma^2}{n}+\sum_{\substack{|\mathcal{I}_m|=k\\j\in \{1,\ldots,p_n \}\setminus \mathcal{I}_m}}\theta_j^2\right\}\\
       & \geq \min_{1\leq k \leq p_n}\left\{\frac{k\sigma^2}{n}+\sum_{j=k+1}^{p_n}|\theta|_{(j)}^2\right\}.
  \end{split}
\end{equation*}
Since $\mathcal{M}_{N}\subset \mathcal{M}_{AS}$, the ideal MS risk is upper bounded by
\begin{equation*}
  R_n(m^*|\mathcal{M}_{AS},\bbf) \leq R_n(m^*|\mathcal{M}_{N},\bbf) = \min_{1\leq k \leq p_n}\left\{\frac{k\sigma^2}{n}+\sum_{j=k+1}^{p_n}|\theta|_{(j)}^2\right\},
\end{equation*}
where the last equality follows from \cite{Peng2021}. Thus, we have
\begin{equation}\label{eq:non_MS_equal}
  R_n(m^*|\mathcal{M}_{AS},\bbf) = R_n(m^*|\mathcal{M}_{N},\bbf)
\end{equation}

From (\ref{eq:MA_risk_non}), the ideal MA risk can be lower bounded by
\begin{equation*}\label{eq:ideal_lower}
  \begin{split}
     R_n(\bw^*|\mathcal{M}_{AS},\bbf) & \geq \sum_{j=1}^{p_n}\min_{0\leq \lambda_j \leq 1}\left[(1-\lambda_j)^2\theta_j^2+\frac{\sigma^2\lambda_j^2}{n}\right]  \\
       & = \sum_{j=1}^{p_n}\frac{\theta_j^2\sigma^2}{n\theta_j^2+\sigma^2}\\
       & = \sum_{j=1}^{p_n}\frac{|\theta|_{(j)}^2\sigma^2}{n|\theta|_{(j)}^2+\sigma^2}.
  \end{split}
\end{equation*}
The ideal MA risk is upper bounded by
\begin{equation*}
  \begin{split}
      R_n(\bw^*|\mathcal{M}_{AS},\bbf) & \leq   R_n(\bw^*|\mathcal{M}_N,\bbf)=\frac{\sigma^2}{n}+\sum_{j=2}^{p_n}\frac{|\theta|_{(j)}^2\sigma^2}{n|\theta|_{(j)}^2+\sigma^2},
  \end{split}
\end{equation*}
where the equality follows from \cite{Peng2021}. Since
\begin{equation*}
  \begin{split}
       \frac{\sigma^2}{n}+\sum_{j=2}^{p_n}\frac{|\theta|_{(j)}^2\sigma^2}{n|\theta|_{(j)}^2+\sigma^2} - \sum_{j=1}^{p_n}\frac{|\theta|_{(j)}^2\sigma^2}{n|\theta|_{(j)}^2+\sigma^2}& = \frac{\sigma^2}{n} - \frac{|\theta|_{(1)}^2\sigma^2}{n|\theta|_{(1)}^2+\sigma^2}\\
       & = \frac{\sigma^4}{n(n|\theta|_{(1)}^2+\sigma^2)} = O\left( \frac{1}{n^2} \right),
  \end{split}
\end{equation*}
and $R_n(\bw^*|\mathcal{M}_N,\bbf) \gtrsim 1/n$, we have
\begin{equation}\label{eq:non_MA_equal}
  R_n(\bw^*|\mathcal{M}_{AS},\bbf) \sim R_n(\bw^*|\mathcal{M}_N,\bbf).
\end{equation}
Combining (\ref{eq:non_MS_equal})--(\ref{eq:non_MA_equal}) with Theorem~2 of \cite{Peng2021}, we obtain the desired results.

\subsection{Proof of Theorem~\ref{theo:suff}}

The proof of this theorem follows from the techniques in Section~\ref{sec:proof_grouped}. We consider a group-wise nested candidate model set $\mathcal{M}_G=\{ \{1,2,\ldots,d_l \}: 1\leq d_1<d_2<\cdots<d_{D_n}= p_n\}$, where $d_l,l=1,\ldots,D_n$ are the parameters given in Assumption~\ref{asmp:partial_order}. We first show that
\begin{equation}\label{eq:ppc_2}
  R_n(\bw^*|\mathcal{M}_G,\bbf)\leq (1+z_n)R_n(\bw^*|\mathcal{M}_N,\bbf)+\frac{d_1\sigma^2}{n},
\end{equation}
where $\mathcal{M}_N$ is the nested candidate model set defined in Section~\ref{sec:proof_impro}.
Define a $D_n$-dimensional weight vector $\bar{\bw}=(\bar{w}_1,\ldots,\bar{w}_{D_n})^{\top}$, where $\bar{w}_l=\sum_{m=d_{l-1}+1}^{d_l}w_m^*$, $\bar{\gamma}_l=\sum_{k=l}^{D_n}\bar{w}_k$, $\gamma_j^*=\sum_{m=j}^{p_n}w^*_m$, and $w_m^*$ is the $m$-th element of $\bw^*|\mathcal{M}_N$. Thus, $R_n(\bw^*|\mathcal{M}_G,\bbf)$ is upper bounded by
\begin{equation}\label{eq:a38_2}
\begin{split}
   R_n(\bw^*|\mathcal{M}_G,\bbf) &\leq R_n(\bar{\bw}|\mathcal{M}_G,\bbf)\\
    &=\sum_{l=1}^{D_n}\left[ \left(1-\bar{\gamma}_l \right)^2\sum_{j=d_{l-1}+1}^{d_l}\theta_j^2+\frac{(d_l - d_{l-1})\sigma^2}{n}\bar{\gamma}_l^2 \right] \\
    & =\sum_{l=1}^{D_n}\left[ \left(1-\bar{\gamma}_l \right)^2\sum_{j=d_{l-1}+1}^{d_l}|\theta|_{(j)}^2+\frac{(d_l - d_{l-1})\sigma^2}{n}\bar{\gamma}_l^2 \right] \\
     &    \leq \sum_{j=1}^{p_n}(1-\gamma_j^*)^2|\theta|_{(j)}^2+\frac{\sigma^2}{n}\sum_{l=1}^{D_n}(d_l-d_{l-1})\bar{\gamma}_l^2,
\end{split}
\end{equation}
where the first equality follows from (\ref{eq:riskw}), the second equality follows from Assumption~\ref{asmp:partial_order}, and the last inequality is due to $\bar{\gamma}_l\geq \gamma_j^*$ for any $d_{l-1}+1\leq j \leq d_l$. Note that
\begin{equation}\label{eq:a39_2}
\begin{split}
    \sum_{l=1}^{D_n}(d_l-d_{l-1})\bar{\gamma}_l^2& \leq d_1+(1+z_n)\sum_{l=2}^{D_n}(d_{l-1}-d_{l-2})\bar{\gamma}_l^2\\
     & \leq d_1+(1+z_n)\sum_{j=1}^{p_n}(\gamma_j^*)^2,
\end{split}
\end{equation}
where the second inequality is due to $\bar{\gamma}_l\leq \gamma_j^*$ when $d_{l-2}+1\leq j \leq d_{l-1}$.
Substituting (\ref{eq:a39_2}) into (\ref{eq:a38_2}), we obtain
\begin{equation*}
  \begin{split}
     R_n(\bw^*|\mathcal{M}_G,\bbf) & \leq \sum_{j=1}^{p_n}(1-\gamma_j^*)^2|\theta|_{(j)}^2 + (1+z_n)\sum_{j=1}^{p_n}\frac{\sigma^2}{n}(\gamma_j^*)^2 + \frac{d_1\sigma^2}{n} \\
       & \leq (1+z_n)R_n(\bw^*|\mathcal{M}_N,\bbf)+\frac{d_1\sigma^2}{n},
  \end{split}
\end{equation*}
which proves (\ref{eq:ppc_2}). Since $\mathcal{M}_G \subset \mathcal{M}_{A} \subset \mathcal{M}_{AS}$, we see
$$
R_n(\bw^*|\mathcal{M}_{AS},\bbf)\leq R_n(\bw^*|\mathcal{M}_{A},\bbf) \leq R_n(\bw^*|\mathcal{M}_G,\bbf) \leq (1+z_n)R_n(\bw^*|\mathcal{M}_N,\bbf)+\frac{d_1\sigma^2}{n}.
$$
Combining (\ref{eq:non_MA_equal}), we see
\begin{equation}\label{eq:a65}
  R_n(\bw^*|\mathcal{M}_{A},\bbf) =\left[1+o(1) \right] R_n\left(\bw^*|\mathcal{M}_{AS},\bbf\right)
\end{equation}
when $z_n \to 0$ and $d_1 = o\left[nR_n(\bw^*|\mathcal{M}_{AS},\bbf)\right]$.

The second part of the theorem can be proved by applying Theorems~\ref{theorem:aop}--\ref{cor:grouped} to (\ref{eq:a65}).

\subsection{Proof of Example~3}

To prove the results in this example, we just need to upper bound $\psi(\mathcal{M}_A)$ and $\psi(\mathcal{M}_{G1})$. The upper bound of $\psi(\mathcal{M}_{G1})$ follows from (\ref{eq:upper_psi_G1}). From (\ref{eq:upper_S}), we have
\begin{equation}\label{eq:psi_MA_upper}
  \psi(\mathcal{M}_A) \lesssim \left( \log p_n +  \max_{1\leq j\leq p_n-1}\left\{ \frac{\theta_j^2}{\theta_{j+1}^2} \right\} \log \frac{1}{\theta_{p_n}^2} \right)\left(\log p_n\right)^2,
\end{equation}
Since $(|\theta_{d_{l-1}+1}|,\ldots,|\theta_{d_{l}}|)^{\top} \in \mathfrak{S}_{(|\theta|_{(d_{l-1}+1)},\ldots,|\theta|_{(d_{l})})^{\top}}$ for $1\leq l \leq D_n$, we see
\begin{equation}\label{eq:upper_exam_3}
  \begin{split}
     \max_{1 \leq j \leq p_n - 1}\left\{ \frac{\theta_j^2}{\theta_{j+1}^2} \right\} & \leq \max_{1 \leq l \leq D_n-1}\left\{\frac{\max_{d_{l-1}+1 \leq j \leq d_l}|\theta|_{(j)}^2}{\min_{d_{l}+1 \leq j \leq d_{l+1}}|\theta|_{(j)}^2}\right\} \\
     & = \max_{1 \leq l \leq D_n-1}\left\{\frac{|\theta|_{(d_{l-1}+1)}^2}{|\theta|_{(d_{l+1})}^2}\right\} = \max_{1 \leq l \leq D_n-1}\left\{ \left(\frac{d_{l+1}}{d_{l-1}+1}\right)^{2\alpha_1} \right\}\\
     & \leq d_2^{2\alpha_1}  \vee \max_{2 \leq l \leq D_n-1}\left\{ \left(\frac{d_{l+1}}{d_{l-1}}\right)^{2\alpha_1} \right\}.
  \end{split}
\end{equation}
Note that $d_2^{2\alpha_1} \lesssim (\log n)^{2\alpha_1}$. In addition, when $2\leq l\leq D_n-1$, we have
\begin{equation*}
  \begin{split}
       \left(\frac{d_{l+1}}{d_{l-1}}\right)^{2\alpha_1} &\leq \left[\frac{d_{l-1}+d_l-d_{l-1} + \left(1+\frac{1}{\log n} \right)(d_l-d_{l-1})}{d_{l-1}}\right]^{2\alpha_1}\\
       & = \left[1+ \left(2+\frac{1}{\log n} \right)\left( \frac{d_{l}}{d_{l-1}} -1 \right)\right]^{2\alpha_1}< C,
  \end{split}
\end{equation*}
where the last inequality follows from
\begin{equation*}
  \frac{d_{l}}{d_{l-1}} - 1 \leq \left( 1+ \frac{1}{\log n} \right)\left(1 - \frac{d_{l-2}}{d_{l-1}} \right) < C.
\end{equation*}
Thus, we conclude
\begin{equation*}
  \psi(\mathcal{M}_A) \lesssim (\log n)^{3+2\alpha_1}.
\end{equation*}

\subsection{Proof of Theorem~\ref{prop:nece}}

For simplicity of notation, let $\Theta$ denote $\mathfrak{S}_{(|\theta|_{(1)},\ldots,|\theta|_{(n)})^{\top}}$, where $|\theta|_{(j)}= j^{-\alpha_1},j=1,\ldots,n$. Define the corresponding space of regression mean vector $\mathcal{F}=\{\bbf=\sqrt{n}\sum_{j=1}^{n}\theta_j\bphi_j:\btheta \in \Theta\}$. In order to prove Theorem~\ref{prop:nece}, we just need to prove the following minimax argument:
\begin{equation}\label{eq:minimax_equal}
  \lim_{n\to \infty} \inf_{\hat{\bbf}}\sup_{\bbf \in \mathcal{F}}\frac{R_n(\hat{\bbf},\bbf)}{R_n(\bw^*|\mathcal{M}_{AS},\bbf)} \to \infty,
\end{equation}
where $\hat{\bbf}$ is any measurable estimator based on the observation $\by$. Note that the term in (\ref{eq:minimax_equal}) is lower bounded by
\begin{equation}\label{eq:minimax_equal_lower}
  \inf_{\hat{\bbf}}\sup_{\bbf \in \mathcal{F}}\frac{R_n(\hat{\bbf},\bbf)}{R_n(\bw^*|\mathcal{M}_{AS},\bbf)} \geq \frac{\inf_{\hat{\bbf}}\sup_{\bbf \in \mathcal{F}} R_n(\hat{\bbf},\bbf)}{\sup_{\bbf \in \mathcal{F}}R_n(\bw^*|\mathcal{M}_{AS},\bbf)}.
\end{equation}
Based on the results in Section~\ref{sec:proof_impro}, we find the denominator of (\ref{eq:minimax_equal_lower}) has the order
\begin{equation*}
  \sup_{\bbf \in \mathcal{F}}R_n(\bw^*|\mathcal{M}_{AS},\bbf) \sim \sum_{j=1}^{n}\frac{|\theta|_{(j)}^2\sigma^2}{n|\theta|_{(j)}^2+\sigma^2}\asymp n^{-1+\frac{1}{2\alpha_1}},
\end{equation*}
where the last approximation is due to the Example 1 of \cite{Peng2021}. Therefore, the main task is to lower bound the minimax risk $\inf_{\hat{\bbf}}\sup_{\bbf \in \mathcal{F}} R_n(\hat{\bbf},\bbf)$.

Recall that the true regression mean vector has the form $\bbf= \sqrt{n}\sum_{j=1}^{n}\theta_j\bphi_j$, where $\theta_j=\theta_j(\bbf)=\bphi_j^{\top}\bbf/\sqrt{n},j=1,\ldots,n$. For any estimator $\hat{\bbf}$, define
$$
\theta_j(\hat{\bbf})=\frac{\bphi_j^{\top}\hat{\bbf}}{\sqrt{n}},\quad j=1,\ldots,n,
$$
which are the transformed coefficients of $\hat{\bbf}$. Then we have
\begin{equation*}
  \begin{split}
       & \inf_{\hat{\bbf}}\sup_{\bbf \in \mathcal{F}} R_n(\hat{\bbf},\bbf) = \inf_{\hat{\bbf}}\sup_{\bbf \in \mathcal{F}} \frac{1}{n}\mathbb{E}\left\|\hat{\bbf}-\bbf \right\|^2\\
       & = \inf_{\hat{\bbf}}\sup_{\bbf \in \mathcal{F}} \frac{1}{n}\mathbb{E}\left\|\sqrt{n}\sum_{j=1}^{n}\left[\theta_j(\hat{\bbf}) - \theta_j\right]\bphi_j+  \hat{\bbf} - \sqrt{n}\sum_{j=1}^{n}\theta_j(\hat{\bbf})\bphi_j \right\|^2\\
       & \geq \inf_{\hat{\bbf}}\sup_{\bbf \in \mathcal{F}} \mathbb{E}\sum_{j=1}^{n}\left[\theta_j(\hat{\bbf}) - \theta_j\right]^2=  \inf_{\hat{\bbf}}\sup_{\btheta \in \Theta} \mathbb{E}\sum_{j=1}^{n}\left[\theta_j(\hat{\bbf}) - \theta_j\right]^2,
  \end{split}
\end{equation*}
where the inequality follows from the orthogonality of $\{\bphi_1,\ldots,\bphi_{n} \}$. Recall the Gaussian sequence model \begin{equation}\label{eq:sequence2_recall}
  \hat{\theta}_{j} = \theta_j + e_j, \quad j=1,\ldots,n,
\end{equation}
where $\hat{\theta}_{j} =\bphi_j^{\top}\by/\sqrt{n}$, $\theta_j=\bphi_j^{\top}\bbf/\sqrt{n}$, $e_j=\bphi_j^{\top}\beps/\sqrt{n}$, and $e_1,\ldots,e_n$ i.i.d. $N(0,\sigma^2/n)$. Note that we can replace the infimum over arbitrary estimators $\hat{\bbf}$ by the infimum over estimators $\hat{\bv}=\hat{\bv}(\hat{\theta}_{1},\ldots,\hat{\theta}_{n})$ depending only on the statistics $\hat{\theta}_{j} =\bphi_j^{\top}\by/\sqrt{n},j=1,\ldots,n$. Indeed, by defining $\tilde{v}_j=\mathbb{E}(\theta_j(\hat{\bbf})|\hat{\theta}_{1},\ldots,\hat{\theta}_{n})$, we have
\begin{equation*}
  \inf_{\hat{\bbf}}\sup_{\btheta \in \Theta} \mathbb{E}\sum_{j=1}^{n}\left[\theta_j(\hat{\bbf}) - \theta_j\right]^2 \geq \inf_{\hat{\bbf}}\sup_{\btheta \in \Theta} \mathbb{E}\sum_{j=1}^{n}\left(\tilde{v}_j - \theta_j\right)^2\geq\inf_{\hat{\bv}}\sup_{\btheta \in \Theta} \mathbb{E}\| \hat{\bv}- \btheta \|^2,
\end{equation*}
where the first inequality follows from the conditional expectation formula.

Now we lower bound $\inf_{\hat{\bv}}\sup_{\btheta \in \Theta} \mathbb{E}\| \hat{\bv}- \btheta \|^2$ based on the local metric entropy method in \cite{Yang1999Information} and \cite{Wang2014}. Define $m_n^{**}= \lfloor ( n/\log n )^{1/(2\alpha_1)} \rfloor$. Let $\{0, 1 \}^{n-m_n^{**}} \triangleq \{ \bomega=(\omega_1,\ldots,\omega_{n-m_n^{**}}), \omega_i \in \{0,1 \} \}$ denote the set of all binary sequences of length $n-m_n^{**}$. Suppose $n$ is large enough such that $1 \leq m_n^{**} \leq (n-m_n^{**})/4$. Define $\{0, 1 \}^{n-m_n^{**}}_{m_n^{**}}\triangleq \{\bomega \in \{0, 1 \}^{n-m_n^{**}}: d_{H}(\boldsymbol{0}, \bomega)=m_n^{**} \}$, where $d_{H}(\bomega, \bomega') = \sum_{i=1}^{n-m_n^{**}}I_{\{\omega_i \neq \omega_i'\}}$ is the Hamming distance between the binary sequences $\bomega$ and $\bomega'$. Then, the sparse Varshamov-Gilbert bound \citep[Lemma 4.10]{Massart2007} states that there exists a subset $\mathcal{S}$ of $\{0, 1 \}^{n-m_n^{**}}_{m_n^{**}}$ such that
$$
\log |\mathcal{S}|\geq \frac{m_n^{**}}{5}\log\frac{2(n-m_n^{**})}{m_n^{**}},
$$
and any two distinct vectors in $\mathcal{S}$ are separated by at least $m_n^{**}/2$ in the Hamming distance.

We now map every $\bs \in \mathcal{S}$ to a vector $\btheta=(\theta_1,\ldots,\theta_n)^{\top}$ in $\Theta$. Define an $m_n^{**}$-dimensional vector $\bI_1=\bI_1(\bs)\triangleq\{j:s_j=1 \}$ and an $(n-2m_n^{**})$-dimensional vector $\bI_0=\bI_0(\bs)\triangleq\{j:s_j=0 \}$. Then for a $\bs \in \mathcal{S}$, set $\theta_j=|\theta|_{(j)}$ and $\theta_{m_n^{**}+j}=|\theta|_{(m_n^{**}+i_{1j})}$ for $1\leq j \leq m_n^{**}$, where $i_{1j}$ is the $j$-th element of $\bI_1$. Set $\theta_{2m_n^{**}+j}=|\theta|_{(m_n^{**}+i_{0j})}$ for $1\leq j \leq n-2m_n^{**}$, where $i_{0j}$ is the $j$-th element of $\bI_0$. Denote by $\mathcal{P}$ the image of $\mathcal{S}$ under this mapping. And for any two different vectors $\btheta, \btheta' \in \mathcal{P}$, we have
\begin{equation*}
\begin{split}
     & \| \btheta - \btheta' \|^2 \geq 2\sum_{j=\frac{7m_n^{**}}{4}}^{2m_n^{**}} \left[ j^{-\alpha_1} - \left( j+ \frac{m_n^{**}}{4} \right)^{-\alpha_1} \right]^2 \\
     & \geq 2\sum_{j=\frac{7}{4}m_n^{**}}^{2m_n^{**}} \left[ j^{-\alpha_1} - \left( \frac{9}{8}j \right)^{-\alpha_1} \right]^2\\
     & = 2\left[1 - \left(\frac{8}{9} \right)^{\alpha_1} \right]^2\sum_{j=\frac{7}{4}m_n^{**}}^{2m_n^{**}}j^{-2\alpha_1} = C_1 \left(m_n^{**}\right)^{-2\alpha_1+1}.
\end{split}
\end{equation*}
Define $\btheta_0$ a vector that $\theta_{0j}=|\theta|_{(j)}$ for $1\leq j \leq m_n^{**}$ and $\theta_{0j}=0$ for $m_n^{**}+1\leq j \leq n$. For any $\btheta \in \mathcal{P}$, we have
\begin{equation*}
  \begin{split}
       & \| \btheta - \btheta_0 \| ^2 = \sum_{j=m_n^{**}+1}^{n}|\theta|_{(j)}^2= \sum_{j=m_n^{**}+1}^{n}j^{-2\alpha_1}\\
       & \leq  C_2 \left(m_n^{**}\right)^{-2\alpha_1+1},
  \end{split}
\end{equation*}
where $C_2$ can be chosen sufficiently large than $C_1$.  Thus, $\mathcal{P}$ is a $C_1^{\frac{1}{2}} \left(m_n^{**}\right)^{-\alpha_1+\frac{1}{2}}$ packing set of the Euclidean ball $B(\btheta_0, C_2^{\frac{1}{2}} \left(m_n^{**}\right)^{-\alpha_1+\frac{1}{2}})$. In addition, we have
$$
\log|\mathcal{P}|=\log|\mathcal{S}|=\frac{m_n^{**}}{5}\log\frac{2(n-m_n^{**})}{m_n^{**}} \gtrsim n  \left(m_n^{**}\right)^{-2\alpha_1+1}.
$$
Thus, based on Proposition~16 of \cite{Wang2014} (see also Section~7 of \cite{Yang1999Information}), we know
\begin{equation*}
  \inf_{\hat{\bv}}\sup_{\btheta \in \Theta} \mathbb{E}\| \hat{\bv}- \btheta \|^2 \gtrsim \left(m_n^{**}\right)^{-2\alpha_1+1} = \left( \frac{n}{\log n} \right)^{-1+\frac{1}{2\alpha_1}}.
\end{equation*}
Thus, we have
\begin{equation*}
  \inf_{\hat{\bbf}}\sup_{\bbf \in \mathcal{F}}\frac{R_n(\hat{\bbf},\bbf)}{R_n(\bw^*|\mathcal{M}_{AS},\bbf)} \gtrsim \frac{\left( \frac{n}{\log n} \right)^{-1+\frac{1}{2\alpha_1}}}{n^{-1+\frac{1}{2\alpha_1}}} = (\log n)^{1-\frac{1}{2\alpha_1}}\to \infty,
\end{equation*}
which completes the proof.

\section{Proof of Theorem~\ref{tho:minimax}}\label{sec:proof_minimax}

We first give some well-established minimax results when $\btheta$ lies in the ellipsoid $\Theta(\alpha, R)=\{ \btheta \in \mathbb{R}^{n}:\sum_{j=1}^{n}j^{2\alpha}\theta_j^2\leq R \}$. Similar to Section~\ref{sec:a:proof:s3:preliminaries}, we can rewrite the model (\ref{eq:model_matrix}) as a form of Gaussian sequence model
\begin{equation}\label{eq:sequence_mm}
  \hat{\theta}_{j} = \theta_j + e_j, \quad j=1,\ldots,n,
\end{equation}
where $\hat{\theta}_{j} =\bphi_j^{\top}\by/\sqrt{n}$, $\theta_j=\bphi_j^{\top}\bbf/\sqrt{n}$, $e_j=\bphi_j^{\top}\beps/\sqrt{n}$, and $e_1,\ldots,e_n$ are i.i.d. $N(0,\sigma^2/n)$. According to (\ref{eq:riskw}), we have
\begin{equation}\label{eq:risk_minimax_proof}
  R_n(\bw|\mathcal{M}_A,\bbf)=\sum_{j=1}^{n}\left[(1-\gamma_j)^2\theta_j^2+\frac{\sigma^2\gamma_j^2}{n}\right],
\end{equation}
where $\gamma_j=\sum_{m=j}^{n}w_m$. Note that the MA risk (\ref{eq:risk_minimax_proof}) coincides with the risk of the linear estimator $\hat{\btheta}(\bgamma)=(\gamma_1\hat{\theta}_1,\ldots,\gamma_n\hat{\theta}_n)^{\top}$ in the Gaussian sequence model (\ref{eq:sequence_mm}), i.e., $R_n(\bw|\mathcal{M}_A,\bbf)=\mathbb{E}\|\hat{\btheta}(\bgamma) -\btheta \|^2$. For the Gaussian sequence model, \citet{Pinsker1980} obtained an exact evaluation for the linear minimax risk over the ellipsoid $\Theta(\alpha, R)$ and showed that the optimal minimax risk is asymptotically equivalent to the optimal linear minimax risk. \citet{Pinsker1980}'s results yield the minimax risk and the linear-combined minimax risk of MA
\begin{equation}\label{eq:pinsker}
  R_M\left[\mathcal{F}_{\Theta(\alpha, R)}\right]\sim R_L\left[\mathcal{F}_{\Theta(\alpha, R)}\right] \sim C_1\left(\frac{\sigma^2}{n}  \right)^{\frac{2\alpha}{2\alpha+1}},
\end{equation}
where $C_1$ is the Pinsker constant which only depends on $\alpha$ and $R$. Define $x_{+}=\max(x, 0)$. The minimax optimal weights are given by $\tilde{w}_j^*=\tilde{\gamma}_j^*-\tilde{\gamma}^*_{j-1}, j=1\ldots,n$, where
\begin{equation}\label{eq:pinsker2}
  \tilde{\gamma}_j^*=\left[1-C_2\left(\frac{\sigma^2}{n} \right)^{\frac{\alpha}{2\alpha+1}} j^{\alpha}\right]_+,
\end{equation}
and $C_2$ a constant that depends on $\alpha$ and $R$. Since $\tilde{\gamma}_1^* \to 1$ and $\tilde{\gamma}_j^*\geq \tilde{\gamma}_{j+1}^*$, we see that when $n$ is large enough, $(\tilde{w}_1^*,\ldots,\tilde{w}_n^*)^{\top}$ lies in the unit simplex $\mathcal{W}_n$.

Now we consider the candidate model set $\mathcal{M}_{G1}$ with $k_1=\lceil \log n \rceil$ and $\zeta_n = 1/\log n$. The relation (\ref{eq:upper_psi_G1}) shows $\psi(\mathcal{M}_{G1}) \lesssim (\log n\log\log n)^2$. Based on the risk bound (\ref{eq:oracle_MG}), we have
\begin{equation}\label{eq:oracle_MG_mm}
\begin{split}
   \mathbb{E}L_n(\hat{\bw}|\mathcal{M}_{G1},\bbf)&\leq \left(1+\frac{1}{\log n}\right)R_n(\bw^*|\mathcal{M}_A,\bbf)+\frac{C(\log n\log\log n)^2}{n}\\
   &+C\left[\frac{(\log n\log\log n)^2}{n}\right]^{\frac{1}{2}}\left[\left(1+\frac{1}{\log n}\right)R_n(\bw^*|\mathcal{M}_A,\bbf)\right]^{\frac{1}{2}}\\
   &+C\mathbb{E}\left( \hat{\sigma}^2-\sigma^2 \right)^2+ C\left[ \mathbb{E}\left( \hat{\sigma}^2-\sigma^2 \right)^2 \right]^{\frac{1}{2}}\left[\left(1+\frac{1}{\log n}\right)R_n(\bw^*|\mathcal{M}_A,\bbf)\right]^{\frac{1}{2}}.\\
\end{split}
\end{equation}
Then, taking the upper bound on both sides of (\ref{eq:oracle_MG_mm}) with respect to $\bbf \in \mathcal{F}_{\Theta(\alpha, R)}$ gives
\begin{equation}\label{eq:mini1}
\begin{split}
\sup_{\bbf \in \mathcal{F}_{\Theta(\alpha, R)}}\mathbb{E}L_n(\hat{\bw}|\mathcal{M}_{G1},\bbf) & \leq \left(1+\frac{1}{\log n}\right) \sup_{\bbf \in \mathcal{F}_{\Theta(\alpha, R)}}R_n(\bw^*|\mathcal{M}_A,\bbf)+\frac{C(\log n\log\log n)^2}{n}\\
&
    +C\left[\frac{(\log n\log\log n)^2}{n}\right]^{\frac{1}{2}}\left[\left(1+\frac{1}{\log n}\right)\sup_{\bbf \in \mathcal{F}_{\Theta(\alpha, R)}}R_n(\bw^*|\mathcal{M}_A,\bbf)\right]^{\frac{1}{2}}\\
    &+ C\sup_{\bbf \in \mathcal{F}_{\Theta(\alpha, R)}}\mathbb{E}\left( \hat{\sigma}^2-\sigma^2 \right)^2\\
    &+ C\left[\sup_{\bbf \in \mathcal{F}_{\Theta(\alpha, R)}} \mathbb{E}\left( \hat{\sigma}^2-\sigma^2 \right)^2 \right]^{\frac{1}{2}}\left[\left(1+\frac{1}{\log n}\right)\sup_{\bbf \in \mathcal{F}_{\Theta(\alpha, R)}}R_n(\bw^*|\mathcal{M}_A,\bbf)\right]^{\frac{1}{2}}.
\end{split}
\end{equation}
The first term on the right side of (\ref{eq:mini1}) is upper bounded by
\begin{equation}\label{eq:mini1_1}
\begin{split}
   &\left(1+\frac{1}{\log n}\right)\sup_{\bbf \in \mathcal{F}_{\Theta(\alpha, R)}}R_n(\bw^*|\mathcal{M}_A,\bbf) \\ & \leq \left(1+\frac{1}{\log n}\right) \inf_{\bw \in \mathcal{W}_n}\sup_{\bbf \in \mathcal{F}_{\Theta(\alpha, R)}}R_n(\bw|\mathcal{M}_A,\bbf) \\
     & =R_L\left[\mathcal{F}_{\Theta(\alpha, R)}\right]+\frac{R_L\left[\mathcal{F}_{\Theta(\alpha, R)}\right]}{\log n}\\
     &+\left(1+\frac{1}{\log n}\right)\left[\inf_{\bw \in \mathcal{W}_n}\sup_{\bbf \in \mathcal{F}_{\Theta(\alpha, R)}}R_n(\bw|\mathcal{M}_A,\bbf)-\inf_{\bw}\sup_{\bbf \in \mathcal{F}_{\Theta(\alpha, R)}}R_n(\bw|\mathcal{M}_A,\bbf)\right],
\end{split}
\end{equation}
where the first inequality is due to the definition of $\bw^*|\mathcal{M}_A$, and the second equality is due to the definition of $R_L\left[\mathcal{F}_{\Theta(\alpha, R)}\right]$. Thus, it remains to prove
\begin{equation}\label{eq:remain0}
  \inf_{\bw \in \mathcal{W}_n}\sup_{\bbf \in \mathcal{F}_{\Theta(\alpha, R)}}R_n(\bw|\mathcal{M}_A,\bbf)-\inf_{\bw}\sup_{\bbf \in \mathcal{F}_{\Theta(\alpha, R)}}R_n(\bw|\mathcal{M}_A,\bbf)= o\left(R_L\left[\mathcal{F}_{\Theta(\alpha, R)}\right]\right),
\end{equation}
\begin{equation}\label{eq:remain1}
  \frac{(\log n\log\log n)^2}{n} = o\left(R_L\left[\mathcal{F}_{\Theta(\alpha, R)}\right]\right),
\end{equation}
and
\begin{equation}\label{eq:remain2}
  \sup_{\bbf \in \mathcal{F}_{\Theta(\alpha, R)}}\mathbb{E}\left( \hat{\sigma}^2-\sigma^2 \right)^2 = o\left(R_L\left[\mathcal{F}_{\Theta(\alpha, R)}\right]\right)
\end{equation}
for all $\alpha>0$ and $R>0$. For (\ref{eq:remain0}), based on Chapter~3 of \cite{tsybakov2008introduction}, we have
\begin{equation}
\begin{split}
   &\inf_{\bw \in \mathcal{W}_n}\sup_{\bbf \in \mathcal{F}_{\Theta(\alpha, R)}}R_n(\bw|\mathcal{M}_A,\bbf)-\inf_{\bw}\sup_{\bbf \in \mathcal{F}_{\Theta(\alpha, R)}}R_n(\bw|\mathcal{M}_A,\bbf)\\
    &\asymp \frac{1-\tilde{\gamma}_1^*}{n} = o\left(R_L\left[\mathcal{F}_{\Theta(\alpha, R)}\right]\right),
\end{split}
\end{equation}
where the last equality is due to (\ref{eq:pinsker})--(\ref{eq:pinsker2}). The condition (\ref{eq:remain1}) can be easily proved for all $\alpha>0$ and $R>0$ by noticing (\ref{eq:pinsker}). The condition (\ref{eq:remain2}) is satisfied when the estimator $\hat{\sigma}^2_{D}$ with the parametric rate $1/n$ is adopted. When $\hat{\sigma}^2 = \hat{\sigma}_{m_n}^2$ with $m_n = \lfloor kn \rfloor$ ($0<k<1$), we have
\begin{equation*}
\begin{split}
   &\mathbb{E}(\hat{\sigma}_{m_n}^2-\sigma^2)^2 \lesssim n^{-1}\vee \left(\sum_{j=\lfloor kn \rfloor+1}^{n}\theta_j^2\right)^2\\
   &\leq n^{-1}\vee \left[(kn)^{-2\alpha}\sum_{j=\lfloor kn \rfloor+1}^{n}j^{2\alpha}\theta_j^2\right]^2
     \lesssim n^{-1} \vee n^{-4\alpha},
\end{split}
\end{equation*}
where the first inequality follows from (\ref{eq:variance_risk}), and the third inequality follows from the definition of $\Theta(\alpha, R)$. Thus, we obtain
\begin{equation}
  \sup_{\bbf \in \mathcal{F}_{\Theta(\alpha, R)}}\mathbb{E}\left( \hat{\sigma}^2-\sigma^2 \right)^2\lesssim n^{-1} \vee n^{-4\alpha}=o\left(R_L\left[\mathcal{F}_{\Theta(\alpha, R)}\right]\right)
\end{equation}
for all $\alpha>0$ and $R>0$. Combining (\ref{eq:mini1})--(\ref{eq:remain2}), we have proved the exact linear-combined minimax adaptivity of MMA on the family of ellipsoids. According to (\ref{eq:pinsker}), MMA also achieves the exact minimax adaptivity on the family of ellipsoids.

The linear-combined minimax risk over the hyperrectangle is
\begin{equation}\label{eq:minin_risk_hyper}
\begin{split}
    & R_L\left[\mathcal{F}_{\Theta^H(c,q)}\right]=\inf_{\mathbf{w}}\sup_{\bbf\in \mathcal{F}_{\Theta^H(c,q)}}R_n(\bw|\mathcal{M}_A,\bbf) \\
     & =\sum_{j=1}^{n}\frac{c^2j^{-2q}\sigma^2}{nc^2j^{-2q}+\sigma^2}\asymp n^{-1+\frac{1}{2q}},
\end{split}
\end{equation}
where the second equality is due to the definition of $\Theta^{H}(c,q)$ and (\ref{eq:risk_minimax_proof}), and the last approximation can be obtained based on the similar technique in the proof of Theorem~1 of \cite{Peng2021}.
Likewise, by taking the upper bound on both sides of (\ref{eq:risk_bound_general}) over $\bbf \in \mathcal{F}_{\Theta^H(c,q)}$, we see that the results can be proved if we show
\begin{equation}\label{eq:remain10}
  \inf_{\bw \in \mathcal{W}_n}\sup_{\bbf \in \mathcal{F}_{\Theta^H(c,q)}}R_n(\bw|\mathcal{M}_A,\bbf)-\inf_{\bw}\sup_{\bbf \in \mathcal{F}_{\Theta^H(c,q)}}R_n(\bw|\mathcal{M}_A,\bbf)= o\left(R_L\left[\mathcal{F}_{\Theta^H(c,q)}\right]\right),
\end{equation}
\begin{equation}\label{eq:remain11}
  \frac{(\log n\log\log n)^2}{n} = o\left(R_L\left[\mathcal{F}_{\Theta^H(c,q)}\right]\right),
\end{equation}
and
\begin{equation}\label{eq:remain12}
  \sup_{\bbf \in \mathcal{F}_{\Theta^H(c,q)}}\mathbb{E}\left( \hat{\sigma}^2-\sigma^2 \right)^2 = o\left(R_L\left[\mathcal{F}_{\Theta^H(c,q)}\right]\right)
\end{equation}
for all $c>0$ and $q>1/2$. Note that
\begin{equation}
\begin{split}
    & \inf_{\bw \in \mathcal{W}_n}\sup_{\bbf \in \mathcal{F}_{\Theta^H(c,q)}}R_n(\bw|\mathcal{M}_A,\bbf)-\inf_{\bw}\sup_{\bbf \in \mathcal{F}_{\Theta^H(c,q)}}R_n(\bw|\mathcal{M}_A,\bbf) \\
     & =\frac{\sigma^4}{n^2c^2+n\sigma^2}=o\left(R_L\left[\mathcal{F}_{\Theta^H(c,q)}\right]\right),
\end{split}
\end{equation}
which implies (\ref{eq:remain10}). The equation (\ref{eq:remain11}) holds for all $c>0$ and $q>1/2$ due to (\ref{eq:minin_risk_hyper}). The condition (\ref{eq:remain12}) is naturally satisfied for the estimator $\hat{\sigma}^2_{D}$. When $\hat{\sigma}^2 = \hat{\sigma}_{m_n}^2$ with $m_n = \lfloor kn \rfloor$ ($0<k<1$) is adopted, we have
\begin{equation*}
\begin{split}
   \mathbb{E}(\hat{\sigma}_{m_n}^2-\sigma^2)^2 & \lesssim n^{-1}\vee \left(\sum_{j=\lfloor kn \rfloor+1}^{n}\theta_j^2\right)^2\leq n^{-1}\vee \left(c^2\sum_{j=\lfloor kn \rfloor+1}^{n}j^{-2q}\right)^2 \\
     & \lesssim n^{-1} \vee n^{-2q+1}=o\left(n^{-1+\frac{1}{2q}}\right)
\end{split}
\end{equation*}
for all $q>1/2$, which implies (\ref{eq:remain12}). Thus, we see that the MMA estimator is adaptive in the exact linear-combined minimax sense on the family of hyperrectangles.

\section{Additional theoretical results}\label{sec:a:add_theory}

\subsection{Improvability of discretized MA over MS}\label{sec:A_5_1}

Section~\ref{subsec:review_hansen} of the main text compares the optimal MA risk with the optimal MA risk subject to the discrete weight restriction (\ref{eq:condition_discrete}). On the other hand, MS can be viewed as MA in the discrete set $\mathcal{W}_{|\mathcal{M}_S|}(1)$. Recall that $m_n^*$ denotes the optimal single model among all nested candidate models, and $m^*|\mathcal{M}_S$ stands for the optimal model in $\mathcal{M}_S$. Thus we have $R_n\left(m^*|\mathcal{M}_S,\bbf\right)\geq R_n\left(\bw_N^*|\mathcal{M}_S,\bbf\right)$. A natural question to ask is whether $R_n\left(\bw_N^*|\mathcal{M}_S,\bbf\right)$ has a substantial improvement over $R_n\left(m^*|\mathcal{M}_S,\bbf\right)$ when $N\geq 2$.

\begin{proposition}\label{lemma:delta_11}
    Suppose Assumptions~\ref{asmp:square_summable}--\ref{asmp:regressor_order} hold. Under Condition~\ref{condition1} and $M_n\gtrsim m_n^*$, define
    $$
    r\triangleq\log_\kappa\left( \frac{m_n^*}{M_n}\vee 1 \right),
    $$
    where $\kappa$ is the constant appearing in Condition~\ref{condition1}.
    If $N>(1+\delta^{2r+2})/(2\delta^{2r+2})$, we have
    $$
    R_n\left( m^*|\mathcal{M}_{S},\bbf\right) - R_n\left( \bw_N^*|\mathcal{M}_{S}, \bbf\right) \asymp R_n\left( m^*|\mathcal{M}_{S},\bbf\right).
    $$
    Under Condition~\ref{condition2} or $M_n=o(m_n^*)$, for any $N$, we have
    $$
    R_n\left( m^*|\mathcal{M}_{S},\bbf\right) - R_n\left( \bw_N^*|\mathcal{M}_{S}, \bbf\right)=o\left[ R_n\left( m^*|\mathcal{M}_{S},\bbf\right)\right].
    $$
\end{proposition}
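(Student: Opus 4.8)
The plan is to work from the excess-risk identity implied by (\ref{eq:riskw_large})--(\ref{eq:gamma_star}): parametrizing any weight vector through its cumulative weights $\bgamma=(\gamma_1,\dots,\gamma_{M_n})^{\top}$, one has
\[
R_n(\bw|\mathcal{M}_S,\bbf)-R_n(\bw^*|\mathcal{M}_S,\bbf)=\sum_{j=1}^{M_n}\Big(\tfrac{\sigma^2}{n}+\theta_j^2\Big)(\gamma_j-\gamma_j^*)^2 .
\]
Since MS is exactly the case $N=1$, a single $0/1$ step at $\widehat m:=m^*|\mathcal{M}_S$, the quantity of interest is the difference between the ``MS excess'' $\sum_j(\sigma^2/n+\theta_j^2)(\gamma_j^{MS}-\gamma_j^*)^2$ and the ``discrete-$N$ excess''. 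The $\lesssim$ half of Part~1 is then immediate: $R_n(\bw_N^*|\mathcal{M}_S,\bbf)\ge R_n(\bw^*|\mathcal{M}_S,\bbf)$ gives $R_n(m^*|\mathcal{M}_S,\bbf)-R_n(\bw_N^*|\mathcal{M}_S,\bbf)\le R_n(m^*|\mathcal{M}_S,\bbf)-R_n(\bw^*|\mathcal{M}_S,\bbf)\asymp R_n(m^*|\mathcal{M}_S,\bbf)$ by Lemma~\ref{lemma:peng}.

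I would dispatch Part~2 first, as a one-line sandwich. For every $N$, $R_n(\bw^*|\mathcal{M}_S,\bbf)\le R_n(\bw_N^*|\mathcal{M}_S,\bbf)\le R_n(m^*|\mathcal{M}_S,\bbf)$, so the improvement is at most $R_n(m^*|\mathcal{M}_S,\bbf)-R_n(\bw^*|\mathcal{M}_S,\bbf)$, which Lemma~\ref{lemma:peng} shows to be $o[R_n(m^*|\mathcal{M}_S,\bbf)]$ under Condition~\ref{condition2} and also under $M_n=o(m_n^*)$.

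The core is the $\gtrsim$ half of Part~1. Here I would exhibit an explicit competitor $\tilde\bw_N\in\mathcal{W}_{|\mathcal{M}_S|}(N)$ beating MS and then use optimality $R_n(\bw_N^*|\mathcal{M}_S,\bbf)\le R_n(\tilde\bw_N|\mathcal{M}_S,\bbf)$. In the representative regime $M_n<m_n^*$ the optimal MS model is the full set $\widehat m=M_n$, so $\gamma_j^{MS}=1$ throughout; I would take the cumulative weights $\tilde\gamma_{N,j}=(N-1)/N$ on a band $B$ just below $M_n$ and leave them at $1$ elsewhere. The per-index gain is
\[
\Big(\tfrac{\sigma^2}{n}+\theta_j^2\Big)\Big[(1-\gamma_j^*)^2-\big(\tfrac{N-1}{N}-\gamma_j^*\big)^2\Big]=\Big(\tfrac{\sigma^2}{n}+\theta_j^2\Big)\tfrac{1}{N}\Big(\tfrac{2N-1}{N}-2\gamma_j^*\Big),
\]
which is $\gtrsim \sigma^2/n$ whenever $\gamma_j^*\le \tfrac{2N-1}{2N}-c$. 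Using $\theta_{M_n}\le\delta^{-(r+1)}\theta_{m_n^*+1}$ from Condition~\ref{condition1} (the extra $\kappa$-step carries $M_n$ past $m_n^*$ to the index $m_n^*+1$, where (\ref{eq:mnstar}) gives $\theta_{m_n^*+1}^2\le\sigma^2/n$), one gets $\theta_{M_n}^2\le\delta^{-(2r+2)}\sigma^2/n$, i.e.\ $\gamma_{M_n}^*\le[1+\delta^{2r+2}]^{-1}$. The hypothesis $N>(1+\delta^{2r+2})/(2\delta^{2r+2})$ is precisely the equivalent statement $[1+\delta^{2r+2}]^{-1}<\tfrac{2N-1}{2N}$, i.e.\ that $\gamma_{M_n}^*$ sits strictly below the midpoint of the grid cell $[(N-1)/N,1]$; the strictness yields a margin $\rho>1$ with $\theta_{M_n}^2\le\rho^{-1}C_0\sigma^2/n$. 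Condition~\ref{condition1} then keeps $\theta_j^2<C_0\sigma^2/n$ (hence $j\in B$) down to index $M_n\kappa^{-s_0}$ for a fixed $s_0>0$, so $|B|\asymp M_n\asymp m_n^*$. Summing the per-index gains gives $R_n(m^*|\mathcal{M}_S,\bbf)-R_n(\tilde\bw_N|\mathcal{M}_S,\bbf)\gtrsim m_n^*/n\asymp R_n(m^*|\mathcal{M}_S,\bbf)$ by Lemma~\ref{lemma:peng}, and the same construction with $\widehat m=m_n^*$ (replacing $1$ by $(N-1)/N$ on indices just below $m_n^*$) handles $M_n\ge m_n^*$, where $r=0$.

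The main obstacle is the bookkeeping in this last step: fixing the band $B$ and the level $(N-1)/N$, and certifying a uniformly positive per-index gain on a band of width $\asymp m_n^*$. This is exactly where $r=\log_\kappa(m_n^*/M_n\vee1)$ enters, since it measures how many $\kappa$-steps push the boundary value $\gamma_{M_n}^*$ toward $1$ and therefore how fine the grid must be; tracking $\delta,\nu,\kappa$ through Condition~\ref{condition1} to turn the threshold on $N$ into the clean inequality $\gamma_{M_n}^*<\tfrac{2N-1}{2N}$ with margin is the delicate part, whereas the width estimate $|B|\asymp m_n^*$ and the final summation are routine given Lemma~\ref{lemma:peng} and (\ref{eq:mnstar}).
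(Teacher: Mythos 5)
Your decomposition matches the paper's at every point except one. Part~2 and the $\lesssim$ half of Part~1 are exactly the sandwich $R_n(\bw^*|\mathcal{M}_S,\bbf)\le R_n(\bw_N^*|\mathcal{M}_S,\bbf)\le R_n(m^*|\mathcal{M}_S,\bbf)$ combined with Lemma~\ref{lemma:peng}, which is precisely how the paper argues via (\ref{eq:relation3}). For the $\gtrsim$ half, your substitution of an explicit feasible competitor $\tilde\bw_N$ in place of the paper's direct per-index comparison of $\gamma^*_{1,j}$ and $\gamma^*_{N,j}$ against $h_j=\theta_j^2/(\theta_j^2+\sigma^2/n)$ is a legitimate (and arguably cleaner) variation: it never requires characterizing the optimal discrete weights, and it avoids the negative contributions from indices where $(1-h_j)^2<1/(4N^2)$ since you leave the weights untouched off the band. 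Your per-index gain formula $\frac1N\bigl(\frac{2N-1}{N}-2h_j\bigr)$ and the observation that the hypothesis on $N$ is exactly $h_{M_n}<\frac{2N-1}{2N}$ are both correct, and the quantitative core you isolate is the same as the paper's.

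The step that does not close as written is the extension from the single index $M_n$ to a band of width $\asymp m_n^*$. Condition~\ref{condition1} constrains $|\theta_{\lfloor\kappa l\rfloor}/\theta_l|$ only over full $\kappa$-steps; there is no fractional-step version, so for $j\in(M_n/\kappa,M_n)$ the only available bound is $\theta_j^2\le\delta^{-2}\theta_{M_n}^2$, and nothing in Condition~\ref{condition1} prevents essentially all of that factor $\delta^{-2}$ from being realized immediately below $M_n$ (the sequence may be nearly flat on $[M_n,\kappa l]$ and drop by almost $\delta$ just before $M_n$ while every full $\kappa$-step ratio stays in $[\delta,\nu]$). Since the margin $\rho>1$ coming from the strict inequality $N>(1+\delta^{2r+2})/(2\delta^{2r+2})$ can be arbitrarily close to $1$, it cannot absorb a fixed factor $\delta^{-2s_0}$ for any fixed $s_0>0$, and your band $B$ could degenerate to $\{M_n\}$, yielding a gain of order $\sigma^2/n$ rather than $m_n^*/n$. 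The repair is to anchor the chain of Condition~\ref{condition1} ratios at the \emph{bottom} of the band rather than at $M_n$: the number of $\kappa$-steps from $\lceil M_n/\kappa\rceil$ up to $m_n^*+1$ is still about $r+1$, so $\theta_{\lceil M_n/\kappa\rceil}^2\le\delta^{-(2r+2)}\theta_{m_n^*+1}^2\le\delta^{-(2r+2)}\sigma^2/n$ by (\ref{eq:mnstar}), and monotonicity of $\theta_j^2$ then gives $h_j<\frac{2N-1}{2N}$ uniformly on $[\lceil M_n/\kappa\rceil,M_n]$, a band of width $\asymp M_n\asymp m_n^*$, at the stated threshold up to the integer rounding of the step count. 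This re-anchoring is exactly what the paper's display (\ref{eq:what12}) accomplishes through the index $j_n^*$ and the exponent $2d_4^*+2$; with it, your competitor argument goes through verbatim.
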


When $\theta_l$ decays slowly and $M_n$ is large, the optimal model size $m^*|\mathcal{M}_{S}$ is not very small relative to the sample size $n$, and MA under the discrete weight set still reduces the risk of MS substantially, although it does not provide the full potential of MA. For example, when $\theta_j=j^{-\alpha_1},\alpha_1>1/2$, Condition~\ref{condition1} is satisfied for any $\kappa>1$ and $\delta=\kappa^{-\alpha_1}$. Then, for a large candidate model set with $M_n\geq m_n^*$, the condition for improving over MS is
$
N>(1+\delta^{2})/(2\delta^{2})=(1+\kappa^{2\alpha_1})/2.
$
Due to the arbitrariness of $\kappa$, it suffices to require $N\geq 2$.


\begin{proof}

We first prove the claim under Condition~\ref{condition1} and $M_n\gtrsim m_n^*$. Recall that $\bw_N^*|\mathcal{M}_S=\arg\min_{\bw \in \mathcal{W}_{|\mathcal{M}_S|}(N)}R_n(\bw | \mathcal{M}_S,\bbf)$ denotes the optimal discrete weight vector in $\mathcal{W}_{|\mathcal{M}_S|}(N)$, and $m^*|\mathcal{M}_S=\arg\min_{m \in \{1,\ldots,M_n\}}R_n(m|\mathcal{M}_S,\bbf)$ is the size of the optimal candidate model in $\mathcal{M}_S$. Since MS can be seen as the MA on the discrete weight set with $N=1$, we have $R_n\left( m^*|\mathcal{M}_{S},\bbf\right)=R_n\left( \bw_1^*|\mathcal{M}_{S}, \bbf\right)$, where $\bw_1^*|\mathcal{M}_{S}=(w_{1,1}^*,\ldots,w_{1,M_n}^*)^{\top}$ and the optimal discrete cumulative weights for MS is $\gamma_{1,j}^*=\sum_{m=j}^{M_n}w_{1,m}^*$. From (\ref{eq:risk_m}) and (\ref{eq:riskw_large}), we have
\begin{equation}\label{eq:gamma1j}
  \gamma_{1,j}^*=\left\{\begin{array}{ll}
1 &\quad 1\leq j \leq (m_n^*\wedge M_n), \\
0 &\quad (m_n^*\wedge M_n)<j\leq M_n.
\end{array}\right.
\end{equation}

From (\ref{eq:riskw_large}), we see that the risk difference between MS and MA is
\begin{equation}\label{eq:p111}
\begin{split}
   &R_n\left( m^*|\mathcal{M}_{S},\bbf\right) - R_n\left( \bw_N^*|\mathcal{M}_{S}, \bbf\right)\\
   &=R_n\left( \bw_1^*|\mathcal{M}_{S}, \bbf\right) -R_n\left( \bw_N^*|\mathcal{M}_{S}, \bbf\right)  \\
   & =\sum_{j=1}^{M_n}\left( \frac{\sigma^2}{n} + \theta_j^2 \right) \left( \gamma_{1,j}^* - \frac{\theta_j^2}{\theta_j^2+\frac{\sigma^2}{n}} \right)^2 - \sum_{j=1}^{M_n}\left( \frac{\sigma^2}{n} + \theta_j^2 \right) \left( \gamma_{N,j}^* - \frac{\theta_j^2}{\theta_j^2+\frac{\sigma^2}{n}} \right)^2 \\
   &\geq \sum_{j=1}^{M_n\wedge m_n^*}\left( \frac{\sigma^2}{n} + \theta_j^2 \right)\left( 1 - \frac{\theta_j^2}{\theta_j^2+\frac{\sigma^2}{n}} \right)^2- \frac{1}{4N^2}\sum_{j=1}^{M_n\wedge m_n^*}\left( \frac{\sigma^2}{n} + \theta_j^2 \right),
\end{split}
\end{equation}
where the inequality is due to (\ref{eq:gamma1j}) and the fact $$\left|\gamma_{N,j}^* - \frac{\theta_j^2}{\theta_j^2+\frac{\sigma^2}{n}} \right| \leq \frac{1}{2N}.$$

Define
\begin{equation*}
  d_4^*=\left\{\begin{array}{ll}
\arg\min_{d \in
\mathbb{N}}\{G_d(m_n^*+1)<M_n\} &\quad M_n<m_n^*, \\
0 &\quad M_n\geq m_n^*.
\end{array}\right.
\end{equation*}
where the function $G_d$ is given by (\ref{eq:Gfunction}). It is easy to check that
\begin{equation*}\label{eq:check_d2star}
  d_4^* \sim \log_\kappa\left( \frac{m_n^*}{M_n}\vee 1 \right),
\end{equation*}
where $\kappa>1$ is the constant given in Condition~\ref{condition1}. When $N>(1+\delta^{2d_4^*+2})/(2\delta^{2d_4^*+2})$, there must exist a positive constant $\tau$ that satisfies $\delta^{2d_4^*+2}\geq (1+\tau)/[2N-(1+\tau)]$, where $0<\delta<1$ is the constant given in Condition~\ref{condition1}. Then we define
\begin{equation}\label{eq:d3_star}
d_5^*=\argmax_{d\in \mathbb{N}\cup\{0\}}\left\{\delta^{2d+2d_4^*+2}\geq \frac{1+\tau}{2N-(1+\tau)}\right\}
\end{equation}
and the model index $j_n^*=G_1(M_n) \wedge G_{d_5^*}\left(m_n^*+1 \right)$. When $j_n^*=G_1(M_n)$, we have
\begin{equation}\label{eq:what12}
\begin{split}
   \frac{\theta_{m_n^*+1}^2}{\theta_{j_n^*}^2} &   =\frac{\theta_{m_n^*+1}^2}{\theta_{G_1(m_n^*+1)}^2}\times\frac{\theta_{G_1(m_n^*+1)}^2}{\theta_{G_2(m_n^*+1)}^2}\times\cdots\times\frac{\theta_{G_{d_4^*}(m_n^*+1)}^2}{\theta_{G_1(M_n)}^2}
 \\
     & \geq \delta^{2d_4^*} \frac{\theta_{G_{d_4^*}(m_n^*+1)}^2}{\theta_{G_1(M_n)}^2}\geq \delta^{2d_4^*} \frac{\theta_{G_{d_4^*}(m_n^*+1)}^2}{\theta_{G_{d_4^*+1}(m_n^*+1)}^2}\\
     &\geq \delta^{2d_4^*+2},
\end{split}
\end{equation}
where the first inequality follows Condition~\ref{condition1}, and the second inequality is due to $G_{d_4^*}(m_n^*+1)<M_n$ and $G_{d_4^*+1}(m_n^*+1)<G_1(M_n)$. When $j_n^*=G_{d_5^*}\left(m_n^*+1 \right)$, we have
\begin{equation}\label{eq:what123}
\begin{split}
   \frac{\theta_{m_n^*+1}^2}{\theta_{j_n^*}^2} &   =\frac{\theta_{m_n^*+1}^2}{\theta_{G_1(m_n^*+1)}^2}\times\frac{\theta_{G_1(m_n^*+1)}^2}{\theta_{G_2(m_n^*+1)}^2}\times\cdots\times\frac{\theta_{G_{d_5^*-1}(m_n^*+1)}^2}{\theta_{G_{d_5^*}(m_n^*+1)}^2}
\geq \delta^{2d_5^*}.
\end{split}
\end{equation}
Combining (\ref{eq:what12}) with (\ref{eq:what123}), we have
\begin{equation}
\begin{split}
   \frac{\theta_{m_n^*+1}^2}{\theta_{j_n^*}^2} & \geq \delta^{2d_4^*+2}\wedge \delta^{2d_5^*}= \delta^{(2d_4^*+2)\vee(2d_5^*)} \\
     & \geq \delta^{2d_4^*+2d_5^*+2} \geq \frac{1+\tau}{2N-(1+\tau)},
\end{split}
\end{equation}
where the second inequality is due to $0<\delta<1$, and the last inequality is due to the definition (\ref{eq:d3_star}).
Thus when $j\geq j_n^*$, we have
\begin{equation}\label{eq:p112}
\begin{split}
   1 - \frac{\theta_j^2}{\theta_j^2+\frac{\sigma^2}{n}}&\geq 1 - \frac{1}{1+\frac{\theta_{m_n^*+1}^2}{\theta_j^2}} \geq 1-\frac{1}{1+\frac{\theta_{m_n^*+1}^2}{\theta_{j_n^*}^2}}\\
    &\geq  1 - \frac{1}{1+\frac{1+\tau}{2N-(1+\tau)}}= \frac{1+\tau}{2N}.
\end{split}
\end{equation}
Substituting (\ref{eq:p112}) into (\ref{eq:p111}) gives the desired claim
\begin{equation*}
\begin{split}
   &R_n\left( m^*|\mathcal{M}_{S},\bbf\right) - R_n\left( \bw_N^*|\mathcal{M}_{S}, \bbf\right)\\
    &\geq \sum_{j=1}^{M_n\wedge m_n^*}\left( \frac{\sigma^2}{n} + \theta_j^2 \right) \left[\frac{(1+\tau)^2}{4N^2}-\frac{1}{4N^2}\right] \\
     & \geq \frac{(\tau^2+2\tau)(M_n\wedge m_n^*-j_n^*)\sigma^2}{4N^2 n} \asymp \frac{m_n^*}{n}\asymp R_n(m^*|\mathcal{M}_{S},\bbf).
\end{split}
\end{equation*}

The proof of the result under Condition~\ref{condition2} or the condition $M_n=o(m_n^*)$ is straightforward based on Lemma~\ref{lemma:peng} and (\ref{eq:relation3}). This completes the proof of this proposition.

\end{proof}

\subsection{AOP in terms of loss}\label{sec:a:aop_loss}

Theorems~\ref{theorem:main}--\ref{theorem:aop} in the main text focus on the squared risk of the MMA estimator. Note that the definitions of AOP in terms of statistical loss have also been commonly adopted in MS \citep{Stone1984, Li1987, Shao1997} and MA literature \citep{Hansen2007, Wan2010}. The following corollary shows that under the same assumptions in Theorem~\ref{theorem:aop}, MMA is optimal in the sense that its squared loss asymptotically converges to that of the oracle MA estimator in probability.
\begin{corollary}\label{corollary:aop}
  Suppose Assumption~\ref{asmp:square_summable} holds. As $n \to \infty$, if the conditions (\ref{eq:variance_rate})--(\ref{eq:minimum_marisk_rate}) are satisfied, then we have
  \begin{equation*}
    \frac{L_n(\hat{\bw} | \mathcal{M},\bbf)}{\inf_{\bw\in\mathcal{W}_{M_n}}L_n(\bw | \mathcal{M},\bbf)}\to_p 1,
  \end{equation*}
  where $\to_p$ means convergence in probability.
\end{corollary}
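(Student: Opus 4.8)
The plan is to prove the stronger pair of statements $L_n(\hat{\bw}|\mathcal{M},\bbf)/R^\ast \to_p 1$ and $\inf_{\bw\in\mathcal{W}_{M_n}} L_n(\bw|\mathcal{M},\bbf)/R^\ast\to_p 1$, where $R^\ast:=R_n(\bw^\ast|\mathcal{M},\bbf)=\min_\bw R_n(\bw|\mathcal{M},\bbf)$; dividing one by the other then yields the corollary. The starting point is the exact decomposition (\ref{eq:addd1}), which I rewrite as $L_n(\bw|\mathcal{M},\bbf)=C_n(\bw|\mathcal{M},\by)-A_n+T_n(\bw)$, where $A_n=n^{-1}\bbf^\top\beps+n^{-1}\|\beps\|^2$ does not depend on $\bw$ and $T_n(\bw)=2\sum_{j}\sum_{l}\gamma_j(e_l^2-\sigma^2/n+\theta_l e_l)+2\sum_j\sum_l\gamma_j(\sigma^2/n-\hat{\sigma}^2/n)$ collects the stochastic remainder (with $\gamma_j$ the cumulative weights of $\bw$). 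Because $\hat{\bw}|\mathcal{M}$ minimizes $C_n$ and $A_n$ is constant in $\bw$, evaluating this identity at $\hat{\bw}|\mathcal{M}$ and at the deterministic pivot $\bw^\ast|\mathcal{M}$ gives $L_n(\hat{\bw}|\mathcal{M},\bbf)\le L_n(\bw^\ast|\mathcal{M},\bbf)+|T_n(\hat{\bw}|\mathcal{M})|+|T_n(\bw^\ast|\mathcal{M})|$.

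The key step is a uniform fluctuation bound: there is a data-dependent, $\bw$-free quantity $B_n\ge 0$ such that, for every $\bw\in\mathcal{W}_{M_n}$, both $|T_n(\bw)|\le B_n[R_n(\bw|\mathcal{M},\bbf)]^{1/2}$ and $|L_n(\bw|\mathcal{M},\bbf)-R_n(\bw|\mathcal{M},\bbf)|\le B_n[R_n(\bw|\mathcal{M},\bbf)]^{1/2}$, with $\mathbb{E} B_n^2\lesssim \sigma^2\psi(\mathcal{M})/n+k_{M_n}\mathbb{E}(\hat{\sigma}^2-\sigma^2)^2/(n\sigma^2)$. I would obtain this by reusing the summation-by-parts and maximal-inequality estimates already derived for Theorem~\ref{theorem:main} in (\ref{eq:upper_bounding_term_1}), (\ref{eq:important213}), (\ref{eq:upper_bounding_term_3}), (\ref{eq:important214}), (\ref{eq:important5}), and (\ref{eq:important6}): the maximal variables $\kappa_1,\dots,\kappa_4$ there do not involve $\bw$, and the weight enters only through the factors $[\sum_j\gamma_j^2(k_j-k_{j-1})]^{1/2}\le[(n/\sigma^2)R_n(\bw|\mathcal{M},\bbf)]^{1/2}$ and the bias norm $\le[R_n(\bw|\mathcal{M},\bbf)]^{1/2}$ via (\ref{eq:riskw}); since $0\le\gamma_j\le1$ the same bounds hold for arbitrary weights, not only for $\hat{\bw}$. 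The expectation bound then follows from $\mathbb{E}\kappa_i^2\lesssim(1+\log M_n)^2$, so that $\mathbb{E} B_n^2/R^\ast\lesssim \sigma^2\psi(\mathcal{M})/(nR^\ast)+k_{M_n}\mathbb{E}(\hat{\sigma}^2-\sigma^2)^2/(n\sigma^2 R^\ast)\to0$ under conditions (\ref{eq:variance_rate})--(\ref{eq:minimum_marisk_rate}); Markov then gives $B_n^2/R^\ast\to_p 0$.

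With this in hand the two one-sided statements are routine. For the lower bound, since $R_n(\bw|\mathcal{M},\bbf)\ge R^\ast$ for all $\bw$ and $r\mapsto r-B_n r^{1/2}$ is increasing on $[R^\ast,\infty)$ once $B_n^2\le 4R^\ast$ (which holds with probability tending to one), I get $\inf_\bw L_n(\bw|\mathcal{M},\bbf)\ge R^\ast-B_n[R^\ast]^{1/2}=R^\ast(1-o_p(1))$; combined with $\inf_\bw L_n\le L_n(\bw^\ast|\mathcal{M},\bbf)=R^\ast(1+o_p(1))$ this yields $\inf_\bw L_n/R^\ast\to_p1$. For the upper bound I use Theorem~\ref{theorem:aop} (namely $\mathbb{E} R_n(\hat{\bw}|\mathcal{M},\bbf)\le[1+o(1)]R^\ast$, hence $R_n(\hat{\bw}|\mathcal{M},\bbf)=O_p(R^\ast)$ by Markov) to control $|T_n(\hat{\bw}|\mathcal{M})|\le B_n[R_n(\hat{\bw}|\mathcal{M},\bbf)]^{1/2}=o_p([R^\ast]^{1/2})\,O_p([R^\ast]^{1/2})=o_p(R^\ast)$ and likewise $|T_n(\bw^\ast|\mathcal{M})|=o_p(R^\ast)$, so the displayed inequality gives $L_n(\hat{\bw}|\mathcal{M},\bbf)\le R^\ast(1+o_p(1))$; together with $L_n(\hat{\bw}|\mathcal{M},\bbf)\ge\inf_\bw L_n(\bw|\mathcal{M},\bbf)=R^\ast(1-o_p(1))$ this proves $L_n(\hat{\bw}|\mathcal{M},\bbf)/R^\ast\to_p1$.

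The main obstacle is the uniform fluctuation lemma: I must check that the estimates from the proof of Theorem~\ref{theorem:main} indeed factor as $B_n\,[R_n(\bw|\mathcal{M},\bbf)]^{1/2}$ uniformly in $\bw$ and that a single $B_n$ governs both $L_n-C_n$ and $L_n-R_n$; this is precisely what makes the $\bw$-free maximal variables and the Cauchy--Schwarz separation indispensable. A secondary subtlety worth flagging is that the sharp constant $1$ cannot be obtained from the risk bound alone --- since Theorem~\ref{theorem:aop} only controls the mean of $R_n(\hat{\bw}|\mathcal{M},\bbf)$, a direct comparison $L_n(\hat{\bw})\approx R_n(\hat{\bw})$ would give $O_p(R^\ast)$ rather than $(1+o_p(1))R^\ast$ --- which is exactly why the argument must route the upper bound through the optimality of $\hat{\bw}|\mathcal{M}$ for $C_n$ rather than through its risk.
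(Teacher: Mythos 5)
Your proposal is correct and follows essentially the same route as the paper's own argument (Section~\ref{sec:a:aop_loss}): the paper likewise starts from the decomposition (\ref{eq:addd1}), reuses the maximal variables $\kappa_1,\ldots,\kappa_4$ and the Cauchy--Schwarz factorization against $[nR_n(\bw|\mathcal{M},\bbf)/\sigma^2]^{1/2}$, and closes with Markov's inequality; the only difference is packaging --- the paper verifies Li (1987)-type conditions $\sup_{\bw}|\cdot|/R_n(\bw|\mathcal{M},\bbf)\to_p 0$ term by term, while you bundle the same estimates into a single uniform fluctuation bound $B_n$ and run the sandwich explicitly. One point to fix when writing this out: as literally defined, $T_n(\bw)$ contains $2\sum_j\sum_l\gamma_j\theta_l e_l$, whose $\bw$-free piece $2\sum_{l=1}^{k_{M_n}}\theta_l e_l$ is $O_p(n^{-1/2})$ and does \emph{not} admit the factorized bound $B_n[R_n(\bw|\mathcal{M},\bbf)]^{1/2}$; you must first absorb that piece into $A_n$ so that the cross term appears in the form $\sum_j\sum_l(1-\gamma_j)\theta_l e_l$ (exactly as in (\ref{eq:difference1})), after which the summation-by-parts bound via $\kappa_3,\kappa_4$ applies and the rest of your argument goes through.
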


\begin{proof}
  From (\ref{eq:addd1}), the MMA criterion can be decomposed as
\begin{equation*}\label{eq:addd10}
  \begin{split}
      & C_{n}(\bw|\mathcal{M},\by)= L_n(\bw|\mathcal{M},\bbf) -  2\sum_{j=1}^{M_n}\sum_{l=k_{j-1}+1}^{k_j}\gamma_j\left( e_l^2-\frac{\sigma^2}{n}\right)  \\
       & -2\sum_{j=1}^{M_n}\sum_{l=k_{j-1}+1}^{k_j}\gamma_j\theta_le_l-2\sum_{j=1}^{M_n}\sum_{l=k_{j-1}+1}^{k_j}\gamma_j\left( \frac{\sigma^2}{n} - \frac{\hat{\sigma}^2}{n} \right)\\
     & + \frac{1}{n}\bbf^{\top}\beps + \frac{1}{n}\|\beps \|^2.
  \end{split}
\end{equation*}
Following the technique in \cite{Li1987}, it is sufficient to verify
\begin{equation}\label{eq:wan_1}
  \sup_{\bw \in \mathcal{W}_{M_n}}\frac{\left|\sum_{j=1}^{M_n}\sum_{l=k_{j-1}+1}^{k_j}\gamma_j\left( e_l^2-\frac{\sigma^2}{n}\right)\right| }{R_n(\bw|\mathcal{M},\bbf)}\to_p 0,
\end{equation}
\begin{equation}\label{eq:wan_2}
  \sup_{\bw \in \mathcal{W}_{M_n}}\frac{\left|\sum_{j=1}^{M_n}\sum_{l=k_{j-1}+1}^{k_j}\gamma_j\theta_le_l \right| }{R_n(\bw|\mathcal{M},\bbf)}\to_p 0,
\end{equation}
\begin{equation}\label{eq:wan_4}
  \sup_{\bw \in \mathcal{W}_{M_n}}\frac{\left|\sum_{j=1}^{M_n}\sum_{l=k_{j-1}+1}^{k_j}\gamma_j\left( \frac{\sigma^2}{n} - \frac{\hat{\sigma}^2}{n} \right) \right| }{R_n(\bw|\mathcal{M},\bbf)}\to_p 0,
\end{equation}
and
\begin{equation}\label{eq:wan_3}
  \sup_{\bw \in \mathcal{W}_{M_n}}\left| \frac{L_n(\bw|\mathcal{M},\bbf)}{R_n(\bw|\mathcal{M},\bbf)}-1 \right|\to_p 0.
\end{equation}
In particular, (\ref{eq:wan_3}) is equivalent to
\begin{equation}\label{eq:wan_31}
  \sup_{\bw \in \mathcal{W}_{M_n}}\frac{\left|\sum_{j=1}^{M_n}\sum_{l=k_{j-1}+1}^{k_j}\gamma_j^2\left( e_l^2-\frac{\sigma^2}{n}\right)\right| }{R_n(\bw|\mathcal{M},\bbf)}\to_p 0
\end{equation}
and
\begin{equation}\label{eq:wan_32}
  \sup_{\bw \in \mathcal{W}_{M_n}}\frac{\left|\sum_{j=1}^{M_n}\sum_{l=k_{j-1}+1}^{k_j}\gamma_j^2\theta_le_l \right| }{R_n(\bw|\mathcal{M},\bbf)}\to_p 0.
\end{equation}

As an example, we prove (\ref{eq:wan_1}) and (\ref{eq:wan_4}). Recall that $z_l=\sqrt{n}e_l/\sigma$, $l=1,\ldots,k_{M_n}$. For any $\delta>0$, we have
\begin{equation}\label{eq:aop_loss_prove}
\begin{split}
    \mathbb{P}&\left\{ \sup_{\bw \in \mathcal{W}_{M_n}}\frac{\left|\sum_{j=1}^{M_n}\sum_{l=k_{j-1}+1}^{k_j}\gamma_j\left( e_l^2-\frac{\sigma^2}{n}\right)\right| }{R_n(\bw|\mathcal{M},\bbf)}>\delta \right\} \\
    =\mathbb{P}&\left\{ \sup_{\bw \in \mathcal{W}_{M_n}}\frac{\left|\sum_{j=1}^{M_n}\sum_{l=k_{j-1}+1}^{k_j}\sigma^2\gamma_j\left( z_l^2-1\right) \right| }{nR_n(\bw|\mathcal{M},\bbf)}>\delta \right\} \\
     \leq \mathbb{P}&\left\{ \sup_{\bw \in \mathcal{W}_{M_n}}\frac{\kappa_1 \left[\sum_{j=1}^{M_n}\sigma^4\gamma_j^2\left(k_j-k_{j-1} \right)\right]^{\frac{1}{2}}  \left[\sum_{j=1}^{M_n}\frac{\left(  k_j^{\frac{1}{2}} -k_{j-1}^{\frac{1}{2}}\right)^2}{k_j-k_{j-1}}\right]^{\frac{1}{2}} }{\left[nR_n(\bw|\mathcal{M},\bbf)\right]^{\frac{1}{2}}\left[nR_n(\bw^*|\mathcal{M},\bbf)\right]^{\frac{1}{2}} }>\delta \right\}\\
     \leq \mathbb{P}&\left\{\kappa_1\sigma\left[\sum_{j=1}^{M_n}\frac{\left(  k_j^{\frac{1}{2}} -k_{j-1}^{\frac{1}{2}}\right)^2}{k_j-k_{j-1}}\right]^{\frac{1}{2}}>\delta\left[nR_n(\bw^*|\mathcal{M},\bbf)\right]^{\frac{1}{2}}   \right\}\\
     \leq(&\mathbb{E}\kappa_1^2)\delta^{-2}\left[nR_n(\bw^*|\mathcal{M},\bbf)\right]^{-1}  \sigma^{2}\left[\sum_{j=1}^{M_n}\frac{\left(  k_j^{\frac{1}{2}} -k_{j-1}^{\frac{1}{2}}\right)^2}{k_j-k_{j-1}}\right]\\
     &\!\!\!\!\!\!\!\!\!\!\!\leq \frac{C\psi(\mathcal{M})}{nR_n(\bw^*|\mathcal{M},\bbf)}\to 0,
\end{split}
\end{equation}
where the first inequality follows from (\ref{eq:step21}), the second inequality follows from (\ref{eq:riskw}), the third inequality is due to Markov’s inequality, and the last inequality follows from the upper bound on $\mathbb{E}\kappa_1^2$ and the definition of $\psi(\mathcal{M})$. For any $\delta>0$, we have
\begin{equation*}\label{eq:aop_loss_prove2}
\begin{split}
\mathbb{P}&\left\{ \sup_{\bw \in \mathcal{W}_{M_n}}\frac{\left|\sum_{j=1}^{M_n}\sum_{l=k_{j-1}+1}^{k_j}\gamma_j\left( \frac{\sigma^2}{n} - \frac{\hat{\sigma}^2}{n} \right) \right| }{R_n(\bw|\mathcal{M},\bbf)}>\delta \right\} \\
\leq\mathbb{P}&\left\{ \sup_{\bw \in \mathcal{W}_{M_n}}\frac{\left(\sum_{j=1}^{M_n}\sum_{l=k_{j-1}+1}^{k_j}\gamma_j^2\frac{\sigma^2}{n}\right)^{\frac{1}{2}}\left[\frac{nk_{M_n}}{\sigma^2}\left( \frac{\sigma^2}{n}-\frac{\hat{\sigma}^2}{n}\right)^2\right]^{\frac{1}{2}} }{R_n(\bw|\mathcal{M},\bbf)}>\delta \right\} \\
\leq\mathbb{P}&\left\{ \sup_{\bw \in \mathcal{W}_{M_n}}\frac{\left[\frac{k_{M_n}}{n\sigma^2}\left( \sigma^2-\hat{\sigma}^2\right)^2\right]^{\frac{1}{2}} }{\left[R_n(\bw|\mathcal{M},\bbf)\right]^{\frac{1}{2}}}>\delta \right\} \\
\leq\mathbb{P}&\left\{ \left[\frac{k_{M_n}}{n\sigma^2}\left( \sigma^2-\hat{\sigma}^2\right)^2\right]^{\frac{1}{2}} >\delta\left[R_n(\bw^*|\mathcal{M},\bbf)\right]^{\frac{1}{2}} \right\} \\
&\!\!\!\!\!\!\!\!\!\!\!\!\leq  \frac{\mathbb{E}\left[\frac{k_{M_n}}{\sigma^2}\left( \sigma^2-\hat{\sigma}^2\right)^2\right]}{\delta^2 n R_n(\bw^*|\mathcal{M},\bbf)}\to 0.
\end{split}
\end{equation*}
The remaining equations in (\ref{eq:wan_1})--(\ref{eq:wan_31}) can also be proved using similar techniques in Section~\ref{sec:Proof_main} and (\ref{eq:aop_loss_prove}). Thus we skip the similar materials here.
\end{proof}

\subsection{Candidate model set with equal-sized groups}\label{sec:equal_sized}

Consider $\mathcal{M}_{G2}$ with $\zeta_n = 0$, $k_1 = \lceil(\log n)^{t}\rceil $, $k_m = mk_1$ for $m=2,\ldots,M_n-1$, and $k_{M_n}=p_n$, where $0<t<3$ and $M_n= \arg\min_{m \in \mathbb{N}} k_m \geq p_n$. We have $M_n$ is upper bounded by
$$
  M_n \lesssim \frac{n}{(\log n)^{t}}.
$$
Now we verify the condition $k_1\vee \psi(\mathcal{M}_{G2})=o[nR_n(\bw^*|\mathcal{M}_A,\bbf)]$ in the following examples.

\begin{example}[Polynomially decaying coefficients]
\label{example1}

When $\theta_j=j^{-\alpha_1}, \alpha_1>1/2$, we have $nR_n(\bw^*|\mathcal{M}_A,\bbf)\asymp m_n^* \asymp n^{1/(2\alpha_1)}$. Note that $k_1/n^{1/(2\alpha_1)} \to 0$ and $\psi(\mathcal{M}_{G2})/n^{1/(2\alpha_1)} \to 0$ due to $\psi(\mathcal{M}_{G2}) \lesssim \log n (\log\log n)^2$. Thus, the MMA estimator based on $\mathcal{M}_{G2}$ still attains the full AOP in the case of polynomially decaying coefficients.

\end{example}

\begin{example}[Exponentially decaying coefficients]
\label{example1}

Now the transformed coefficients decay fast: $\theta_j=\exp(-cj^{\alpha_2})$, $\alpha_2>0$. In this case, $nR_n(\bw^*|\mathcal{M}_A,\bbf)\asymp m_n^* \asymp (\log n)^{1/\alpha_2}$. However, $\mathcal{M}_{G2}$ is not proved to achieve the full AOP due to $\psi(\mathcal{M}_{G2})/(\log n)^{1/\alpha_2} \to \infty$.

\end{example}

\begin{proof}
  We first derive an upper bound for $\psi(\mathcal{M}_{G2})$. Note that
\begin{equation*}
\begin{split}
   1+ \sum_{j=1}^{M_n-1}\frac{k_{j+1} - k_{j}}{4k_{j}} \leq 1 + \sum_{j=1}^{M_n-1}\frac{1}{4j} \leq C\log M_n .
\end{split}
\end{equation*}
Based on (\ref{eq:psi_M}) and (\ref{eq:upper_S}), another term in $\psi(\mathcal{M}_{G2})$ is upper bounded by
\begin{equation*}
  \begin{split}
       1+\sum_{j=1}^{M_n-1} \frac{S_{j-1} - S_{j}}{4S_{j}} & \leq C\max_{1 \leq j \leq M_n-1}\left\{ \frac{S_{j-1} - S_{j}}{S_{j} - S_{j+1}}\right\}\log \frac{1}{S_{M_n-1}}.
  \end{split}
\end{equation*}
We have
\begin{equation*}
  \log \frac{1}{S_{M_n-1}} = \log \frac{1}{\sum_{j=k_{M_n-1}+1}^{k_{M_n}}\theta_j^2} \leq \log \frac{1}{\theta_n^2}=2\alpha_1\log n
\end{equation*}
and
\begin{equation*}
  \begin{split}
       & \max_{1 \leq j \leq M_n-1}\left\{ \frac{S_{j-1} - S_{j}}{S_{j} - S_{j+1}}\right\} \\
       & = \max_{1 \leq j \leq M_n-1}\left\{ \frac{\sum_{l=k_{j-1}+1}^{k_{j}}\theta_l^2}{\sum_{l=k_{j}+1}^{k_{j+1}}\theta_l^2}\right\} \leq \max_{1 \leq j \leq M_n-1}\left\{ \frac{(k_{j} - k_{j-1})\theta_{k_{j-1}}^2}{(k_{j+1} - k_{j})\theta_{k_{j}}^2}\right\}\\
       & \leq C \max_{1 \leq j \leq M_n-1}\left\{ \frac{\theta_{k_{j-1}}^2}{\theta_{k_{j}}^2}\right\} \leq C.
  \end{split}
\end{equation*}
Thus, we obtain
\begin{equation*}
  \psi(\mathcal{M}_{G2}) \leq \left( \log M_n + \log n \right) (\log M_n)^2 = o(m_n^*)
\end{equation*}
in Example~1.

Following the similar technique as in Section~\ref{sec:proof:example:aop}, we see the current bound is not sufficient to establish the full AOP in Example~2.

\end{proof}

\subsection{Candidate model set based on MS with bounded $k_l\vee k_u$}\label{sec:MS_bounded}

Let $\hat{\mathcal{M}}_{MS2}=\hat{\mathcal{M}}_{MS}$ with $\kappa_l\vee \kappa_u$ being upper bounded by some positive constant $C$.

\begin{theorem}\label{cor:minimum_2}
Suppose that Assumptions~\ref{asmp:square_summable}--\ref{asmp:regressor_order} hold. Under Condition~\ref{condition1}, even with the precise knowledge of $m_n^*$, i.e., $\hat{m}_n = m_n^*$, we have
\begin{equation}\label{eq:corpart1}
    \mathbb{E}R_n(\hat{\bw}|\hat{\mathcal{M}}_{MS2},\bbf)-R_n(\bw^*|\mathcal{M}_A,\bbf) \gtrsim R_n(\bw^*|\mathcal{M}_A,\bbf).
\end{equation}

Under Condition~\ref{condition2}, if $m_n^*\to \infty$, $\mathbb{E}R_n(\hat{m}_n|\mathcal{M}_A,\bbf)/R_n(m^*|\mathcal{M}_A,\bbf) \to 1$, and there exists a constant $C_2\geq 1$ such that $\hat{u}_n-\hat{l}_n \leq C_2$ almost surely, then we get
$$
\mathbb{E}Q_n(\hat{\bw}|\hat{\mathcal{M}}_{MS2},\bbf)\leq[1+o(1)] R_n\left(\bw^*|\mathcal{M}_A,\bbf\right).
$$

\end{theorem}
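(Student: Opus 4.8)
The plan is to handle the two claims separately, the first being a negative (lower-bound) statement and the second a positive (AOP) one; both reduce to the closed-form oracle MA risk in (\ref{eq:riskw_large}) together with Lemma~\ref{lemma:peng}. For the first claim, I would start from the observation that under $\hat m_n = m_n^*$ the set $\hat{\mathcal{M}}_{MS2}$ is deterministic, its smallest model has size $\hat l_n = 1\vee\lfloor \kappa_l^{-1}m_n^*\rfloor$, and since $\kappa_l\le C$ with $m_n^*\to\infty$ under Condition~\ref{condition1}, we have $\hat l_n\asymp m_n^*$. Every candidate model contains the first $\hat l_n$ regressors, so the cumulative weights of any feasible $\bw$ obey $\gamma_j=1$ for $j\le \hat l_n$. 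Plugging this into (\ref{eq:riskw_large}) and comparing term by term against the full-model optimal weights (\ref{eq:gamma_star}) (for which $\gamma_1^*=1$ but $\gamma_j^*$ shrinks for $j\ge 2$), and discarding the nonnegative truncation excess from $j>\hat u_n$, gives the pointwise bound
\[
R_n(\bw^*|\hat{\mathcal{M}}_{MS2},\bbf) - R_n(\bw^*|\mathcal{M}_A,\bbf)
\ \ge\ \sum_{j=2}^{\hat l_n}\left(\frac{\sigma^2}{n}-\frac{\theta_j^2\sigma^2}{n\theta_j^2+\sigma^2}\right)
= \sum_{j=2}^{\hat l_n}\frac{\sigma^4}{n\,(n\theta_j^2+\sigma^2)}.
\]

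The next step is to bound this sum below by a constant multiple of $m_n^*/n$. Restricting to indices in a window $[\lfloor \hat l_n/\kappa\rfloor,\hat l_n]$ of length $\asymp m_n^*$, I would use Condition~\ref{condition1} through the composition function $G_d$ of (\ref{eq:Gfunction}), exactly as in the proof of Proposition~\ref{lemma:delta_1}, to show $\theta_j^2\asymp\theta_{m_n^*}^2\asymp\sigma^2/n$ uniformly on this window (legitimate because $\hat l_n$ and $m_n^*$ differ only by the bounded factor $\kappa_l$). Each such summand is then $\asymp\sigma^2/n$, so the sum is $\gtrsim m_n^*\sigma^2/n\asymp R_n(\bw^*|\mathcal{M}_A,\bbf)$ by Lemma~\ref{lemma:peng}. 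Finally, because $\hat\bw$ lies in the simplex associated with the deterministic set $\hat{\mathcal{M}}_{MS2}$ and $\bw^*|\hat{\mathcal{M}}_{MS2}$ minimizes the oracle risk over that simplex, we have $\mathbb{E}R_n(\hat\bw|\hat{\mathcal{M}}_{MS2},\bbf)\ge R_n(\bw^*|\hat{\mathcal{M}}_{MS2},\bbf)$, and (\ref{eq:corpart1}) follows.

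For the second claim, the decisive simplification is that $\hat u_n-\hat l_n\le C_2$ forces the cardinality $M_n\le C_2+1$, whence $\psi(\hat{\mathcal{M}}_{MS2})\le (C_2+1)\bigl(1+\log(C_2+1)\bigr)^2=O(1)$ almost surely. Since $\kappa_l,\kappa_u>1$ we always have $\hat l_n\le \hat m_n\le \hat u_n$, so unit weight on $\{1,\ldots,\hat m_n\}$ is feasible and $R_n(\bw^*|\hat{\mathcal{M}}_{MS2},\bbf)\le R_n(\hat m_n|\mathcal{M}_A,\bbf)$ pointwise. Taking expectations and combining the risk-consistency hypothesis with Lemma~\ref{lemma:peng} (which under Condition~\ref{condition2} and $M_n=p_n$ gives $R_n(m^*|\mathcal{M}_A,\bbf)\sim R_n(\bw^*|\mathcal{M}_A,\bbf)$) yields
\[
\mathbb{E}R_n(\bw^*|\hat{\mathcal{M}}_{MS2},\bbf)\le \mathbb{E}R_n(\hat m_n|\mathcal{M}_A,\bbf)=[1+o(1)]\,R_n(\bw^*|\mathcal{M}_A,\bbf).
\]
I would then apply Theorem~\ref{theorem:main} conditionally on $\hat{\mathcal{M}}_{MS2}$: with $\psi=O(1)$ and $nR_n(\bw^*|\hat{\mathcal{M}}_{MS2},\bbf)\gtrsim \hat l_n\gtrsim m_n^*\to\infty$, both $\psi$-dependent excess terms in (\ref{eq:risk_bound_general}) are of smaller order than $R_n(\bw^*|\hat{\mathcal{M}}_{MS2},\bbf)$, while the term $\rho$ is controlled by (\ref{eq:variance_rate}) for $\mathcal{M}_A$ once one notes $k_{\hat u_n}=\hat u_n\lesssim m_n^*$; taking expectations and chaining the two displays gives the asserted AOP.

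The main obstacle I anticipate is twofold. In the first part, making the window argument rigorous, that is, showing through Condition~\ref{condition1} that a positive fraction of indices below $\hat l_n$ genuinely contribute terms of order $\sigma^2/n$ rather than being dominated by the small-index terms where $\theta_j^2\gg\sigma^2/n$, which requires carefully tracking the $\delta,\nu$ ratios across a bounded number of $\kappa$-steps. In the second part, the delicate point is passing from the conditional risk bound of Theorem~\ref{theorem:main} to an unconditional statement in expectation despite the data-dependence of the random set $\hat{\mathcal{M}}_{MS2}$ and its coupling with $\hat\bw$ and $\hat\sigma^2$; I would address this by the conditioning-and-decomposition strategy already used in the proof of Theorem~\ref{cor:minimum_1}.
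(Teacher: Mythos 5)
Your proposal is correct and follows essentially the same route as the paper's proof: the same reduction of part (i) to the deterministic oracle-risk gap $\sum_{j=2}^{\hat l_n}\bigl(\sigma^2/n-\theta_j^2\sigma^2/(n\theta_j^2+\sigma^2)\bigr)$, lower-bounded over a window of length $\asymp m_n^*$ on which iterating Condition~\ref{condition1} a bounded number of $\kappa$-steps gives $\theta_j^2\asymp\sigma^2/n$ (the paper's $t_n^*=G_{d_6^*}(m_n^*+1)$ construction), and the same reduction of part (ii) to $\psi(\hat{\mathcal M}_{MS2})=O(1)$ plus the feasibility of the single model $\{1,\ldots,\hat m_n\}$ and Lemma~\ref{lemma:peng}. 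The only small inaccuracy is the claim $nR_n(\bw^*|\hat{\mathcal M}_{MS2},\bbf)\gtrsim\hat l_n\gtrsim m_n^*$ in part (ii), since nothing in that part's hypotheses forces $\hat m_n\gtrsim m_n^*$; the needed divergence follows instead from $\hat{\mathcal M}_{MS2}\subseteq\mathcal M_A$, which gives $R_n(\bw^*|\hat{\mathcal M}_{MS2},\bbf)\ge R_n(\bw^*|\mathcal M_A,\bbf)\asymp m_n^*/n$.
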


This theorem states that when the coefficients decay slowly, such as in the case  $\theta_j=j^{-\alpha_1},\alpha_1>1/2$, the MMA estimator based on a restricted $\hat{\mathcal{M}}_{MS2}$ cannot achieve the full AOP of MA. However, when the coefficients decay fast, reducing the number of candidate models around $\hat{m}_n$ to a constant level is beneficial for MMA. Nevertheless, requiring $\kappa_l$ and $\kappa_u$ to increase to $\infty$ is still necessary for the full AOP in the case of polynomially decaying coefficients.

\begin{proof}
  Define the random variable $\Delta_{n2}=R_n(\bw^*|\hat{\mathcal{M}}_{MS2},\bbf)-R_n(\bw^*|\mathcal{M}_A,\bbf)$, where $\hat{\mathcal{M}}_{MS2}=\{ \{1,\ldots,m \} : (1\vee\lfloor \kappa_l^{-1}m_n^*\rfloor) \leq m \leq (p_n\wedge \lfloor \kappa_u m_n^*\rfloor) \}$. Let us first prove the results under Condition~\ref{condition1}. Note that $\hat{\mathcal{M}}_{MS2}$ is a deterministic candidate model set, and $\Delta_{n2}$ is a non-random quantity. It is evident that
\begin{equation}\label{eq:60}
\begin{split}
   &\mathbb{E}R_n(\hat{\bw}|\hat{\mathcal{M}}_{MS2},\bbf)-R_n(\bw^*|\mathcal{M}_A,\bbf) \geq \Delta_{n2} \\
     & \geq \sum_{j=2}^{\lfloor \kappa_l^{-1}m_n^*\rfloor}\left( \frac{\sigma^2}{n} - \frac{\sigma^2}{n+\frac{\sigma^2}{\theta_j^2}} \right) + \sum_{j=\lfloor \kappa_um_n^*\rfloor+1}^{p_n}\frac{\theta_j^2}{1+\frac{\sigma^2}{n\theta_j^2}}.
\end{split}
\end{equation}
Recall the function $G_d$ defined in (\ref{eq:Gfunction}). Under Condition~\ref{condition1}, there must exist two integers $d_6^*$ and $t_n^*=G_{d_6^*}(m_n^*+1)$ such that $\theta_{m_n^*+1}^2/\theta_{t_n^*}^2 \geq \delta^{2d_6^*}$ and $\lfloor \kappa_l^{-1}m_n^*\rfloor-t_n^*\asymp m_n^*$ when $\kappa_l$ is bounded. Hence the first term on the right side of (\ref{eq:60}) can be lower bounded by
\begin{equation*}
\begin{split}
&\sum_{j=2}^{\lfloor\kappa_l^{-1}m_n^*\rfloor}\left( \frac{\sigma^2}{n} - \frac{\sigma^2}{n+\frac{\sigma^2}{\theta_j^2}} \right)\\
&  =\sum_{j=2}^{\lfloor \kappa_l^{-1}m_n^*\rfloor}\frac{\sigma^2}{n}-\sum_{j=2}^{t_n^*}\frac{\sigma^2}{n+\frac{\sigma^2}{\theta_j^2}}-\sum_{j=t_n^*+1}^{\lfloor \kappa_l^{-1}m_n^*\rfloor}\frac{\sigma^2}{n+\frac{\sigma^2}{\theta_j^2}}\\
     &\geq \frac{(\lfloor \kappa_l^{-1}m_n^*\rfloor-t_n^*)\sigma^2}{n}-\frac{(\lfloor \kappa_l^{-1}m_n^*\rfloor-t_n^*)\sigma^2}{n(1+\delta^{2d_6^*})}\\
     &\asymp \frac{m_n^*}{n}.
\end{split}
\end{equation*}
Similarly, when $\kappa_u$ is bounded, the second term in (\ref{eq:60}) has a lower bound with the order $m_n^*/n$. Thus, we have
\begin{equation*}
  \mathbb{E}R_n(\hat{\bw}|\hat{\mathcal{M}}_{MS2},\bbf)-R_n(\bw^*|\mathcal{M}_A,\bbf) \geq \Delta_{n2} \gtrsim \frac{m_n^*}{n} \asymp R_n(\bw^*|\mathcal{M}_A,\bbf),
\end{equation*}
where the last approximation follows from Lemma~\ref{lemma:peng}.

Under Condition~\ref{condition2}, it is easy to see
\begin{equation}\label{eq:show}
  \mathbb{E}Q_n(\hat{\bw}|\hat{\mathcal{M}}_{MS2},\bbf) \leq  [1+o(1)]\mathbb{E}R_n(\bw^*|\hat{\mathcal{M}}_{MS2},\bbf),
\end{equation}
where $Q_n(\hat{\bw}|\hat{\mathcal{M}}_{MS2},\bbf)$ and $R_n(\bw^*|\hat{\mathcal{M}}_{MS2},\bbf)$ are defined by directly plugging $\hat{\mathcal{M}}_{MS2}$ into $Q_n(\hat{\bw}|\mathcal{M},\bbf)$ and $R_n(\bw^*|\mathcal{M},\bbf)$, respectively. Indeed, based on the risk bound (\ref{eq:risk_bound_general}), we only need to show that $\mathbb{E}\psi(\hat{\mathcal{M}}_{MS2})=o(m_n^*)$. Note that
\begin{equation*}
  \mathbb{E}\psi(\hat{\mathcal{M}}_{MS2}) \lesssim \mathbb{E}\left[M_n(\log M_n)^2\right]  < C = o(m_n^*),
\end{equation*}
where the second inequality is due to $\hat{u}_n-\hat{l}_n$ is bounded almost surely. Thus (\ref{eq:show}) is proved. Then define a candidate model set that contains a single model $\hat{\mathcal{M}}_{MS3}=\{ \hat{m}_n \}$. We see that
\begin{equation}\label{eq:final1}
\begin{split}
   &\mathbb{E}R_n(\bw^*|\hat{\mathcal{M}}_{MS2},\bbf) \leq \mathbb{E}R_n(\bw^*|\hat{\mathcal{M}}_{MS3},\bbf)\\
    &= \mathbb{E}R_n(\hat{m}_n|\mathcal{M}_A,\bbf)\sim R_n(m^*|\mathcal{M}_A,\bbf) \sim R_n(\bw^*|\mathcal{M}_{A},\bbf),
\end{split}
\end{equation}
where the last approximation follows from Lemma~\ref{lemma:peng}. On the other hand, we have
\begin{equation}\label{eq:final2}
  R_n(\bw^*|\mathcal{M}_{A},\bbf) \leq \mathbb{E}R_n(\bw^*|\hat{\mathcal{M}}_{MS2},\bbf).
\end{equation}
By combining (\ref{eq:final1})--(\ref{eq:final2}) with (\ref{eq:show}), we obtain the desired conclusion.

\end{proof}

\section{Additional Numerical Results}\label{sec:a:simu}

\subsection{Assessing the full AOP of MMA}\label{sec:simulation:subsec1}

To illustrate the full-AOP theory in Section~\ref{sec:main_results}, we focus on the MMA estimator based on the largest nested candidate model set $\mathcal{M}_A$ as a representative. Let $\bbf^{(r)}$ and $\hat{\bbf}^{(r)}$ denote the true mean vector and the estimated mean vector in the $r$-th replicate, respectively. We plot the risk ratio
\begin{equation}\label{eq:risk_ratio}
  \text{Ratio} = \frac{R^{-1}\sum_{r=1}^{R}\|\bbf^{(r)} - \hat{\bbf}^{(r)}_{\hat{\mathbf{w}}|\mathcal{M}_A} \|^2}{R^{-1}\sum_{r=1}^{R}\min_{\mathbf{w}\in\mathcal{W}_{p_n}}\|\bbf^{(r)} - \hat{\bbf}^{(r)}_{\mathbf{w}|\mathcal{M}_A} \|^2}
\end{equation}
as a function of $n$, where $\hat{\bbf}^{(r)}_{\hat{\mathbf{w}}|\mathcal{M}_A}$ is the MMA estimator in the $r$-th replicate. The optimizations involved in (\ref{eq:risk_ratio}) can be efficiently
performed by quadratic programming. For example, \verb"quadprog" package in R language is applicable. The simulation results are displayed in Figure~\ref{fig:aop}.

\begin{figure}[!h]
  \centering
  \includegraphics[width=5in]{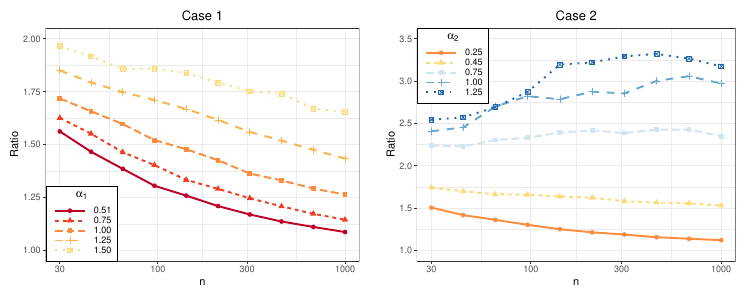}
  \caption{Assessing the full AOP of the MMA estimator based on the largest nested candidate model set $\mathcal{M}_A$ and the general continuous weight set $\mathcal{W}_{p_n}$. }
  \label{fig:aop}
\end{figure}

As shown in the left panel of Figure~\ref{fig:aop}, the curves decrease gradually and tend to $1$ as the sample size $n$ increases. This feature confirms our theoretical understanding that MMA attains the full AOP without restricting the weight or candidate model set when coefficients decay at a polynomial rate. Another observation is that when the sample size $n$ is fixed, the risk ratio increases as $\alpha_1$ increases from $0.51$ to $1.5$. This phenomenon implies that it is more difficult for MMA to achieve the full AOP when coefficients decay fast, which is expected. As observed in the right panel of the figure, the curve with $\alpha_2\leq 0.45$ still shows an apparent downward trend. However, the curves with large $\alpha_2$ exhibit quite different patterns. It seems that the risk ratio experiences a two-phase process, a sharp increase when $n\leq 300$ followed by a slight decrease when $n$ approaches $1000$. Due to the limit of computing power, it is not easy to check by simulation whether these curves will finally tend to $1$ when $n$ is sufficiently large.

\subsection{Comparing different choices of candidate model set}\label{sec:simulation:subsec3}

The primary purpose of this subsection is to compare several full-AOP MMA strategies, which are based on different candidate model sets as summarized in Table~\ref{tab:method}. The competing methods include M-G1 with $k_1 = \lceil \log n \rceil$ and $\zeta_n=1/\log n$, M-G2 with $k_1 = \lceil \log n \rceil$ and $\zeta_n=0$, M-MS1 with $\kappa_l=\kappa_u=\log n$, and M-MS2 with $\hat{l}_n = 1\vee (\hat{m}_n-5)$ and $\hat{u}_n = p_n \wedge (\hat{m}_n+5)$, where $\hat{m}_n$ in M-MS1 and M-MS2 is selected by Mallows' $C_p$ criterion. To show the differences between the competing methods, we divide the $\ell_2$ loss of these four methods by the $\ell_2$ loss of the MMA estimator based on $\mathcal{M}_A$. The simulation results are presented in Figure~\ref{fig:risk}.

\begin{figure}[!htbp]
    \centering
    \subfigure[Case 1]{
    \begin{minipage}[t]{1\linewidth}
    \centering
       \includegraphics[width=4.7in]{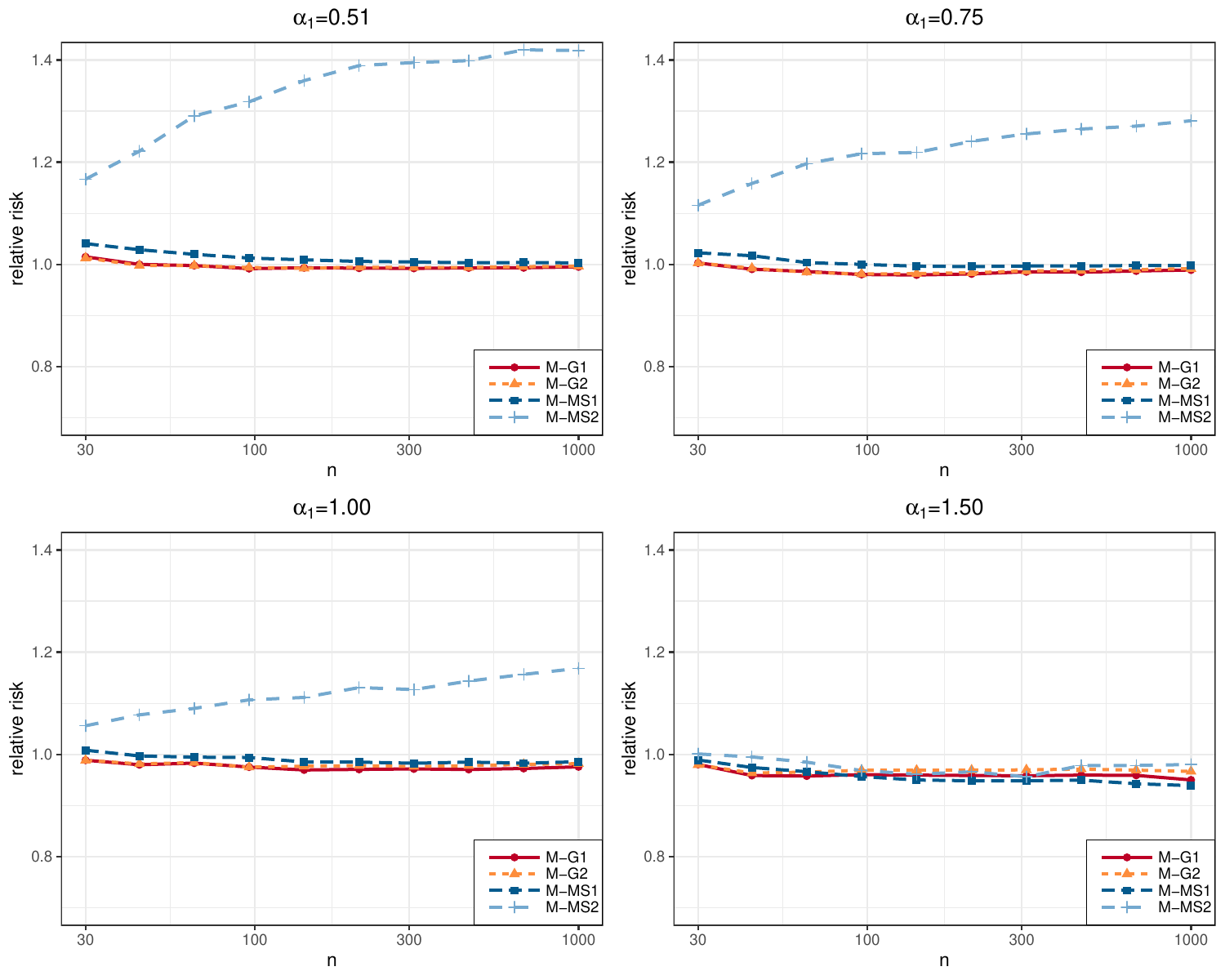}
    \end{minipage}
    }

    \subfigure[Case 2]{
    \begin{minipage}[t]{1\linewidth}
    \centering
       \includegraphics[width=4.7in]{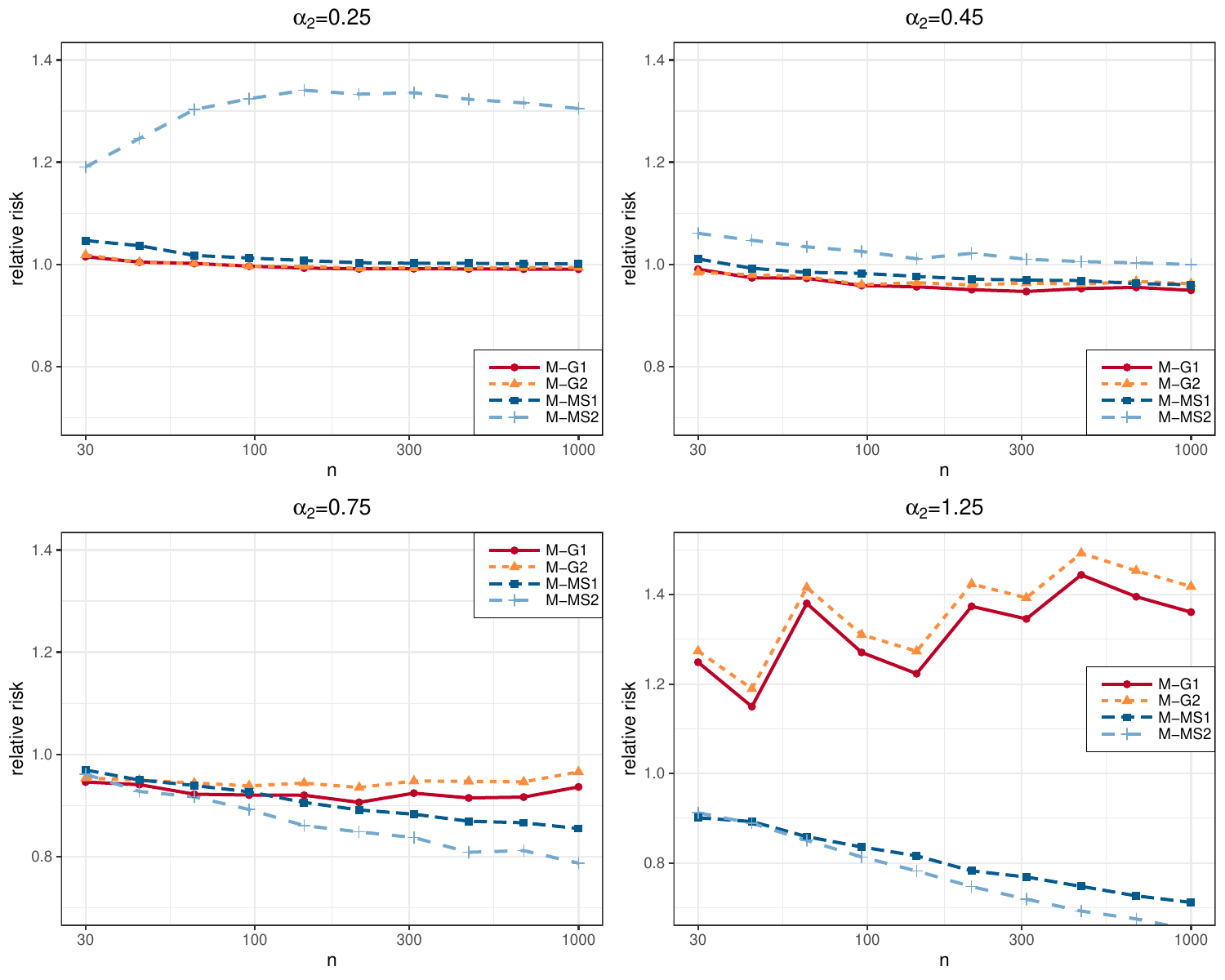}
    \end{minipage}
    }

    \caption{Relative risks of the competing methods in Cases 1--2. In each replication, the squared $\ell_2$ loss of each method is divided by the $\ell_2$ loss of the MMA estimator based on $\mathcal{M}_A$ and $\mathcal{W}_{p_n}$.}
    \label{fig:risk}
\end{figure}

As can be seen from Figure~\ref{fig:risk} (a), the relative risks of the methods M-G1, M-G2, and M-MS1 are near 1. This feature corroborates the findings in Theorems~\ref{cor:grouped}--\ref{cor:minimum_1} that the full AOP is still realized based on these properly constructed candidate model sets. Figure~\ref{fig:risk} (a) also illustrates the consequence of over-reducing the number of candidate models. The M-MS2 method, which combines at most 11 models around $\hat{m}_n$, exhibits much higher relative risks than 1 when the coefficients decay slowly. This observation accords with our statement in Theorem~\ref{cor:minimum_2} that M-MS2 cannot achieve the full potential of MA in Case 1.

From Figure~\ref{fig:risk} (b), we observe that the methods M-G1, M-G2, and M-MS1 perform slightly better than the MMA estimator based on $\mathcal{M}_A$ when $\alpha_2=0.45$ and $0.75$. In addition, the methods M-MS1 and M-MS2 show an obvious advantage when $\alpha_2 = 1.25$. These results further support our understanding in Section~\ref{sec:reduced} and Sections~\ref{sec:equal_sized}--\ref{sec:MS_bounded} that contracting the candidate model set provides certain benefits for MMA when coefficients decay fast. Interestingly, when coefficients decay extremely fast ($\alpha_2 = 1.25$), the curves of the methods M-G1 and M-G2 show an upward trend with some fluctuations. A sensible explanation is that the M-G methods exclude the best candidate model in this case. Note that their smallest candidate model has size $k_1= \lceil \log n \rceil$, while the optimal single model, in this case, is $m_n^* \asymp (\log n)^{4/5}$. Therefore, excluding the best candidate models from below can be harmful as well due to unnecessarily large variances in the models. This is in contrast to the situation of excluding the best models from above, as done in the MR methods, which induces unnecessarily large biases in the candidate models.

We also notice that the results with $\alpha_1=1.5$ in Case 1 show more similar patterns to those in Case 2, while the relative risk curves with $\alpha_2=0.25$ in Case 2 are more like those in Case 1. Indeed, this phenomenon is caused by the same reason stated at the end of Section~\ref{sec:simulation}. See \cite{Liu2011PI} and \cite{ZHANG201595} for more related theoretical and numerical discussions.

\section{Discussions on other related works}\label{sec:a:related}

\subsection{Aggregation}

It is worth mentioning that our work relates to a vast literature on aggregation procedures, which were first studied by \cite{Yang2000Mixing, Yang2001, Yang2004}, \cite{Nemirovski2000, juditsky2000functional}, and \cite{Catoni2004}, respectively. The optimal rates of aggregation have been established by \cite{Tsybakov2003, Wang2014} and various rate-optimal procedures have been proposed with different weight constraints \citep[see, e.g.,][]{Tsybakov2003, Yang2004, Bunea2007, Lounici2007, Rigollet2011, Dalalyan2012Mirror, Lecue2013, Wang2014}. A significant difference between the traditional aggregation procedures and the MMA-type methods is that the formers often focus on the step of combining models, namely, \emph{pure aggregation}, wherein one has already obtained the candidate estimates based on previous studies, or from data splitting \citep[see, e.g.,][]{Yang2001, Lecue2007, Rigollet2007}.

When candidate models and aggregation are trained on the same sample, some substantial progress has also been made in the aggregation literature. The exponential weighting (EW) methods in \cite{Leung2006, Alquier2011, Rigollet2011, Dalalyan2012} and the Q-aggregation in \cite{Dai2014, Bellec2018} are suitable for combining least squares or affine estimators from the same data. In particular, the EW method described in \citet{Dalalyan2012} can be applied for convex aggregation of a list of affine estimators. Note that the EW method can be formulated as the entropy-penalized empirical risk minimization problem
\begin{equation}\label{eq:ds1}
  \hat{\pi}_{EW}=\arg\inf_{\pi}\left\{ \int_{\mathcal{W}_{M_n}} C_n(\bw)\pi(d\bw)+\frac{\lambda}{n}D_{\mathrm{KL}}(\pi||\pi_0) \right\},
\end{equation}
where $\pi$ is a probability measure on $\mathcal{W}_{M_n}$, $C_n(\bw)$ is the MMA criterion (\ref{eq:criterion}), $\lambda$ is a temperature parameter, $\pi_0$ is a given prior, and $D_{\mathrm{KL}}$ stands for the Kullback-Leibler divergence. The final EW estimator is
\begin{equation}\label{eq:ds2}
  \hat{\bbf}_{EW}=\int_{\mathcal{W}_{M_n}}\hat{\bbf}_{\bw|\mathcal{M}}\hat{\pi}_{EW}(d\bw).
\end{equation}
When $\pi_0$ is the uniform distribution on $\mathcal{W}_{M_n}$ and $\lambda\geq8\sigma^2$, Proposition~2 of \citet{Dalalyan2012} implies that
\begin{equation}\label{eq:dsbound}
    \mathbb{E}L_n(\hat{\bbf}_{EW},\bbf) \leq R_n(\bw^*|\mathcal{M},\bbf) + \frac{CM_n\log(n)}{n}.
\end{equation}
When $M_n$ is large, \cite{Dalalyan2012} suggest a heavy tailed prior $\pi_0$ which favors sparse weight vectors. Their Proposition~3 shows that with a properly defined $\pi_0$,
\begin{equation}\label{eq:dsbound2}
    \mathbb{E}L_n(\hat{\bbf}_{EW},\bbf) \leq R_n(\bw^*|\mathcal{M},\bbf) + \frac{C\log(nM_n)}{n}.
\end{equation}
First, notice that the EW estimator (\ref{eq:ds2}) coincides with the MMA estimator (\ref{eq:MMA_estimator}) when $\lambda=0$ but differs from (\ref{eq:MMA_estimator}) when $\lambda>0$. The risk bounds (\ref{eq:dsbound}) and (\ref{eq:dsbound2}), which are obtained under the condition $\lambda\geq8\sigma^2$, are not applicable for the understanding of the MMA method as intended in this paper. Second, the core proof technique in \cite{Dalalyan2012} is based on Stein's lemma \citep{Stein1981Estimation}, which requires $\epsilon$ to follow a Gaussian distribution and the error variance is estimated based on independent data, which is typically unavailable. In contrast, our MMA approach can handle the sub-Gaussian errors with $\sigma^2$ being estimated based on the same data. It is worthy mentioning the risk bounds (\ref{eq:dsbound})--(\ref{eq:dsbound2}) also target the optimal MA risk $R_n(\bw^*|\mathcal{M},\bbf)$ as the MMA approach does. They can justify the full AOP of the EW method when the priors are properly selected.

Proposition~7.2 of \citet{Bellec2018} gives a risk bound for MMA when $\epsilon$ is normally distributed and $\sigma^2$ is known. Integrating the tail probability of their equation (7.4) yields
  \begin{equation}\label{eq:bellec}
    \mathbb{E}L_n(\hat{\bw}|\mathcal{M},\bbf) \leq R_n(\bw^*|\mathcal{M},\bbf) + \frac{C\log M_n}{n} + \left(\frac{C\log M_n}{n}\right)^{\frac{1}{2}}.
  \end{equation}
  The bound (\ref{eq:bellec}) cannot achieve MMA's AOP unless the optimal MA risk $R_n(\bw^*|\mathcal{M},\bbf)$ converges slower than $(\log M_n/n)^{1/2}$. Note that the framework in \cite{Bellec2018} allows one to combine a set of affine estimators, which may be applicable to some other MA problems. However, in our MMA context, Theorem~\ref{theorem:main} substantially improves (\ref{eq:bellec}) for AOP under much milder conditions.

Our examination of MMA is in the nested model framework, which serves as a representative setup in the MS and MA literature \cite[see, e.g.,][]{Polyak1991, Li1987, Hansen2007}. Nested models can be seen as a special case of the ordered linear smoother \citep[][]{Kneip1994}. Aggregation of ordered linear smoothers has been studied in \cite{Chernousova2013} and \cite{Bellec2020}. However, their risk bounds are in terms of the best model instead of their optimal combination. As shown in \cite{Peng2021}, the optimal MS risk can be substantially reduced by MA under certain conditions.

\subsection{Minimax adaptivity}

The minimax statement in Section~\ref{sec:minimax} is known as the exact minimax adaptivity, which was first introduced by \cite{efroimovich1984learning} in the Gaussian white noise model and was further investigated for various estimators in other specific problems \citep[see, e.g.,][]{Donoho1995, Efromovich1996, Nemirovski2000, Yang2000Mixing, Cavalier2002Sharp,Dalalyan2012,Bellec2018}. Our setup focuses on the minimax adaptivity on the spaces of the transformed parameters $\btheta$ rather than the spaces of the original regression coefficient $\bbeta$. Similar setup was adopted by \cite{Dalalyan2012} based on a discrete-cosine transformation of $\bbf$. Another goal considered in the literature is the minimax-rate adaptation, which is less demanding but more tangible with much wider applicability. Some MS and MA schemes have been considered to construct the minimax-rate optimal estimators that require almost no assumption on the behaviors of the candidate models. For example, see \cite{Barron1999}, \cite{juditsky2000functional}, and \cite{YANG2000135, Yang2000PATTERN, Yang1998ms} for early representative work.

In this paper, we show that the MMA estimator is adaptive in the exact minimax sense over the family of ellipsoids and hyperrectangles. Some other approaches, such as the blockwise constant (BC) rules \citep{efroimovich1984learning, Efromovich1996, Donoho1995, Nemirovski2000, cavalier2001penalized, Cavalier2002Sharp}, have also been used to derive the exact minimax adaptive estimators on various classes. There are two notable differences between the BC rule and the MMA method. First, the adaptivity of the BC rule can be obtained only when the orders of some hyperparameters, such as the lengths of blocks, are set correctly, while there are no parameters needed to be determined prior to implementing MMA. Second, the BC rule requires $\sigma^2$ to be known, while the MMA method can accommodate the setting with unknown $\sigma^2$, which is more applicable in regression problems. The effects of the variance estimation on MMA are seen in the risk bound (\ref{eq:risk_bound_general}). It is worth noting that the exact minimax adaptivity property over the family of ellipsoids can also be obtained by aggregation methods in \cite{Dalalyan2012} and \cite{Bellec2018}, in which the candidate models are constructed from the Pinsker filters and the variance $\sigma^2$ is assumed to be known or estimated from an independent sample.

\bigskip
\baselineskip=18pt

\bibliographystyle{Chicago}
\bibliography{mybibfile}

\end{sloppypar}
\end{document}